\pdfoutput=1

\documentclass[aos,preprint]{imsart}

\RequirePackage{amsthm,amsmath,amsfonts,amssymb} \RequirePackage[authoryear]{natbib} \RequirePackage[colorlinks,citecolor=blue,linkcolor=blue,urlcolor=blue]{hyperref} \usepackage{cleveref}

\startlocaldefs \numberwithin{equation}{section}

\theoremstyle{plain} \newtheorem{theorem}{Theorem} \newtheorem{proposition}{Proposition} \newtheorem{corollary}{Corollary} \newtheorem{lemma}{Lemma}[section] \newtheorem{prop}[lemma]{Proposition} \theoremstyle{definition} \newtheorem{remark}{Remark}[section]

\crefname{theorem}{Theorem}{Theorems} \Crefname{theorem}{Theorem}{Theorems} \crefname{proposition}{Proposition}{Propositions} \Crefname{proposition}{Proposition}{Propositions} \crefname{corollary}{Corollary}{Corollaries} \Crefname{corollary}{Corollary}{Corollaries} \crefname{lemma}{Lemma}{Lemmas} \Crefname{lemma}{Lemma}{Lemmas} \crefname{remark}{Remark}{Remarks} \Crefname{remark}{Remark}{Remarks} \crefname{figure}{Figure}{Figures} \Crefname{figure}{Figure}{Figures} \crefname{section}{Section}{Sections} \Crefname{section}{Section}{Sections} \crefname{prop}{Proposition}{Propositions} \Crefname{prop}{Proposition}{Propositions} \crefformat{equation}{(#2#1#3)} \Crefformat{equation}{(#2#1#3)} \crefmultiformat{equation}{(#2#1#3)}{ and (#2#1#3)}{, (#2#1#3)}{, and (#2#1#3)} \Crefmultiformat{equation}{(#2#1#3)}{ and (#2#1#3)}{, (#2#1#3)}{, and (#2#1#3)} \crefrangeformat{equation}{(#3#1#4) to (#5#2#6)} \Crefrangeformat{equation}{(#3#1#4) to (#5#2#6)}

\newcommand{\sfV}{\mathsf{V}}                 % vertex set
\newcommand{\ind}{\mathbf{1}}                 % indicator
\newcommand{\E}{\mathbb{E}}
\newcommand{\bbR}{\mathbb{R}}
\DeclareMathOperator{\pa}{pa}
\DeclareMathOperator{\ch}{ch}
\DeclareMathOperator{\depth}{depth}
\DeclareMathOperator{\conv}{conv}
\DeclareMathOperator{\diam}{diam}
\newcommand{\Nup}{N^{\uparrow}_{T}}           % ancestor covering number
\newcommand{\kalg}{k_{\mathrm{alg}}}          % algorithmic crossing index
\def\journal@name{}
\endlocaldefs

\begin{document}

\begin{frontmatter}
\title{The Minimax Rate of Denoising Flows on Trees}
\runtitle{The Minimax Rate of Denoising Flows on Trees}

\begin{aug}
\author[A]{\fnms{Sichen}~\snm{Wang}\ead[label=e1]{wsc@smbu.edu.cn}}
\address[A]{Shenzhen MSU-BIT University\printead[presep={,\ }]{e1}}
\end{aug}
\runauthor{Sichen Wang}

\begin{abstract}
Isotonic regression and mean estimation over a simplex are among the most classical denoising problems under geometric constraints. Chatterjee and Lafferty generalized both to the recovery of a flow on a rooted tree from Gaussian noise, with isotonic regression on a path and the simplex on a star. Their results reach only special trees; on a general tree the minimax risk remained unknown for a decade. We settle the question in full. On every tree with $n$ vertices, at every budget $V$ and noise level $\sigma$, the minimax risk is of order $\min\{V^2H_T,\ \sigma^2k_0\}$, with $H_T$ the diameter and $k_0$ the crossing index of a truncated ancestor profile. A deterministic estimator, an exponentially weighted aggregate over integer states, attains it in $O(n^2\log(n+1))$ arithmetic operations. Least squares at a known budget, the natural convex program for the model, is suboptimal in the worst case by a factor of order $(\log n)^{2/5}$. One functional, the truncated ancestor profile, carries the rate, the estimator, and the least squares comparison. It serves as covering radius, as packing entropy in the positive cone, and as the range of the one-dimensional state behind the exact computation. Two classical theories become one on trees, sharp and computable throughout.
\end{abstract}

\begin{keyword}[class=MSC]
\kwdgroup[type=primary]{\kwd{62G05}
\kwd{62C20}}
\kwdgroup[type=secondary]{\kwd{62G20}
\kwd{68Q25}}
\end{keyword}

\begin{keyword}
\kwd{Flow polytope}
\kwd{isotonic regression}
\kwd{minimax risk}
\kwd{metric entropy}
\kwd{aggregation}
\kwd{least squares}
\end{keyword}

\end{frontmatter}

%%%%%%%%%%%%%%%%%%%%%%%%%%%%%%%%%%%%%%%%%%%%%%%%%%%%%%%%%%%%%%%%%%%%%%%%%%%%%%
\section{Introduction}\label{sec:intro}
%%%%%%%%%%%%%%%%%%%%%%%%%%%%%%%%%%%%%%%%%%%%%%%%%%%%%%%%%%%%%%%%%%%%%%%%%%%%%%

Two classical problems anchor Gaussian denoising under geometric constraints. Isotonic regression, the estimation of a monotone sequence, obeys the cube-root phase diagram of shape-constrained estimation \citep{zhang2002,chatterjee2015}. Mean estimation over a scaled simplex, the nonnegative part of an $\ell_1$ ball, obeys the $\sqrt{\log}$ diagram of sparse estimation \citep{donoho1994,birgemassart2001}. The two diagrams, each carrying decades of theory, are unlike in kind: the monotone problem pays a power of its depth, the sparse problem a logarithm of its width. Between the sequence and the simplex lies every geometry in which depth and branching interact, and there the picture has remained incomplete.

One model generalizes both. A \emph{tree flow} \citep{chatterjeelafferty2018} assigns to the vertices of a rooted tree nonnegative values such that each vertex passes to its children at most what it receives: a mass enters at the root, travels downward, and may leak at any vertex. A path, depth without branching, is bounded isotonic regression; a star, branching without depth, is mean estimation over the scaled simplex. The same inequalities arise wherever hierarchical totals are measured with noise. In the consistency post-processing of differentially private counts they hold with equality \citep{hay2010,abowd2022}; in profiling a call tree the leak carries real mass, the self time of a procedure \citep{graham1982}. Chatterjee and Lafferty studied the least squares estimator, the projection onto the flow cone and the natural tuning-free program for the model, and discovered that it behaves unlike its isotonic counterpart: its rate of convergence is not monotone in the depth of the tree, and it can miss the minimax rate by a power of $n$. Where the projection fell short, the minimax rate was attained only by least squares over an exponentially large net, and their closing sentence poses the challenge of ``closing the gap between the LSE and the minimax lower bound'' with an efficient estimator. A decade later the challenge stood, and with it three gaps: the minimax risk was uncharacterized beyond two regimes of trees, the worst case of least squares at a fixed budget was unquantified, and no efficient estimator was known to attain the minimax rate where the projection did not.

This paper closes all three. We ask: for every finite rooted tree and every budget and noise level, what is the minimax risk of denoising a tree flow, and is it attained by a computationally efficient estimator? One combinatorial functional of the tree organizes the answers, stated in \Cref{sec:results}: a rate formula; an exact and efficient estimator; adaptation to the budget and, in an explicit regime, to the noise level; and the worst-case price of least squares. In the local entropy program for convex constraints \citep{neykov2023}, the flow polytopes thereby form a nonsymmetric, combinatorially explicit family.

\subsection{Our Results}\label{sec:results}

Let $T$ be a finite rooted tree with vertex set $\sfV$, root $o$, and $n:=|\sfV|$ vertices. Write $u\preceq v$, and call $u$ an \emph{ancestor} of $v$, when $u$ lies on the path from $o$ to $v$, so that every vertex is an ancestor of itself; let $d_T$ denote graph distance, and let $H_T:=\max_{u,w\in\sfV}d_T(u,w)$ denote the diameter of $T$. Each vertex $u$ carries its root-path indicator $p_u\in\{0,1\}^{\sfV}$, defined by $p_u(v)=\ind\{v\preceq u\}$. The \emph{flow cone} of $T$ is
\begin{equation}\label{eq:cone}
\mathcal F(T):=\Bigl\{\textstyle\sum_{u\in\sfV}s_up_u:\ s_u\ge0
\text{ for all }u\Bigr\},
\end{equation}
whose elements are exactly the monotone flows on $T$ (\Cref{lem:normal}). Fix a budget $V>0$ and a noise level $\sigma>0$. The parameter set of this paper is the budget-$V$ slice of the cone, the \emph{flow polytope}
\begin{equation}\label{eq:body}
\mathcal F_V(T):=\bigl\{\mu\in\mathcal F(T):\ \mu(o)=V\bigr\}
\ =\ V\conv\{p_u:u\in\sfV\}.
\end{equation}
The cone \Cref{eq:cone} is the flow cone of \citet{chatterjeelafferty2018}; they cap the root value, $\mu(o)\le V$, where we fix it. For $n\ge2$ the two constraints carry the same minimax risk up to universal constants: the capped body contains the slice, raising the root value of a capped signal to $V$ changes no other coordinate, and the cap therefore adds only the bounded scalar $\mu(o)$, at a cost $\min\{V^2,\sigma^2\}$ that the two-point bound \Cref{eq:twopoint} absorbs; the estimator of \Cref{thm:efficient} transfers (\Cref{ssec:profile}). We observe $Y=\mu+\sigma Z$ with $Z\sim N(0,I_n)$ and $\mu\in\mathcal F_V(T)$, and study the minimax risk
\begin{equation}\label{eq:risk}
R^*_T(V,\sigma):=\inf_{\widehat\mu}\ \sup_{\mu\in\mathcal F_V(T)}
\E_\mu\bigl\|\widehat\mu(Y)-\mu\bigr\|_2^2,
\end{equation}
the infimum running over all measurable maps $\widehat\mu:\bbR^{\sfV}\to\bbR^{\sfV}$; estimators are not required to take values in $\mathcal F_V(T)$.

An \emph{ancestor $r$-net} is a set $R\subseteq\sfV$ such that every vertex $u$ has an ancestor $a\in R$ with $d_T(a,u)\le r$, and $\Nup(r)$ denotes the minimum cardinality of an ancestor $r$-net. The \emph{truncated ancestor profile} of $T$ is
\begin{equation}\label{eq:profile}
\alpha_k(T):=\sup_{r>0}\ r\,\min\Bigl\{k,\
\Bigl[\log\frac{\Nup(r)}{k}\Bigr]_+\Bigr\},
\qquad
\delta_k(T):=\frac{\alpha_k(T)}{k}
\qquad(k=1,2,\dots),
\end{equation}
where $[x]_+:=\max\{x,0\}$, and the \emph{crossing index} is
\begin{equation}\label{eq:crossing}
k_0=k_0(T,V,\sigma):=\min\Bigl\{k\ge1:\
V^2\delta_k(T)\le A_0\sigma^2k\ \text{ or }\ k>n/C_\star\Bigr\},
\end{equation}
where $A_0$ and $C_\star$ are fixed universal constants; one may take $A_0=144$ and $C_\star=e^2$. The truncation is the one nonstandard feature: once describing the branching below a vertex costs more than the information budget $k$, the cluster is blurred at its own diameter rather than charged per descendant (\Cref{rem:star}).

The crossing index determines the minimax risk: \Cref{thm:rate}, proved in \Cref{sec:rate}, shows that
\[
R^*_T(V,\sigma)\ \asymp\ \min\bigl\{V^2H_T,\ \sigma^2k_0(T,V,\sigma)\bigr\}
\]
for every finite rooted tree and all $V,\sigma>0$. The minimum has three readings: a diameter branch $V^2H_T$, where the body is small and the constant estimator $Vp_o$ is already optimal; an information branch $\sigma^2k_0$, the cost of $k_0$ effective coordinates; and, when the crossing clause never fires, a dimension branch $\sigma^2k_0\asymp\sigma^2n$, where the identity estimator is optimal. The formula is computable: a surrogate index evaluates it to within a factor of two in $O(n\log(n+1))$ operations (\Cref{lem:toolkit}).

Behind the formula stands the entropy theory of Gaussian estimation, from global metric entropy to the local entropy fixed point of bounded convex constraints \citep{yangbarron1999,neykov2023}. That theory identifies the governing quantity; computing it on a given body, and constructing the packings it promises when the body is an asymmetric cone, are the separate problems solved in \Cref{sec:rate}. For $t>0$ let $H^c_T(t)$ denote the metric entropy of the flow polytope at squared radius $t$: the logarithm of the smallest number of Euclidean balls of radius $\sqrt t$, centered anywhere, whose union contains $\mathcal F_V(T)$. Define the associated fixed point
\begin{equation}\label{eq:fixedpoint}
\rho_T(V,\sigma)\ :=\ \inf_{t>0}\ \bigl[t+\sigma^2H^c_T(t)\bigr].
\end{equation}
Free centers are the convenient convention, because the covers of \Cref{sec:rate} have centers outside the body; restricting centers to the body leaves \Cref{eq:fixedpoint} unchanged (\Cref{ssec:profile}). \Cref{prop:fixedpoint} gives $R^*_T(V,\sigma)\asymp\min\{\rho_T(V,\sigma),\ V^2H_T,\ \sigma^2n\}$; \Cref{thm:rate} thus evaluates this entropy expression in closed combinatorial form.

\begin{table}[t]
\caption{The rate formula on three families
(\Cref{cor:path,cor:star,cor:binary}), with $n$ the number of
vertices and universal constants in $\asymp$; the path $P_L$ has $L$
edges, the star $S_m$ has $m$ leaves, and the complete binary tree
$B_h$ has height $h$.
The first two rows recover the classical phase diagrams; the third
carries a full logarithm that occurs at neither endpoint.}
\label{tab:phases}
\centering
{\footnotesize
\begin{tabular}{@{}llll@{}}
\hline
Tree & $R^*\asymp$ & Classical counterpart & Phenomenon\\
\hline
path $P_L$ &
$\min\bigl\{V^2L,\ V^{2/3}\sigma^{4/3}L^{1/3},\ \sigma^2(L{+}1)\bigr\}$ &
bounded isotonic & cube root\\
star $S_m$ &
$\min\bigl\{V^2,\ \sigma V\sqrt{1+[\log(m\sigma/V)]_+},\ \sigma^2m\bigr\}$ &
simplex mean & $\sqrt{\log}$\\
binary $B_h$ &
$\min\bigl\{V^2h,\ \sigma V\bigl(1+[\log(n\sigma/V)]_+\bigr),\ \sigma^2n\bigr\}$ &
no direct analogue & full logarithm\\
\hline
\end{tabular}
}
\end{table}

On three benchmark families the formula specializes in closed form; \Cref{tab:phases} collects the three rates, proved as \Cref{cor:path,cor:star,cor:binary} in \Cref{sec:crossing}. The first two are external checks: the path recovers the bounded isotonic diagram \citep{zhang2002,chatterjeelafferty2018}; the star recovers the simplex diagram of sparse estimation \citep{donoho1994,birgemassart2001}. The complete binary tree resembles neither: branching entropy repeats across $\Theta(\log n)$ depth scales, making the profile $\alpha_k$ quadratic in $\log(n/k)$ once $k$ exceeds that logarithm, and the crossing takes a square root, leaving the single full logarithm.

An exactly evaluable estimator attains the rate. \Cref{thm:efficient}, proved in \Cref{sec:estimation}, constructs a deterministic estimator with worst-case risk $O(R^*_T(V,\sigma))$ on every tree, computed exactly in $O(n^2\log(n+1))$ arithmetic operations: an exponentially weighted aggregate over a class of integer states. Together with \Cref{thm:lse}, it answers the challenge of \citet{chatterjeelafferty2018}: an efficient estimator closes the gap between least squares and the minimax risk.

The estimator of \Cref{thm:efficient} takes $V$ and $\sigma$ as inputs; the next two theorems remove them. \Cref{thm:budget}, proved in \Cref{sec:adaptation}, removes the budget: a deterministic polynomial-time estimator free of $V$ attains $C(R^*_T(V,\sigma)+\sigma^2)$, and no budget-free estimator stays below an $\Omega(\sigma^2)$ floor on adjacent budgets. On the cone, where the original model carries no budget parameter, this adaptation is the return to the model as \citet{chatterjeelafferty2018} posed it. \Cref{thm:full} removes the noise level as well, in an explicit regime $Vw_T\le c\sigma n$, with $w_T$ a width functional that bounds how much of the tree's difference structure the signal can contaminate, equal to $1$ on a path and $\Theta(\log n)$ on a balanced binary tree. Within the regime the noise dominates a majority of the tree's difference statistics and a median estimates $\sigma$; on families of bounded width, the regime sits within a $\sqrt{\log n}$ factor of a ceiling that no noise estimator can pass (\Cref{ssec:adaptation}).

Every estimator above is built from the profile. The default estimator ignores it: the Euclidean projection onto the body, the tuning-free convex program of least squares at a known budget. \Cref{thm:lse}, proved in \Cref{sec:lse}, determines its worst case: on every tree and at all $V,\sigma>0$ the projection loses at most a factor $C(1+\log(en))^{2/5}$ over the minimax risk, and on an explicit one-parameter family of brooms, at a single budget and noise level, it loses at least $c(\log n)^{2/5}$. The separation is an exact power of the logarithm, matched from both sides, where the separations of \citet{chatterjee2014,kur2024} are polynomial in the sample size or the dimension. The equivalence of the fixed and capped budgets recorded above concerns minimax risks; \Cref{thm:lse} concerns the projection itself.

\subsection{Technical Overview}\label{sec:overview}

The upper bound in \Cref{thm:rate} is soft: least squares over a covering net attains the entropy fixed point \Cref{eq:fixedpoint}, and two trivial estimators cap the risk at $V^2H_T$ and $\sigma^2n$. Three tasks remain. The entropy must be \emph{realized}, by hypotheses legal in a positive body (the lower bound); \emph{searched}, exactly and quickly (the algorithm); and \emph{measured} against the natural convex program (the least squares question). Two features of the flow polytope obstruct the classical routes to all three: positivity, the body being a simplex rather than a symmetric ball, so that a hypothesis moves only the leak mass it holds and sign-toggling lower bounds, whose one base point would have to fund every signed perturbation, fail; and overlap, root paths through a common vertex sharing coordinates, so that the contrast blocks realizing the entropy can overlap and are therefore priced (\Cref{sec:packing}).

The lower bound realizes the entropy inside the body. Ancestor-covering numbers force a separated vertex set (\Cref{lem:bridge}); groups drawn from it, in subtrees with disjoint edge sets, share the budget exactly; a multiscale code fills each group; and a product form of Fano's inequality (\Cref{lem:fano}) prices the overlap between scales once, through a normalized Gram row sum, returning $R^*_T\ge c\min\{V^2r/q,\ \sigma^2q\log(eM/q)\}$ for $M$ vertices at pairwise distance $r$ in $q$ groups; matching $q$ to the profile recovers its truncated logarithm. Where \citet{chatterjeelafferty2018} allocate mass across vertex-disjoint paths, the same budget discipline runs on arbitrarily overlapping routes.

The upper bound builds one coded cover for all signals. Dyadic ancestor nets and their cells depend only on the tree and the information budget $k$; for each signal, heavy cells split and light cells collapse to their roots, sparse sampling restoring the light exits; every subtree is an interval of the depth-first preorder, so one scalar, the running floor of cumulative masses along it, rounds all subtree sums at once. The resulting approximant is within squared error $O(V^2\delta_k)$ of its signal and carries a fixed root sum, a code length $O(k)$, and flow states confined to an $O(k)$ range; counting the class at $e^{O(k)}$ gives the entropy bound \Cref{eq:entropycover}. An integer-leak net already appears in the upper bound of \citet{chatterjeelafferty2018}, at cost $\log n$ per unit of mass and at the diameter scale; the activation charges lower this to $O(1)$ per unit and the profile scale, and these reductions are what the estimator consumes.

The two halves meet at the crossing. In the interior case the nonincreasing profile scale $V^2\delta_k$ first falls below $A_0\sigma^2k$ at $k_0$; the cover gives the rate $\sigma^2k_0$ there, the packing gives $\sigma^2(k_0-1)$ one step earlier, and the corner regimes are collected by a two-point bound and the dimension cap. This proves \Cref{thm:rate,prop:fixedpoint} as four cases of one argument.

The estimator replaces the search over the cover by an average, an exponentially weighted aggregate under a Kraft-type prior built from the code length. Its risk analysis composes two exact identities, Stein's and the Gibbs variational identity, with no union bound and approximation constant one (\Cref{lem:oracle}). The confinement is what makes the computation possible: one integer state in $[0,3k]$ per vertex turns the partition function into a tree recursion of polynomial messages, evaluated exactly; one reverse-mode sweep then reads off the posterior mean at every coordinate. The recursion is dynamic programming over distributions, not values.

Adaptation reuses the cover. Removing the quantization from the approximant of \Cref{prop:cover} leaves amplitude-free profile subspaces; a Kraft-weighted penalized selection over them, joined with finite grids of candidate budgets, attains $C(R^*_T+\sigma^2)$ without knowing $V$, the additive $\sigma^2$ forced on adjacent budgets by a two-point test at the root. For unknown $\sigma$, a median of tree difference statistics survives contamination when $Vw_T\le c\sigma n$, and the selection runs at the rounded estimate.

The broom of \Cref{thm:lse}, a handle of $L$ edges feeding $e^{\Theta(L^{5/2})}$ terminal leaves, makes the blindness of the projection quantitative: the extreme leaf noise repays the quadratic cost of the full-budget route through the handle, forcing least squares to lose $L^{3/2}$, while the profile sees one ancestor covering the terminal star at every radius $r\ge1$ and the minimax risk stays at order $\sqrt L$, that of the handle alone; with $\log n\asymp L^{5/2}$, the ratio is the $2/5$ power of the logarithm. For the matching upper bound, the localized-width analysis of \citet{chatterjee2014} runs on the coded cover with centers chosen after the noise, and a width branch and a height branch meet at the same power.

Three devices are stated for reuse: the overlapping-block Fano inequality (\Cref{lem:fano}), which refers to no tree and prices overlap inside an arbitrary convex constraint; the positive packing of \Cref{sec:packing}, a template for bodies in which sign toggling is illegal; and the aggregate of \Cref{sec:estimation}, whose oracle inequality holds over any finite class and whose evaluation is exact whenever the class is dynamically
programmable. \Cref{sec:interfaces,%
sec:rate,sec:estimation,sec:lse} prove the results in turn; the remaining details are in the appendices.

\subsection{Related Work}\label{sec:related}

The closest work is \citet{chatterjeelafferty2018}. They introduced tree flows and analyzed the projection onto the flow cone in two regimes: for trees of bounded depth they proved matching risk bounds in the principal range of budgets, and on trees consisting of many long disjoint paths they determined the exponents of both the projection and the minimax risk, which disagree. Their projection takes no budget input, and over part of that family its risk exceeds the minimax risk polynomially in $n$ at a fixed budget, by overfitting the zero signal where the cone has large statistical dimension; the projection of \Cref{thm:lse} receives the budget, which removes that failure and leaves exactly the $2/5$ power of the logarithm. The constraint family itself predates the statistics: \citet{benabbas2011} smooth hierarchical data under the same inequalities in $\ell_1$. Isotonic regression on richer orders \citep{han2019} shares the ambient structure but not the geometry: it acts on differences, while the flow constraint transports mass, thereby making the body a simplex. On the path the model meets the isotonic tradition \citep{zhang2002,chatterjee2015,bellec2018,guntuboyinasen2018}, in its bounded form \citep{lussrosset2017}, and \Cref{cor:path} returns its phase diagram. The obstacle on trees is therefore the budget slice, the simplex geometry, not monotonicity.

Metric entropy has determined minimax rates since \citet{yangbarron1999} tied them to the global entropy of the function class; for bounded convex constraints in the Gaussian sequence model, \citet{neykov2023} gives the exact rate through a local entropy fixed point. On the flow polytopes the two halves of that program take concrete form: the profile computes the rate that the fixed point governs, in closed combinatorial form and in near-linear time, and the packing of \Cref{sec:packing} realizes the entropy inside a cone with no symmetry, one-sided perturbations replacing sign toggling; compare the cone geometry of testing in \citet{wei2019}. On the star, positivity leaves the sparse rate unchanged; what it changes is the lower bound, which must be built one-sided. The covering geometry has an operator-theoretic precedent: the body is $V$ times the image of the positive face of the $\ell_1$ ball under the tree's summation operator, and \citet{lifshitslinde2011,lifshitslinde2011critical} bound the entropy numbers of such operators through ancestor covering numbers; the critical case stops a heavy--light partition at its light sets and counts the outcomes, the combinatorial core that \Cref{sec:cover} shares. Compactness of the symmetric image is their question; the positive slice, the minimax risk, and the certificates behind the estimator are this paper's.

That the projection onto a convex body can miss the minimax rate by a large factor was shown by \citet{zhang2013} on an ellipsoid and by \citet{chatterjee2014} on a designed body, in both cases by a factor as large as $\sqrt n$; the fixed-point analysis of least squares in the latter is the engine of our upper bound in \Cref{sec:lse}. Separations of polynomial order in the sample size or the dimension are known for several natural programs \citep{kur2024,vaskevicius2023}, and on an $\ell_p$ ball \citet{aolaritei2025} separate the projection from the minimax rate by a power of the logarithm. \citet{prasadan2025} characterize the boundary exactly: least squares is minimax optimal if and only if the local Gaussian width map is Lipschitz at the relevant scales. By that characterization, \Cref{thm:lse} shows the Lipschitz bound cannot hold uniformly along the brooms, and it goes past the optimal-or-not dichotomy by bounding the worst-case ratio of least squares to the minimax rate from both sides. The upper bound speaks for the projection as much as against it: a loss beyond the $2/5$ power of the logarithm is impossible on any tree.

The estimator of \Cref{thm:efficient} belongs to the aggregation tradition. \citet{leungbarron2006} proved an exact Stein-based oracle inequality for exponentially weighted aggregates, and \Cref{lem:oracle} descends from it; sharp PAC-Bayes bounds, optimal sparse aggregation, and unknown-variance mixing follow in \citet{dalalyantsybakov2008,rigollettsybakov2011,giraud2008}. Exact mixtures over combinatorial families have a computing tradition of their own: context-tree weighting \citep{willems1995} and circuit differentiation for marginals \citep{darwiche2003}, without squared-loss risk guarantees. The class of integer states sits in both traditions: its code length ties the Kraft prior, in the tradition of minimum description length \citep{barroncover1991}, to the minimax rate, and its confined states make the posterior mean a tree recursion, evaluated exactly. Shape-constrained solvers compute hard projections \citep{robertson1988,kyng2015}, the object that \Cref{thm:lse} measures; the recursion here averages where those optimize. A recent line attains polynomial-time near-optimality on symmetric bodies under further regularity and oracle hypotheses \citep{neykov2026}; the flow polytope is a simplex. The trade is structure for exactness: a combinatorial family of bodies, in exchange for the exact rate and the exact estimator.

%%%%%%%%%%%%%%%%%%%%%%%%%%%%%%%%%%%%%%%%%%%%%%%%%%%%%%%%%%%%%%%%%%%%%%%%%%%%%%
\section{The Flow Polytope and the Ancestor Profile}\label{sec:interfaces}
%%%%%%%%%%%%%%%%%%%%%%%%%%%%%%%%%%%%%%%%%%%%%%%%%%%%%%%%%%%%%%%%%%%%%%%%%%%%%%

This section records the interfaces used by the rest of the paper: the two normal forms of the flow polytope, the Euclidean realization of the tree metric, and the working properties of the ancestor profile, including the surrogate index through which the profile is computed. Proofs are collected in \Cref{ssec:profile}.

\subsection{The Flow Polytope}\label{sec:polytope}

We complete the tree notation of \Cref{sec:results}. For $v\ne o$ we write $\pa(v)$ for the parent of $v$ and identify $v$ with its parent edge $(\pa(v),v)$, of which $v$ is the \emph{lower endpoint}; $\ch(v)$ is the set of children of $v$, and $T_v:=\{u\in\sfV:v\preceq u\}$ is the subtree rooted at $v$. The depth of a vertex is its distance from the root, and the height of $T$ is $h_T:=\max_{v\in\sfV}\depth(v)$, so that $h_T\le H_T\le2h_T$. Universal constants $c,C,c',C',\dots$ may change value from occurrence to occurrence, while subscripted constants such as $A_0$ and $C_\star$ keep their values fixed once introduced. Logarithms are natural. Complexity statements count arithmetic operations in the real-RAM model: arithmetic, comparisons, integer part, and evaluations of $\exp$ and $\log$ on real registers are exact unit-cost operations, and two polynomials of degree $m$ can be multiplied in $O(m\log m)$ operations by the fast Fourier transform \citep{vonzurgathen2013}; what exactness means for the estimator is stated in \Cref{rem:exact}.

The polytope has a barycentric normal form and a flow normal form: the first is the legality certificate behind every lower-bound construction of \Cref{sec:rate}, the second identifies the body as a set of flows and explains its name.

\begin{lemma}[Normal forms]\label{lem:normal}
For $\mu\in\bbR^{\sfV}$ the following are equivalent:
\begin{enumerate}
\item\label{it:body} $\mu\in\mathcal F_V(T)$;
\item\label{it:bary} $\mu=\sum_{u\in\sfV}s_up_u$ with $s_u\ge0$ for all
$u$ and $\sum_{u\in\sfV}s_u=V$;
\item\label{it:flow} $\mu(o)=V$ and
$\mu(v)\ge\sum_{c\in\ch(v)}\mu(c)$ for every $v\in\sfV$.
\end{enumerate}
The coefficients in \textup{(\ref{it:bary})} are unique, and they are tied to the coordinates by the subtree sums
\[
\mu(v)\ =\ \sum_{u\succeq v}s_u,
\qquad
s_v\ =\ \mu(v)-\sum_{c\in\ch(v)}\mu(c)
\qquad(v\in\sfV).
\]
\end{lemma}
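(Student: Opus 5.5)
The proof rests on one linear change of coordinates: the triangular map $\Phi:\bbR^{\sfV}\to\bbR^{\sfV}$, $\Phi(s)(v):=\sum_{u\succeq v}s_u$, and the leak map $\Psi(\mu)(v):=\mu(v)-\sum_{c\in\ch(v)}\mu(c)$. I would first check the coordinate identity behind everything: for any $s\in\bbR^{\sfV}$, $\bigl(\sum_u s_up_u\bigr)(v)=\sum_u s_u\ind\{v\preceq u\}=\sum_{u\succeq v}s_u$. So the vector $\sum_u s_up_u$ is exactly $\Phi(s)$.

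Next I would show that $\Phi$ and $\Psi$ are mutually inverse linear bijections of $\bbR^{\sfV}$. The set $T_v$ is the disjoint union of $\{v\}$ and the subtrees $T_c$ for $c\in\ch(v)$. Hence $\Psi(\Phi(s))(v)=\sum_{u\in T_v}s_u-\sum_{c\in\ch(v)}\sum_{u\in T_c}s_u=s_v$. Since $\Psi\circ\Phi=\mathrm{id}$ on a finite-dimensional space, $\Phi$ is invertible with inverse $\Psi$. Two facts follow at once:
\begin{itemize}
\item the coefficients in (\ref{it:bary}) are unique, because the vectors $p_u$ are linearly independent;
\item they are given by the displayed formulas.
\end{itemize}
The disjoint decomposition of $T_v$ is the only place where tree structure enters, since each vertex other than $v$ in $T_v$ lies below exactly one child of $v$.

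With this in hand the equivalences are short.
\begin{itemize}
\item (\ref{it:bary})$\Leftrightarrow$(\ref{it:flow}): write $\mu=\Phi(s)$ with $s=\Psi(\mu)$. Then $s_v\ge0$ for all $v$ is literally the flow inequality $\mu(v)\ge\sum_{c}\mu(c)$. Also $\mu(o)=\sum_{u\succeq o}s_u=\sum_u s_u$, because $T_o=\sfV$, so the root condition matches $\sum_u s_u=V$.
\item (\ref{it:body})$\Leftrightarrow$(\ref{it:bary}): by definition \Cref{eq:cone}, $\mu\in\mathcal F(T)$ means $\mu=\Phi(s)$ for some $s\ge0$. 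Together with $\mu(o)=V$ and the root identity just noted, this is exactly (\ref{it:bary}).
\item The equality $\mathcal F_V(T)=V\conv\{p_u\}$ in \Cref{eq:body} is (\ref{it:bary}) rescaled by $s_u=V\lambda_u$.
\end{itemize}

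None of the steps is a real obstacle. The one point that needs care is the bookkeeping in $\Psi\circ\Phi=\mathrm{id}$, namely the partition $T_v=\{v\}\sqcup\bigsqcup_{c\in\ch(v)}T_c$, which I would state explicitly.
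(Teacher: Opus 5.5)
Your proof is correct and is essentially the paper's argument: both rest on the subtree-sum identity $\bigl(\sum_u s_up_u\bigr)(v)=\sum_{u\succeq v}s_u$ and on the partition of $T_v$ into $\{v\}$ and the child subtrees $T_c$. The only difference is packaging. You obtain uniqueness and the converse (3)$\Rightarrow$(2) from the single identity $\Psi\circ\Phi=\mathrm{id}$ together with finite-dimensionality, whereas the paper uses a unit upper-triangular matrix for uniqueness and a leaf-up induction for the converse.
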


Form (\ref{it:flow}) exhibits $\mathcal F_V(T)$ as the set of monotone flows at budget $V$, with $s_v$ the \emph{leak} at $v$, the mass that stops there \citep{chatterjeelafferty2018}. In a linear order of $\sfV$ that places ancestors before descendants, the matrix with columns $(p_u)_{u\in\sfV}$ is upper triangular with unit diagonal (\Cref{ssec:profile}), so the $p_u$ are linearly independent and the body is a nondegenerate $(n-1)$-dimensional simplex with vertex set $\{Vp_u:u\in\sfV\}$. Finally, $\mu(o)=V$ identically on the body: the root coordinate carries no statistical content, and all estimation error lives on the nonroot coordinates.

The second interface converts tree combinatorics into Euclidean geometry: squared Euclidean distance between root-path indicators is tree distance, which is why the ancestor profile, defined by the tree metric, controls Euclidean covering and packing of the body.

\begin{lemma}[Isometry and edge supports]\label{lem:isometry}
\leavevmode
\begin{enumerate}
\item\label{it:iso} $\|p_u-p_w\|_2^2=d_T(u,w)$ for all $u,w\in\sfV$;
consequently $\diam^2\mathcal F_V(T)=V^2H_T$.
\item\label{it:edge} For $a\preceq b$, $p_b-p_a=\ind_{(a,b]}$, the
indicator of $\{v:a\prec v\preceq b\}$; in general, the support of
$p_u-p_w$ is exactly the set of lower endpoints of the edges on the
path from $u$ to $w$. In particular, differences $p_u-p_w$ taken
within edge-disjoint connected subtrees have disjoint supports.
\end{enumerate}
\end{lemma}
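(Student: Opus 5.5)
The plan is to work directly from the definition $p_u(v)=\ind\{v\preceq u\}$, so that $p_u$ is the indicator of the root path $P_u:=\{v:v\preceq u\}$. For the identity in (\ref{it:iso}), I will use the fact that the squared Euclidean distance between two indicator vectors is the cardinality of the symmetric difference of their supports. So the task is to show that $|P_u\,\triangle\,P_w|=d_T(u,w)$.

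Let $m$ be the last common ancestor of $u$ and $w$. Then $P_u\cap P_w=P_m$, so the symmetric difference is
\[
(P_u\setminus P_m)\cup(P_w\setminus P_m)=\{v:m\prec v\preceq u\}\cup\{v:m\prec v\preceq w\}.
\]
These two sets are disjoint. They have cardinalities $\depth(u)-\depth(m)$ and $\depth(w)-\depth(m)$, whose sum is the length of the unique path $u\to m\to w$, namely $d_T(u,w)$.

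This argument also proves (\ref{it:edge}). For $a\preceq b$ we have $m=a$, and $p_b-p_a$ is the indicator of $P_b\setminus P_a=(a,b]$. In general, $p_u-p_w=\ind_{(m,u]}-\ind_{(m,w]}$. The support of this vector is $(m,u]\cup(m,w]$, which is exactly the set of lower endpoints of the edges on the path $u\to m\to w$, since each edge $(\pa(v),v)$ on that path has $v$ strictly below $m$.

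For the final clause, suppose $u,w$ lie in a connected subtree $S$. Then the path between them stays inside $S$, so the support of $p_u-p_w$ consists of lower endpoints of edges of $S$. Because an edge is identified with its lower endpoint, edge-disjoint subtrees give disjoint sets of lower endpoints, and hence disjoint supports.

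The diameter claim is the only step that needs some care, and I see it as the main (mild) obstacle. By \Cref{lem:normal}, $\mathcal F_V(T)=V\conv\{p_u\}$. The squared distance is a convex function of each argument separately, so the diameter of a polytope is attained at a pair of its vertices. Combined with (\ref{it:iso}), this gives
\[
\diam^2\mathcal F_V(T)=V^2\max_{u,w}\|p_u-p_w\|^2=V^2\max_{u,w}d_T(u,w)=V^2H_T.
\]
The maximum over vertex pairs is attained at a diametral pair, so we get equality rather than just an upper bound.
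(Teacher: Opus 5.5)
Your proposal is correct and follows essentially the paper's proof: both arguments go through the deepest common ancestor $m=u\wedge w$ and the decomposition $p_u-p_w=\ind_{(m,u]}-\ind_{(m,w]}$. Your symmetric-difference count plays the role of the paper's expansion $\|p_u\|_2^2+\|p_w\|_2^2-2\langle p_u,p_w\rangle$ with $\langle p_u,p_w\rangle=\depth(u\wedge w)+1$, and the diameter claim again reduces to the convex hull having the same diameter as its vertex set. The one step you assert without justification is the disjointness of $(m,u]$ and $(m,w]$, which you need so that no cancellation shrinks the support; it holds because a common element would be a common ancestor of $u$ and $w$ strictly below $m$.
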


Behind (\ref{it:iso}) is the ancestor count $\langle p_u,p_w\rangle=\depth(u\wedge w)+1$, where $u\wedge w$ denotes the deepest common ancestor of $u$ and $w$.

\subsection{The Ancestor Profile}\label{sec:profile}

The profile $\alpha_k,\delta_k$ and the crossing index $k_0$ were defined in \Cref{eq:profile,eq:crossing}; this subsection collects the properties used below. A star shows why the truncation is there.

\begin{remark}[Why the profile is truncated]\label{rem:star}
On the star with $m$ leaves rooted at the hub, a leaf is ancestor-covered only by itself or by the hub, so $\Nup(r)=m+1$ for $0\le r<1$ and $\Nup(r)=1$ for $r\ge1$; the expression in \Cref{eq:profile} evaluates exactly to
\[
\alpha_k\ =\ \min\Bigl\{k,\ \Bigl[\log\frac{m+1}{k}\Bigr]_+\Bigr\}.
\]
Without the truncation, a star with $\lceil e^{2k^2}\rceil$ leaves would be assigned $\alpha_k\ge k^2$, hence $V^2\alpha_k/k\ge A_0\sigma^2k$ at $V^2=A_0\sigma^2$, and the profile-to-risk half of \Cref{sec:rate} would assert a minimax risk of order at least $\sigma^2k$; for large $k$ this contradicts the bound $R^*_T\le V^2H_T=2A_0\sigma^2$ furnished by the constant estimator $Vp_o$, since the star has $H_T=2$.
\end{remark}

The root covers every vertex at radius $h_T$, so $\Nup(r)=1$ for $r\ge h_T$ and the supremum in \Cref{eq:profile} is effectively over $r<h_T$. Evaluating the profile exactly involves every integer radius below $h_T$; a factor-of-two surrogate needs only geometrically spaced ones, and it is the surrogate that the estimator of \Cref{sec:estimation} consumes. Define
\begin{equation}\label{eq:surrogate}
\overline\delta_k(T):=\frac2k\,
\max_{\substack{q=2^\ell-1,\ \ell\in\{0,1,2,\dots\}\\ 2^\ell\le h_T}}\
(q+1)\,\min\Bigl\{k,\ \Bigl[\log\frac{\Nup(q)}{k}\Bigr]_+\Bigr\},
\end{equation}
with $\overline\delta_k(T):=0$ if $h_T=0$, and the \emph{algorithmic crossing index}
\begin{equation}\label{eq:kalg}
\kalg=\kalg(T,V,\sigma):=\min\Bigl\{k\ge1:\
V^2\overline\delta_k(T)\le2A_0\sigma^2k\ \text{ or }\
k>n/C_\star\Bigr\}.
\end{equation}

\begin{lemma}[Profile toolkit]\label{lem:toolkit}
For every finite rooted tree $T$:
\begin{enumerate}
\item\label{it:mono} $\delta_{k+1}(T)\le\delta_k(T)\le h_T$ for every
$k\ge1$;
\item\label{it:discrete} for $n\ge2$ and every $k\ge1$,
\begin{equation}\label{eq:discrete}
\alpha_k(T)=\max_{0\le q<h_T}\ (q+1)\,\min\Bigl\{k,\
\Bigl[\log\frac{\Nup(q)}{k}\Bigr]_+\Bigr\},
\end{equation}
the maximum running over integer radii, and $\alpha_k(T)=0$ when
$n=1$;
\item\label{it:surrogate} $\delta_k\le\overline\delta_k\le2\delta_k$
for every $k\ge1$, the map $k\mapsto\overline\delta_k(T)$ is
nonincreasing, and
\[
\kalg\ \le\ k_0\ \le\ 2\,\kalg .
\]
Moreover, a minimum ancestor $q$-net can be computed exactly in $O(n)$
operations for each integer $q\ge0$, by a one-pass
\emph{residual-depth greedy}; consequently $\kalg$ is computable in
$O(n\log(n+1))$ operations, and $k_0$ in $O(n\,h_T)$ operations.
\end{enumerate}
\end{lemma}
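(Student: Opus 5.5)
The four parts follow from two elementary properties of the ancestor covering function: $r\mapsto\Nup(r)$ is nonincreasing, and it is constant on each integer interval $[q,q+1)$, because tree distances are integers, so an ancestor $r$-net is the same as an ancestor $\lfloor r\rfloor$-net. Also $\Nup(r)=1$ for $r\ge h_T$. I would prove (\ref{it:discrete}) first, since (\ref{it:mono}) and (\ref{it:surrogate}) read off it most cleanly.

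\emph{Step 1: part (\ref{it:discrete}).} Fix $k$ and write $g(N):=\min\{k,[\log(N/k)]_+\}$.
\begin{itemize}
\item For $r\ge h_T$ we have $g(\Nup(r))=g(1)=0$, so these radii contribute nothing.
\item On $[q,q+1)$ with $0\le q<h_T$ the factor $g(\Nup(q))$ is constant. The supremum of $r\,g(\Nup(q))$ over this interval is therefore $(q+1)\,g(\Nup(q))$, approached as $r\uparrow q+1$.
\item Taking the maximum over the finitely many $q$ gives \eqref{eq:discrete}.
\item When $n=1$ we have $h_T=0$, every radius gives $\Nup=1$, and so $\alpha_k=0$.
\end{itemize}

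\emph{Step 2: part (\ref{it:mono}).} Write $\delta_k=\sup_r r\min\{1,[\log(\Nup(r)/k)]_+/k\}$. For each fixed $r$ the integrand is nonincreasing in $k$, since both $1/k$ and $\log(N/k)$ decrease in $k$; hence $\delta_{k+1}\le\delta_k$. For the upper bound, the minimum is at most $1$, and by Step 1 only $r<h_T$ contribute, so $\delta_k\le h_T$.

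\emph{Step 3: part (\ref{it:surrogate}), the comparisons.}
\begin{itemize}
\item \emph{Upper bound $\overline\delta_k\le2\delta_k$.} The dyadic radii $q=2^\ell-1$ with $2^\ell\le h_T$ are among the integers $0\le q<h_T$ of \eqref{eq:discrete}.
\item \emph{Lower bound $\delta_k\le\overline\delta_k$.} Any integer $q<h_T$ lies in some block $[2^\ell-1,2^{\ell+1}-1)$ with $2^\ell\le q+1\le h_T$. In that block $\Nup(q)\le\Nup(2^\ell-1)$ and $q+1\le2\cdot2^\ell$. So each term of \eqref{eq:discrete} is at most $2$ times the dyadic term, which gives $\alpha_k\le k\,\overline\delta_k$.
\item \emph{Monotonicity of $k\mapsto\overline\delta_k$.} This holds termwise, exactly as in Step 2.
\item \emph{$\kalg\le k_0$.} At $k=k_0$, either the dimension clause $k>n/C_\star$ fires, and it is shared by both indices, or $V^2\overline\delta_{k_0}\le2V^2\delta_{k_0}\le2A_0\sigma^2k_0$. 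In both cases $k_0$ is admissible for $\kalg$.
\item \emph{$k_0\le2\kalg$.} Let $k=\kalg$ and suppose the inequality clause holds at $k$. Then
\[
V^2\delta_{2k}\ \le\ V^2\delta_k\ \le\ V^2\overline\delta_k\ \le\ 2A_0\sigma^2k\ =\ A_0\sigma^2(2k).
\]
If instead $k>n/C_\star$, then also $2k>n/C_\star$. Either way $2k$ is admissible for $k_0$.
\end{itemize}

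\emph{Step 4: the algorithm.} This is the step needing real care. For fixed $q$, I would run a postorder pass that computes a residual demand
\[
\mathrm{need}(v)=\max\bigl(\{0\text{ if }v\text{ is uncovered}\}\cup\{\mathrm{need}(c)+1: c\in\ch(v),\ \mathrm{need}(c)\ne-\infty\}\bigr).
\]
Whenever $\mathrm{need}(v)=q$, or $v=o$ with finite need, the pass places $v$ in the net and resets $\mathrm{need}(v)=-\infty$.

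Optimality would follow by an exchange argument. Let $v$ be the first vertex selected, and let $w$ be an uncovered vertex realizing its demand, so $d(v,w)=q$. Any ancestor $q$-net contains some $a$ with $v\preceq a\preceq w$. By the definition of need, every still-uncovered vertex of $T_v$ has depth at most $\depth(v)+q$. Hence $v$ covers every residual vertex of $T_v$, and in particular everything that $a$ covers. Swapping $a$ for $v$ preserves feasibility, and induction on the residual tree finishes the argument. The main obstacle is phrasing this induction correctly while other selections are made in disjoint subtrees; I would handle it by inducting on the number of selections in postorder.

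\emph{Complexity.} Each pass costs $O(n)$.
\begin{itemize}
\item For $\kalg$: only the $O(\log(n+1))$ dyadic radii are needed, so all the nets cost $O(n\log(n+1))$. Afterwards each $\overline\delta_k$ is a maximum over $O(\log(n+1))$ stored values, and scanning $k\le n/C_\star+1$ costs another $O(n\log(n+1))$.
\item For $k_0$: all $h_T$ radii are needed, which costs $O(n\,h_T)$ for the nets. The scan over $k$ then also costs $O(n\,h_T)$.
\end{itemize}
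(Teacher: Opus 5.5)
Your proposal is correct. Parts (1), (2) and the comparison $\delta_k\le\overline\delta_k\le2\delta_k$ follow the paper's proof essentially verbatim. Two steps take a different, slightly shorter route.

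\emph{The bound $k_0\le2\kalg$.} You check directly that $2\kalg$ satisfies a clause in the definition of $k_0$: either via $V^2\delta_{2\kalg}\le V^2\delta_{\kalg}\le V^2\overline\delta_{\kalg}\le2A_0\sigma^2\kalg=A_0\sigma^2(2\kalg)$, or via the inherited dimension clause. The paper instead runs over every $k$ with $\kalg\le k\le k_0-1$, uses that both clauses fail there, and deduces $k<2\kalg$. The two arguments are equivalent; yours is a one-line admissibility check.

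\emph{Optimality of the greedy.} The paper proves the residual invariant (which also gives feasibility of the output), re-sorts the selected centers by nonincreasing depth to exhibit them as a run of a deepest-uncovered greedy, and proves that procedure optimal by exchange. You run the exchange directly in postorder. That works with the induction hypothesis that some minimum net $O$ contains the first $t-1$ postorder selections. Then:
\begin{itemize}
\item the center $a\in O$ covering the witness $w$ lies on the segment from $v$ to $w$;
\item $a$ is not an earlier selection, since otherwise $w$ would already be covered;
\item every vertex $a$ covers is either covered by an earlier selection, which stays in the swapped net, or is still uncovered in $T_v$ and hence within distance $q$ of $v$.
\end{itemize}
So your phrase that $v$ covers ``everything that $a$ covers'' should read ``every vertex covered by $a$ but by no earlier selection.'' In general $a$ covers vertices deeper than $\depth(v)+q$, and those are handled only through the induction hypothesis.

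You should also state explicitly that the output is itself an ancestor $q$-net, by the invariant plus the final root selection. Containment in a minimum net yields minimality only for a feasible set.

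The paper's detour buys the informal description of the algorithm used in Section~2, a center $q$ levels above a deepest uncovered vertex. Your direct route avoids the reordering.

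Finally, scanning $k$ linearly rather than by binary search still costs $O(n\log(n+1))$, since computing the nets dominates.
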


The greedy selects a center $q$ levels above a deepest uncovered vertex, or the root when fewer remain; its exactness and the remaining proofs are in \Cref{ssec:profile}.

\Cref{sec:rate} now converts the profile into the minimax rate: an upper bound through a coded cover, and a matching lower bound through a positive packing.

%%%%%%%%%%%%%%%%%%%%%%%%%%%%%%%%%%%%%%%%%%%%%%%%%%%%%%%%%%%%%%%%%%%%%%%%%%%%%%
\section{The Rate Formula}\label{sec:rate}
%%%%%%%%%%%%%%%%%%%%%%%%%%%%%%%%%%%%%%%%%%%%%%%%%%%%%%%%%%%%%%%%%%%%%%%%%%%%%%

The main theorem of the paper is the rate formula.

\begin{theorem}[Rate formula]\label{thm:rate}
There are universal constants $c,C>0$ such that for every finite rooted tree $T$ and all $V,\sigma>0$,
\[
c\,R^*_T(V,\sigma)\ \le\ \min\bigl\{V^2H_T,\ \sigma^2k_0(T,V,\sigma)\bigr\}
\ \le\ C\,R^*_T(V,\sigma).
\]
\end{theorem}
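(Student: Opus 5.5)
We prove the two inequalities separately and organize both around the crossing index $k_0$ of \Cref{eq:crossing}, splitting into four cases: whether $V^2H_T$ or $\sigma^2k_0$ is the smaller term, and whether $k_0$ is attained by the profile clause $V^2\delta_{k_0}\le A_0\sigma^2k_0$ (the interior case) or by the dimension clause $k_0>n/C_\star$.

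\emph{Upper bound.} The constant estimator $Vp_o$ has risk at most $\diam^2\mathcal F_V(T)=V^2H_T$ by \Cref{lem:isometry}. The identity estimator $Y$ has risk $\sigma^2n$, which disposes of the dimension clause because there $k_0\ge n/C_\star$. In the interior case we need a cover. For each $k$ we build a coded cover of $\mathcal F_V(T)$ of cardinality $e^{Ck}$ and squared radius $CV^2\delta_k(T)$, so that $H^c_T(CV^2\delta_k)\le Ck$.

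The cover is constructed as follows. Take dyadic ancestor nets $R_j$ at radii $2^j$, using the exact greedy of \Cref{lem:toolkit}. For a given signal, cells below a net point carrying mass at least $V/k$ are split and lighter cells are collapsed to their roots. This costs squared error $V^2\cdot\text{radius}$ per unit fraction of mass, by \Cref{lem:isometry}. Encoding which net points are heavy costs $\log\binom{\Nup(r)}{k}\lesssim k\log(\Nup(r)/k)$ nats. The amplitudes are then rounded to multiples of $V/k$ by a running floor along the depth-first preorder, so that subtree sums stay consistent with \Cref{lem:normal}.

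Truncating each scale's description at $k$ nats, and blurring the cluster at its own diameter when branching is more expensive, is exactly what makes the error sum to $O(V^2\alpha_k/k)=O(V^2\delta_k)$. Least squares over this net then has risk at most $C[V^2\delta_k+\sigma^2k]$. At $k=k_0$ in the interior case this gives $C\sigma^2k_0$. This also gives $R^*_T\lesssim\min\{\rho_T,V^2H_T,\sigma^2n\}$.

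\emph{Lower bound.} The lower bound needs a positive packing. First, \Cref{lem:bridge} turns $\Nup(r)\ge N$ into a set of $M\gtrsim N$ vertices at pairwise distance at least $r$ (the bridge lemma). These vertices are grouped into $q$ groups lying in subtrees with disjoint edge sets. Each group receives budget $V/q$, placed on a single chosen vertex of that group, with the rest of the mass held at a common ancestor. Hypotheses therefore differ by sums of vectors $(V/q)(p_u-p_w)$. These differences have disjoint supports across groups (\Cref{lem:isometry}), so separations add up to $(V^2/q)\cdot r\cdot q$, with the positivity certificate given by \Cref{lem:normal}.

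A product code across scales, priced by the overlapping-block Fano inequality \Cref{lem:fano}, then yields
\[
R^*_T\ge c\min\bigl\{V^2r/q,\ \sigma^2q\log(eM/q)\bigr\}.
\]
Choosing $q\approx k$ and $r$ to be the maximizing radius in $\alpha_{k}$ gives $R^*_T\ge c\min\{V^2\delta_k,\sigma^2k\}$ for every $k$. Taking $k=k_0-1$, where the profile clause fails so that $V^2\delta_{k_0-1}>A_0\sigma^2(k_0-1)$, yields $R^*_T\gtrsim\sigma^2(k_0-1)$.

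The corner cases are handled separately:
\begin{itemize}
\item when $k_0=1$, or more generally to get $R^*_T\gtrsim\min\{V^2,\sigma^2\}$, a two-point bound between $Vp_u$ and $Vp_w$ at distance $1$ suffices;
\item when $V^2H_T$ is the smaller term, a two-point bound along a diameter path suffices;
\item when the dimension clause fires, a Varshamov--Gilbert packing of $e^{cn}$ sparse leak patterns gives $R^*_T\gtrsim\sigma^2 n$ once $V^2\delta_k\gtrsim\sigma^2k$ for $k\asymp n$.
\end{itemize}
The minimum with $V^2H_T$ is then reconciled by monotonicity of $\delta_k$ (\Cref{lem:toolkit}), using $\delta_1\le h_T$.

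\emph{Main obstacle.} The hardest step is the packing realization. Groups drawn at different scales share coordinates, because root paths overlap, so the Kullback--Leibler divergences are not block diagonal. Sign toggling is also unavailable in the positive cone. I would first try to bound the KL row sums by a normalized Gram row sum across the dyadic scales, showing it is geometric so that the overlap costs only a constant. This is the content I expect \Cref{lem:fano} to absorb, and it is where the truncation at $k$ must be matched exactly.
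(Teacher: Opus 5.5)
Your architecture matches the paper's: a cover built from ancestor nets and fed to least squares for the upper half; the bridge lemma, edge-disjoint groups, a multiscale code and \Cref{lem:fano} for the lower half; and the crossing read at $k_0$ and at $k_0-1$, with $\delta_k\le h_T$ disposing of the diameter branch. But two load-bearing steps fail as stated.

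\emph{The matching of $q$.} Take $r$ near-maximizing in \Cref{eq:profile}, $N=\Nup(r)$ and $L=\log(N/k)$. Choosing $q\approx k$ in $\min\{V^2r/q,\ \sigma^2q\log(eM/q)\}$ loses a factor $\min\{k,1+L\}$ in the mass branch. If $L\ge k$ the truncation is active and $\delta_k\approx r$, so the hypothesis $V^2\delta_k\ge A_0\sigma^2k$ bounds $V^2r/k\approx V^2\delta_k/k$ below only by $A_0\sigma^2$. If $L<k$, then $V^2r/k\approx V^2\delta_k/L$. On a star with $\log(m/k)\ge k$ and $V^2=A_0\sigma^2k$, your choice certifies only $R^*_T\gtrsim\sigma^2$, while for large $k$ the rate is of order $\sigma^2k$ (\Cref{cor:star}). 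The correct choice is $q\asymp\max\{1,\,k/(1+L)\}$, kept below $M/2$. Take $q=1$ in the truncated regime, where $\log(eM)\ge k$ pays the entropy branch. Otherwise take $q\asymp k/(1+L)$, where $V^2r(1+L)/k\gtrsim V^2\delta_k$ and $q\log(eM/q)\gtrsim k$. This scalar matching, not the Fano step, is where the truncation is matched. Fano only absorbs the overlap across scales, which the paper makes summable by retaining one residue class of scales modulo four. Relatedly, one chosen vertex per group, with the whole group as alphabet, violates the aspect hypothesis of \Cref{lem:fano}: its contrasts range from the separation $r$ up to the group's diameter. The group's mass has to be split across scales, $a_\ell\propto\sqrt{h_\ell/r_{\ell-1}}$, and recombined by Cauchy--Schwarz.

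\emph{The cover.} As you account for it, the cover does not have code length $O(k)$, which is exactly what \Cref{eq:entropycover} needs. Pricing each scale separately, by $\log\binom{\Nup(r)}{k}$ or by $k$ nats after truncation, sums to order $k\log k$ over the dyadic scales and would give only $\sigma^2k_0\log k_0$. With a fixed threshold $V/k$ there can be $k$ heavy cells at every level, each costing about $2^j$ nats at radius $\bar\alpha/2^j$, because the profile bounds $\Nup(\lfloor\bar\alpha/2^j\rfloor)$ only by $ke^{2^j}$. Nor do you say how the light cells are encoded. They can be arbitrarily many (the leaves of a star below a heavy hub). Re-attaching them to their heavy parent root costs the exit error $\|\sum_L\lambda_Lq_L\|_2^2$, which can be of order $\bar\alpha/2^j\gg\bar\alpha/k$ when many light cells hang below a common segment.

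The ingredients of \Cref{prop:cover} that your sketch lacks are:
\begin{itemize}
\item net radii $\bar\alpha/2^j$, $2^j<k$, anchored at the working scale $\bar\alpha=k\overline\delta_k$; this is where the truncation enters the cover;
\item the level-dependent threshold $2^j/k$, under which maximal heavy cells are disjoint and activation charges along heavy chains total less than $2k$;
\item an unbiased randomized quantization of the light exits, each kept with probability $k\lambda_L/m(L)$ at mass $m(L)/k$. Its errors add without cross terms to $O(\bar\alpha/k)$ however the exits overlap, and its expected charge is at most $k$.
\end{itemize}

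Two smaller points. The dimension case needs no Varshamov--Gilbert packing, whose separations are not governed by Hamming distance once root paths overlap. Since no clause fires on $[1,K]$, $V^2\delta_K>A_0\sigma^2K$, and \Cref{prop:profilerisk} at $K=\lfloor n/C_\star\rfloor$ yields $\sigma^2n$ directly. Also, the upper bound when the crossing fires at $k=1$ needs the reverse comparison $\delta_1\ge h_T\log2$ (every ancestor net below radius $h_T$ has at least two elements), so that $V^2H_T\lesssim\sigma^2$ there.
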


The formula is insensitive to the choice of $(A_0,C_\star)$ in \Cref{eq:crossing}: any pair for which \Cref{thm:rate} holds yields a formula $\asymp R^*_T$. The entropy reading is the following.

\begin{proposition}[Entropy fixed point]\label{prop:fixedpoint}
For every finite rooted tree $T$ and all $V,\sigma>0$,
\[
R^*_T(V,\sigma)\ \asymp\
\min\bigl\{\rho_T(V,\sigma),\ V^2H_T,\ \sigma^2n\bigr\}
\]
with universal constants.
\end{proposition}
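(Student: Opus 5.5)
The plan is to prove the two directions separately. The upper bound is generic and soft. The lower bound is reduced to \Cref{thm:rate} together with the entropy bound furnished by the coded cover of \Cref{sec:rate}.

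\emph{Upper bound.} I bound $R^*_T$ by each of the three terms in turn.
\begin{enumerate}
\item The constant estimator $Vp_o$ lies in the body, so by \Cref{lem:isometry}(\ref{it:iso}) its risk is at most $\diam^2\mathcal F_V(T)=V^2H_T$.
\item The estimator that returns $Y$ on the nonroot coordinates and $V$ at the root has risk $\sigma^2(n-1)\le\sigma^2n$.
\item For the fixed point, fix $t>0$ and a minimal cover of $\mathcal F_V(T)$ by $N=e^{H^c_T(t)}$ balls of radius $\sqrt t$, with centers $\theta_1,\dots,\theta_N$ (free centers are allowed). Take $\widehat\mu$ to be the least squares estimator over the finite set $\{\theta_j\}$. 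The standard finite-class argument gives $\E\|\widehat\mu-\mu\|^2\le C(t+\sigma^2\log N)$: compare $\widehat\mu$ with the center nearest $\mu$, and control $\max_j\langle Z,\theta_j-\theta_{j^*}\rangle/\|\theta_j-\theta_{j^*}\|$ by a Gaussian maximal inequality over $N$ directions.
\end{enumerate}
Taking the infimum over $t$ yields $R^*_T\le C\rho_T$. Together these give $R^*_T\le C\min\{\rho_T,V^2H_T,\sigma^2n\}$. One could alternatively aggregate via \Cref{lem:oracle} once it is available, but the least squares argument suffices.

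\emph{Lower bound.} Rather than invoke a general local-entropy lower bound (global and local entropy may differ on an asymmetric body), I route through \Cref{thm:rate}. It suffices to show
\[
\min\{\rho_T,\ V^2H_T,\ \sigma^2n\}\ \le\ C\min\{V^2H_T,\ \sigma^2k_0\}.
\]
Three observations give this.
\begin{enumerate}
\item One ball of radius $\sqrt{V^2H_T}$ centered at any body point covers the body, so $H^c_T(V^2H_T)=0$ and $\rho_T\le V^2H_T$.
\item In the dimension branch, where the clause $k_0>n/C_\star$ fires, we have $\sigma^2 n\le C_\star\sigma^2k_0$.
\item In the interior branch, where $V^2\delta_{k_0}\le A_0\sigma^2k_0$, I use the entropy bound of the coded cover: for every $k$, $H^c_T(CV^2\delta_k)\le Ck$. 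This is the approximant of squared error $O(V^2\delta_k)$ with code length $O(k)$, counted at $e^{O(k)}$, i.e. \Cref{eq:entropycover}. Setting $t=CV^2\delta_{k_0}$ gives $\rho_T\le CV^2\delta_{k_0}+C\sigma^2k_0\le C'\sigma^2k_0$.
\end{enumerate}
In every case the left-hand minimum is at most $C\min\{V^2H_T,\sigma^2k_0\}$, and \Cref{thm:rate} bounds the right-hand side by $CR^*_T$.

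The main obstacle is the covering estimate $H^c_T(CV^2\delta_k)\le Ck$. It needs a single coded cover that works for every signal simultaneously, using heavy/light cell splitting and preorder rounding, and counting it at $e^{O(k)}$ rather than $n^{O(k)}$. This is exactly where the truncation in $\alpha_k$ enters. Since this estimate is also the upper half of \Cref{thm:rate}, I would prove it once, in \Cref{sec:rate}, and use it for both statements. The remaining bookkeeping is the edge cases $n=1$ (trivial, since $\mu=Vp_o$ is known) and $k_0=1$, where one must check that the two-point bound \Cref{eq:twopoint} is consistent with $\rho_T\le C\sigma^2$.
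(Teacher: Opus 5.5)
Your upper bound is exactly the paper's \Cref{eq:genericupper}. Your lower bound is the paper's case analysis repackaged: in the same cases the paper bounds $\rho_T\wedge V^2H_T\wedge\sigma^2n$ above by \Cref{eq:rhocover} at $k_0$, by $\sigma^2n<C_\star\sigma^2k_0$, or by a single ball. Factoring through the lower half of \Cref{thm:rate} is legitimate and not circular, since that half rests only on \Cref{eq:twopoint} and \Cref{prop:profilerisk}, not on $\rho_T$.

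The one step you have not supplied is the case where the crossing clause fires at $k_0=1$ with $n\ge C_\star$. There you need $\min\{\rho_T,V^2H_T,\sigma^2n\}\le C\sigma^2$ whenever $V^2H_T>\sigma^2$, and your item 3 cannot give it. The coded cover is built, and \Cref{eq:entropycover} is stated, only for $2\le k\le n/C_\star$, so your claim ``for every $k$'' is unsupported at $k=1$. Nor is this a consistency check against \Cref{eq:twopoint}: in your route that bound is already inside \Cref{thm:rate}, and what is needed is an \emph{upper} bound on the left side.

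The missing input is that at $k=1$ the profile already dominates the height. For $n\ge2$, every ancestor $r$-net with $r<h_T$ contains the root, which is its own only ancestor and covers no deepest vertex. Hence $\Nup(r)\ge2$, and letting $r\uparrow h_T$ in \Cref{eq:profile} gives $\delta_1(T)=\alpha_1(T)\ge h_T\log2$. The crossing clause $V^2\delta_1\le A_0\sigma^2$ then yields $V^2H_T\le2V^2h_T\le(2A_0/\log2)\,\sigma^2$. So the left side is at most $V^2H_T\le C\min\{V^2H_T,\sigma^2\}$.

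Equivalently, the single ball around $Vp_o$ of radius $V\sqrt{h_T}$ gives $H^c_T(V^2\delta_1/\log2)=0$, which is the $k=1$ instance of your item 3 and is exactly how the paper handles this case. With this added, your argument is complete and coincides with the paper's.
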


\Cref{sec:cover} converts the profile into a coded cover of the flow polytope, yielding the upper bounds and the certificates that the estimators of \Cref{sec:estimation} later consume; \Cref{sec:packing} converts the same profile into legal packings and the matching lower bounds; and \Cref{sec:crossing} joins the two at the crossing index, proves both statements above, and derives the benchmark corollaries. The assembly is given here; the proofs of the two halves are in \Cref{ssec:cover,ssec:packing}.

\subsection{The Coded Cover}\label{sec:cover}

Fix an integer $2\le k\le n/C_\star$. The construction runs at the \emph{working scale}
\[
\bar\alpha\ :=\ k\,\overline\delta_k(T),
\]
built from the surrogate \Cref{eq:surrogate}; the centers, and ultimately the estimator, owe their computability to this one choice. Everything rests on a fixed hierarchy of nets. Set $m_j:=2^j$ for $0\le j\le J$, where $m_J$ is the largest power of two strictly below $k$, let $S_j$ be the minimum ancestor $\lfloor\bar\alpha/m_j\rfloor$-net computed by the residual-depth greedy of \Cref{lem:toolkit}(\ref{it:surrogate}), and put $R_j:=S_0\cup\dots\cup S_j$, so that $R_0\subseteq\dots\subseteq R_J$ and each $R_j$ is an ancestor $(\bar\alpha/m_j)$-net containing the root. The nets depend only on $(T,k)$, not on any signal; every counting statement below rests on this. Their sizes are controlled by the profile itself: $\Nup(\lfloor\bar\alpha/m_j\rfloor)\le ke^{m_j}$, as $m_j<k$ and a larger count would make the term of \Cref{eq:profile} at that radius exceed $\bar\alpha\ge\alpha_k(T)$; consequently $|R_j|\le2ke^{m_j}$.

For $v\in\sfV$ let $a_j(v)$ be the deepest ancestor of $v$ in $R_j$, and call the fibers $B_{a,j}:=\{v:a_j(v)=a\}$, $a\in R_j$, the level-$j$ \emph{cells}. At each level the cells partition $\sfV$; each cell is \emph{rooted-connected}, containing with any of its vertices the whole segment from its root $a$ down to that vertex; the cells of level $j+1$ refine those of level $j$; and every vertex of a cell is within tree distance $\bar\alpha/m_j$ of its root. These are elementary consequences of the net property (\Cref{ssec:cover}).

Define the \emph{active support}, the first-appearance weight, and the \emph{activation charge}
\[
A_k:=R_J,
\qquad
\omega(v):=\min\{m_j:\ v\in R_j\},
\qquad
\widetilde\omega(v):=\omega(v)\,\ind\{v\notin R_0\},
\]
and, for an integer vector $z$ supported on $A_k$, the \emph{code length}
\begin{equation}\label{eq:codelength}
\Gamma(z)\ :=\ \sum_{v\in A_k}\Bigl(|z_v|
+\widetilde\omega(v)\,\ind\{z_v\ne0\}\Bigr):
\end{equation}
coefficient mass, plus a charge on each support vertex equal to its dyadic discovery cost, the coarsest level being free. \Cref{sec:estimation} converts exactly this additive functional into a prior.

The next proposition is the key technical result of the upper half. Beyond the covering statement it certifies three additive budgets simultaneously: a fixed root sum, a bounded code length, and a confined flow state. It is through these certificates that the cover later becomes an estimator.

\begin{prop}[Coded cover]\label{prop:cover}
There are universal constants $C_0,C_1$ such that for every finite
rooted tree, every $V>0$, and every integer $2\le k\le n/C_\star$ the
following hold.
\begin{enumerate}
\item\label{it:approx} For every $\mu\in\mathcal F_V(T)$ there is an
integer vector $z$ supported on $A_k$ whose flow states
$x_v:=\sum_{u\succeq v}z_u$ satisfy
\begin{gather*}
\sum_{v}z_v=k,
\qquad
\Gamma(z)\le9k,
\qquad
0\le x_v\le3k\ \text{ for all }v\in\sfV,
\\
\Bigl\|\mu-\frac Vk\sum_vz_vp_v\Bigr\|_2^2\ \le\ C_0V^2\delta_k(T).
\end{gather*}
\item\label{it:count} The integer vectors supported on $A_k$ that
satisfy the first three constraints displayed in
\textup{(\ref{it:approx})} number at most $e^{C_1k}$.
\end{enumerate}
Since $A_k$ and $\widetilde\omega$ depend only on $(T,k)$, the centers
built from these vectors form a class
\[
\mathcal D_k\ :=\ \Bigl\{\tfrac Vk\textstyle\sum_vz_vp_v:\
z\in\mathbb Z^{A_k},\ \sum_vz_v=k,\ \Gamma(z)\le9k,\
0\le x_v\le3k\ \forall v\Bigr\},
\]
the \emph{coded cover}, which does not depend on the signal and covers
$\mathcal F_V(T)$ at squared radius $C_0V^2\delta_k(T)$; in particular
\begin{equation}\label{eq:entropycover}
H^c_T\bigl(C_0V^2\delta_k(T)\bigr)\ \le\ C_1k
\qquad\text{for every integer }2\le k\le n/C_\star .
\end{equation}
\end{prop}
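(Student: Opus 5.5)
We build the approximant in three moves: a heavy/light partition of the cells, sparse sampling of the light exits, and one rounding scalar along the preorder. Fix $\mu\in\mathcal F_V(T)$ with leaks $s_u\ge0$, $\sum_us_u=V$ (\Cref{lem:normal}), and write $\mu=V\sum_u\pi_up_u$ with $\pi:=s/V$ a probability vector.

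\textbf{Step 1: heavy and light cells.} Descend the cell hierarchy $R_0\subseteq\dots\subseteq R_J$ top-down. Call a level-$j$ cell $B_{a,j}$ \emph{heavy} if its subtree mass $\pi(B_{a,j})$ is at least $m_j/k$, and \emph{light} otherwise. Heavy cells are split into their level-$(j+1)$ children. A light cell is stopped, and its mass is transported to the cell root $a$. Since every vertex of $B_{a,j}$ lies within distance $\bar\alpha/m_j$ of $a$, \Cref{lem:isometry}(\ref{it:iso}) and convexity of $\|\cdot\|^2$ bound the cost of collapsing that cell by $V^2\pi(B_{a,j})\,\bar\alpha/m_j$. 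A light cell was reached only through a heavy parent, and the number of heavy cells at level $j$ is at most $k/m_j$. Summing over levels, the geometric weights give a total collapse error of order $V^2\bar\alpha/k\le 2V^2\delta_k$ per level. Losing the factor $J\asymp\log k$ would be fatal, so I will instead charge each light cell to its heavy parent, whose mass is at least $m_{j-1}/k$. This yields a telescoping bound $\sum_j(\text{mass at level }j)\cdot\bar\alpha/m_j$ with mass weighted appropriately, and I expect it to give $O(V^2\bar\alpha/k)$. At level $J$, cells whose mass exceeds $1/k$ keep their mass at the cell root; these points are the \emph{light exits}.

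\textbf{Step 2: sparse sampling and rounding.} The collapsed measure $\pi'$ is supported on $A_k$. Convert $\pi'$ into an integer vector $z$ with $\sum z_v=k$. Order $\sfV$ in depth-first preorder, so that every subtree is an interval. Set $Z(t):=\lfloor k\,\Pi'(t)+\theta\rfloor$, with $\Pi'$ the cumulative mass and $\theta\in[0,1)$ a single shift, and take $z$ to be the increments of $Z$. Then every subtree sum $x_v$ differs from $k\pi'(T_v)$ by less than $2$, since it is a difference of two rounded cumulative values. This forces $x_v\ge0$ up to a sign correction absorbed by choosing $\theta$, and it gives $x_v\le k+2\le3k$.

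The rounding error in the coordinate $\mu(v)$ is at most $2V/k$, but only on the $O(k)$ vertices where rounding actually moves mass. The coordinates strictly inside a vertex-to-center segment contribute the segment length, at most $\bar\alpha/m_j$. Summed, this again gives $O(V^2\bar\alpha/k)$, and we conclude with $\bar\alpha\le2\alpha_k$ from \Cref{lem:toolkit}(\ref{it:surrogate}). For the code length, $\sum|z_v|\le k$ plus the rounding overshoot is at most $3k$. Each support vertex first appearing at level $j$ pays $\widetilde\omega=m_j$. Every such vertex is the root of a child of a heavy level-$(j-1)$ cell, and the combined charge $\sum_j m_j\cdot\#\{\text{heavy cells at }j-1\}$ can reach about $2kJ$ rather than $6k$. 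To cut it down, I will sample the light children of each heavy cell: keep a light child with probability proportional to $k$ times its mass, and otherwise leave its mass at the parent root. Each kept child then costs $m_j$ but has mass about $1/k$, so the expected charge is $\sum m_j\cdot k\,\pi(\text{light at }j)$. This is controlled by the same mass-at-level sum, and a derandomized choice achieves the expectation. Getting the constant $9$ is bookkeeping.

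\textbf{Step 3: counting.} For (\ref{it:count}), encode $z$ by its support together with signed magnitudes. Magnitudes with $\sum|z_v|\le 9k$ contribute $e^{O(k)}$ compositions. For supports, a vertex of charge $m_j$ is chosen from $|R_j|\le2ke^{m_j}$ candidates. Since $\sum_{v\in\mathrm{supp}}\widetilde\omega(v)\le9k$, the number of support vertices at level $j\ge1$ is $n_j\le 9k/m_j$, and choosing them costs $\binom{2ke^{m_j}}{n_j}\le\exp(n_j(m_j+\log(2ek/n_j)))$. Summing over $j$ gives $O(k)+\sum_j n_j\log(ek/n_j)$, and with $n_j\le9k/m_j$ the second term is $O(k)$. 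Level $0$ has $|R_0|\le 2ke$ free vertices, and at most $k$ support vertices there cost $e^{O(k)}$. The entropy bound \Cref{eq:entropycover} follows immediately.

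\textbf{Main obstacle.} I expect the main obstacle to be Step 1–2's simultaneous control of squared error and activation charge without a $\log k$ loss across levels, which the sampling of light children is designed to achieve.
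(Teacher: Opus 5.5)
Your counting in Step~3 is fine. Steps~1--2, however, have a genuine gap at exactly the point you flag, and your proposed fix does not close it.

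\textbf{The sampling scheme overspends the charge budget.} You keep a light level-$j$ child with probability proportional to $k\lambda_L$ and give it mass of order $1/k$. Its expected activation charge is then of order $\sum_L m(L)\min\{1,k\lambda_L\}$, which is your $k\sum_j m_j\,\pi(\text{light at }j)$, and this is not $O(k)$. Suppose most of the mass sits in about $k$ light cells of mass $1/k$ at a level with $m_j\asymp k$: children of a bounded number of heavy level-$(j-1)$ cells, with roots first appearing in $R_j$. Then a constant fraction of them is kept, $\Gamma(z)\asymp k^2$, and the approximant falls outside the class counted in part~(2).

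\textbf{The missing idea is a level-adapted quantum.} Let $h(L)$ be the root of the heavy parent cell. The exit $q_L=p_{a_L}-p_{h(L)}$ of a light cell must be moved as one block of $m(L)$ integer units: take $Z_L=\frac{m(L)}k\xi_L$ with independent $\xi_L\sim\mathrm{Bernoulli}(k\lambda_L/m(L))$, a valid parameter because $\lambda_L\le m(L)/k$. Encode each selected exit as the signed transfer $m(L)(e_{a_L}-e_{h(L)})$. Then:
\begin{itemize}
\item $W:=\sum_Lm(L)\ind\{Z_L\ne0\}$ has $\E W=k\sum_L\lambda_L\le k$;
\item independence removes every cross term between overlapping exits, and $\operatorname{Var}(Z_L)\|q_L\|_2^2\le\frac{\lambda_Lm(L)}k\cdot\frac{2\bar\alpha}{m(L)}$, so the expected exit error is at most $2\bar\alpha/k$;
\item Markov's inequality yields one realization with $W\le2k$ and exit error at most $8\bar\alpha/k$.
\end{itemize}
Charge and quantum scale together, and that is what keeps both budgets linear in $k$. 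The signed transfers are also why $z$ has negative entries. They are why the flow states only satisfy $0\le x_v\le k+W\le3k$, since each transfer is the nonnegative downward transport $m(L)\ind_{(h(L),a_L]}$, rather than your $x_v\le k+2$.

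\textbf{Step~1 needs a quadratic per-cell bound.} For $e_L:=\sum_{u\in L}\lambda_u(p_u-p_{a_L})$, Jensen gives $\|e_L\|_2^2\le\lambda_L^2\bar\alpha/m(L)$, and lightness turns this into $\lambda_L\bar\alpha/k$. Each $e_L$ is supported in $L\setminus\{a_L\}$ because cells are rooted-connected. So the blocks of distinct terminal cells have disjoint supports, and their squared norms simply add. The collapse error is therefore at most $\bar\alpha/k+2\bar\alpha/k$; the second term covers the at most one heavy terminal cell, which sits at level $J$ where $m_J\ge k/2$. No sum over levels and no charging to parents is needed. Your linear bound $\lambda_L\bar\alpha/m(L)$ is already off by a factor $k$ on light level-$0$ cells, which have no heavy parent.

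\textbf{Step~2's rounding is not confined to $O(k)$ vertices.} The preorder rounding error can be nonzero at every vertex of the minimal subtree spanning the support being rounded. You must round only the skeleton on $U=R_0\cup\{\text{heavy roots}\}$, with light cells handled entirely by the exits above. You then need to show this spanning subtree has $O(\bar\alpha k)$ edges, by chaining over the nets:
\begin{itemize}
\item each $a\in R_0\setminus\{o\}$ lies within $\bar\alpha+1$ of its deepest strict $R_0$-ancestor, and $|R_0|\le ek$;
\item each heavy root at level $j$ lies within $2\bar\alpha/m_j$ of its heavy parent's root, and there are fewer than $k/m_j$ heavy cells per level.
\end{itemize}
This gives $\|g_0-\bar g_0\|_2^2\le6\bar\alpha/k$.

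\textbf{The code length then closes.} The heavy roots' charges total less than $2k$: maximal heavy cells are disjoint with $\sum m(C)<k$, and a heavy chain's weights sum to less than twice its bottom weight. Together with $\|z\|_1\le k+2W$ and exit charges at most $W$, this gives $\Gamma(z)\le9k$.
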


Members of $\mathcal D_k$ may carry negative coefficients $z_v$, so the cover is external to the body, which is why $H^c_T$ was taken with free centers; in flow coordinates, every member is nonnegative with root coordinate exactly $V$ and no coordinate above $3V$.

\begin{proof}
By homogeneity take $V=1$ and write $f=\sum_u\lambda_up_u$ in barycentric form (\Cref{lem:normal}), with $\lambda(B):=\sum_{u\in B}\lambda_u$. The construction stops a refinement of the cells at the signal's own masses, collapses each stopped cell to its root, rounds the resulting skeleton, and samples the light exits; the budgets are then read off and the class is counted. The cell properties above, the collapse and exit estimates, the charge and skeleton accounting, and the counting computation are carried out in \Cref{ssec:cover}.

\emph{Stopped refinement.} Call a level-$j$ cell $B$ \emph{heavy} when $\lambda(B)>m_j/k$ and \emph{light} otherwise. Starting from level $0$, stop at every light cell, refine every heavy cell, and stop everything at level $J$. The \emph{terminal} cells partition $\sfV$; write $\mathcal L$ for their collection, and for $L\in\mathcal L$ let $a_L$ be its root, $m(L)$ its level weight $m_j$, and $\lambda_L:=\lambda(L)$. Heaviness is inherited upward through the refinement, so the maximal heavy cells are pairwise disjoint (\Cref{ssec:cover}) and each has mass exceeding its own $m_j/k$; since their masses total at most one,
\begin{equation}\label{eq:heavyweight}
\sum_{C\ \text{maximal heavy}}m(C)\ <\ k .
\end{equation}
A heavy terminal cell sits at level $J$ and is maximal, so $m_J\ge k/2$ leaves room for at most one.

\emph{Collapse.} Let $g:=\sum_{L\in\mathcal L}\lambda_L\,p_{a_L}$. The error blocks $e_L:=\sum_{u\in L}\lambda_u(p_u-p_{a_L})$ are supported in $L\setminus\{a_L\}$ by rooted-connectedness and \Cref{lem:isometry}(\ref{it:edge}), so they have pairwise disjoint supports; each is controlled by the radius of its cell and, on a light cell, by its mass, and summing the disjoint blocks gives
\begin{equation}\label{eq:collapse}
\|f-g\|_2^2\ \le\ \frac{3\bar\alpha}k
\end{equation}
(\Cref{ssec:cover}).

\emph{Skeleton and exits.} Let $U$ consist of $R_0$ together with the roots of all heavy cells. Attach each terminal cell to $U$: set $h(L):=a_L$ if $L$ is a level-$0$ cell or the heavy terminal cell; otherwise the parent cell of $L$ is heavy, since $L$ was reached by the refinement, and $h(L)$ is its root, so that $h(L)\in U$ and $h(L)\preceq a_L$. With $q_L:=p_{a_L}-p_{h(L)}$, \Cref{lem:isometry}(\ref{it:iso}) and the parent cell's radius give $\|q_L\|_2^2=d_T(h(L),a_L)\le2\bar\alpha/m(L)$, and
\[
g\ =\ g_0+\sum_L\lambda_L\,q_L,
\qquad
g_0:=\sum_{h\in U}b_h\,p_h,
\quad
b_h:=\sum_{L:\,h(L)=h}\lambda_L .
\]
The nonzero $q_L$ are the \emph{light exits}.

\emph{Preorder rounding.} The skeleton masses are rounded through one scalar. List $U$ as $h_1,\dots,h_s$ in a fixed depth-first preorder of $T$, write $b_i:=b_{h_i}$ for the masses in this order, set $B_0:=0$ and $B_i:=\sum_{\ell\le i}b_\ell$, and put
\[
Q_i\ :=\ \lfloor kB_i\rfloor-\lfloor kB_{i-1}\rfloor\ \in\
\mathbb Z_{\ge0},
\qquad\text{so that}\qquad
\Bigl|\sum_{i\in I}Q_i-k\sum_{i\in I}b_i\Bigr|\ <\ 1
\]
for every interval $I$ of the list, the left side being a difference of two floor errors. Write $Q$ for the vector with entry $Q_i$ at $h_i$ and zero elsewhere, a nonnegative integer leak vector of total mass $k$ supported on $U$ (\Cref{ssec:cover}). The point of the preorder is that every subtree $T_v$ is a contiguous block of it, so $U\cap T_v$ is an interval of the list; coordinates being subtree sums (\Cref{lem:normal}), the skeleton and its rounding $\bar g_0:=\frac1k\sum_{h\in U}Q_h\,p_h$ differ by less than $1/k$ at every vertex simultaneously. Both signals vanish outside the minimal rooted subtree spanning $U$, which has $O(\bar\alpha k)$ edges (a chaining argument over the nets; \Cref{ssec:cover}), whence
\begin{equation}\label{eq:roundingerror}
\|g_0-\bar g_0\|_2^2\ \le\ C\,\frac{\bar\alpha}k .
\end{equation}

\emph{Exit quantization.} A light exit has $\lambda_L\le m(L)/k$, so the independent variables $Z_L:=\frac{m(L)}k\,\xi_L$ with $\xi_L\sim\mathrm{Bernoulli}(k\lambda_L/m(L))$ are well defined and unbiased. Independence makes the cross terms vanish in expectation no matter how the exits overlap, so $\E\|\sum_L(Z_L-\lambda_L)q_L\|_2^2=\sum_L\operatorname{Var}(Z_L)\|q_L\|_2^2\le2\bar\alpha/k$. The selection weight $W:=\sum_Lm(L)\ind\{Z_L\ne0\}$ has $\E W\le k$; by Markov's inequality, some realization satisfies both $W\le2k$ and squared exit error at most $8\bar\alpha/k$ (\Cref{ssec:cover}). Fix one and define, with $e_v$ the coordinate vector at $v$, $z:=Q+\sum_{L:\,Z_L\ne0}m(L)\bigl(e_{a_L}-e_{h(L)}\bigr)$. Each transfer has coefficient sum zero, so $\sum_vz_v=k$; and $z$ is supported on $A_k$ with $\|z\|_1\le5k$.

\emph{The flow states.} Since $x_v=\sum_uz_up_u(v)$, the flow states of $z$ are the coordinates of $\sum_uz_up_u$. The skeleton $Q$ is a nonnegative leak vector of total mass $k$, so its states lie in $[0,k]$; and each selected transfer contributes $m(L)(p_{a_L}-p_{h(L)})=m(L)\ind_{(h(L),a_L]}\ge0$ by \Cref{lem:isometry}(\ref{it:edge}), a downward transport of mass that never creates a negative state and adds at most $m(L)$ to any state. Hence $0\le x_v\le k+W\le3k$ at every vertex.

\emph{Code length and error.} The code length collects the activation charges: the selected exit roots contribute at most $W\le2k$, and the heavy roots at most $2k$, because the level weights along a chain of heavy cells total less than twice that of the maximal cell it descends to, which \Cref{eq:heavyweight} caps; so $\Gamma(z)\le9k$ (\Cref{ssec:cover}). The error separates along the three stages of the construction,
\[
f-\frac1k\sum_vz_vp_v\ =\ (f-g)+(g_0-\bar g_0)+\sum_L(\lambda_L-Z_L)q_L,
\]
so \Cref{eq:collapse,eq:roundingerror} and the fixed realization give $\bigl\|f-\frac1k\sum_vz_vp_v\bigr\|_2^2\le C\bar\alpha/k =C\overline\delta_k(T)\le2C\delta_k(T)$ by \Cref{lem:toolkit}(\ref{it:surrogate}). Restoring the amplitude proves (\ref{it:approx}).

\emph{Counting.} A vector obeying the three budgets has support split into a free part inside $R_0$ and a charged part of total activation charge at most $9k$; the net sizes bound each charge group, so a generating-function estimate controls the number of supports, and on a fixed support, $\|z\|_1\le9k$ bounds the number of coefficient vectors. Multiplying the counts gives (\ref{it:count}) (\Cref{ssec:cover}). The covering statement and \Cref{eq:entropycover} follow from (\ref{it:approx}) and (\ref{it:count}).
\end{proof}

Two displays close the upper half. The first is generic. For any $t>0$, least squares over a covering of the body at squared radius $t$ with $\log$-cardinality $H^c_T(t)$, or the single center itself when one ball suffices, has risk at most $C(t+\sigma^2H^c_T(t))$ uniformly over the body, by the oracle inequality for least squares over a finite set of candidates (\Cref{ssec:packing}). We optimize over $t$ and adjoin the constant estimator $Vp_o$, whose risk is at most $\sup_\mu\|\mu-Vp_o\|_2^2\le V^2h_T$ by convexity and \Cref{lem:isometry}, and the identity estimator $Y$, whose risk is $\sigma^2n$. This gives
\begin{equation}\label{eq:genericupper}
R^*_T(V,\sigma)\ \le\
C\bigl(\rho_T(V,\sigma)\wedge V^2H_T\wedge\sigma^2n\bigr),
\end{equation}
the upper half of \Cref{prop:fixedpoint}. The second display is where the profile enters: evaluating the infimum \Cref{eq:fixedpoint} at $t=C_0V^2\delta_k(T)$ and inserting \Cref{eq:entropycover},
\begin{equation}\label{eq:rhocover}
\rho_T(V,\sigma)\ \le\ C\bigl(V^2\delta_k(T)+\sigma^2k\bigr)
\qquad\text{for every integer }2\le k\le n/C_\star .
\end{equation}
\Cref{eq:genericupper} knows nothing of the profile; \Cref{eq:rhocover} is what the crossing argument of \Cref{sec:crossing} consumes.

\subsection{The Packing Lower Bound}\label{sec:packing}

The lower bound must realize the same entropy through hypotheses that live in the polytope, and here the two structural features of the body bind: a hypothesis may only move leak mass that it actually possesses, and root paths through a common vertex share coordinates, so the hypotheses cannot be taken orthogonal on an arbitrary tree. The construction of this subsection allocates disjoint mass budgets exactly, and measures the overlap it cannot remove.

\begin{lemma}[Cover to packing]\label{lem:bridge}
For every $r>0$ there exist at least $\Nup(r)$ vertices of $T$ whose pairwise tree distances are all at least $r/9$.
\end{lemma}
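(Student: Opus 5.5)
The plan is to prove the bound with the constant $1$ in place of $1/9$, by running the residual-depth greedy of \Cref{lem:toolkit}(\ref{it:surrogate}) and keeping, at each step, the deepest uncovered vertex that triggered it. Two reductions come first. Tree distances are integers, so an ancestor $r$-net is the same thing as an ancestor $q$-net with $q:=\lfloor r\rfloor$, and $\Nup(r)=\Nup(q)$. Also, whatever set $G$ of centers the greedy outputs is an ancestor $q$-net, so $\Nup(r)\le|G|$. We therefore never need the exactness of the greedy: it suffices to exhibit $|G|$ vertices with pairwise distances at least $q+1>r\ge r/9$.

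The greedy step is as follows. While some vertex is uncovered, pick a deepest uncovered vertex $w_i$. Let $c_i$ be its ancestor $q$ levels up, or the root $o$ if $\depth(w_i)<q$. Declare covered every vertex $u\in T_{c_i}$ with $d_T(c_i,u)\le q$. The key observation is that after step $i$, no uncovered vertex remains in $T_{c_i}$. Indeed, every vertex uncovered before step $i$ has depth at most $\depth(w_i)\le\depth(c_i)+q$, so any such vertex lying in $T_{c_i}$ is within distance $q$ of $c_i$. This is where the choice of a \emph{deepest} uncovered vertex is used.

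Now take $i<j$. The vertex $w_j$ is uncovered at time $j$, hence also at time $i$. By the key observation, $w_j\notin T_{c_i}$. The meet $m:=w_i\wedge w_j$ is an ancestor of $w_i$ that is not in $T_{c_i}$ (otherwise $w_j\in T_{c_i}$), so $m$ is a strict ancestor of $c_i$. This gives
\[
d_T(w_i,w_j)\ \ge\ \depth(w_i)-\depth(m)\ >\ \depth(w_i)-\depth(c_i).
\]
If $c_i$ is the ancestor $q$ levels up, the right side equals $q$, so $d_T(w_i,w_j)\ge q+1>r$. If instead $c_i=o$, the root covers every vertex of depth at most $q$, and all remaining uncovered vertices have depth at most $\depth(w_i)<q$, so step $i$ is the last step. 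In that case no $j>i$ exists, and pairs involving this $w_i$ are handled through the earlier index, which has the first kind of center.

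Hence the vertices $w_1,\dots,w_{|G|}$ are distinct and pairwise at distance greater than $r$. Since $|G|\ge\Nup(r)$, this proves the lemma with room to spare. The case $r<1$ (where $q=0$) is included: distinct vertices are at distance at least $1>r$. The only delicate point is the direction of the inequality for pairs $i<j$: one must always argue through the earlier center $c_i$, using that $w_j$ survives step $i$. The generous constant $9$ in the statement suggests the authors use a cruder packing argument, but this greedy-witness argument appears to suffice directly.
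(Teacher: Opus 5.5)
Your argument is correct, and it takes a genuinely different route from the paper's. The paper works through the isometry of \Cref{lem:isometry}. It takes a maximal $(\sqrt r/3)$-separated subset of the root-path indicators $\{p_u\}$, which is at once a packing at tree distance $r/9$ and a Euclidean cover at radius $\sqrt r/3$. It then shows that the deepest common ancestor of the vertices captured by each ball lies within tree distance $\tfrac49 r$ of all of them. So one ancestor per ball gives an ancestor $r$-net, and the balls number at least $\Nup(r)$. The constant $1/9$ is the price of squaring the radius and of the factor two between packing and covering radii.

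You instead run the deepest-uncovered greedy and keep its triggering vertices as a dual certificate. Strictly, this is the comparison procedure in the proof of \Cref{slem:greedy}, not the residual-depth pass itself. Since you define and analyze it yourself, the misnomer is harmless. Each step of your argument checks out:
\begin{itemize}
\item The key observation, that no uncovered vertex survives in $T_{c_i}$, uses exactly that $\depth(w_i)\le\depth(c_i)+q$.
\item The meet argument through the \emph{earlier} center is sound.
\item A root center forces the final step, so every pair is handled by a non-root center.
\item You correctly need only that the output is a net, not that it is minimum.
\end{itemize}

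Your route buys a sharper and explicit statement: $\Nup(r)$ vertices at pairwise distance at least $\lfloor r\rfloor+1>r$, read off directly from the greedy. In \Cref{prop:profilerisk} this removes the factor $9$ from the loss $18=2\cdot 9$ absorbed into $A_0$. The paper's route buys brevity and uniformity. It is the standard ``maximal packing is a cover'' argument transported through the Euclidean embedding, needs no ordering or algorithmic bookkeeping, and stays in the Euclidean language in which the lemma is used to join metric entropy to the ancestor profile.
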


\begin{proof}[Proof sketch]
Under the isometry of \Cref{lem:isometry}(\ref{it:iso}), a maximal $(\sqrt r/3)$-separated subset of $\{p_u:u\in\sfV\}$ is a packing at pairwise tree distances at least $r/9$ and, by maximality, a cover of the same set at Euclidean radius $\sqrt r/3$. Each covering ball yields one ancestor: the deepest common ancestor of the vertices it captures lies within tree distance $\tfrac49\,r$ of each of them, because every captured vertex below it meets some other captured vertex exactly there. Collecting one ancestor per ball gives an ancestor $r$-net, so the balls, and with them the separated vertices, number at least $\Nup(r)$ (\Cref{ssec:packing}).
\end{proof}

A product form of Fano's inequality extracts risk from overlapping hypotheses. The construction in this subsection supplies all three assumptions: blockwise scalability is legality under shrinking toward a reference point, the aspect bound rules out degenerate alphabets, and the Gram row sum is the one quantity through which arbitrary overlap enters.

\begin{lemma}[Fano's inequality for overlapping blocks]\label{lem:fano}
Let $K\subseteq\bbR^N$ be convex, let $\mathcal A_1,\dots,\mathcal A_J$ be finite alphabets with $|\mathcal A_j|\ge2$, and let
\[
\mu_{\mathbf a}\ =\ \mu_0+\sum_{j=1}^Jg_j(a_j),
\qquad
\mathbf a\in\textstyle\prod_j\mathcal A_j,
\]
be a family of points of $K$. Write $d_j^2:=\min_{a\ne b}\|g_j(a)-g_j(b)\|_2^2$, $D_j^2:=\max_{a,b}\|g_j(a)-g_j(b)\|_2^2$, and $h_j:=\log|\mathcal A_j|$, and assume, for constants $A\ge1$ and $\gamma<1$:
\begin{enumerate}
\item\label{it:scal} $\mu_0+\sum_j\tau_jg_j(a_j)\in K$ for every
$\tau\in[0,1]^J$ and every $\mathbf a$;
\item\label{it:aspect} $0<D_j^2\le A\,d_j^2$ for every $j$;
\item\label{it:gram} every family of nonzero contrasts
$\Delta_j=g_j(a_j)-g_j(b_j)$ satisfies
$\ \sup_i\sum_{j\ne i}
\frac{|\langle\Delta_i,\Delta_j\rangle|}
{\|\Delta_i\|_2\,\|\Delta_j\|_2}\le\gamma$.
\end{enumerate}
Then the minimax risk over $K$ in the model $Y\sim N(\mu,\sigma^2I_N)$, $\mu\in K$, satisfies
\[
R^*(K,\sigma)\ \ge\ c_A(1-\gamma)\sum_{j=1}^J
\min\bigl\{d_j^2,\ \sigma^2h_j\bigr\},
\]
where $c_A>0$ depends only on $A$. The bound holds already for the Bayes risk of an explicit product prior on a blockwise-shrunken subfamily; and if every contrast $g_j(a)-g_j(b)$ is supported in a common coordinate set $S$, it holds for the loss restricted to $S$.
\end{lemma}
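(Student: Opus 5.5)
The plan is to reduce the bound to an Assouad/Fano argument on a product prior, after shrinking each block so that blockwise testing is at the noise threshold. For each $j$, pick a shrink factor $\tau_j:=\min\{1,\ c\,\sigma\sqrt{h_j}/D_j\}$, with $c$ small. Consider the subfamily $\mu_{\mathbf a}^\tau:=\mu_0+\sum_j\tau_jg_j(a_j)$, which lies in $K$ by assumption (\ref{it:scal}). Put the uniform product prior on $\mathbf a$. The shrunken contrasts have minimum separation $\tau_j^2d_j^2\ge A^{-1}\tau_j^2D_j^2$ by (\ref{it:aspect}), and $\tau_j^2d_j^2\gtrsim_A\min\{d_j^2,\sigma^2h_j\}$. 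It therefore suffices to show that, under the posterior, each coordinate $a_j$ is misidentified with probability bounded below, and that this per-block error converts into squared loss with the Gram correction $(1-\gamma)$.

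The first step is the decomposition of the loss. For any estimator $\widehat\mu$, define the blockwise decoder $\widehat a_j$ as a nearest-point rule on block $j$. Use (\ref{it:gram}) in the form of a near-orthogonality inequality: for any family of nonzero contrasts $\Delta_j$,
\[
\Bigl\|\sum_j\Delta_j\Bigr\|_2^2\ \ge\ (1-\gamma)\sum_j\|\Delta_j\|_2^2 .
\]
This follows from $2|\langle\Delta_i,\Delta_j\rangle|\le w_{ij}(\|\Delta_i\|^2+\|\Delta_j\|^2)$ with $w_{ij}$ the normalized cosine, after summing and applying the row-sum bound. Apply it to $\mu^\tau_{\widehat{\mathbf a}}-\mu^\tau_{\mathbf a}$, and use the triangle inequality $\|\mu^\tau_{\widehat{\mathbf a}}-\mu^\tau_{\mathbf a}\|\le2\|\widehat\mu-\mu^\tau_{\mathbf a}\|$ with $\widehat{\mathbf a}$ the global nearest point in the subfamily. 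This gives
\[
\E\|\widehat\mu-\mu\|^2\ \ge\ \tfrac{1-\gamma}4\sum_j\tau_j^2d_j^2\,\Pr(\widehat a_j\ne a_j).
\]
If only a coordinate set $S$ carries the contrasts, the same chain works with norms restricted to $S$.

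The second step bounds each $\Pr(\widehat a_j\ne a_j)$ from below by Fano's inequality applied conditionally on $(a_i)_{i\ne j}$. Given the other coordinates, the $|\mathcal A_j|$ hypotheses differ only by $\tau_jg_j(\cdot)$, and their pairwise Kullback--Leibler divergences are at most $\tau_j^2D_j^2/(2\sigma^2)\le c^2h_j/2$. Fano then gives error at least $1-(c^2h_j/2+\log2)/h_j$. This is bounded below once $h_j$ is large, say $h_j\ge\log 4$ after choosing $c$ small. For small alphabets, $h_j\in[\log2,\log4)$, use Le Cam's two-point bound instead, which gives error $\ge c'$ whenever the divergence is $O(1)$, and that holds by the choice of $\tau_j$. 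Since the conditional error bound holds for every value of the other coordinates, it also holds for the decoder $\widehat a_j$ built from the global estimate, because that decoder is still a function of $Y$.

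I expect the main obstacle to be the decomposition step. The global nearest-point decoder mixes blocks, so I must check that the triangle-inequality reduction interacts correctly with the Gram inequality when several blocks err simultaneously. The inequality above handles precisely that case, since it holds for arbitrary subfamilies of nonzero contrasts: blocks with $\widehat a_j=a_j$ contribute zero and are simply dropped. The constants then combine into $c_A$, which depends only on $A$, and summing over $j$ gives the claimed bound for the Bayes risk of the explicit product prior.
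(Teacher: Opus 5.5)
Your proposal is correct and is essentially the paper's proof. It uses the same shrinkage $\tau_j^2=\min\{1,\eta\sigma^2h_j/D_j^2\}$ (your $c^2$ is the paper's $\eta$), the uniform product prior, global nearest-point decoding combined with the Gram row-sum inequality to get the pathwise factor $(1-\gamma)/4$, and conditional Fano per block with a two-point test for small alphabets. Two details are left implicit. First, the Gram inequality is applied to the scaled contrasts $\tau_j\Delta_j$, which is legitimate because the normalized cosines are scale-invariant. Second, your threshold $h_j\ge\log4$ sends $|\mathcal A_j|=3$ to the two-point route, which needs its routine multi-letter form (average $P_a(\text{err})+P_b(\text{err})\ge1-\mathrm{TV}$ over pairs); the paper instead covers size three by Fano and reserves the direct binary test for $|\mathcal A_j|=2$.
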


\begin{proof}[Proof sketch]
For a small universal $\eta$, shrink each block into its information budget by $\tau_j^2:=\min\{1,\,\eta\sigma^2h_j/D_j^2\}$, legal by (\ref{it:scal}), and place the uniform product prior on $\mathbf a$. The scale-invariant Gram bound (\ref{it:gram}) separates any two shrunken members by $(1-\gamma)\sum_j\tau_j^2\|\Delta_j\|_2^2$, overlap paid for once through $\gamma$. Revealing all other labels reduces block $j$ to a finite Gaussian test of Kullback--Leibler diameter $O(\eta h_j)$, on which Fano's inequality \citep{tsybakov2009}, or a direct binary test, makes every decoder err with probability at least $\tfrac14$. Decoding to the nearest shrunken member makes an error at block $j$ cost squared loss of order $(1-\gamma)\tau_j^2d_j^2\ge(\eta/A)(1-\gamma)\min\{d_j^2,\sigma^2h_j\}$ by (\ref{it:aspect}). Taking expectations assembles the bound; the per-block constants, the binary-alphabet test, and the restricted-loss variant are in \Cref{ssec:packing}.
\end{proof}

The next proposition realizes these hypotheses inside the flow polytope.

\begin{prop}[Positive packing]\label{prop:packing}
Let $U\subseteq\sfV$ consist of $M\ge2$ vertices with pairwise tree
distances at least $r>0$. Then for every integer $1\le q\le M/2$,
\[
R^*_T(V,\sigma)\ \ge\ c\,\min\Bigl\{\frac{V^2r}q,\
\sigma^2q\log\frac{eM}q\Bigr\}.
\]
\end{prop}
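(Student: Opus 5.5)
The plan is to apply \Cref{lem:fano} with $K=\mathcal F_V(T)$. The base point is $\mu_0:=Vp_o$, and each block moves a share $V/q$ of the budget from the root down to a chosen vertex: $g_j(a):=\frac Vq\,(p_a-p_o)$ for $a$ in a group $U_j\subseteq U$. Then $\mu_{\mathbf a}=\sum_j\frac Vq\,p_{a_j}$ has barycentric coefficients summing to $V$, so it is legal by \Cref{lem:normal}(\ref{it:bary}). Shrinking block $j$ by $\tau_j\in[0,1]$ returns mass $(1-\tau_j)V/q$ to the leak at the root, which is again a nonnegative barycentric combination; this gives assumption (\ref{it:scal}). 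By \Cref{lem:isometry}(\ref{it:iso}), a contrast inside block $j$ has squared norm $(V/q)^2d_T(a,b)\ge V^2r/q^2$. If each alphabet has $\log|\mathcal A_j|\gtrsim\log(eM/q)$, summing the per-block bound $\min\{V^2r/q^2,\sigma^2\log(eM/q)\}$ over $q$ blocks gives exactly $\min\{V^2r/q,\ \sigma^2q\log(eM/q)\}$. The case $M/q$ bounded reduces to binary alphabets and is covered by the binary-test variant of \Cref{lem:fano}.

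\emph{Grouping, for the Gram condition.} I would partition $U$ into $q$ groups of at least $\lfloor M/(2q)\rfloor$ vertices each, every group lying in a connected subtree, with the subtrees pairwise edge-disjoint. A greedy post-order sweep does this: cut off a subtree as soon as the uncut part below a vertex contains at least $M/(2q)$ points of $U$. Shared cut vertices are allowed, since the subtrees only need disjoint edge sets. By \Cref{lem:isometry}(\ref{it:edge}), contrasts from different groups then have disjoint supports, so the cross-group Gram terms vanish.

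\emph{Aspect bound: the main obstacle.} A group can have diameter much larger than $r$, so the naive alphabet $\mathcal A_j=U_j$ violates (\ref{it:aspect}). Following the multiscale code mentioned in the overview, I would first build a dyadic hierarchical clustering of $U_j$ in the tree metric, at scales $r2^\ell$. Block $j$ is then split into sub-blocks indexed by scale. At scale $\ell$, the alphabet chooses a child cluster inside the cluster already selected at the coarser scale, and the displacement is of the form $\frac V{q}\,c_\ell\,(p_{\text{child root}}-p_{\text{parent root}})$, with weights $c_\ell$ summing to at most one. Each sub-block then has contrasts whose diameter and separation are comparable at its own scale, which gives (\ref{it:aspect}) with a universal $A$. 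The masses at the scales are allocated so that their contributions $\min\{d^2,\sigma^2h\}$ add back up to the single-block quantity.

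The remaining work is the Gram row sum (\ref{it:gram}) across scales inside one group. Contrasts at different scales lie on nested root-paths and genuinely overlap. I would bound the normalized inner products by the geometric ratio of path lengths, $|\langle\Delta_i,\Delta_j\rangle|/(\|\Delta_i\|\|\Delta_j\|)\le(r_i/r_j)^{1/2}$ for $r_i\le r_j$, and then thin the scales to every $C$-th dyadic level so that the row sum is at most $\gamma<1$. This thinning, and checking that it loses only a constant in the total entropy $\log(eM/q)$, is where I expect the real difficulty to lie. If the geometric decay fails for some configuration, the fallback is to keep one scale per group, chosen to maximize the count of $r$-separated clusters at comparable diameter, using \Cref{lem:bridge}-type counting to retain entropy of order $\log(eM/q)$.
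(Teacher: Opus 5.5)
Your architecture matches the paper's: edge-disjoint groups, a multiscale code in each group, thinning of scales for the Gram condition, then \Cref{lem:fano}. The genuine gap is that the code you describe does not satisfy the hypotheses of \Cref{lem:fano}.

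\textbf{The tree code is not a legal product family.} \Cref{lem:fano} needs a product family $\mu_0+\sum_j g_j(a_j)$, $\mathbf a\in\prod_j\mathcal A_j$, with each $g_j$ depending on its own label only. In your tree-structured code, the alphabet at scale $\ell$ is the set of children of the cluster selected at scale $\ell-1$. So the codewords do not form a product, since branching differs across clusters, and the scale-$\ell$ displacement depends on all coarser labels. The telescoping displacements also break hypothesis~(\ref{it:scal}). Shrink the scale-$(\ell-1)$ block to $\tau=0$ and keep the scale-$\ell$ block at $\tau=1$. This leaves barycentric coefficient $-\tfrac Vq c_\ell$ on the root-path indicator of the parent root, and by the uniqueness in \Cref{lem:normal} that point lies outside the body.

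\textbf{The missing device.} Descend a \emph{single} chain of the hierarchy, always to a child of maximal weight, meaning the number of group points at or below it. A node's weight is at most its number of children times its largest child's weight. Hence the child counts $m_\ell$ along this chain satisfy $\prod_\ell m_\ell\ge M_i$, where $M_i$ is the number of points in the group. Then:
\begin{itemize}
\item Take as level-$\ell$ alphabet the children of the chain node at level $\ell$, a fixed set independent of the other labels.
\item Give each level its own mass $a_\ell$ at the chosen letter: $\mu_{\mathbf u}=\sum_\ell a_\ell p_{u_\ell}$, with $\sum_\ell a_\ell$ the group budget.
\item This is a product family. Shrinking level $\ell$ replaces $p_{u_\ell}$ by a convex combination of $p_{u^0_\ell}$ and $p_{u_\ell}$, so legality is automatic.
\item With radii $16^\ell r$ the aspect ratio is at most $32$.
\item The thinning you flagged is a pigeonhole step. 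Among levels with $m_\ell\ge2$, one residue class modulo four carries at least a quarter of $\sum_\ell\log m_\ell\ge\log M_i$.
\item Your cross-scale decay estimate is then right: entries are bounded by $a_\ell$ and supports have size $O(r_{\ell-1})$.
\item Masses $a_\ell\propto\sqrt{\log m_\ell/r_{\ell-1}}$, with Cauchy--Schwarz against the geometric radii, give $\min\{a^2r,\sigma^2\log M_i\}$ per group.
\end{itemize}

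\textbf{Grouping.} Your rule, as stated, can return far fewer than $q$ groups. On a star with $U$ the set of leaves, nothing is cut below the hub, and the hub's uncut part, all $M$ points, becomes one group. You must split at a vertex. Bin-pack the residuals of its children into several components that share the vertex but no edge, and seal each when it collects about $M/(4q)$ points. This gives $q'\asymp q$ groups of size $\asymp M/q$ and at least $2$ (\Cref{slem:group}). That is all the assembly needs: budget $V/q'$ per group, with Bayes risks adding across the disjoint edge supports.

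\textbf{Fallback.} Keeping one scale per group would not rescue the argument. Take $T$ a path and points spaced $r$ apart. Any alphabet of $s$-separated points with diameter at most $As$ has at most $A+1$ elements, so a single bounded-aspect block carries $O(1)$ entropy. The claim needs $\log(eM/q)$ per group, so the entropy has to be collected across scales.
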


\begin{proof}[Proof sketch]
The construction distributes the budget exactly and lets \Cref{lem:fano} price the overlap; the proof is completed in \Cref{ssec:packing}.

\emph{Disjoint budgets.} A bin-packing pass from the leaves of the minimal subtree spanning $U$ upward, sealing a connected component whenever it has collected on the order of $M/q$ marked vertices, extracts from $U$ marked sets $U_1,\dots,U_{q'}$ of sizes $M_i\asymp M/q$ lying in connected subtrees $\mathcal C_1,\dots,\mathcal C_{q'}$ with pairwise disjoint edge sets, where $q'\asymp q$ (\Cref{ssec:packing}). Give every group the budget $a:=V/q'$; the budgets sum to $V$.

\emph{A multiscale code in each group.} Fix a group with marked set $U_i$. Set $r_\ell:=16^\ell r$ and let $N_\ell\subseteq N_{\ell-1}$ be nested maximal $r_\ell$-separated subsets of $N_0:=U_i$; assigning each point of $N_{\ell-1}$ to a point of $N_\ell$ within distance $r_\ell$ organizes $U_i$ into a hierarchy with a singleton top. Descend from the top along children of maximal weight, the weight of a node being the number of members of $U_i$ at or below it, so that the child counts along the chain multiply to at least $M_i$. Discarding single-child levels and keeping one residue class $I$ of the rest modulo four produces alphabets $\mathcal A_\ell$, $\ell\in I$, the children of the chain node at level $\ell$, with $h_\ell:=\log|\mathcal A_\ell|$ and total entropy $H:=\sum_{\ell\in I}h_\ell\ge\tfrac14\log M_i$. Two points of $\mathcal A_\ell$ are $r_{\ell-1}$-separated and within $2r_\ell$ of each other, so each block has aspect at most $32$ (\Cref{lem:isometry}(\ref{it:iso})). The per-scale masses $a_\ell:=a\sqrt{h_\ell/r_{\ell-1}}/S$, with normalizer $S:=\sum_{\ell'\in I}\sqrt{h_{\ell'}/r_{\ell'-1}}$, spend $\sum_{\ell\in I}a_\ell=a$ exactly, and Cauchy--Schwarz against the geometric radii gives
\[
\sum_{\ell\in I}\min\{a_\ell^2r_{\ell-1},\,\sigma^2h_\ell\}\ \ge\ c\min\{a^2r,\,\sigma^2\log M_i\}
\]
(\Cref{ssec:packing}).
Fix a reference letter $u^0_\ell\in\mathcal A_\ell$ at each retained scale; the group's hypotheses are $\mu_{\mathbf u}=\mu_0+\sum_{\ell\in I}g_\ell(u_\ell)$ with $\mu_0:=\sum_{\ell\in I}a_\ell\,p_{u^0_\ell}$ and $g_\ell(u):=a_\ell(p_u-p_{u^0_\ell})$, so that $\mu_{\mathbf u}=\sum_{\ell\in I}a_\ell p_{u_\ell}$ places mass $a_\ell$ at the letter chosen from $\mathcal A_\ell$.

\emph{Overlap.} A level-$\ell$ contrast has entries bounded by $a_\ell$ and support of size at most $32\,r_{\ell-1}$ (\Cref{lem:isometry}(\ref{it:edge})), so its normalized inner product with a level-$\ell'$ contrast, $\ell<\ell'$ in $I$, is at most $32\cdot4^{-(\ell'-\ell)}$; retained levels differ by at least four, so every normalized Gram row sums to at most $\gamma=64/255<\tfrac12$.

\emph{Assembly.} Each blockwise contraction $\mu_0+\sum_\ell\tau_\ell g_\ell(u_\ell)$ replaces $p_{u^0_\ell}$ by a convex combination of $p_{u^0_\ell}$ and $p_{u_\ell}$, so it remains a nonnegative leak combination of total mass $a$; \Cref{lem:fano} therefore applies with $A=32$ and $\gamma=64/255$ and bounds the group's Bayes risk from below by $c\min\{a^2r,\sigma^2\log M_i\}$ in the group's own coordinates. Sums of one member per group are legal points of $\mathcal F_V(T)$ (\Cref{lem:normal}), and the groups' contrasts occupy the pairwise disjoint edge supports of the $\mathcal C_i$ (\Cref{lem:isometry}(\ref{it:edge})), so the joint Bayes risk dominates the sum of the isolated group bounds (\Cref{ssec:packing}). With $q'\asymp q$ and $M_i\asymp M/q$, the group bounds sum to $R^*_T(V,\sigma)\ge c\,q\min\{V^2r/q^2,\,\sigma^2\log(eM/q)\}$, which is the claim.
\end{proof}

The lower half of the crossing now follows by matching the free parameter $q$ to the profile.

\begin{prop}[Profile to risk]\label{prop:profilerisk}
There is a universal constant $c_0>0$ such that for every finite
rooted tree, every integer $k\ge1$, and all $V,\sigma>0$,
\[
V^2\delta_k(T)\ \ge\ A_0\sigma^2k
\qquad\Longrightarrow\qquad
R^*_T(V,\sigma)\ \ge\ c_0\,\sigma^2k .
\]
\end{prop}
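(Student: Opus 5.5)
We plan to combine \Cref{lem:bridge} with \Cref{prop:packing}, choosing the radius from the profile and the group count $q$ from the truncated logarithm. Assume $V^2\delta_k(T)\ge A_0\sigma^2k$, that is, $V^2\alpha_k(T)\ge A_0\sigma^2k^2$. Then $\alpha_k>0$, so $n\ge2$. By the discrete form \Cref{eq:discrete} of \Cref{lem:toolkit}(\ref{it:discrete}), the supremum defining $\alpha_k$ is attained at some integer radius $r=q_*+1$, where we evaluate $\Nup$ at $q_*$. Write $N:=\Nup(q_*)$ and $L:=\min\{k,\log(N/k)\}>0$. Then
\[
rL=\alpha_k(T),\qquad N=ke^{\log(N/k)}\ge ke^{L}>k .
\]
Rather than squeeze a factor $r$ versus $q_*$, we apply \Cref{lem:bridge} at radius $q_*$ (or at $r$ with the minor adjustment $r\le 2q_*$ when $q_*\ge1$; the case $q_*=0$ is handled the same way with separation $1$, since distinct vertices are at distance at least $1$). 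This yields a set of $M\ge N$ vertices with pairwise tree distances at least $r/18$.

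\Cref{prop:packing} then gives $R^*_T\ge c\min\{V^2r/(18q),\ \sigma^2q\log(eM/q)\}$ for any integer $1\le q\le M/2$. We choose $q$ so that both terms are $\gtrsim\sigma^2k$. The first term satisfies
\[
\frac{V^2r}{q}=\frac{V^2\alpha_k}{qL}\ \ge\ \frac{A_0\sigma^2k^2}{qL},
\]
so it suffices that $q\lesssim k/L$. The second term needs $q\log(eM/q)\gtrsim k$. We split into three cases.

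\emph{(i) $L=k$.} Then $\log(N/k)\ge k$. Take $q=1$. The first term is at least $A_0\sigma^2k$, and $\log(eM)\ge\log N\ge k$. Also $M\ge N\ge ek\ge2$, so $q\le M/2$.

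\emph{(ii) $L\le 2$.} Take $q=\lceil k/4\rceil$. Then $q\le M/2$ because $M>k$; the case $k=1$ needs $M\ge2$, which holds since $N>k$ forces $N\ge2$. Next, $q\log(eM/q)\ge q\ge k/4$, and the first term is at least $A_0\sigma^2k^2/(2kL/4+L)\gtrsim A_0\sigma^2k$.

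\emph{(iii) $2<L<k$.} Take $q=\lceil k/L\rceil\le 2k/L$. Then $q\le ke^{L}/2\le M/2$ because $2/L\le e^L/2$ when $L>2$. Moreover $\log(eM/q)\ge 1+L+\log(L/2)\ge L$, so $q\log(eM/q)\ge k$. The first term is at least $A_0\sigma^2k^2L/(2kL)=A_0\sigma^2k/2$.

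In every case the minimum is at least $c'\min\{A_0,1\}\sigma^2k$, which gives the claim with $c_0$ universal. This holds for any fixed $A_0\ge1$; the paper's value $144$ is simply a safe choice.

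The main obstacle is bookkeeping rather than ideas. First, the off-by-one between the radius $r$ in \Cref{eq:discrete} and the net radius $q_*$ must be handled; the plan is to use $r=q_*+1\le 2\max\{q_*,1\}$ together with the trivial separation $1$ when $q_*=0$. Second, the constraint $q\le M/2$ in \Cref{prop:packing} must be respected when $L$ is small, which is exactly why case (ii) uses a $q$ proportional to $k$ rather than $k/L$. If any constants fail, we would enlarge the threshold between cases (ii) and (iii).
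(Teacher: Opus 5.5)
Your proposal is correct and follows the paper's route. \Cref{lem:bridge} supplies $M>k$ separated vertices, \Cref{prop:packing} is applied, and $q$ is matched to $\max\{1,k/(1+L)\}$ in three cases as in the paper's scalar matching; your use of the attained integer maximum from \Cref{eq:discrete}, and of $L$ defined through $N$ rather than $M$, are only bookkeeping variants. One small correction: apply the bridge at radius $q_*$ (as your main line does, giving separation $q_*/9\ge r/18$), not at $r=q_*+1$, since $\Nup(q_*+1)$ can be smaller than $N=\Nup(q_*)$.
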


\begin{proof}
The supremum in \Cref{eq:profile} need not be attained, so choose $r>0$ whose term is at least half the supremum:
\[
V^2r\,\min\Bigl\{1,\ \tfrac1k\Bigl[\log\tfrac{\Nup(r)}k\Bigr]_+\Bigr\}
\ \ge\ \tfrac12\,V^2\delta_k(T)\ \ge\ \tfrac{A_0}2\,\sigma^2k\ >\ 0 .
\]
Positivity forces $\Nup(r)>k$. \Cref{lem:bridge} supplies $M\ge\Nup(r)>k$ vertices with pairwise tree distances at least $r':=r/9$, and the displayed inequality survives with $(r',M)$ in place of $(r,\Nup(r))$ at the cost of a factor $18$, absorbed into $A_0$. It remains to choose $q$ in \Cref{prop:packing}: the mass branch $V^2r'/q$ shrinks and the entropy branch $\sigma^2q\log(eM/q)$ grows as $q$ increases, and at $q\asymp\max\bigl\{1,\ k/(1+\log(M/k))\bigr\}$ both sit above $\sigma^2k$ times a universal constant. This scalar matching, including the boundary cases, is proved in \Cref{ssec:packing}.
\end{proof}

\subsection{The Crossing and the Benchmarks}\label{sec:crossing}

The two halves now meet. Throughout, $K:=\lfloor n/C_\star\rfloor$, so that $k_0\le K+1$ by \Cref{eq:crossing}.

\begin{proof}[Proof of \Cref{thm:rate,prop:fixedpoint}]
The upper half of \Cref{prop:fixedpoint} is \Cref{eq:genericupper}; it remains to prove
\[
\text{(I)}\quad
R^*_T\ \ge\ c\,\bigl(\rho_T\wedge V^2H_T\wedge\sigma^2n\bigr),
\qquad\qquad
\text{(II)}\quad
R^*_T\ \asymp\ \min\bigl\{V^2H_T,\ \sigma^2k_0\bigr\}.
\]
For $n=1$ the body is the single point $Vp_o$ and $H_T=0$, so $R^*_T=0$ and both sides of (I) and (II) vanish; assume $n\ge2$. Testing two points at distance $\min\{V\sqrt{H_T},\sigma\}$ along the diameter segment of the body, which lies in $\mathcal F_V(T)$ by \Cref{lem:isometry}(\ref{it:iso}), records a two-point bound
\begin{equation}\label{eq:twopoint}
R^*_T(V,\sigma)\ \ge\ c\,\min\bigl\{V^2H_T,\ \sigma^2\bigr\}
\qquad(n\ge2)
\end{equation}
(\Cref{ssec:packing}). The proof of (I) and (II) now splits into four exhaustive cases, according to how the crossing index fires.

\emph{Case $n<C_\star$.} Here $k_0=1$ through the dimension clause, and $\sigma^2\ge\sigma^2 n/C_\star$, so \Cref{eq:twopoint} gives (I) and the lower half of (II), while \Cref{eq:genericupper} gives the upper half (\Cref{ssec:packing}).

\emph{Case $n\ge C_\star$ and $V^2\delta_1\le A_0\sigma^2$.} Here $k_0=1$ through the crossing clause. \Cref{prop:cover} starts at $k=2$, so we supply the missing entropy bound at $k=1$. Every ancestor $r$-net with $r<h_T$ has at least two elements: the root lies in every net, being its own only ancestor, and it covers no deepest vertex. Hence $\Nup(r)\ge2$ for all $r<h_T$, and letting $r\uparrow h_T$ in \Cref{eq:profile} gives $\alpha_1\ge h_T\log2$. On the other hand the single center $Vp_o$ covers the body at squared radius $V^2h_T$, since $\|\mu-Vp_o\|_2\le\max_u\|Vp_u-Vp_o\|_2=V\sqrt{h_T}$ by convexity, so $H^c_T(V^2h_T)=0$ and
\[
\rho_T\ \le\ V^2h_T\ \le\ \frac{V^2\delta_1}{\log2}\ \le\
\frac{A_0}{\log2}\,\sigma^2 .
\]
Both (I) and (II) follow, with \Cref{eq:twopoint} for the lower bounds and \Cref{eq:genericupper} for the upper ones (\Cref{ssec:packing}).

\emph{Case $2\le k_0\le K$.} The crossing clause fired at $k_0$: $V^2\delta_{k_0}\le A_0\sigma^2k_0$, and $k_0\le n/C_\star$, so \Cref{eq:rhocover} at $k_0$ gives
\begin{equation}\label{eq:rhointerior}
\rho_T\ \le\ C\bigl(V^2\delta_{k_0}+\sigma^2k_0\bigr)\ \le\
C(A_0+1)\,\sigma^2k_0 .
\end{equation}
By minimality neither clause holds at $k_0-1$; in particular $V^2\delta_{k_0-1}>A_0\sigma^2(k_0-1)$, so \Cref{prop:profilerisk} at $k_0-1$ gives
\begin{equation}\label{eq:packinterior}
R^*_T\ \ge\ c_0\,\sigma^2(k_0-1)\ \ge\ \frac{c_0}2\,\sigma^2k_0 .
\end{equation}
Combining \Cref{eq:rhointerior,eq:packinterior} proves (I). For (II) the same two inequalities identify the information branch: $V^2H_T\ge V^2h_T\ge V^2\delta_{k_0-1}>A_0\sigma^2(k_0-1)\ge \sigma^2k_0$ by \Cref{lem:toolkit}(\ref{it:mono}), so $\min\{V^2H_T,\sigma^2k_0\}=\sigma^2k_0$, which \Cref{eq:packinterior} bounds below and \Cref{eq:genericupper,eq:rhointerior} bound above.

\emph{Case $k_0=K+1$ with $n\ge C_\star$.} No clause fired on $[1,K]$; in particular $V^2\delta_K>A_0\sigma^2K$, so \Cref{prop:profilerisk} at $K$ gives $R^*_T\ge c_0\sigma^2K\ge c\,\sigma^2n$, using $K=\lfloor n/C_\star\rfloor\ge n/(2C_\star)$. This is (I), the three-term minimum being at most $\sigma^2n$; and (II) follows because $\sigma^2k_0\asymp\sigma^2n$ while $V^2H_T\ge V^2\delta_K>A_0\sigma^2K$ (\Cref{ssec:packing}).

The four cases are exhaustive: for $n\ge C_\star$, either the crossing clause fires at $k=1$, or it first fires at some $2\le k_0\le K$, or it never fires on $[1,K]$ and the dimension clause fires at $K+1$. This proves (I) and (II). In the interior case the computation also pins the fixed point itself, $\rho_T\asymp\sigma^2k_0$ (\Cref{ssec:packing}).
\end{proof}

The rate formula specializes in closed form on the three families of \Cref{tab:phases}; in each corollary the equivalence holds for all $V,\sigma>0$ with universal constants.

\begin{corollary}[Path]\label{cor:path}
Let $P_L$ be the path with $L\ge1$ edges, rooted at one endpoint, so that $\mathcal F_V(P_L)=\{\mu:V=\mu_0\ge\mu_1\ge\cdots\ge\mu_L\ge0\}$. Then
\[
R^*_{P_L}(V,\sigma)\ \asymp\
\min\bigl\{V^2L,\ \ V^{2/3}\sigma^{4/3}L^{1/3},\ \ \sigma^2(L+1)\bigr\}.
\]
\end{corollary}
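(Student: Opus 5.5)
The plan is to evaluate the truncated ancestor profile of $P_L$ explicitly, show that $\delta_k(P_L)\asymp L/k^2$ uniformly over the relevant range of $k$, and then solve the crossing condition \Cref{eq:crossing}. \Cref{thm:rate} does the rest, with $H_{P_L}=L$ and $n=L+1$.

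\emph{Ancestor covering numbers.} On a path rooted at an endpoint, a center at depth $d$ ancestor-covers exactly the depths $d,\dots,d+\lfloor r\rfloor$. A minimum ancestor $r$-net therefore consists of the root together with every $(\lfloor r\rfloor+1)$-st vertex below it (this is also what the residual-depth greedy of \Cref{lem:toolkit} returns), so
\[
\Nup(r)=\Bigl\lceil\frac{L+1}{\lfloor r\rfloor+1}\Bigr\rceil .
\]

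\emph{Two-sided bound on the profile.} Fix $1\le k\le(L+1)/C_\star$; by the dimension clause of \Cref{eq:crossing}, only such $k$ matter.
\begin{itemize}
\item[] \emph{Lower bound.} Take $r\asymp L/(ek)$, an integer of order $L/k\ge e$. Then $\Nup(r)\ge ek$, so the bracket in \Cref{eq:profile} is at least $\min\{k,1\}=1$. This gives $\alpha_k\ge cL/k$.
\item[] \emph{Upper bound.} Write $\Nup(r)\le 2(L+1)/(r+1)$ and set $t:=[\log(\Nup(r)/k)]_+$. Then $r+1\le 2(L+1)e^{-t}/k$, so the term at radius $r$ is at most $r\,t\le 2(L+1)\,t e^{-t}/k\le C L/k$.
\end{itemize}
Hence $\alpha_k\asymp L/k$ and $\delta_k\asymp L/k^2$ in this range. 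The discrete form \Cref{eq:discrete} lets me take integer $r$ throughout.

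I expect the main obstacle to be this bookkeeping: keeping the constants uniform at the extremes. These are the small radii $r<1$, where $\Nup=L+1$ and the truncation $\min\{k,\cdot\}$ is what keeps the term at most $\log((L+1)/k)\le (L+1)/k$, and the regime $k$ comparable to $L/C_\star$, where the chosen $r$ is only of constant size. I would handle both with the crude bounds $\lceil x\rceil\le 2x$ for $x\ge1$ and $te^{-t}\le e^{-1}$, and check the $L=1,2$ edge cases directly through the case $n<C_\star$ of the proof of \Cref{thm:rate}.

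\emph{Solving the crossing.} With $\delta_k\asymp L/k^2$, the clause $V^2\delta_k\le A_0\sigma^2k$ reads $k^3\gtrsim V^2L/\sigma^2$. Together with the dimension clause this gives
\[
k_0\asymp \min\bigl\{\max\{1,(V^2L/\sigma^2)^{1/3}\},\ L+1\bigr\}.
\]
The constants from the two-sided bound on $\delta_k$ only rescale the threshold. Since $\delta_k$ is nonincreasing (\Cref{lem:toolkit}), the first crossing index moves by at most a constant factor, by the same sandwich argument that gives $\kalg\le k_0\le2\kalg$.

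\emph{Assembly.} Substitute into $\min\{V^2L,\sigma^2k_0\}$. When $(V^2L/\sigma^2)^{1/3}\le1$, we have $V^2L\le\sigma^2$; the minimum is then $V^2L$, and it also equals the three-term expression, since $V^{2/3}\sigma^{4/3}L^{1/3}\ge V^2L$ exactly when $\sigma^2\ge V^2L$. When the cube root lies between $1$ and $L+1$, the minimum is $\sigma^2k_0\asymp V^{2/3}\sigma^{4/3}L^{1/3}$. When the cube root exceeds $L+1$, the minimum is $\sigma^2(L+1)$. In every case this matches the stated three-term minimum up to universal constants.
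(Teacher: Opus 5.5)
Your proposal is correct and follows the paper's route. The paper likewise uses the interval-covering count $\Nup(q)=\lceil (L+1)/(q+1)\rceil$, the two-sided bound $\alpha_k\asymp (L+1)/k$ on $1\le k\le (L+1)/e^2$ (via $\sup_y y\log(A/y)=A/e$ and a radius of order $(L+1)/(ek)$), and then \Cref{thm:rate}. Two differences are minor. Organizationally, you solve $k_0\asymp\min\{\max\{1,(V^2L/\sigma^2)^{1/3}\},\,L+1\}$ first and then split on that cube root, whereas the paper splits directly on $k_0=1$, $2\le k_0\le K$, and $k_0=K+1$. On a detail, the residual-depth greedy actually returns a bottom-anchored net rather than your root-anchored one; it has the same cardinality, so nothing in your argument changes.
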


\begin{corollary}[Star]\label{cor:star}
Let $S_m$ be the star with $m\ge2$ leaves rooted at the hub; the hub coordinate is pinned at $V$, and in leaf coordinates $\mathcal F_V(S_m)=\{\mu\in\bbR_{\ge0}^m:\sum_{i\le m}\mu_i\le V\}$. Then
\[
R^*_{S_m}(V,\sigma)\ \asymp\
\min\Bigl\{V^2,\ \ \sigma V\sqrt{1+\bigl[\log(m\sigma/V)\bigr]_+},\ \
\sigma^2m\Bigr\}.
\]
\end{corollary}

\begin{corollary}[Complete binary tree]\label{cor:binary}
Let $B_h$ be the complete binary tree of height $h\ge1$: every internal vertex has two children and all $2^h$ leaves lie at depth $h$, so that $n=2^{h+1}-1$. Then
\[
R^*_{B_h}(V,\sigma)\ \asymp\
\min\bigl\{V^2h,\ \ \sigma V\bigl(1+[\log(n\sigma/V)]_+\bigr),\ \
\sigma^2n\bigr\}.
\]
\end{corollary}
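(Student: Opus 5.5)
We apply \Cref{thm:rate}: it suffices to evaluate the profile $\delta_k(B_h)$ up to universal constants, then solve the crossing condition \Cref{eq:crossing} for $k_0$ and compare $\sigma^2k_0$ with $V^2H_T=2V^2h$. Because $\min\{V^2H_T,\sigma^2k_0\}$ is insensitive to constant-factor changes in $\delta_k$ (the remark after \Cref{thm:rate} allows changing $A_0$), a two-sided estimate of $\delta_k$ up to universal factors is enough.

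\emph{Step 1: ancestor covering numbers.} We show $\Nup(q)\asymp 2^{h-q}$ for integers $0\le q<h$, more precisely $2^{h-q}\le \Nup(q)\le 2^{h-q+1}$.
\begin{itemize}
\item Lower bound: each center covers at most $2^q$ leaves, namely those of its subtree within $q$ levels, and there are $2^h$ leaves.
\item Upper bound: take all vertices at depths $h-q,\,h-2(q+1)+1,\dots$, i.e.\ one band of $q+1$ consecutive levels per center depth, together with the root. These sets have geometrically decreasing sizes, so the total is at most $2\cdot2^{h-q}$.
\end{itemize}
Hence $\log(\Nup(q)/k)=(h-q)\log2-\log k+O(1)$.

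\emph{Step 2: evaluating the profile.} Insert Step 1 into the discrete form \Cref{eq:discrete} of \Cref{lem:toolkit}(\ref{it:discrete}). Write $x:=\log(en/k)\asymp 1+(h-\log_2k)$. The profile becomes, up to constants,
\[
\alpha_k\asymp\max_{0\le q<h}(q+1)\min\{k,[x-q\log2]_+\},
\]
and we split according to the size of $k$ relative to $x$.
\begin{itemize}
\item If $k\ge x$, the truncation at $k$ is inactive. The maximum of $(q+1)(x-q\log 2)$ is attained near $q\asymp x$ and gives $\alpha_k\asymp x^2$, hence $\delta_k\asymp x^2/k$.
\item If $k<x$, the truncation binds on the range $q\le x-k$. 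That range has length $\asymp x\asymp h$, so $\alpha_k\asymp kh$ and $\delta_k\asymp h$.
\end{itemize}
In both cases
\[
\delta_k(B_h)\asymp\min\bigl\{h,\ \log^2(en/k)/k\bigr\},
\]
which is consistent with $\delta_k\le h_T$ from \Cref{lem:toolkit}(\ref{it:mono}).

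\emph{Step 3: solving the crossing.} We read off $k_0$ from \Cref{eq:crossing} in three regimes.
\begin{itemize}
\item If $V^2h\lesssim\sigma^2$, then $k_0=1$ and the formula gives $\min\{V^2h,\sigma^2\}$. This matches the claim, since in this regime $\sigma V(1+\cdots)\ge\sigma V\gtrsim V^2h$ whenever $V^2h\le\sigma^2$.
\item In the interior, the crossing is governed by the $\log^2(en/k)/k$ branch: $V^2\log^2(en/k)/k\asymp\sigma^2k$, i.e.\ $k\asymp(V/\sigma)\log(en/k)$. Solving this self-consistently gives $\log(en/k)\asymp 1+[\log(n\sigma/V)]_+$, where $\log(V/\sigma)$ is absorbed because $k\le n$. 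Hence $\sigma^2k_0\asymp\sigma V(1+[\log(n\sigma/V)]_+)$.
\item If the solution would exceed $n/C_\star$, the dimension clause fires and the rate is $\sigma^2k_0\asymp\sigma^2n$.
\end{itemize}
Taking the minimum with $V^2H_T\asymp V^2h$ gives the three-term formula.

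\emph{Main obstacle.} The main difficulty is regime bookkeeping at the boundaries, in particular where the profile switches between its $h$ branch and its $x^2/k$ branch. At that switch, $\sigma^2k_0$ from the $h$ branch is $\gtrsim V^2h$, so it is the diameter term, not the information term, that must be shown to realize the minimum. We would handle this by checking, in each regime, that the minimum of the three claimed terms equals the claimed branch up to constants. This uses monotonicity of $k\mapsto\delta_k$ and of $k\mapsto\sigma^2k$, so that sandwiching $k_0$ between two explicit values suffices.
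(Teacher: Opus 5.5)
Your overall route is the paper's: the same band construction for the covering counts, the same two-phase profile (the paper proves $\alpha_k\asymp\beta\min\{k,\beta\}$ with $\beta=\log(n/k)$, which on $k\le n/C_\star$ is equivalent to your $\delta_k\asymp\min\{h,\log^2(en/k)/k\}$), and the same sandwiching of $k_0$. The gap is in the small-amplitude regime, which is exactly where the binary tree differs from the star.

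In the first bullet of Step~3 you assert $\sigma V\gtrsim V^2h$ whenever $V^2h\le\sigma^2$. This is false. The inequality $\sigma V\ge V^2h$ means $V\le\sigma/h$, whereas $V^2h\le\sigma^2$ only gives $V\le\sigma/\sqrt h$; at $V=\sigma/\sqrt h$ one has $\sigma V=\sigma^2/\sqrt h$ against $V^2h=\sigma^2$. So, once $h$ is large, your check that the claimed three-term expression is $\asymp V^2h$ fails as written. The missing observation is that the logarithm may not be dropped. For $V\le\sigma$,
\[
1+\bigl[\log(n\sigma/V)\bigr]_+\ \ge\ 1+\log n\ \ge\ h\log2,
\qquad\text{so}\qquad
\sigma V\bigl(1+[\log(n\sigma/V)]_+\bigr)\ \ge\ (\log2)\,\sigma Vh\ \ge\ (\log2)\,V^2h ,
\]
while the third term satisfies $\sigma^2n\ge\sigma^2h\ge V^2h$. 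The full logarithm is of order $h$ throughout $V\le\sigma$, and that is what makes the diameter branch the minimum there. On the star, with $h=1$, dropping it is harmless, which is presumably why the shortcut looked safe.

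Second, none of your three bullets covers the range $A_0\sigma^2/h<V^2\le\sigma^2$. There $k_0>1$, but the crossing occurs on the $h$-branch of the profile. It does not occur on the $\log^2(en/k)/k$ branch that your interior bullet solves, since that branch requires $k_0\gtrsim\log(en/k_0)$, i.e.\ $V\gtrsim\sigma$. Your closing paragraph anticipates this but does not carry it out. It suffices to show $k_0\ge cV^2h/\sigma^2$:
\begin{itemize}
\item For $k\le c\,h$, with $h$ above a universal constant, one has $\log(n/k)\ge\log(2^h/h)\ge\tfrac{\log2}{2}h\ge k$, so $\delta_k\gtrsim h$.
\item Hence the crossing clause fails for every $k<cV^2h/\sigma^2$, while the dimension clause is silent.
\item Then $\min\{V^2H_T,\sigma^2k_0\}\asymp V^2h$, and the display above matches it.
\end{itemize}
This is how the paper argues. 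It treats the whole range $V\le\sigma$ as a single diameter regime, handled by this lower bound on $k_0$. Your interior and dimension bullets correspond to its treatment of $V>\sigma$, where $k_0$ is sandwiched between $\min\{c\kappa,K\}$ and $\lceil\kappa\rceil$, with $\kappa=(V/\sigma)\bigl(1+[\log(n\sigma/V)]_+\bigr)$.

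A minor point: the remark after \Cref{thm:rate} does not assert the theorem for every $A_0$. The insensitivity to constant factors in $\delta_k$ that you invoke follows instead from the monotonicity of $k\mapsto\delta_k$, as you note at the end.
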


The three corollaries follow one recipe: the ancestor-covering counts determine the profile through \Cref{eq:discrete}, and the crossing selects the active branch. Interval covering gives the path counts $\lceil(L+1)/(q+1)\rceil$; on the star only the radius $q=0$ contributes, and the profile is the exact expression of \Cref{rem:star}; on the binary tree the counts decay geometrically in the radius, and, for $k\le n/C_\star$, the profile $\alpha_k$ is quadratic in $\log(n/k)$ wherever $k$ exceeds that logarithm. The counts, the profiles, and the scalar optimizations that locate $k_0$ and match the branches in every parameter regime are carried out in \Cref{ssec:benchmarks}.

The crossing consumed \Cref{prop:cover} only through \Cref{eq:entropycover}; the further certificates are spent in \Cref{sec:estimation}, the code length becoming a prior, the root sum an exact constraint, and the confined states a one-dimensional computation.

%%%%%%%%%%%%%%%%%%%%%%%%%%%%%%%%%%%%%%%%%%%%%%%%%%%%%%%%%%%%%%%%%%%%%%%%%%%%%%
\section{Efficient and Adaptive Estimation}\label{sec:estimation}
%%%%%%%%%%%%%%%%%%%%%%%%%%%%%%%%%%%%%%%%%%%%%%%%%%%%%%%%%%%%%%%%%%%%%%%%%%%%%%

This section proves the three estimation theorems; the first is stated here, the adaptation pair in \Cref{sec:adaptation}.

\begin{theorem}[Efficient estimation]\label{thm:efficient}
There is a deterministic estimator $\widehat\mu=\widehat\mu(T,V,\sigma,Y)$, computed exactly in $O(n^2\log(n+1))$ arithmetic operations of the real-RAM model fixed in \Cref{sec:polytope}, such that for every finite rooted tree $T$ and all $V,\sigma>0$,
\[
\sup_{\mu\in\mathcal F_V(T)}\E_\mu\bigl\|\widehat\mu-\mu\bigr\|_2^2
\ \le\ C\,R^*_T(V,\sigma)
\]
with a universal constant $C$.
\end{theorem}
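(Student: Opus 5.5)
The plan is to combine the coded cover of \Cref{prop:cover} at the algorithmic index $\kalg$ with an exponentially weighted aggregate, and to fall back on the two trivial estimators in the corner regimes.

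\emph{Choice of branch.} First compute $\kalg$ in $O(n\log(n+1))$ operations by \Cref{lem:toolkit}(\ref{it:surrogate}); this also gives $\min\{V^2H_T,\sigma^2k_0\}$ up to a factor two, since $H_T$ comes from two tree traversals. There are three outputs:
\begin{itemize}
\item if $V^2H_T\le\sigma^2\kalg$, or if $\kalg\le1$, or if $n<C_\star$, output the constant $Vp_o$, whose risk is at most $V^2h_T$;
\item if $\kalg>n/C_\star$, output $Y$, whose risk is $\sigma^2n\lesssim\sigma^2k_0$;
\item otherwise set $k:=\max\{2,\kalg\}\le n/C_\star$ and run the aggregate below.
\end{itemize}
In the first branch, when $\kalg=1$ the case analysis in the proof of \Cref{thm:rate} gives $V^2h_T\le V^2\delta_1/\log2\lesssim\sigma^2$, and \Cref{eq:twopoint} matches this. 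The comparisons with $R^*_T$ then follow from \Cref{thm:rate} and $\kalg\le k_0\le2\kalg$.

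\emph{The aggregate.} Over the class $\mathcal D_k$, put the prior $\pi(z)\propto e^{-\Gamma(z)}$ restricted to the three budgets. Use the weights
\[
w(z)\propto\pi(z)\exp\Bigl(-\|Y-\tfrac Vk\textstyle\sum_vz_vp_v\|_2^2/(4\sigma^2)\Bigr),
\]
and set $\widehat\mu$ to the weighted mean. The oracle inequality (\Cref{lem:oracle}, built from Stein's identity and the Gibbs variational identity) gives risk at most
\[
\min_z\bigl\{\|\mu-\theta_z\|^2+4\sigma^2\log(1/\pi(z))\bigr\}.
\]
Bound $\log(1/\pi(z))\le\Gamma(z)+\log Z_\pi$, where $Z_\pi\le e^{C_1k}$ by \Cref{prop:cover}(\ref{it:count}). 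Inserting the certificate $z$ from \Cref{prop:cover}(\ref{it:approx}) yields risk at most $C(V^2\delta_k+\sigma^2k)$. At $k\asymp\kalg$ the crossing clause for $\overline\delta$ gives $V^2\delta_k\le V^2\overline\delta_{\kalg}\le2A_0\sigma^2\kalg$; when $k=2$ use monotonicity instead. So the risk is $O(\sigma^2k_0)=O(R^*_T)$ in this branch.

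\emph{Exact computation.} This is the main obstacle. The quadratic exponent must factor over the tree. Write $\theta_z(v)=\frac Vk x_v$, so the exponent is $\sum_v(Y_v-\frac Vk x_v)^2$, separable in the flow states. The prior $\Gamma$ is separable in $z_v=x_v-\sum_{c\in\ch(v)}x_c$. The constraints are $x_o=k$ and $x_v\in\{0,\dots,3k\}$.

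The budget $\Gamma\le9k$ is the one global coupling. I would handle it by carrying a generating variable $t$ for $\Gamma$, so each message is a polynomial in $t$ of degree at most $9k$. Alternatively, I would drop the hard cap, since $e^{-\Gamma}$ already concentrates and the oracle bound survives with only a Kraft sum; that version is easier, and I would try it first.

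Define the message $m_v(x)=\sum$ over configurations of $T_v$ with $x_v=x$. At a vertex, the children's states are combined by the convolution $\sum_c x_c=x-z_v$, weighted by $e^{-|z_v|-\widetilde\omega(v)\ind\{z_v\ne0\}}$. Because $|z_v|$ is piecewise linear, this convolution is computed by FFT polynomial multiplication over the range $[0,3k]$. It costs $O(k\log k)$ per edge, plus an $O(k^2)$ transfer for the leak kernel. This is the step where the $O(n^2\log(n+1))$ total must be defended, since $k\le n$.

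Posterior means $\E[x_v\mid Y]$ follow from one reverse (outside) sweep, by differentiating $\log Z$ with respect to the linear coefficients $Y_v$. Everything is exact in the real-RAM model. The obstacle I expect is bookkeeping the root constraint $\sum z=k$ jointly with the $[0,3k]$ confinement and the activation charges, so that the recursion's state space really is one integer per vertex.
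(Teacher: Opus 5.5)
Your architecture matches the paper's: branch on $\kalg$, an exponentially weighted aggregate over confined integer states with a code-length prior, degree-$3k$ polynomial messages, and one reverse sweep. But the version you would actually run has a statistical gap and a computational one.

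\emph{The prior.} With the cap $\Gamma\le9k$ dropped, the weight $e^{-\Gamma}$ does not give an $e^{O(k)}$ Kraft sum from the available estimates. You have $\sum_x e^{-\Gamma(x)}\le\prod_{v\in A_k}\bigl(1+\tfrac{2}{e-1}e^{-\widetilde\omega(v)}\bigr)$. The vertices first appearing at level $j$ number up to $2ke^{m_j}$ and each carries charge $m_j$, so net size and charge cancel exactly. Each of the $\asymp\log k$ dyadic levels is then bounded only by $O(k)$ in the logarithm, giving $e^{O(k\log k)}$, and the oracle inequality yields only $O(\sigma^2k\log k)$.

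The paper doubles the exponent, $\pi_k\propto e^{-2\Gamma}$. Level $j$ then contributes at most $2ke^{m_j}\cdot\tfrac{2}{e^2-1}e^{-2m_j}$, which is summable in $j$. So the normalizer is at most $e^k$ and $\log(1/\pi_k(x^\star))\le2\Gamma(x^\star)+k\le19k$. Your other option, keeping the cap with $Z_\pi\le e^{C_1k}$, is statistically sound. However, a generating variable for $\Gamma$ makes every merge a bivariate product costing $O(k^2\log k)$, which is over budget.

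\emph{The operation count.} An $O(k^2)$ leak transfer at each active vertex costs $O(|A_k|\,k^2)$. The case $A_k=\sfV$ with $k\asymp n$ does occur: on a star with $m$ leaves at $V=\sigma m$, the crossing index is of order $m$ while $\bar\alpha=O(1)$, so the finest net has radius $0$. That gives $\Theta(n^3)$.

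What rescues the bound is that the kernel is geometric, not merely piecewise linear. Write $\sum_{s\ne x}e^{-2|x-s|}Q_v[s]=\psi^-_x+\psi^+_x$, with $\psi^-_x=e^{-2}(Q_v[x-1]+\psi^-_{x-1})$ and $\psi^+_x=e^{-2}(Q_v[x+1]+\psi^+_{x+1})$. These are two $O(k)$ passes, so the total is $O(n\log(n+1)+nk\log(k+1))$.

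The same structure settles the bookkeeping you anticipate. At an active vertex, a child sum $s>3k$ still yields admissible states $x\le3k$ through a negative leak. So an FFT product truncated at degree $3k$ silently leaves $\mathcal X_k$, and the computed object is no longer the aggregate you analyzed. There are two fixes.
\begin{itemize}
\item Carry the discarded high part as one scalar, its evaluation at $e^{-2}$ shifted to the cap. It is updated at each merge by an $O(k)$ formula and seeds $\psi^+_{3k}$; this is the paper's tail.
\item Declare your class to be $\{x\in\mathcal X_k:\sum_{c\in\ch(v)}x_c\le3k\text{ for all }v\}$. The approximant of the coded cover lies in it, since its child sums are at most $k+W\le3k$. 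A subclass only lowers the normalizer, so the oracle bound survives.
\end{itemize}

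\emph{The first branch for $n<C_\star$.} Here $\kalg=1$ through the dimension clause, not the crossing clause, so $V^2h_T\lesssim\sigma^2$ does not follow. Then $Vp_o$ has risk of order $V^2$ where $R^*_T\asymp\min\{V^2,\sigma^2\}$, already on a single edge. When $V^2H_T>\sigma^2\kalg$ and $\kalg>\lfloor n/C_\star\rfloor$, you must return $Y$, whose risk is below $C_\star\sigma^2\kalg$. Only $\kalg=1\le\lfloor n/C_\star\rfloor$ justifies $Vp_o$.
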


The estimator is an exponentially weighted aggregate over a class of integer states, evaluated exactly in the sense fixed by \Cref{rem:exact}. \Cref{sec:aggregation} constructs it and proves the risk bound, \Cref{sec:evaluation} gives the algorithm, and \Cref{sec:adaptation} removes first the budget and then the noise level.

\subsection{Aggregation over Integer States}\label{sec:aggregation}

Least squares over the coded cover converts \Cref{eq:entropycover} into the rate, but it is a search through as many as $e^{C_1k}$ centers (\Cref{prop:cover}). The estimator of \Cref{thm:efficient} replaces the search by an average under a prior built from the code length.

Fix an integer $2\le k\le n/C_\star$ and recall the active support $A_k$, the charges $\widetilde\omega$, and the code length \Cref{eq:codelength}. The \emph{integer states} at budget $k$ are the vectors
\begin{equation}\label{eq:class}
\mathcal X_k\ :=\ \Bigl\{x\in\{0,1,\dots,3k\}^{\sfV}:\ x_o=k
\ \text{ and }\
z_v:=x_v-\textstyle\sum_{c\in\ch(v)}x_c=0\ \text{ for all }v\notin
A_k\Bigr\},
\end{equation}
each with its leak vector $z$, its code length $\Gamma(x):=\Gamma(z)$, and its center $\nu_x:=\frac Vk\,x$; the leaks telescope to the root state, $\sum_vz_v=x_o=k$. The approximant of \Cref{prop:cover} is an integer state: its coordinates lie in $[0,3k]$, its root coordinate is $k$, and its leaks are supported on $A_k$. The class relaxes the coded cover in exactly one respect: the budget $\Gamma\le9k$ is dropped from the constraints, so that $\mathcal D_k=\{\nu_x:\ x\in\mathcal X_k,\ \Gamma(x)\le9k\}$. The dropped budget resurfaces as the prior: as a constraint it would add a second dimension to the recursions of \Cref{sec:evaluation}; as a soft penalty it costs only a normalizing sum. Their leaks may be negative, as for the coded cover.

The estimator at budget $k$ is the exponentially weighted aggregate
\begin{equation}\label{eq:aggregate}
\widehat\mu_k\ :=\
\frac{\sum_{x\in\mathcal X_k}\nu_x\,e^{-2\Gamma(x)}\,
e^{-\|Y-\nu_x\|_2^2/(4\sigma^2)}}
{\sum_{x\in\mathcal X_k}e^{-2\Gamma(x)}\,
e^{-\|Y-\nu_x\|_2^2/(4\sigma^2)}}\ ,
\end{equation}
the posterior mean at temperature $4\sigma^2$ under the Kraft-type prior $\pi_k(x)\propto e^{-2\Gamma(x)}$. The normalizing sum $\sum_{x\in\mathcal X_k}e^{-2\Gamma(x)}$, which cancels from \Cref{eq:aggregate} and is never computed, is at most $e^{k}$, by a generating-function estimate from the net sizes of \Cref{sec:cover} (\Cref{ssec:evaluation}). Every integer state has root coordinate $k$ and coordinates at most $3k$, so $\widehat\mu_k(o)=V$ identically and every coordinate of $\widehat\mu_k$ lies in $[0,3V]$.

The risk of an exponentially weighted aggregate is controlled through the prior mass at a single candidate; no cardinality of the class enters \citep[cf.][]{leungbarron2006}. In the next lemma the true mean is arbitrary.

\begin{lemma}[Aggregation oracle]\label{lem:oracle}
Let $F\subseteq\bbR^N$ be finite and nonempty, let $\pi$ be a probability vector on $F$ with $\pi_f>0$ for all $f$, and let $Y=\mu+\sigma Z$ with $Z\sim N(0,I_N)$ and $\mu\in\bbR^N$. Define
\[
\widehat f_\pi(Y)\ :=\ \sum_{f\in F}\widehat\pi_f(Y)\,f,
\qquad
\widehat\pi_f(Y)\ :=\ \frac{\pi_f\,e^{-\|Y-f\|_2^2/(4\sigma^2)}}
{\sum_{g\in F}\pi_g\,e^{-\|Y-g\|_2^2/(4\sigma^2)}}\ .
\]
Then
\[
\E_\mu\bigl\|\widehat f_\pi-\mu\bigr\|_2^2\ \le\ \min_{f\in F}
\Bigl\{\|f-\mu\|_2^2+4\sigma^2\log\frac1{\pi_f}\Bigr\}.
\]
\end{lemma}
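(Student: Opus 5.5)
The plan is the Leung--Barron route: compute an unbiased estimate of the risk by Stein's identity, then bound it with the Gibbs variational identity. Write $\beta:=1/(4\sigma^2)$ and $w_f(Y):=\pi_f e^{-\beta\|Y-f\|_2^2}$, so that the partition function is $Z(Y):=\sum_g w_g(Y)$ and $\widehat f_\pi=\sum_f\widehat\pi_f f$.

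First I will express the estimator as a gradient:
\[
\nabla_Y\log Z(Y)=-2\beta\bigl(Y-\widehat f_\pi\bigr),
\qquad\text{so}\qquad
\widehat f_\pi=Y+\frac1{2\beta}\nabla\log Z=Y+2\sigma^2\nabla\log Z .
\]
This map is smooth, and its derivatives have Gaussian-integrable growth because $F$ is finite.

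Stein's unbiased risk estimate then gives
\[
\E\|\widehat f_\pi-\mu\|^2=\E\Bigl[N\sigma^2+4\sigma^4\|\nabla\log Z\|^2+4\sigma^4\Delta\log Z\Bigr].
\]
Since $\nabla\log Z=-2\beta\sum_f\widehat\pi_f(Y-f)$, a direct computation gives
\[
\Delta\log Z=-2\beta N+4\beta^2\operatorname{Var}_{\widehat\pi}(Y-f).
\]
Here $\operatorname{Var}_{\widehat\pi}$ denotes $\sum_f\widehat\pi_f\|Y-f\|^2-\|\sum_f\widehat\pi_f(Y-f)\|^2$. Substituting $\beta=1/(4\sigma^2)$, the unbiased risk estimate simplifies to
\[
\widehat r(Y)=-N\sigma^2+\sum_f\widehat\pi_f\|Y-f\|^2 .
\]
The two terms $\|\bar m\|^2$, with $\bar m:=\sum_f\widehat\pi_f(Y-f)$, carry coefficients $4\sigma^4\cdot4\beta^2=1$ and $-4\sigma^4\cdot4\beta^2\cdot$ (from the variance) $=-1$, and they cancel. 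Checking this cancellation of coefficients is the delicate algebra, and I expect it to be the main obstacle. The temperature $4\sigma^2$ is chosen precisely so that the $\|\bar m\|^2$ terms cancel and the residual is no worse than the Gibbs average; if the cancellation is not exact, the leftover term is $-\|\bar m\|^2\le0$, which only helps.

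The Gibbs variational identity then controls the average. For every probability vector $\rho$ on $F$,
\[
\sum_f\widehat\pi_f\|Y-f\|^2+4\sigma^2\mathrm{KL}(\widehat\pi\|\pi)\le\sum_f\rho_f\|Y-f\|^2+4\sigma^2\mathrm{KL}(\rho\|\pi),
\]
because $\widehat\pi$ is the minimiser of the right side. Dropping the nonnegative $\mathrm{KL}(\widehat\pi\|\pi)$ and taking $\rho=\delta_f$ gives
\[
\widehat r(Y)\le -N\sigma^2+\|Y-f\|^2+4\sigma^2\log(1/\pi_f).
\]
Taking expectations, $\E\|Y-f\|^2=\|\mu-f\|^2+N\sigma^2$, so $\E\widehat r\le\|f-\mu\|^2+4\sigma^2\log(1/\pi_f)$. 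Minimising over $f$ proves the lemma, with no union bound and approximation constant one.
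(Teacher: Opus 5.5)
Your proof is correct and follows the paper's route. You use Stein's identity, with the $\|\bar m\|_2^2$ terms cancelling exactly at temperature $4\sigma^2$, to get risk $=\E\sum_f\widehat\pi_f(Y)\|Y-f\|_2^2-N\sigma^2$, and then bound this by the Gibbs variational identity evaluated at a point mass. Writing $\widehat f_\pi=Y+2\sigma^2\nabla\log Z$ and computing the Laplacian of the log-partition function is just the paper's Jacobian and bias--variance computation in potential form; your coefficient check is right, and since the cancellation is exact, the hedge about a leftover $-\|\bar m\|_2^2$ is unnecessary.
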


\begin{proof}[Proof sketch]
Two exact identities. Differentiating the weights shows that the aggregate is smooth and bounded and identifies its Jacobian as $\frac1{2\sigma^2}$ times the posterior covariance of $f$, so Stein's identity \citep{stein1981} writes the risk through the divergence, a covariance trace; the bias--variance split of the posterior loss $\sum_f\widehat\pi_f(y)\,\|y-f\|_2^2$ produces the same trace with the opposite sign, and at this temperature the two cancel exactly, leaving $\E_\mu\|\widehat f_\pi-\mu\|_2^2=\E\sum_f\widehat\pi_f(Y)\,\|Y-f\|_2^2-N\sigma^2$. The weights are the Gibbs minimizer of the posterior loss penalized by $4\sigma^2$ times the relative entropy to $\pi$ \citep[see, e.g.,][Section~4.9]{boucheron2013}, so the value of the penalized objective at the point mass on any $f_0\in F$ bounds the posterior loss pathwise by $\|Y-f_0\|_2^2+4\sigma^2\log(1/\pi_{f_0})$; take expectations and insert $\E\|Y-f_0\|^2=\|f_0-\mu\|^2+N\sigma^2$. The differentiation, the domain of Stein's identity, and the two expansions are carried out in \Cref{ssec:evaluation}.
\end{proof}

Set $K:=\lfloor n/C_\star\rfloor$ as in \Cref{sec:crossing}. From here through \Cref{sec:evaluation}, set $k:=\kalg(T,V,\sigma)$, the algorithmic crossing index of \Cref{lem:toolkit}(\ref{it:surrogate}), so that $k\le K+1$. The root coordinate carries no statistical content (\Cref{sec:polytope}), so define the root-corrected observation $\widetilde Y$ by $\widetilde Y(o):=V$ and $\widetilde Y(v):=Y(v)$ for $v\ne o$. The estimator of \Cref{thm:efficient} is
\begin{equation}\label{eq:branches}
\widehat\mu\ :=\
\begin{cases}
Vp_o, & V^2H_T\le\sigma^2k,\\
\widetilde Y, & V^2H_T>\sigma^2k\ \text{ and }\ k>K,\\
Vp_o, & V^2H_T>\sigma^2k\ \text{ and }\ k=1\le K,\\
\widehat\mu_{k}, & V^2H_T>\sigma^2k\ \text{ and }\ 2\le k\le K,
\end{cases},
\end{equation}
four exclusive and exhaustive branches: when the diameter is not the active scale, the index $k$ either exceeds the dimension range, or is too small for \Cref{prop:cover}, or is interior.

The risk bound of \Cref{thm:efficient} follows. In the interior branch \Cref{prop:cover} applies, and evaluating the oracle of \Cref{lem:oracle} at the approximant $x^\star$ of a given $\mu\in\mathcal F_V(T)$, with $\log(1/\pi_k(x^\star))\le2\Gamma(x^\star)+k\le19k$, gives
\begin{equation}\label{eq:interior}
\sup_{\mu\in\mathcal F_V(T)}
\E_\mu\bigl\|\widehat\mu_k-\mu\bigr\|_2^2\ \le\
C_0V^2\delta_k(T)+C\sigma^2k\ \le\
C\,\sigma^2k\ =\ C\min\{V^2H_T,\ \sigma^2k\},
\end{equation}
the second inequality because the surrogate crossing clause fired at $k=\kalg\le K$, so that $V^2\delta_k\le V^2\overline\delta_k\le2A_0\sigma^2k$, and the equality by the branch condition. The three elementary branches obey the same bound by the diameter and first-budget facts of \Cref{sec:crossing} (\Cref{ssec:evaluation}). Since $\kalg\le k_0$ by \Cref{lem:toolkit}(\ref{it:surrogate}), $\min\{V^2H_T,\sigma^2\kalg\}\le\min\{V^2H_T,\sigma^2k_0\}$, which \Cref{thm:rate} bounds by $C\,R^*_T(V,\sigma)$.

\subsection{Exact Evaluation}\label{sec:evaluation}

It remains to evaluate \Cref{eq:aggregate}, a ratio of sums over a class of exponential size. Computing $\kalg$ and the nets behind $A_k$ costs $O(n\log(n+1))$ operations (\Cref{lem:toolkit}), and the elementary branches are linear; the content is the interior branch.

The confined states are the mechanism. Every coordinate of an integer state lies in $\{0,\dots,3k\}$, so subtree weights factorize over one integer state per vertex. Write $\varphi_v(x):=e^{-(Y_v-(V/k)x)^2/(4\sigma^2)}$ for the local weight, so that the summand of \Cref{eq:aggregate} at $x$ is $e^{-2\Gamma(x)}\prod_v\varphi_v(x_v)$, and define for each vertex the \emph{message}
\begin{equation}\label{eq:message}
P_v(\zeta)\ :=\ \sum_{x=0}^{3k}P_v[x]\,\zeta^x,
\end{equation}
a polynomial of degree at most $3k$ in an indeterminate $\zeta$, whose coefficient $P_v[x]$ is the total weight, local weights times prior kernel, of the admissible assignments on the subtree $T_v$ with state $x$ at $v$. The message at $v$ arises from the product $\prod_{c\in\ch(v)}P_c(\zeta)$ of its children's messages by pairwise polynomial multiplications and an $O(k)$ local pass, and correctness is an induction over subtrees; since every integer state has root coordinate $k$, the denominator of \Cref{eq:aggregate} is the single coefficient $P_o[k]$.

A child sum can exceed $3k$ where a state cannot. At an inactive vertex conservation kills such terms; at an active vertex the prior kernel is geometric in the leak, so they enter through one scalar, the child product evaluated at $e^{-2}$; evaluations multiply where messages multiply, so the scalar travels with each message, and no coefficient beyond degree $3k$ outlives a merge (\Cref{ssec:evaluation}).

The tree off the active support carries no state: a subtree disjoint from $A_k$ contributes a constant factor and is pruned, and a maximal chain of inactive vertices, each left with one child whose state it copies, compresses to a single edge whose diagonal kernel is generated in $O(k)$ from the chain's length and observation sum.

The numerator of \Cref{eq:aggregate} is recovered by differentiation. Treat the $3k+1$ local weights at each vertex as formal inputs of the arithmetic circuit that computes $P_o[k]$; every admissible assignment carries the factor $\varphi_v(x_v)$ exactly once, so
\begin{equation}\label{eq:marginal}
\widehat\mu_k(v)\ =\ \frac Vk\,\sum_{x=0}^{3k}x\cdot
\frac{\varphi_v(x)}{P_o[k]}\,
\frac{\partial P_o[k]}{\partial\varphi_v(x)}\ ,
\end{equation}
the ratio under the sum being the posterior probability of $\{x_v=x\}$. One reverse-mode sweep of the circuit computes all these derivatives at once, at the same asymptotic cost as the forward pass (\Cref{ssec:evaluation}).

Each vertex contributes $O(k)$ local work, each of the fewer than $n$ merges costs one multiplication of degree-$3k$ polynomials, $O(k\log(k+1))$ operations in the model of \Cref{sec:polytope}, and the reverse sweep matches the forward cost, so with the preprocessing the total is $O(n\log(n+1)+nk\log(k+1))\le O(n^2\log(n+1))$ operations, the count of \Cref{thm:efficient}. On structured trees the account improves: sibling messages that agree up to a dilation of the variable multiply in batch through power sums, and on stars the entire evaluation takes $O(n\log^2 n)$ operations. Every tie break is fixed, so the estimator is a deterministic function of $(T,V,\sigma,Y)$; together with the risk bound of \Cref{sec:aggregation}, this proves \Cref{thm:efficient}. The evaluation identities, the operation count, and the instance-sensitive refinements are proved in \Cref{ssec:evaluation}.

\begin{remark}[Computational model]\label{rem:exact}
``Exact'' refers to the estimator itself, not only to its risk order: in the arithmetic model fixed in \Cref{sec:polytope}, the messages, the overflow scalars, the compressed kernels, and the reverse sweep are algebraic identities, and the returned vector is \Cref{eq:aggregate} with $k=\kalg$, evaluated without error.
\end{remark}

\subsection{Adaptation}\label{sec:adaptation}

The budget is not external to the model: on the cone \Cref{eq:cone} it is the root value of the signal, observed directly as $Y_o=V+\sigma Z_o$, so adaptation to it is cheap but not free.

\begin{theorem}[Budget adaptation]\label{thm:budget}
There are universal constants $c,C>0$ and a deterministic polynomial-time estimator $\widehat\mu=\widehat\mu(T,\sigma,Y)$, not depending on $V$, such that for every finite rooted tree $T$, all $V,\sigma>0$, and every $\mu\in\mathcal F_V(T)$,
\[
\E_\mu\|\widehat\mu-\mu\|_2^2\ \le\
C\bigl(R^*_T(V,\sigma)+\sigma^2\bigr).
\]
Conversely, every estimator $\widehat\mu(T,\sigma,Y)$ satisfies, for every finite rooted tree $T$ and all $V,\sigma>0$,
\[
\max_{V'\in\{V,\,V+\sigma\}}\ \sup_{\mu\in\mathcal F_{V'}(T)}
\E_\mu\|\widehat\mu-\mu\|_2^2\ \ge\ c\,\sigma^2 .
\]
\end{theorem}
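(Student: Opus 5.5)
For the upper half, I would estimate the budget from the root coordinate and run a Kraft-weighted selection over budgets. The first step is to replace $V$ by $\widehat V:=\max\{Y_o,\sigma\}$ or a value rounded from it. Since $|Y_o-V|$ is of order $\sigma$, a mismatch in the budget costs at most of order $\sigma^2 H_T$ in the naive plug-in. That is too much, so plain plug-in is not enough. Instead I would follow the route sketched in \Cref{sec:overview} and strip the quantization from the approximant of \Cref{prop:cover}. For each $k$ in $[2,K]$ this gives a fixed, signal-independent family of subspaces. Each subspace is spanned by $\{p_v: v\in S\}$, with $S\subseteq A_k$ a support of code length $O(k)$. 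For every $V$, some member of the family approximates $\mu$ within squared error $C V^2\delta_k$. The amplitudes are free, so no knowledge of $V$ is needed to define the class.

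The estimator is penalized least squares over this union of subspaces. The penalty is $C\sigma^2(\dim S+\text{code length})$, which is Kraft-summable by the generating-function count behind \Cref{prop:cover}(\ref{it:count}). I would also adjoin the two elementary candidates. The first is $\widehat V p_o$. The second is the root-corrected identity $\widetilde Y$ with $\widetilde Y(o)$ replaced by $Y_o$. The standard oracle inequality for penalized projection estimators in the Gaussian sequence model then gives
\[
\E_\mu\|\widehat\mu-\mu\|_2^2\ \le\ C\min_k\bigl(V^2\delta_k+\sigma^2k\bigr)+C\sigma^2 .
\]
The root coordinate accounts for the additive $\sigma^2$, since it is always estimated by $Y_o$. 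Taking $k=k_0$ in the interior case, and comparing with the elementary candidates in the remaining cases exactly as in the proof of \Cref{thm:rate}, yields $C(R^*_T+\sigma^2)$.

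Polynomial time is the main obstacle. The union of subspaces is exponentially large, so I would exploit the additive structure of the penalty. The residual is additive over vertices once the leaks are fixed, so the selection can be done as a tree dynamic program over supports. Fitting free amplitudes is a least-squares problem, however, not a sum over states. The remedy I would try first is to discretize amplitudes on a grid of mesh $\sigma$ relative to a finite grid of candidate budgets $V'$ around $\widehat V$. This turns each candidate into an integer-state class as in \Cref{sec:estimation}, at budget $V'$. For each $V'$ in a geometric grid I would compute the aggregate of \Cref{thm:efficient}. I would then select among the polynomially many outputs by a hold-out or by a second aggregation, using \Cref{lem:oracle} with the uniform prior over the grid. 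This costs $\sigma^2\log(\text{grid size})$, which must be absorbed. The grid therefore has to be kept of size $O(1)$ relative to the scale of $V$, using the localization $|Y_o-V|\lesssim\sigma$ together with the observation that $R^*_T$ changes by at most constant factors as $V$ ranges over $[V-\sigma,V+\sigma]$ when $V\gg\sigma$. When $V\lesssim\sigma$, the candidate $\widehat Vp_o$ already achieves $O(\sigma^2+V^2H_T)$, and I need $\min$ with the other branches to close.

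For the converse, I would use a two-point test at the root. Take $\mu_1:=Vp_o\in\mathcal F_V(T)$ and $\mu_2:=(V+\sigma)p_o\in\mathcal F_{V+\sigma}(T)$. These differ only in coordinate $o$, by $\sigma$, so the KL divergence between the two observation laws is $1/2$. Le Cam's two-point inequality then shows that any estimator has
\[
\max_i\E_{\mu_i}\|\widehat\mu-\mu_i\|_2^2\ \ge\ \tfrac{\sigma^2}{4}\bigl(1-\mathrm{TV}\bigr)\ \ge\ c\sigma^2 .
\]
This holds for every tree, including $n=1$.
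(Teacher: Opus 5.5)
Your converse is the paper's own two-point argument at the root. The statistical core of your upper bound is also the paper's: amplitude-free subspaces obtained by unquantizing the cover of \Cref{prop:cover}, Kraft-weighted penalized least squares, and the root coordinate handled separately. The gap is in polynomial time, which the theorem requires.

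Selecting over all budgets $2\le k\le K$ means about $e^{C_1k}$ subspaces per budget, which is exponential in $n$. You see this, but your grid remedy does not close it, for three reasons.
\begin{enumerate}
\item The root noise $Y_o-V=\sigma Z_o$ is unbounded, so with constant probability every value of an $O(1)$-size grid around $\widehat V$ lies below $V$. On that event \Cref{thm:efficient} at budget $V'$ gives you nothing, since the lift $\mu+(V'-V)p_o\in\mathcal F_{V'}(T)$ needs $V'\ge V$. Your fallbacks $\widehat Vp_o$ and $\widetilde Y$ then cost $\min\{V^2H_T,\sigma^2n\}$. On the path at $V=10\sigma$ this is $\sigma^2L$, against $R^*\asymp\sigma^2L^{1/3}$. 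If instead you let the grid grow under a uniform prior, you pay $\sigma^2\log(\text{grid size})$, which is not absorbed when $R^*\asymp\sigma^2$.
\item A grid built around $\widehat V\ge\sigma$ never reaches amplitudes $V\ll\sigma$. There $Y_o$ carries no information at scale $V$, and $\widehat Vp_o$ is polynomially suboptimal: on the path at $V=\sigma L^{-1/4}$ one has $V^2H_T=\sigma^2L^{1/2}$ while $R^*\asymp\sigma^2L^{1/6}$. You leave exactly this case open.
\item A hold-out needs external randomization in this one-sample model, which breaks determinism. Also, \Cref{lem:oracle} is stated for fixed candidates, not for the data-dependent outputs of \Cref{thm:efficient}.
\end{enumerate}

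The missing idea is to split on the size of the crossing index.
\begin{itemize}
\item \emph{Small crossing.} Enumerate the subspaces only at dyadic $k\le k^\star:=\lfloor\log n/(2C_1)\rfloor$. This gives at most $2e^{C_1k^\star}\le2\sqrt n$ supports in all. It handles $\kalg\le k^\star/2$ in polynomial time and pays no quantization.
\item \emph{Large crossing.} When $\kalg>k^\star/2$ and the diameter branch is inactive, the rate is at least $c\sigma^2\log n$, and that pays for amplitude grids.
\item \emph{Anchored grid, $V\ge\sigma/2$.} Use the root-anchored geometric grid $v_i=(Y_o)_++2^i\sigma$, $0\le i\le O(\log n)$, with Kraft weight $i$ rather than a uniform prior. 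The first $v_{i^*}\ge V$ has $\E i^*=O(1)$ and $v_{i^*}/V\le5+6|Z_o|$, so the scale bound $R^*_T(cV,\sigma)\le c^2R^*_T(V,\sigma)$ transfers the rate. Missing the grid altogether forces $Z_o<-2(n+1)$.
\item \emph{Low-amplitude grid, $V<\sigma/2$.} Use the grid $2^{-j}\sigma$ with weight $2\log(1+j)$. Its price $O(\sigma^2\log\log n)$ is absorbed by the $c\sigma^2\log n$ rate.
\item \emph{Determinism and computation.} Every candidate depends on $Y$ only through $Y_o$, so conditioning on $Y_o$ fixes the system, at the cost of the additive $\sigma^2$. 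The inner minimization over $\mathcal X_k$ at each grid amplitude is a min-plus version of the message recursion, taking $O(nk^2)$ operations.
\end{itemize}
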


\begin{proof}[Proof of \Cref{thm:budget}, necessity]
The points $Vp_o$ and $(V+\sigma)p_o$ lie in the slices at budgets $V$ and $V+\sigma$ and are at distance $\sigma$. For any estimator $\widehat\mu(T,\sigma,Y)$, decode the nearer of the two points: a decoding error forces squared loss at least $\sigma^2/4$, and deciding between two Gaussian means at distance $\sigma$ is a one-dimensional shift of size one, on which every test errs with probability at least a universal constant under the uniform two-point prior. The larger of the two risks is therefore at least $c\,\sigma^2$.
\end{proof}

The sufficiency halves of \Cref{thm:budget,thm:full} rest on candidate families free of the unknown parameters. The construction behind \Cref{prop:cover} consumes the budget in exactly one place, the quantization of leaks in units of $V/k$; dropping the quantization leaves subspaces. For an integer $2\le k\le n/C_\star$, call $S\subseteq A_k\setminus R_0$ \emph{admissible} when $\sum_{v\in S}\widetilde\omega(v)\le4k$, and define the \emph{profile subspaces} $\mathcal V_{k,S}:=\operatorname{span}\{p_v:\ v\in R_0\cup S\}$, of dimension $O(k)$. Everything here depends on $(T,k)$ alone, and the admissible supports number at most $e^{C_1k}$, by the counting argument of \Cref{prop:cover}. Every body point is close to one of these subspaces at the profile scale:
\begin{equation}\label{eq:subspace}
\min_{S\ \mathrm{admissible}}\
\operatorname{dist}^2\bigl(\mu,\ \mathcal V_{k,S}\bigr)
\ \le\ C\,V^2\overline\delta_k(T)
\qquad\text{for every }\mu\in\mathcal F_V(T),\
2\le k\le n/C_\star .
\end{equation}
The unrounded skeleton and the sampled exits of the proof of \Cref{prop:cover} realize the bound, their support carrying total charge at most $4k$ (\Cref{ssec:adaptation}).

Selection is by penalized least squares with Kraft weights. \Cref{ssec:adaptation} proves the following oracle, of Birg\'e--Massart type \citep{birgemassart2001}, in the weighted and random-penalty form needed here: for Gaussian data on $\bbR^N$ with independent coordinates of variances at most $\sigma^2$ and mean $\theta$, a countable collection $\mathcal M$ of affine models $m$ with weights $\Delta_m$ obeying $\sum_me^{-\Delta_m}\le1$, penalties $C_2\sigma^2(\dim m+\Delta_m)$ for a sufficiently large universal $C_2$, and a full model penalized at $C_2\sigma^2N$, every minimizer $\widehat\theta$ of the criterion obeys
\begin{equation}\label{eq:selection}
\E\bigl\|\widehat\theta-\theta\bigr\|_2^2\ \le\
C\,\min\Bigl\{\ \inf_{m\in\mathcal M}
\bigl(\operatorname{dist}^2(\theta,m)
+\sigma^2(\dim m+\Delta_m+1)\bigr),\ \ \sigma^2N\Bigr\}
\ +\ C\sigma^2 .
\end{equation}
If the penalties are random, nonnegative, and bracketed on an event $E$ between fixed multiples of the displayed ones, the same right side bounds $\E\bigl[\|\widehat\theta-\theta\|_2^2\ind_E\bigr]$; this is how the estimated noise scale enters in \Cref{thm:full}.

For $n$ below a universal threshold the estimator of \Cref{thm:budget} returns $Y$, at risk $O(\sigma^2)$; above it, the estimator is one such selection, conditional on the root observation, over three groups of candidates: the amplitude-free subspaces $\mathcal V_{k,S}$ at dyadic budgets $2\le k\le k^\star:=\lfloor\log n/(2C_1)\rfloor$; the integer-state points $\tfrac{V'}k\,x$, $x\in\mathcal X_k$, at the $O(\log n)$ amplitudes $V'$ of a root-anchored geometric grid and a low-amplitude dyadic grid, with $\operatorname{span}(p_o)$ and the root points $V'p_o$ adjoined; and the full model, each candidate weighted by its dimension and code length. The system depends on $Y$ through $Y_o$ alone, so conditionally on $Y_o$ it is deterministic while the noise in the remaining coordinates is unchanged, and unconditioning costs $\E(Y_o-V)^2=\sigma^2$; the precise grids, weights, and thresholds are fixed in \Cref{ssec:adaptation}.

\begin{proof}[Proof sketch of \Cref{thm:budget}, sufficiency]
By \Cref{thm:rate} and the sandwich of \Cref{lem:toolkit}(\ref{it:surrogate}), $\min\{V^2H_T,\sigma^2\kalg\}\asymp R^*_T(V,\sigma)$, and there are three regimes, according to the candidate at which the oracle \Cref{eq:selection} is evaluated. In the diameter regime $V^2H_T\le\sigma^2\kalg$, the model $\operatorname{span}(p_o)$ is within squared distance $V^2h_T$ of $\mu$, at constant dimension and weight. In the entropy regime with small crossing, $\kalg\le k^\star/2$, the smallest dyadic $k\ge\max\{\kalg,2\}$ stays below $k^\star$, and \Cref{eq:subspace} with the surrogate clause fired at $\kalg$ gives an amplitude-free admissible subspace within squared distance $C\sigma^2\kalg$ of $\mu$, at dimension plus weight $O(\kalg)$, so no quantization is paid. In the entropy regime with large crossing, $\kalg>k^\star/2$, the grids take over: for $V\ge\sigma/2$, outside an event of loss contribution $O(\sigma^2)$, the anchored grid contains an amplitude $V'\ge V$ with $\E(V'/V)^2=O(1)$ and index price $O(\sigma^2)$, and, according to the regime that $V'$ sees, the approximant of \Cref{prop:cover} at the dyadic budget nearest its crossing, the root candidate, or the full model is a candidate whose oracle value is $C(R^*_T(V,\sigma)+\sigma^2)$ in expectation, by the scale bound $R^*_T(cV,\sigma)\le Cc^2R^*_T(V,\sigma)$ for $c\ge1$; for $V<\sigma/2$ the low-amplitude grid plays the same role, its index price absorbed because $\kalg>k^\star/2$ places the rate above $c\,\sigma^2\log n$ (\Cref{ssec:adaptation}). The additive $\sigma^2$ collects the root replacement, the anchored-grid prices, and the constant floor in \Cref{eq:selection}; the two families are complementary, amplitude quantization being expensive only at small crossing, where the subspaces are few enough to enumerate. The selection runs in deterministic polynomial time, each grid candidate's inner minimization over $\mathcal X_k$ being a min-plus analogue of \Cref{sec:evaluation}; the operation counts, the grid moment bounds, and the conditioning argument are completed in \Cref{ssec:adaptation}.
\end{proof}

It remains to remove the noise level, which enters the selection only through the penalties; these tolerate a constant-factor error, so a robust estimate suffices. The regime in which the estimate succeeds is described by a width functional of the tree. At each internal vertex of $T$ fix a child $c$ maximizing the number of vertices $u$ with $c\preceq u$, breaking ties by a fixed rule; call the edge to that child \emph{heavy}, every other child edge \emph{light}, and a vertex \emph{branching} when it has at least two children. Set $w_T:=w_{\mathrm{lt}}(T)+w_{\mathrm{br}}(T)$, where $w_{\mathrm{lt}}(T)$ is one plus the maximum number of light edges on a root-to-leaf path, and $w_{\mathrm{br}}(T)$ is the maximum number of branching vertices on such a path.

\begin{theorem}[Full adaptation]\label{thm:full}
There are universal constants $c,C>0$ and a deterministic polynomial-time estimator $\widehat\mu=\widehat\mu(T,Y)$, depending on neither $V$ nor $\sigma$, such that for every finite rooted tree $T$ and all $V,\sigma>0$,
\[
\sup_{\mu\in\mathcal F_V(T)}\E_\mu\|\widehat\mu-\mu\|_2^2\ \le\
C\bigl(R^*_T(V,\sigma)+\sigma^2\bigr)
\qquad\text{whenever}\quad Vw_T\le c\sigma n .
\]
\end{theorem}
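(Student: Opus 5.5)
The plan is to reduce \Cref{thm:full} to the sufficiency half of \Cref{thm:budget}. There the noise level enters only through the penalties $C_2\sigma^2(\dim m+\Delta_m)$ and through the grids of candidate amplitudes, whose spacing is tied to $\sigma$. So it suffices to construct an estimate $\widehat\sigma=\widehat\sigma(T,Y)$ with two properties. First, on an event $E$ we have $\sigma/2\le\widehat\sigma\le2\sigma$. Second, $\E\bigl[\|\widehat\mu-\mu\|_2^2\ind_{E^c}\bigr]$ is $O(\sigma^2)$, or at most $O(R^*_T+\sigma^2)$. We then run the selection of \Cref{thm:budget} with $\widehat\sigma$ in place of $\sigma$, rounded to a dyadic value so that only $O(\log n)$ candidate systems can occur. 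On $E$ the random penalties are bracketed between fixed multiples of the deterministic ones, so the random-penalty form of \Cref{eq:selection} gives the bound $C(R^*_T+\sigma^2)$ on $E$. The dyadic grids built from $\widehat\sigma$ differ from those built from $\sigma$ by a constant factor, which the scale bound $R^*_T(cV,\sigma)\le Cc^2R^*_T(V,\sigma)$ absorbs.

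\emph{Difference statistics.} The estimate is built from the heavy--light decomposition. Pair the vertices into $\gtrsim n$ disjoint pairs $(u,u')$, each consisting of parent and heavy child along heavy paths, or of siblings. For a pair set $D_{uu'}:=(Y_u-Y_{u'})/\sqrt2$. Each $D_{uu'}$ is $N(\theta_{uu'},\sigma^2)$, and distinct pairs are independent. Let $\widehat\sigma:=\operatorname{med}|D|/\Phi^{-1}(3/4)$.

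\emph{Contamination.} The key claim is that $\sum_{\text{pairs}}|\theta_{uu'}|\le CVw_T$. Along a heavy path the means are monotone, so parent--child differences telescope to at most the mass entering that path. Each unit of leak mass $s_u$ enters at most $w_{\mathrm{lt}}$ distinct heavy paths, namely those on its root path. Sibling differences are bounded by the mass at the branching vertex, and this is charged $w_{\mathrm{br}}$ times. Hence at most $CVw_T/(\eta\sigma)$ pairs have $|\theta|>\eta\sigma$. Under $Vw_T\le c\sigma n$, with $c$ small relative to $\eta$, this is a small fraction of the pairs. For the remaining pairs the law of $|D|$ is close to that of $\sigma|Z|$ within a factor $1\pm O(\eta)$ in quantiles. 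A Hoeffding bound on the empirical counts at the two levels $\sigma/2$ and $2\sigma$ gives $\Pr(E^c)\le e^{-cn}$.

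\emph{Off the event.} On $E^c$ we bound the loss crudely. Either we also clip the selection output coordinatewise to $[0,3\widehat V]$, with $\widehat V$ taken from $Y_o$, or we use a Cauchy--Schwarz step: the $\widehat\mu$ produced by penalized least squares satisfies $\|\widehat\mu-\mu\|^2\le C(\|Z\|^2\sigma^2+\text{projection terms})$, whose second moment is polynomial in $n$ times $\sigma^4$ plus terms in $V^2$. Multiplying by $\Pr(E^c)^{1/2}\le e^{-cn/2}$ leaves $O(\sigma^2)$. Any $V$-dependence is of order $V^2n^{O(1)}e^{-cn}\lesssim\sigma^2$ in the regime $V\le c\sigma n$. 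Small $n$ is handled by returning $Y$ as in \Cref{thm:budget}.

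\emph{Main obstacle.} I expect the contamination bound to be the hardest step: $\sum|\theta|\lesssim Vw_T$ must hold for every flow in $\mathcal F_V(T)$, and the pairing must be chosen so that each leak unit is charged only $O(w_T)$ times. I would first attempt the charging argument via leaks $s_u$ and the identity $\mu(v)=\sum_{u\succeq v}s_u$ from \Cref{lem:normal}. If sibling pairs prove awkward, I would use only parent--heavy-child pairs on heavy paths, together with light-child versus parent pairs at branching vertices.
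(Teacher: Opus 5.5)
\textbf{Overall.} Your route is the paper's route, step for step. It uses the same heavy-edge and sibling differences, thinned to an independent subfamily of size of order $n$ (the paper extracts it greedily and gets $N\ge(n-1)/10$). It uses the same median and the same contamination mechanism. Your sum bound is equivalent to the paper's count, because at a branching vertex $u$ the disjoint pairs give $\sum|\mu(c)-\mu(c')|\le\sum_{c\in\ch(u)}\mu(c)\le\mu(u)$. It then rounds to a power of two, runs the selection of \Cref{thm:budget} at the rounded scale, and applies Cauchy--Schwarz off the good event, using $V\le c\sigma n$ from $w_T\ge1$. One step, however, does not go through as written.

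\textbf{The gap.} The problem is the sentence ``the random-penalty form of \eqref{eq:selection} gives the bound on $E$.''
\begin{itemize}
\item That form allows random penalties over a \emph{fixed} collection of models. Here the anchored and low-amplitude grids depend on $\widetilde\sigma$, so the candidate family itself is random. Moreover, $\widetilde\sigma$ is computed from the same coordinates the selection fits, even after conditioning on $Y_o$.
\item The proof of the selection oracle therefore has to be rerun. Its Kraft-weighted deviation variable must be a supremum over the union of all realizable candidate systems, each paired with its own comparison model. This is legitimate, because the selected model and the comparison model both lie in the realized system.
\item The rerun requires the Kraft sum over the union to stay below one. That forces a shift of every weight by $\log(\#\text{systems})$.
\item With your count of $O(\log n)$ systems, the shift adds $\sigma^2\log\log n$ to every oracle value. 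The resulting bound is then not $O(R^*_T+\sigma^2)$. For example, in the diameter regime with $V^2H_T\ll\sigma^2$ the target is $O(\sigma^2)$.
\end{itemize}
The repair is to take the union only over the systems realizable on $E$. There $\widetilde\sigma$ is a power of two in $[\sigma/2,4\sigma]$, so at most four systems occur. A constant shift pays for the union: the paper adds $2$ to every weight and checks that $4e^{-2}(0.53+\tfrac18)<1$.

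\textbf{Smaller points.}
\begin{itemize}
\item Off $E$, comparing with the full model or with $\operatorname{span}(p_o)$ brings in $\widetilde\sigma^2$. Your polynomial second-moment claim therefore needs the pathwise bound $\widetilde\sigma\le2\widehat\sigma\le C\max|D|\le C\|Y\|_\infty$. You also need a separate rule on the null event $\{\widehat\sigma=0\}$, where the rounding is undefined.
\item Discard the clipping alternative. Clipping to $[0,3\widehat V]$ with $\widehat V$ read from $Y_o$ is nonexpansive only when the interval contains $[0,V]$. This fails whenever $Y_o<V/3$, which has constant probability when $V\asymp\sigma$. On a long path at the signal with all coordinates equal to $V=3\sigma$, it costs order $\sigma^2L$, against a rate of order $\sigma^2L^{1/3}$.
\item The low-amplitude grid point $2^{-j^*}\widetilde\sigma$ is now data-dependent. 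Its root-coordinate term in the coverage bound must therefore be bounded pathwise on $E$, not through independence from $Z_o$. This is easy, since the grid point lies in $[V,2V]$ and $V<2\sigma$ there.
\end{itemize}
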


For $n$ below a universal threshold the estimator behind \Cref{thm:full} returns $Y$, at risk $O(\sigma^2)$; above it, the estimator rounds a median. The tree supplies at least $(n-1)/2$ difference statistics: heavy-edge and sibling differences, each Gaussian of variance $2\sigma^2$ around a signal difference. Monotonicity caps the contamination collectively: heavy differences telescope along vertex-disjoint heavy paths to their tops, sibling excesses charge their branching vertices, and a root path meets at most $w_T$ of these sites; for every $\varepsilon>0$, at most $Vw_T/(\varepsilon\sigma)$ statistics therefore have mean magnitude at least $\varepsilon\sigma$. Under $Vw_T\le c\sigma n$, the normalized median of the absolute values of an independent subfamily then lands in $[\sigma/2,\,2\sigma]$ except with probability $2e^{-cn}$; rounded upward to a power of two, the estimate takes at most four values on that event, the penalties of \Cref{thm:budget} stay bracketed at the estimated scale, a constant weight shift pays for the union over the four values, and the off-event loss contributes $O(\sigma^2)$. The statistics, the contamination count, the concentration of the median, and the moment bookkeeping are carried out in \Cref{ssec:adaptation}.

Uniform recovery of the noise level has a ceiling: once monotone flows with root value of order $\sigma n\sqrt{\log n}$ are admitted, every estimator of $\sigma$ fails, with constant probability, to land strictly within a factor $\sqrt2$ of the truth (\Cref{ssec:adaptation}). On families of bounded width, stars and brooms among them, the scale-based route of \Cref{thm:full} thus operates within a $\sqrt{\log n}$ factor of its ceiling.

%%%%%%%%%%%%%%%%%%%%%%%%%%%%%%%%%%%%%%%%%%%%%%%%%%%%%%%%%%%%%%%%%%%%%%%%%%%%%%
\section{The Suboptimality of Least Squares}\label{sec:lse}
%%%%%%%%%%%%%%%%%%%%%%%%%%%%%%%%%%%%%%%%%%%%%%%%%%%%%%%%%%%%%%%%%%%%%%%%%%%%%%

The last theorem prices the default estimator. The body is compact and convex, so the data have a unique Euclidean projection onto it, the \emph{constrained least squares estimator}
\[
\widehat\mu_{\mathrm{LSE}}\ :=\ \Pi_{\mathcal F_V(T)}(Y)\ =\
\operatorname*{arg\,min}_{\theta\in\mathcal F_V(T)}\
\bigl\|Y-\theta\bigr\|_2^2,
\]
the solution of a tuning-free convex program; write
\[
R^{\mathrm{worst}}_{\mathrm{LSE}}(T,V,\sigma)\ :=\
\sup_{\mu\in\mathcal F_V(T)}
\E_\mu\bigl\|\widehat\mu_{\mathrm{LSE}}-\mu\bigr\|_2^2
\]
for its worst-case risk.

\begin{theorem}[Least squares]\label{thm:lse}
There is a universal constant $C$ such that for every finite rooted tree with $n\ge2$ and all $V,\sigma>0$,
\[
R^{\mathrm{worst}}_{\mathrm{LSE}}(T,V,\sigma)\ \le\
C\,\bigl(1+\log(en)\bigr)^{2/5}\,R^*_T(V,\sigma).
\]
Conversely, there are universal constants $c>0$ and $n_0$ such that the following holds for every integer $n\ge n_0$. Let $L$ be the largest integer with $e^{20L^{5/2}}\le\sqrt n$, and let $T_L$ be the broom on $n$ vertices whose handle $o=v_0,v_1,\dots,v_L$ is a path of $L$ edges and whose endpoint $v_L$ has $m:=n-L-1$ leaf children. Then, at budget $V=L^{1/4}$ and noise level $\sigma=1$,
\[
R^{\mathrm{worst}}_{\mathrm{LSE}}\bigl(T_L,L^{1/4},1\bigr)\ \ge\
c\,(\log n)^{2/5}\,R^*_{T_L}\bigl(L^{1/4},1\bigr).
\]
\end{theorem}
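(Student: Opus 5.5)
The theorem has two halves, and I would prove them separately, following the overview in \Cref{sec:overview}.

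\emph{Upper bound.} I would use the fixed-point analysis of \citet{chatterjee2014}. For the projection onto a convex body $K$ with true mean $\mu$, let $t_\mu$ maximize $t\mapsto w_\mu(t)-t^2/(2\sigma)$, where $w_\mu(t):=\E\sup_{\theta\in K,\ \|\theta-\mu\|\le t}\langle Z,\theta-\mu\rangle$ is the localized Gaussian width. Then $\|\widehat\mu_{\mathrm{LSE}}-\mu\|^2$ concentrates around $t_\mu^2$, so it suffices to bound $w_\mu(t)\lesssim\sigma^{-1}t\sqrt{R^*}\,(1+\log(en))^{1/5}$ at the scale $t^2\asymp(1+\log(en))^{2/5}R^*$.

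I would bound the width by chaining on the coded cover of \Cref{prop:cover}, taking the centers after the noise. At budget $k$, every point of the local ball lies within squared distance $C_0V^2\delta_k$ of a center. The supremum over the $e^{C_1k}$ centers contributes $\sqrt{k}\,t$. The residual within each cell is controlled in two ways:
\begin{itemize}
\item a \emph{width branch}, bounding the Gaussian width of the pieces $\sum_L\lambda_L q_L$ by the exit radii times $\sqrt{\log n}$ (a maximum of at most $n$ Gaussians per scale);
\item a \emph{height branch}, bounding them by $V\sqrt{h_T}$ times mass via the isometry of \Cref{lem:isometry}.
\end{itemize}
Summing dyadically over $k\ge k_0$ and balancing the two branches should give an excess factor of the form $\min\{(\log n)^{a}x,\ x^{-b}\}$, where $x$ measures how far the height scale exceeds the profile scale. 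I expect exponents giving $(\log n)^{1/5}$ in the width, hence $(\log n)^{2/5}$ in the squared risk. This balancing is the main obstacle. I would first calibrate it on the broom, where both branches are explicit, $R^*\asymp\sqrt L$ and the loss is $L^{3/2}$, and then generalize using the monotonicity of $\delta_k$ from \Cref{lem:toolkit}.

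\emph{Lower bound, step 1: the minimax rate.} I would compute $R^*_{T_L}(L^{1/4},1)\asymp\sqrt L$ from \Cref{thm:rate}. For $r\ge1$ the set $\{v_0,\dots,v_L\}$ is an ancestor net at the same radii as the handle path, so $\Nup(r)\le L/r+1$ and the profile agrees with that of the path $P_{L+1}$ up to constants. At $r<1$ the count is $m+L+1$, but the weight $r<1$ makes that term at most $k$, which only affects $k_0=O(1)$. So \Cref{cor:path} with $V=L^{1/4}$ and $\sigma=1$ gives $V^{2/3}L^{1/3}=L^{1/2}$. As a cross-check, the upper bound also follows from the estimator of \Cref{thm:efficient}.

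\emph{Lower bound, step 2: the projection loses $L^{3/2}$.} Take the worst signal to be $\mu=Vp_{v_0}$, all mass leaking at the root. The star coordinates then carry pure noise, and $\max_i Z_i\approx\sqrt{2\log m}\ge c L^{5/4}$ because $\log m\asymp L^{5/2}$. I would compare objective values via the KKT characterization of the projection. Placing mass $s$ at the top leaf $i^\ast$ gains about $2s\max_i Z_i$ in fit. The cost is $s^2(L+1)$ along the handle, plus a handle-noise term $O(s\sqrt L)$. The optimum is therefore $s\asymp\min\{V,\ \max_iZ_i/L\}$. With $V=L^{1/4}$ and $\max_iZ_i/L\asymp L^{1/4}$, the two coincide, so $s\asymp V$. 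Thus $\widehat\mu_{\mathrm{LSE}}$ moves mass of order $V$ down the whole handle, and the squared error is at least $c\,s^2L\asymp L^{3/2}$.

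To make this rigorous, I would lower-bound $\langle\widehat\mu_{\mathrm{LSE}}-\mu,\,p_{v_L}\rangle$ on an event of constant probability. On that event I need the extreme leaf value to be large and the handle noise partial sums to be $O(\sqrt L)$. The ratio is then $L^{3/2}/L^{1/2}=L\asymp(\log n)^{2/5}$, as claimed.
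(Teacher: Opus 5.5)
The upper bound has a genuine gap. You leave the balancing open as ``the main obstacle'', and the reduction you state cannot be completed. With $\ell_n:=1+\log(en)$, your target $w_\mu(t)\lesssim\sigma^{-1}t\sqrt{R^*}\,\ell_n^{1/5}$ at $t^2\asymp\ell_n^{2/5}R^*$ is false in general. Take the star $S_m$ at $V^2=\tfrac{3}{2}A_0\sigma^2$. There the crossing first fires at $k_0=2$, and $R^*\asymp\sigma^2$. For large $n$ the local ball at your scale contains the whole body, whose squared diameter is $2V^2$. Hence $w_{Vp_o}(t)=V\,\E\bigl[\max_{v\ne o}Z_v\bigr]_+\asymp\sigma\sqrt{\log n}$, while your target is only $\asymp\sigma(\log n)^{2/5}$. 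Your ``height branch'' is still an upper bound on this width, so it cannot repair the failure.

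The missing idea is that small $k_0$ is closed through the diameter, not the width. Pathwise, $R^{\mathrm{worst}}_{\mathrm{LSE}}\le\diam^2\mathcal F_V(T)=V^2H_T$. The profile also remembers the height: $\delta_k(T)\ge\frac{\log 2}{8}H_T/k^2$ for $k\le n/C_\star$, by interval covering along a longest root path. The crossing $V^2\delta_{k_0}\le A_0\sigma^2k_0$ then forces $V^2H_T\le C\sigma^2k_0^3$, so the ratio is at most $Ck_0^2$.

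The paper runs the width branch at the single budget $k=k_0$. There the crossing gives $V\sqrt{\overline\delta_{k_0}}\le C\sigma\sqrt{k_0}$, and the width bound \eqref{eq:truthwidth} yields a ratio $C[\log(2+\ell_n)+\sqrt{\ell_n/k_0}]$. The term $\sqrt{\ell_n/k_0}$ comes from the finest-level increments $p_u-p_{a_J(u)}$, up to $n$ Gaussians of variance at most $2\overline\delta_{k_0}$. The two bounds meet at $k_0=\ell_n^{1/5}$, which is where the exponent $2/5$ comes from. The endpoints $k_0\in\{1,K+1\}$ close through $R^{\mathrm{worst}}_{\mathrm{LSE}}\le\min\{V^2H_T,\sigma^2n\}$.

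The width branch itself also needs more than exit radii times $\sqrt{\log n}$. The dominant residual is the collapse $f-g$, which the paper prices level by level with weights capped at $m_j/k$ and total mass one, via the capped order-statistics lemma. The exit quantization is controlled by choosing the sampling realization after the noise: on an event of probability at least $\tfrac14$, a centered variable has conditional mean at most four standard deviations.

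Your lower bound follows the paper's route: the same signal $Vp_o$, the same extreme-value forcing through the terminal star, and the same profile computation. KKT is unnecessary. Write $W:=\sum_jZ_{v_j}$, $Q:=\sum_j[Z_{v_j}]_+^2$, and $M:=\max_iZ_{w_i}$. The rigorous step is a cap on every feasible point with $h_L\le V/2$, including points that spread mass over many leaves or harvest handle noise, which your single-leaf heuristic ignores. Since $2Z_{v_j}h_j-h_j^2\le[Z_{v_j}]_+^2$ and the leaf part is at most $2M\sum_i\ell_i\le2Mh_L$, such points score at most $Q+MV$. The full-budget route to a maximizing leaf scores $2VW+2VM-(L+1)V^2$. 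Forcing therefore holds on $\{W\ge-2\sqrt L,\ Q\le2L,\ M\ge4L^{5/4}\}$, which has probability at least $\tfrac38$.

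One correction: the radius-$0$ term does not only affect $k_0=O(1)$. It adds up to $V^2=\sqrt L$ to $V^2\delta_k$, and $A_0k$ absorbs this only for $k\gtrsim\sqrt L/A_0$, the same order as the crossing. So $k_0\asymp\sqrt L$ survives, but not for the reason you give.
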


We prove the broom lower bound first: the broom isolates the mechanism, extreme-value forcing through a terminal star, and every numerical threshold in the forcing step is explicit (\Cref{ssec:benchmarks}). The universal upper bound shows the resulting separation extremal over all trees and parameters; its proof, sketched at the end of the section, runs the coded cover of \Cref{sec:cover} through a localized-width analysis.

The broom couples the two classical geometries in series, a path feeding a star. For integers $L\ge1$ and $m\ge1$, the broom $T_L$ consists of a \emph{handle}, the path $o=v_0,v_1,\dots,v_L$ of $L$ edges, and of $m$ \emph{terminal leaves} $w_1,\dots,w_m$, the children of $v_L$, so that $n=L+1+m$ and $h_{T_L}=H_{T_L}=L+1$; the budget and the noise level are fixed at $V:=L^{1/4}$ and $\sigma:=1$.

\begin{proof}[Proof of \Cref{thm:lse}, lower bound]
Fix $n\ge n_0$, with $n_0$ a universal constant large enough for the estimates below. The maximality of $L$ gives $e^{20L^{5/2}}\le\sqrt n<e^{20(L+1)^{5/2}}$; for $n\ge n_0$ it forces $L\ge4$ and $L+1\le\sqrt n$, whence $m=n-L-1\ge n-\sqrt n\ge\sqrt n\ge e^{20L^{5/2}}$. The true signal is $\mu=Vp_o$: all mass rests at the root, and every nonroot coordinate vanishes. We show that one extreme leaf noise forces the projection to carry more than half the budget through the entire handle, at squared loss exceeding $L^{3/2}/4$, on an event of probability at least $3/8$; and that the minimax risk of the broom is of order $\sqrt L$.

\emph{Coordinates.} Write $h_j$ for the coordinate of a body point at $v_j$ and $\ell_i$ for its coordinate at $w_i$. By \Cref{lem:normal}, nonnegativity of the leaks identifies the body, in these coordinates, with the chain
\begin{equation}\label{eq:broomfeasible}
V\ \ge\ h_1\ \ge\ h_2\ \ge\ \cdots\ \ge\ h_L\ \ge\
\sum_{i=1}^{m}\ell_i,
\qquad
\ell_i\ \ge\ 0\quad(1\le i\le m).
\end{equation}
The root coordinate equals $V$ on the whole body, and at the true signal the nonroot data are pure noise: $Y_{v_j}=Z_{v_j}$ and $Y_{w_i}=Z_{w_i}$. Expanding $\|Y-\theta\|_2^2$ and discarding the terms that do not depend on $\theta$ shows that the nonroot coordinates of the projection maximize over \Cref{eq:broomfeasible} the objective
\begin{equation}\label{eq:broomobjective}
\Psi(h,\ell)\ :=\ 2\sum_{j=1}^{L}Z_{v_j}h_j-\sum_{j=1}^{L}h_j^2
\ +\ 2\sum_{i=1}^{m}Z_{w_i}\ell_i-\sum_{i=1}^{m}\ell_i^2 .
\end{equation}

\emph{The forcing inequality.} Aggregate the noise into three statistics, local to this proof:
\[
W:=\sum_{j=1}^{L}Z_{v_j},
\qquad
Q:=\sum_{j=1}^{L}\bigl[Z_{v_j}\bigr]_+^2,
\qquad
M:=\max_{1\le i\le m}Z_{w_i}.
\]
Routing the full budget to a leaf attaining $M$, that is, taking $h_j=V$ for every $j$, $\ell_{i^\star}=V$ at one index $i^\star$ with $Z_{w_{i^\star}}=M$, and $\ell_i=0$ elsewhere, is feasible in \Cref{eq:broomfeasible}. Its objective value is $2VW+2VM-(L+1)V^2$. On the other side, suppose $M>0$; then every feasible point with $h_L\le V/2$ obeys
\begin{equation}\label{eq:broomcap}
\Psi(h,\ell)\ \le\ Q+MV,
\end{equation}
since $2Z_{v_j}h_j-h_j^2\le[Z_{v_j}]_+^2$ for every $h_j\ge0$, while discarding $-\sum_i\ell_i^2$ and replacing each $Z_{w_i}$ by $M$ bounds the leaf part by $2M\sum_i\ell_i\le2Mh_L\le MV$, the last two steps by \Cref{eq:broomfeasible} and by the hypothesis $h_L\le V/2$ (\Cref{ssec:benchmarks}). The routed candidate beats the cap \Cref{eq:broomcap} exactly when
\begin{equation}\label{eq:forcing}
M\ >\ (L+1)V-2W+\frac{Q}{V}\ .
\end{equation}
When $M>0$ and \Cref{eq:forcing} hold, no maximizer of $\Psi$ can have $h_L\le V/2$, so the coordinates $(\widehat h,\widehat\ell)$ of $\widehat\mu_{\mathrm{LSE}}$ satisfy $\widehat h_L>V/2$; the chain \Cref{eq:broomfeasible} propagates the bound up the handle, and since the true nonroot signal is zero,
\begin{equation}\label{eq:handleloss}
\bigl\|\widehat\mu_{\mathrm{LSE}}-Vp_o\bigr\|_2^2\ \ge\
\sum_{j=1}^{L}\widehat h_j^{\,2}\ >\ L\,\frac{V^2}{4}\ =\
\frac{L^{3/2}}{4}\ .
\end{equation}

\emph{The forcing event.} Let
\[
E\ :=\ \{W\ge-2\sqrt L\}\cap\{Q\le2L\}\cap\{M\ge4L^{5/4}\}.
\]
On $E$, with $V=L^{1/4}$, the right side of \Cref{eq:forcing} is less than $4L^{5/4}\le M$ for $L\ge4$, so the forcing inequality holds; Markov's inequality on the handle statistics and a Gaussian tail estimate on the leaf maximum, using $m\ge e^{20L^{5/2}}$, give $\mathbb P(E)\ge\tfrac38$ (\Cref{ssec:benchmarks}). Combining with \Cref{eq:handleloss},
\begin{equation}\label{eq:broomrisk}
\E_{Vp_o}\bigl\|\widehat\mu_{\mathrm{LSE}}-Vp_o\bigr\|_2^2\ \ge\
\frac{L^{3/2}}{4}\,\mathbb P(E)\ \ge\ \frac{3}{32}\,L^{3/2}.
\end{equation}

\emph{The minimax rate.} The ancestor-covering numbers of the broom are explicit: $\Nup(0)=n$, and for integers $1\le j\le L$,
\begin{equation}\label{eq:broomcover}
\Nup(j)\ =\ \Bigl\lceil\frac{L+2}{j+1}\Bigr\rceil .
\end{equation}
Both directions follow by counting along the root-to-leaf path $v_0,\dots,v_L,w_1$; the single center $v_{L+1-j}$ covers every leaf at distance exactly $j$ (\Cref{ssec:benchmarks}). Thus every radius $r\ge1$ sees a covering count independent of $m$: the terminal star enters the profile \Cref{eq:profile} only through radii $r<1$, where the truncation caps its term at $k$. What survives is path geometry: \Cref{ssec:benchmarks} turns \Cref{eq:broomcover} into the profile bounds
\begin{equation}\label{eq:broomprofile}
\delta_k(T_L)\ \le\ \max\Bigl\{1,\ \frac{2(L+2)}{ek^2}\Bigr\}
\quad\text{for }k\ge1,
\qquad\quad
\delta_k(T_L)\ \ge\ \frac{L+2}{2ek^2}
\quad\text{for }k\le\frac{L+2}{2e}.
\end{equation}
The crossing then solves at $k_0\asymp(V^2(L+2)/\sigma^2)^{1/3}\asymp\sqrt L$, with the dimension clause of \Cref{eq:crossing} silent and the diameter branch of \Cref{thm:rate} inactive, so
\begin{equation}\label{eq:broomminimax}
R^*_{T_L}\bigl(L^{1/4},1\bigr)\ \asymp\ k_0\ \asymp\ \sqrt L
\end{equation}
(\Cref{ssec:benchmarks}).

\emph{Conclusion.} The maximality of $L$ gives $\sqrt n<e^{20(L+1)^{5/2}}$, so $\log n<40(L+1)^{5/2}$ and $L\ge c(\log n)^{2/5}$. Dividing \Cref{eq:broomrisk} by \Cref{eq:broomminimax} gives a ratio at least $cL\ge c'(\log n)^{2/5}$ at this signal, and the worst-case risk dominates this single-signal risk.
\end{proof}

The upper half is a statement about every tree and every parameter pair at once; its proof has three blocks, completed in \Cref{ssec:lse}.

\begin{proof}[Proof sketch of \Cref{thm:lse}, upper bound]
\emph{A localized projection principle.} For $\mu\in\mathcal F_V(T)$ and $t>0$, let
\[
w_\mu(t)\ :=\ \E\,\sup\bigl\{\langle Z,\theta-\mu\rangle:\
\theta\in\mathcal F_V(T),\ \|\theta-\mu\|_2\le t\bigr\}
\]
be the localized Gaussian width of the body at the truth. If $t\ge\sigma$ and $\sigma\,w_\mu(s)\le s^2/4$ for every $s\ge t$, then $\E_\mu\|\widehat\mu_{\mathrm{LSE}}-\mu\|_2^2\le33\,t^2$. On the event $\|\widehat\mu_{\mathrm{LSE}}-\mu\|_2\ge s$, projection optimality forces the supremum defining $w_\mu(s)$ above twice its mean; the supremum is $s$-Lipschitz in $Z$, so Gaussian concentration bounds its probability, and integrating the tail from $t$ gives the claim (\Cref{ssec:lse}). Localizing least squares at a fixed point of the width is Chatterjee's argument \citep{chatterjee2014}.

\emph{The width of the body through the coded cover.} Fix an integer $2\le k\le n/C_\star$, write $\overline\delta:=\overline\delta_k(T)$ for the surrogate profile \Cref{eq:surrogate}, and let $\ell_n:=1+\log(en)$. \Cref{ssec:lse} proves the truth-localized width bound
\begin{equation}\label{eq:truthwidth}
w_\mu(t)\ \le\
C\,\bigl(t+V\sqrt{\overline\delta}\,\bigr)\sqrt k
\ +\ C\,V\Bigl[\sqrt{k\overline\delta}\,\log(2+\ell_n)
+\sqrt{\overline\delta\,\ell_n}\,\Bigr]
\qquad\text{for all }t>0 .
\end{equation}
The first term is the width of the coded cover $\mathcal D_k$ seen from the truth: recentering at a deterministic center $\nu_\mu\in\mathcal D_k$ (\Cref{prop:cover}) confines every center met by the local ball to distance $t+2CV\sqrt{\overline\delta}$, and the class has logarithmic cardinality $O(k)$, so the maximum of the recentered pairings obeys a $\sqrt k$ bound. The second term bounds the residual $\theta-\nu_\theta$, uniformly over the body, through the three components of the construction behind \Cref{prop:cover}: the collapse and rounding residuals, bounded by a capped order-statistics sum and a union of norm concentrations over $e^{Ck}$ subspaces at $CV\sqrt{k\overline\delta}$ (\Cref{ssec:lse}); and the exit quantization. Conditionally on $(\theta,Z)$, the pairing of $Z$ with the quantization error is centered, so on the sampling event of that construction, of probability at least $\tfrac14$, its conditional expectation is at most four conditional standard deviations. Some realization in the event meets this bound, and the randomized rounding becomes, for each noise outcome, a deterministic choice of a nearby center in $\mathcal D_k$.

\emph{Closure of the exponent.} Let $2\le k_0\le K$, where $R^*_T\asymp\sigma^2k_0$ (\Cref{sec:crossing}). The crossing \Cref{eq:crossing} and the sandwich $\overline\delta_{k_0}\le2\delta_{k_0}$ give $V\sqrt{\overline\delta_{k_0}}\le C\sigma\sqrt{k_0}$, hence also $V\sqrt{k_0\overline\delta_{k_0}}\le C\sigma k_0$. Inserting these into \Cref{eq:truthwidth} at $k=k_0$ and feeding the result to the localization principle at $t^2\asymp\sigma^2[\,k_0\log(2+\ell_n)+\sqrt{k_0\,\ell_n}\,]$ gives
\begin{equation}\label{eq:lseratioA}
\frac{R^{\mathrm{worst}}_{\mathrm{LSE}}(T,V,\sigma)}
{R^*_T(V,\sigma)}
\ \le\ C\,\Bigl[\log(2+\ell_n)+\sqrt{\ell_n/k_0}\,\Bigr].
\end{equation}
This bound deteriorates as $k_0$ shrinks, but a small crossing index strengthens the projection: the profile remembers the height, $\delta_k\ge cH_T/k^2$ for $k\le n/C_\star$ (\Cref{ssec:lse}), so the crossing forces $V^2H_T\le C\sigma^2k_0^3$, while $R^{\mathrm{worst}}_{\mathrm{LSE}}\le V^2H_T$ holds, the truth and its projection lying in the body. Hence
\begin{equation}\label{eq:lseratioB}
\frac{R^{\mathrm{worst}}_{\mathrm{LSE}}(T,V,\sigma)}
{R^*_T(V,\sigma)}
\ \le\ C\,k_0^2 .
\end{equation}
The two bounds meet at $k_0=\ell_n^{1/5}$: for $k_0\le\ell_n^{1/5}$, \Cref{eq:lseratioB} is at most $C\ell_n^{2/5}$; for $k_0>\ell_n^{1/5}$, \Cref{eq:lseratioA} is at most $C[\log(2+\ell_n)+\ell_n^{2/5}]\le C'\ell_n^{2/5}$. At the endpoints $k_0=K+1$ and $k_0=1$ the deterministic caps $R^{\mathrm{worst}}_{\mathrm{LSE}}\le\min\{V^2H_T,\ \sigma^2n\}$ close the argument against $R^*_T$ (\Cref{ssec:lse}).
\end{proof}

The broom is where the two geometries of this paper part ways. Euclidean geometry sees $m$ orthogonal leaf directions, and the extreme noise among them pulls the projection through the handle; the ancestor geometry sees, at every radius $r\ge1$, one vertex that covers the entire star, and the truncation caps its cost at the information budget rather than per leaf. The truncated ancestor profile is therefore the right geometry for this body. On every finite tree it determines the minimax rate and yields an estimator evaluated exactly, one that survives the removal of the budget at an additive cost of order $\sigma^2$; across all trees it prices the blindness of the default convex program at the power $2/5$ of the logarithm.

\clearpage
%%%%%%%%%%%%%%%%%%%%%%%%%%%%%%%%%%%%%%%%%%%%%%%%%%%%%%%%%%%%%%%%%%%%%%%%%%
\appendix
\renewcommand{\thesection}{S.\arabic{section}}
\setcounter{section}{0}
%%%%%%%%%%%%%%%%%%%%%%%%%%%%%%%%%%%%%%%%%%%%%%%%%%%%%%%%%%%%%%%%%%%%%%%%%%

The appendices below contain the complete proofs. References of the form Lemma~2.3, (1.4), or Section~2 point to the main body above; statements and equations proved here carry the prefix~S, and the notation of the main body remains in force.

\section{The Profile Toolkit}\label{ssec:profile}
%%%%%%%%%%%%%%%%%%%%%%%%%%%%%%%%%%%%%%%%%%%%%%%%%%%%%%%%%%%%%%%%%%%%%%%%%%%%%%

This section proves the statements deferred from Section~2, Lemmas~2.1 to~2.3, and verifies two comparisons stated in Section~1.1: the center convention behind (1.6), and the capped constraint of \citet{chatterjeelafferty2018}. The two normal forms and the isometry come first; the computational claims of Lemma~2.3 then rest on an exact linear-time algorithm for minimum ancestor nets.

\begin{proof}[Proof of Lemma~2.1]
\emph{Item \textup{(1)} is equivalent to item \textup{(2)}.} Definition (1.2) reads $\mu=V\sum_u\lambda_up_u$ with $\lambda$ a probability vector on $\sfV$, and the substitution $s_u=V\lambda_u$ turns this into $\mu=\sum_us_up_u$ with $s_u\ge0$ and $\sum_us_u=V$. For uniqueness, order $\sfV$ so that every vertex precedes its descendants, for instance by nondecreasing depth, and let $P$ be the matrix with columns $(p_u)_{u\in\sfV}$ in that order. Its entry at row $v$ and column $u$ is $\ind\{v\preceq u\}$, which vanishes when $v$ follows $u$, because a strict ancestor of $u$ precedes $u$; and $p_u(u)=1$ on the diagonal. So $P$ is upper triangular with unit diagonal, hence $\det P=1$ and $P$ is invertible, and the coefficient vector is determined by $\mu$.

\emph{Item \textup{(2)} implies item \textup{(3)} and both displayed identities.} Evaluating $\mu=\sum_us_up_u$ at $v$ gives
\begin{equation}\label{eq:subtreesum}
\mu(v)\ =\ \sum_{u\in\sfV}s_u\,\ind\{v\preceq u\}\ =\
\sum_{u\succeq v}s_u ,
\end{equation}
the first identity; at $v=o$ it reads $\mu(o)=\sum_us_u=V$, every vertex being a descendant of the root. The strict descendants of $v$ are partitioned by the subtrees $T_c$, $c\in\ch(v)$, so subtracting \Cref{eq:subtreesum} at the children from \Cref{eq:subtreesum} at $v$ leaves the single term $u=v$:
\[
\mu(v)-\sum_{c\in\ch(v)}\mu(c)\ =\
\sum_{u\succeq v}s_u-\sum_{c\in\ch(v)}\sum_{u\succeq c}s_u\ =\ s_v .
\]
This is the second identity; since $s\ge0$ it gives $\mu(v)\ge\sum_{c\in\ch(v)}\mu(c)$, which with $\mu(o)=V$ is item (3).

\emph{Item \textup{(3)} implies item \textup{(2)}.} Set $s_v:=\mu(v)-\sum_{c\in\ch(v)}\mu(c)$, nonnegative by hypothesis. We prove \Cref{eq:subtreesum} by induction from the leaves upward. At a leaf the sum over children is empty, so $s_v=\mu(v)$, which is the claim. At an internal vertex, the same partition of the strict descendants of $v$ and the inductive hypothesis at the children give
\[
\sum_{u\succeq v}s_u\ =\ s_v+\sum_{c\in\ch(v)}\sum_{u\succeq c}s_u
\ =\ s_v+\sum_{c\in\ch(v)}\mu(c)\ =\ \mu(v).
\]
At $v=o$ this yields $\sum_us_u=\mu(o)=V$, and reading \Cref{eq:subtreesum} at every coordinate says exactly $\mu=\sum_us_up_u$.
\end{proof}

\begin{proof}[Proof of Lemma~2.2]
\emph{Part \textup{(1)}.} A vertex is a common ancestor of $u$ and $w$ precisely when it is an ancestor of their deepest common ancestor $u\wedge w$, and a vertex $x$ has exactly $\depth(x)+1$ ancestors, namely the vertices of the segment from $o$ to $x$. Hence
\[
\langle p_u,p_w\rangle\ =\ \#\{v:\ v\preceq u\ \text{and}\
v\preceq w\}\ =\ \depth(u\wedge w)+1 ,
\]
and in particular $\|p_x\|_2^2=\depth(x)+1$. Expanding the square,
\[
\|p_u-p_w\|_2^2\ =\ \depth(u)+\depth(w)-2\depth(u\wedge w)\ =\
d_T(u,w),
\]
the last equality because the $u$-to-$w$ path is the concatenation of the segments from $u$ up to $u\wedge w$ and from $u\wedge w$ down to $w$. The diameter of a convex hull equals that of its generating set, so $\diam^2\mathcal F_V(T)=\max_{u,w}\|Vp_u-Vp_w\|_2^2=V^2H_T$ by (1.2).

\emph{Part \textup{(2)}.} For $a\preceq b$ every ancestor of $a$ is an ancestor of $b$, so
\[
(p_b-p_a)(v)\ =\ \ind\{v\preceq b\}-\ind\{v\preceq a\}\ =\
\ind\{a\prec v\preceq b\},
\]
that is, $p_b-p_a=\ind_{(a,b]}$. For general $u,w$ put $m:=u\wedge w$ and subtract the two vertical cases,
\[
p_u-p_w\ =\ (p_u-p_m)-(p_w-p_m)\ =\ \ind_{(m,u]}-\ind_{(m,w]} .
\]
The sets $(m,u]$ and $(m,w]$ are disjoint, since a common element would be a common ancestor of $u$ and $w$ strictly below $m$; their union is the set of lower endpoints of the edges on the $u$-to-$w$ path, by the description of that path just used.

For the last assertion, let $u$ and $w$ lie in a connected subtree $C$. The $u$-to-$w$ path then runs inside $C$, so the support of $p_u-p_w$ consists of lower endpoints of edges of $C$. Distinct edges have distinct lower endpoints, an edge being determined by its lower endpoint, so differences taken within edge-disjoint connected subtrees occupy disjoint coordinate sets.
\end{proof}

Fix an integer radius $q\ge0$. The \emph{residual-depth greedy} computes a minimum ancestor $q$-net in one postorder pass, maintaining one number per vertex. Process the vertices in postorder, with a fixed child order, and at each vertex $v$ compute, from the values $r_c$ already assigned at its children,
\begin{equation}\label{eq:residual}
d_v:=\max\bigl(\{0\}\cup\{r_c+1:\ c\in\ch(v),\ r_c\ne-\infty\}\bigr);
\end{equation}
if $d_v=q$, select $v$ as a center and set $r_v:=-\infty$; otherwise set $r_v:=d_v$. After the root has been processed, select it as well if $r_o\ne-\infty$. The number $r_v$ tracks the depth below $v$ of the deepest vertex of $T_v$ that no selected center covers, with $-\infty$ standing for none; the proof below verifies this reading.

\begin{lemma}\label{slem:greedy}
For every integer $q\ge0$, the residual-depth greedy outputs a minimum ancestor $q$-net of $T$ in $O(n)$ operations, and the output is a deterministic function of $(T,q)$.
\end{lemma}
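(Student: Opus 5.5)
The argument has three parts: an invariant describing what the greedy computes, a feasibility check, and an exchange argument for optimality. Running time and determinism are immediate: each vertex is visited once in a fixed postorder, and the work at $v$ is $O(1+|\ch(v)|)$. Summed over the tree this is $O(n)$, and since every tie is settled by the fixed child order, the output depends only on $(T,q)$.

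\emph{Invariant.} By induction in postorder, after $v$ is processed, $r_v$ equals the depth below $v$ of the deepest vertex of $T_v$ not covered by any center selected so far within $T_v$. Here "covered" means having a selected ancestor at distance at most $q$, and $r_v=-\infty$ when no such vertex exists. Centers selected outside $T_v$ are either later vertices or vertices in disjoint subtrees. The disjoint ones cannot be ancestors of vertices in $T_v$, so they are irrelevant here.

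Alongside this we carry a second statement. Every uncovered vertex of $T_v$ sits at depth at most $q$ below $v$, so it can still be covered by a strict ancestor of $v$ exactly when it lies within distance $q$ of that ancestor. The case analysis at $v$ is as follows. By \Cref{eq:residual}, $d_v$ is the residual depth once $v$ itself is counted as uncovered. If $d_v=q$, selecting $v$ covers every vertex at depth at most $q$ below $v$, so the residual becomes $-\infty$. If $d_v<q$, nothing is selected, and the depth bound $d_v\le q$ persists. Since $d_v$ never exceeds $q$, vertices left uncovered at $v$ lie within $q$ of $v$, and selecting the root at the end covers them. This proves that the output is an ancestor $q$-net.

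\emph{Minimality: the main step.} The plan is an exchange argument showing that some minimum net contains the greedy's selections. Take any minimum net $R$ and the greedy set $G$, and proceed along the postorder. Suppose the greedy selects $v$ because $d_v=q$. Then there is a vertex $x\in T_v$ at distance exactly $q$ below $v$ that none of the greedy centers in $T_v$ covers. By induction, $R$ agrees with $G$ on the strict descendants of $v$ after the exchanges made so far. Hence $x$ must be covered in $R$ by some ancestor $a$ of $x$ with $v\preceq a$ and $d(a,x)\le q$. The only options are $a=v$ or a vertex between $v$ and $x$.

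If $a\ne v$, replace $a$ by $v$. The reason this is safe is the following domination: every vertex that $a$ covers, apart from those already covered by greedy centers below, has depth at most $q$ below $v$ by the invariant. Therefore $v$ covers all of them, together with everything $a$ covered. The size of $R$ does not change. Iterating over the selections, $G\setminus\{o\}\subseteq R'$ for some minimum net $R'$. The root belongs to every net, so $G\subseteq R'$, and the feasibility of $G$ then forces $|G|\le|R'|$.

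I expect the delicate point to be exactly this domination claim. It has to be stated relative to the vertices still uncovered after the earlier exchanges, not in absolute terms, because $a$ may cover vertices outside the residual that $v$ cannot reach. Those vertices are already covered by greedy centers, which $R'$ contains by the induction. I will therefore set up the induction hypothesis as "$R'$ contains every greedy center processed so far," and prove domination only for vertices uncovered by those centers.
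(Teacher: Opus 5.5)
Your proof is correct, but it reaches optimality by a different route from the paper. The paper first recasts the postorder output as a run of a \emph{deepest-uncovered greedy}. It attaches to each center selected by the test $d_g=q$ a witness at distance exactly $q$, orders the centers by nonincreasing depth, and checks that at each center's turn its witness is a deepest uncovered vertex. It then runs the exchange for that procedure, where domination is a one-line depth comparison: anything the replaced center covers but the greedy center does not is strictly deeper than the witness, hence already covered.

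You instead exchange directly along the postorder, and your domination comes from the residual invariant itself. Since $d_v\le q$, every vertex of $T_a\subseteq T_v$ not covered by greedy centers in $T_v\setminus\{v\}$ lies within distance $q$ of $v$. The vertices that are so covered keep their coverage, because those centers lie in $R$ and none of them is $a$: $a$ covers $x$, while no greedy center below $v$ does. Your route avoids the witnesses and the reordering and is shorter. The paper's route buys an identification with a standard greedy and an exchange that needs no information about the postorder state.

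One correction. Your sentence ``$R$ agrees with $G$ on the strict descendants of $v$'' is false in general. Take the path with depths $0,\dots,4$ and $q=2$. The minimum net $R=\{0,3\}$ contains the non-greedy vertex $3$ below the first greedy center $2$, and that vertex is exactly the $a$ to be swapped. Keep only the containment hypothesis you adopt at the end, which is all the argument uses: the location of $a$ on the segment from $v$ to $x$ follows from $d_T(v,x)=q$ alone. In the last step, $G\subseteq R'$ gives $|G|\le|R'|$, and minimality of $R'$ with feasibility of $G$ gives the reverse inequality.
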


\begin{proof}
We first verify feasibility, then exhibit the selected centers, in a suitable order, as a run of a conceptually simpler procedure, the \emph{deepest-uncovered greedy}, and finally prove by an exchange argument that every run of that procedure is minimum.

\emph{Feasibility.} Call a vertex \emph{uncovered} at a given moment when no center selected so far is an ancestor of it within distance $q$. We claim that after $v$ is processed, $r_v$ is exactly the maximal depth below $v$ of an uncovered vertex of $T_v$, with $r_v=-\infty$ when none exists, and that $r_v<q$. When $v$ is examined, every selected center is an already processed vertex, and no ancestor of $v$ has been processed, so $v$ is uncovered. Moreover, an ancestor of a vertex of $T_c$, for a child $c$ of $v$, lies in $T_c$ or strictly above $c$, and no center selected after $c$ was processed is of either kind: the subtree $T_c$ is finished, and the vertices above $c$ are not yet processed. The uncovered part of $T_c$ is therefore still the one described by $r_c$, and the uncovered vertices of $T_v$ are $v$ itself together with those of the children's subtrees, one level deeper. Hence $d_v$ in \Cref{eq:residual} is exactly the maximal depth below $v$ of an uncovered vertex of $T_v$, and $d_v\le q$ by the inductive bound $r_c<q$. If $d_v=q$, selecting $v$ covers every uncovered vertex of $T_v$, all being descendants of $v$ within distance $q$, and $r_v=-\infty$ is correct; otherwise no center is added and $r_v=d_v<q$ is correct. When the root has been processed, either $r_o=-\infty$ and nothing remains uncovered, or the final selection covers the remaining uncovered vertices, which lie within $r_o<q$ of the root. Each vertex is examined once at cost proportional to its number of children, so the pass costs $O(n)$; and the child order being fixed, the algorithm makes no arbitrary choices, so the output is a deterministic function of $(T,q)$.

\emph{Reduction to the deepest-uncovered greedy.} The comparison procedure repeats the following step until every vertex is covered: pick an uncovered vertex $u$ of maximum depth and select its ancestor at distance exactly $\min\{q,\depth(u)\}$. To exhibit the algorithm's output as a run of this procedure, attach to every center $g$ selected through the test $d_g=q$ a \emph{witness} $u_g\in T_g$ realizing $d_g$, so that $d_T(g,u_g)=q$; if the root is selected at the end, its witness is a deepest vertex left uncovered, at distance $r_o<q$. Order the selected centers by nonincreasing depth, ties broken by processing order, with the final root selection last, and let $P$ denote the set of centers preceding a given center in this order.

Fix a center $g$ with $d_T(g,u_g)=q$. First, $u_g$ is uncovered when only $P$ is in place: a center covering $u_g$ is an ancestor of $u_g$ of depth at least $\depth(u_g)-q=\depth(g)$, so it lies on the vertical segment between $g$ and $u_g$, inside $T_g$, and was processed, and selected, before $g$; had it covered $u_g$, then $u_g$ would already be covered when $g$ was examined and could not realize $d_g$. Second, every vertex $w$ strictly deeper than $u_g$ is covered by $P$: the complete output is feasible, so some selected center $c$ satisfies $c\preceq w$ and $d_T(c,w)\le q$, whence
\[
\depth(c)\ \ge\ \depth(w)-q\ >\ \depth(u_g)-q\ =\ \depth(g),
\]
and every selected center strictly deeper than $g$ precedes it. Thus at $g$'s turn the vertex $u_g$ is a deepest uncovered vertex, and $g$ is its ancestor at distance exactly $q=\min\{q,\depth(u_g)\}$.

For the final root selection with witness $u$: no nonroot center covers the witness, which is still uncovered after the whole tree has been processed; every $w$ strictly deeper than $u$ is covered by a nonroot center, since otherwise $w$ would be an uncovered vertex deeper than $u$ at that moment, contradicting the maximality defining $r_o$; and the root is the ancestor of $u$ at distance $\depth(u)=\min\{q,\depth(u)\}$, since $\depth(u)=r_o<q$.

\emph{Optimality by exchange.} Suppose, inductively, that some minimum ancestor $q$-net $O$ contains the first $t-1$ centers of a run of the deepest-uncovered greedy; let $u$ be the uncovered vertex of maximum depth picked at step $t$ and $g$ its selected ancestor. Feasibility of $O$ provides $c\in O$ with $c\preceq u$ and $d_T(c,u)\le q$; being an ancestor of $u$ of depth at least $\depth(u)-q$, the center $c$ lies on the segment between $g$ and $u$, and $c$ is not among the first $t-1$ centers, which do not cover $u$. Set $O':=(O\setminus\{c\})\cup\{g\}$. A vertex $x$ covered by $c$ but not by $g$ is a descendant of $g$, as $g\preceq c\preceq x$, with $d_T(g,x)>q$, so that
\[
\depth(x)\ >\ \depth(g)+q\ \ge\ \depth(u),
\]
and $x$ is covered by the first $t-1$ centers, which lie in $O'$. Thus $O'$ is feasible, contains the first $t$ centers, and $|O'|\le|O|$, so $O'$ is again a minimum net. By induction the complete run is contained in a minimum net, and being itself feasible, it is minimum.
\end{proof}

\begin{proof}[Proof of Lemma 2.3]
Throughout, write $c_q:=\min\{k,[\log(\Nup(q)/k)]_+\}$ for the truncated factor of the profile at integer radius $q$, the dependence on the fixed index $k$ suppressed from the notation.

\emph{Part (1).} Dividing the truncated factor in (1.4) by $k$ and using $\min\{k,x\}/k=\min\{1,x/k\}$ gives
\begin{equation}\label{eq:deltasup}
\delta_k(T)=\sup_{r>0}\ r\,\min\Bigl\{1,\
\frac1k\Bigl[\log\frac{\Nup(r)}{k}\Bigr]_+\Bigr\}.
\end{equation}
Fix $r$. As $k$ increases, $[\log(\Nup(r)/k)]_+$ is nonnegative and nonincreasing, and so is $1/k$; a product of two nonnegative nonincreasing functions is nonincreasing, and the cap by $1$ preserves this. Every term of the supremum in \Cref{eq:deltasup} is therefore nonincreasing in $k$, and so is $\delta_k$. For the upper bound, the root is an ancestor of every vertex within distance $h_T$, so $\Nup(r)=1$ and the term vanishes for $r\ge h_T$, while for $r<h_T$ the term is at most $r<h_T$; hence $\delta_k\le h_T$.

\emph{Part (2).} Tree distances take integer values, so the covering constraint $d_T(a,u)\le r$ is equivalent to $d_T(a,u)\le\lfloor r\rfloor$, and $\Nup(r)=\Nup(\lfloor r\rfloor)$ for every $r\ge0$. The supremum in (1.4) therefore splits along integer radii:
\[
\alpha_k(T)
=\sup_{q\ge0}\ \sup_{r\in[q,q+1)\cap(0,\infty)}\ r\,c_q
=\sup_{q\ge0}\ (q+1)\,c_q ,
\]
the inner supremum being approached as $r\uparrow q+1$. Radii $q\ge h_T$ have $\Nup(q)=1$ and $c_q=0$, so the outer supremum is a maximum over the integers $0\le q<h_T$, an empty maximum being zero; this also covers $n=1$, where $h_T=0$.

\emph{Part (3): the two-sided comparison.} Write $G:=\{2^\ell-1:\ \ell\in\{0,1,2,\dots\},\ 2^\ell\le h_T\}$ for the grid, so that (2.1) reads $\overline\delta_k=\tfrac2k\max_{q\in G}(q+1)c_q$. The grid consists of integer radii below $h_T$, so part (2) gives $\max_{q\in G}(q+1)c_q\le\alpha_k$, that is, $\overline\delta_k\le2\delta_k$. Conversely, fix an integer $0\le q<h_T$ and set $s:=q+1\in[1,h_T]$; the largest power of two $p\le s$ satisfies $p>s/2$, and $q':=p-1$ belongs to $G$ with $q'\le q$. Every ancestor $q'$-net is an ancestor $q$-net, so $\Nup(q')\ge\Nup(q)$ and $c_{q'}\ge c_q$, whence
\[
(q'+1)\,c_{q'}\ =\ p\,c_{q'}\ \ge\ \frac s2\,c_q
\ =\ \frac{q+1}2\,c_q .
\]
Maximizing over $q$ gives $\max_{q\in G}(q+1)c_q\ge\alpha_k/2$, that is, $\overline\delta_k\ge\delta_k$. When $h_T=0$ both sides vanish by convention. Monotonicity of $k\mapsto\overline\delta_k$ follows as in part (1), applied to the finite maximum over $G$.

\emph{Part (3): the sandwich.} Put $K:=\lfloor n/C_\star\rfloor$. The dimension clause $k>n/C_\star$ first holds at $k=K+1$, so the minima in (1.5) and (2.2) are over nonempty sets and $k_0,\kalg\le K+1$. For the lower half: if $k_0\le K$, the dimension clause fails at $k_0$, so the profile clause holds there, $V^2\overline\delta_{k_0}\le2V^2\delta_{k_0}\le 2A_0\sigma^2k_0$, and $\kalg\le k_0$; if $k_0=K+1$, then $\kalg\le K+1=k_0$ directly. For the upper half: if $\kalg=K+1$, the lower half forces $k_0=K+1=\kalg$. Otherwise $\kalg\le K$, so the dimension clause of (2.2) fails at $\kalg$, its profile clause holds, and $V^2\delta_{\kalg}\le V^2\overline\delta_{\kalg}\le2A_0\sigma^2\kalg$. If $k_0=\kalg$ there is nothing to prove. If $k_0>\kalg$, every integer $k$ with $\kalg\le k\le k_0-1$ fails both clauses of (1.5) by the minimality of $k_0$; in particular $V^2\delta_k>A_0\sigma^2k$, and combining this with the monotonicity of part (1),
\[
A_0\sigma^2k\ <\ V^2\delta_k\ \le\ V^2\delta_{\kalg}
\ \le\ 2A_0\sigma^2\kalg ,
\]
so $k<2\kalg$. At $k=k_0-1$ this reads $k_0-1<2\kalg$, that is, $k_0\le2\kalg$.

\emph{Part (3): complexity.} \Cref{slem:greedy} computes $\Nup(q)$ exactly in $O(n)$ operations for each integer radius. The grid $G$ has $O(\log(h_T+1))$ elements, so storing $\{\Nup(q)\}_{q\in G}$ costs $O(n\log(h_T+1))$, after which each evaluation of $\overline\delta_k$ costs $O(\log(h_T+1))$. The predicate in (2.2) is monotone in $k$: in its profile clause the left side is nonincreasing by the monotonicity proved above and the right side is increasing, and its dimension clause is monotone; $\kalg$ is therefore located by binary search over $\{1,\dots,K+1\}$ with $O(\log n)$ predicate evaluations. Since $h_T<n$, the total is $O(n\log(n+1))$. For $k_0$, running \Cref{slem:greedy} at every integer radius $0\le q<h_T$ costs $O(n\,h_T)$ and stores all counts; each $\alpha_k$ then costs $O(h_T)$ by (2.3), and scanning $k=1,2,\dots$ until the first index satisfying (1.5), at most $K+1\le n$ values, costs $O(n\,h_T)$ in total.
\end{proof}

Finally, we verify the center-convention comparison stated with (1.6). Write $H^{c,\mathrm{int}}_T(t)$ for the covering entropy of $\mathcal F_V(T)$ at squared radius $t$ with centers restricted to the body, and
\[
\rho^{\mathrm{int}}_T(V,\sigma)\ :=\
\inf_{t>0}\ \bigl[t+\sigma^2H^{c,\mathrm{int}}_T(t)\bigr].
\]
Since every cover with centers in the body is in particular a cover with free centers, $H^c_T(t)\le H^{c,\mathrm{int}}_T(t)$ for every $t>0$. Conversely, fix $t>0$ and a cover of the body by $\exp H^c_T(t)$ balls of radius $\sqrt t$ with arbitrary centers, and replace the center $y$ of each ball by its Euclidean projection $\pi_y:=\Pi_{\mathcal F_V(T)}(y)$ onto the body. The variational inequality of the projection, $\langle y-\pi_y,\mu-\pi_y\rangle\le0$ for every $\mu\in\mathcal F_V(T)$, gives
\[
\|\mu-\pi_y\|_2^2\ \le\ \|\mu-y\|_2^2-\|y-\pi_y\|_2^2\ \le\
\|\mu-y\|_2^2 ,
\]
so every $\mu$ within $\sqrt t$ of $y$ is within $\sqrt t$ of $\pi_y$. The projected balls, of the same radius and number, cover the body with internal centers, so $H^{c,\mathrm{int}}_T(t)\le H^c_T(t)$. The two entropies coincide at every radius, and $\rho^{\mathrm{int}}_T=\rho_T$: the center convention is immaterial.

We also verify the comparison with the capped constraint of \citet{chatterjeelafferty2018}, stated in Section~1.1. Let
\[
\mathcal K_V(T)\ :=\ \bigl\{\mu\in\mathcal F(T):\ \mu(o)\le V\bigr\},
\qquad
R^{\mathrm{cap}}_T(V,\sigma)\ :=\ \inf_{\widehat\mu}\
\sup_{\mu\in\mathcal K_V(T)}\E_\mu\bigl\|\widehat\mu(Y)-\mu\bigr\|_2^2,
\]
the infimum over the same maps as in (1.3), and assume $n\ge2$. Since $\mathcal F_V(T)\subseteq\mathcal K_V(T)$, every estimator has at least as large a supremum risk on the larger set, so $R^{\mathrm{cap}}_T\ge R^*_T$.

For the reverse, split $\mu$ into its root value $\mu(o)$ and its remaining coordinates $\mu_-$, and write $C_a$ for the set of $\mu_-$ arising from $\mu\in\mathcal F_a(T)$. Adding $(V-a)p_o$ to such a $\mu$ raises its root value to $V$ and changes no other coordinate, so $C_a\subseteq C_V$ whenever $0<a\le V$, and $C_0:=\{0\}\subseteq C_V$ as well. On the slice the root value is known, so it is estimated at no loss, while $Y_o$ is independent of $Y_-$ with a law free of $\mu_-$ and therefore acts as pure randomization, which cannot lower a maximum risk under squared loss; hence $R^*_T(V,\sigma)$ is exactly the minimax risk of estimating $\mu_-\in C_V$ from $Y_-$. Fix $\varepsilon>0$, let $\widehat\theta$ attain that risk to within $\varepsilon$, and on the capped body take $\widehat\mu_-:=\widehat\theta(Y_-)$ together with
\[
\widehat\mu_o\ :=\
\begin{cases}
0, & V\le\sigma,\\[2pt]
\Pi_{[0,V]}(Y_o), & V>\sigma .
\end{cases}
\]
Every $\mu\in\mathcal K_V(T)$ has $\mu_-\in C_{\mu(o)}\subseteq C_V$, so its nonroot risk is at most $R^*_T(V,\sigma)+\varepsilon$. Its root risk is at most $\mu(o)^2\le V^2$ in the first case, and at most $\E(Y_o-\mu(o))^2=\sigma^2$ in the second, projection onto an interval containing $\mu(o)$ being nonexpansive. Letting $\varepsilon\downarrow0$,
\[
R^{\mathrm{cap}}_T(V,\sigma)\ \le\
R^*_T(V,\sigma)+\min\{V^2,\sigma^2\} .
\]
Finally $n\ge2$ forces $H_T\ge1$, so the two-point bound (3.8) gives
\[
\min\{V^2,\sigma^2\}\ \le\ \min\{V^2H_T,\ \sigma^2\}\ \le\
c^{-1}R^*_T(V,\sigma) :
\]
fixing the root value and capping it pose the same problem up to universal constants.

The comparison is constructive: the estimator of Theorem~2 transfers to the capped body. At budget $V$ its root coordinate is identically $V$, and its nonroot coordinates depend on $Y$ only through $Y_-$: every integer state has root coordinate $k$, so the root factor cancels from the ratio (4.2), and the branch selection in (4.3) depends on $(T,V,\sigma)$ alone. Write $\widehat\mu^V_-(Y_-)$ for this nonroot part. For $\mu\in\mathcal K_V(T)$ the raised signal $(V,\mu_-)$ lies in $\mathcal F_V(T)$, and $Y_-$ has the law of the nonroot data of the slice model there, so Theorem~2 bounds the risk of $\widehat\mu^V_-$ against $\mu_-$ by $C\,R^*_T(V,\sigma)$, uniformly over $\mathcal K_V(T)$. Paired with the root rule above, it yields a deterministic estimator, computed in $O(n^2\log(n+1))$ arithmetic operations, whose risk on the capped body is at most $C\bigl(R^*_T(V,\sigma)+\min\{V^2,\sigma^2\}\bigr)\le C'\,R^{\mathrm{cap}}_T(V,\sigma)$.

%%%%%%%%%%%%%%%%%%%%%%%%%%%%%%%%%%%%%%%%%%%%%%%%%%%%%%%%%%%%%%%%%%%%%%%%%%%%%%
\section{The Coded Cover}\label{ssec:cover}
%%%%%%%%%%%%%%%%%%%%%%%%%%%%%%%%%%%%%%%%%%%%%%%%%%%%%%%%%%%%%%%%%%%%%%%%%%%%%%

This section completes the proof of Proposition~3.1. The main text gives the mechanism behind each stage. To make this section self-contained, we restate the objects below and supply the estimates and accounting: the floor on the working scale and the sizes of the nets, the structure of the cells, the disjointness and charge accounting of the heavy hierarchy, the collapse and exit estimates, the edge count of the skeleton together with the explicit constants of the error assembly, and the counting bound for the coded cover.

Throughout, $k$ is an integer with $2\le k\le n/C_\star$ and, as in Section~3.1, $\bar\alpha:=k\overline\delta_k(T)$; the level weights are $m_j=2^j$ for $0\le j\le J$, with $m_J$ the largest power of two strictly below $k$; $S_j$ is the minimum ancestor $\lfloor\bar\alpha/m_j\rfloor$-net computed by the greedy of \Cref{slem:greedy}; and $R_j:=S_0\cup\dots\cup S_j$. The net hierarchy, the active support and the activation charges below therefore depend on $(T,k)$ alone, the signal entering only through the stopped refinement of the construction. The active support, the first-appearance weight, and the activation charge are
\[
A_k:=R_J,
\qquad
\omega(v):=\min\{m_j:\ v\in R_j\},
\qquad
\widetilde\omega(v):=\omega(v)\,\ind\{v\notin R_0\},
\]
and the code length of an integer vector $z$ supported on $A_k$ is
\[
\Gamma(z)\ :=\ \sum_{v\in A_k}\Bigl(|z_v|
+\widetilde\omega(v)\,\ind\{z_v\ne0\}\Bigr),
\]
the display (3.1). Throughout, $e_v$ denotes the coordinate vector at $v$.

The first lemma contains the size facts quoted when the nets are introduced: the working scale is never degenerate, and the profile itself caps the size of every net.

\begin{lemma}[Scale floor and net sizes]\label{slem:netsizes}
For every integer $2\le k\le n/C_\star$:
\begin{enumerate}
\item\label{sit:floor} $\bar\alpha\ge\alpha_k(T)\ge2$;
\item\label{sit:netsize}
$\Nup(\lfloor\bar\alpha/m_j\rfloor)\le ke^{m_j}$ for every
$0\le j\le J$;
\item\label{sit:union} $|R_j|\le2ke^{m_j}$ for every $0\le j\le J$.
\end{enumerate}
\end{lemma}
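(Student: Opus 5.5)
The three items are proved in order, each from the one before, using only \Cref{lem:toolkit} and \Cref{slem:greedy}.

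\emph{Item (\ref{sit:floor}).} The first inequality comes from the surrogate sandwich. By definition $\bar\alpha=k\overline\delta_k(T)$, and \Cref{lem:toolkit}(\ref{it:surrogate}) gives $\overline\delta_k\ge\delta_k$. Hence $\bar\alpha\ge k\delta_k=\alpha_k(T)$.

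For $\alpha_k\ge2$ I use the discrete form \Cref{eq:discrete} at the single radius $q=0$. Since $k\ge2$ and $n\ge C_\star k=e^2k\ge2e^2$, we have $n\ge2$ and $h_T\ge1$, so $q=0$ is an admissible radius. Moreover $\Nup(0)=n$, because at radius $0$ every vertex must be its own center. The term at $q=0$ is therefore $\min\{k,[\log(n/k)]_+\}$. Here $k\ge2$, and $\log(n/k)\ge\log C_\star=2$ because $k\le n/C_\star$. So the term is at least $2$, and $\alpha_k\ge2$.

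\emph{Item (\ref{sit:netsize}).} I argue by contradiction. Put $q:=\lfloor\bar\alpha/m_j\rfloor$.

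If $q\ge h_T$, the root alone is an ancestor $q$-net, so $\Nup(q)=1\le ke^{m_j}$ and there is nothing to prove.

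Otherwise $0\le q<h_T$, and $q$ is one of the radii in the maximum \Cref{eq:discrete}. Suppose $\Nup(q)>ke^{m_j}$. Then $\log(\Nup(q)/k)>m_j$. Also $k>m_j$, since $m_j\le m_J<k$. Hence the truncated factor satisfies $\min\{k,[\log(\Nup(q)/k)]_+\}>m_j$. Using $q+1>\bar\alpha/m_j$, the term at radius $q$ obeys
\[
(q+1)\min\{k,[\log(\Nup(q)/k)]_+\}\ >\ \frac{\bar\alpha}{m_j}\,m_j\ =\ \bar\alpha\ \ge\ \alpha_k(T).
\]
This contradicts $\alpha_k$ being the maximum over these terms.

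\emph{Item (\ref{sit:union}).} \Cref{slem:greedy} makes each $S_i$ a minimum net, so $|S_i|=\Nup(\lfloor\bar\alpha/m_i\rfloor)\le ke^{2^i}$ by item (\ref{sit:netsize}). The union bound gives
\[
|R_j|\ \le\ k\sum_{i\le j}e^{2^i}.
\]
Consecutive terms satisfy $e^{2^i}/e^{2^{i+1}}=e^{-2^i}\le e^{-1}$. Reading the sum downward from $i=j$, it is therefore dominated by a geometric series with ratio $1/e$, and
\[
\sum_{i\le j}e^{2^i}\ \le\ \frac{e^{m_j}}{1-e^{-1}}\ \le\ 2e^{m_j}.
\]
Hence $|R_j|\le2ke^{m_j}$.

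The only delicate point is in item (\ref{sit:netsize}). One must check that $q$ lies in the range $0\le q<h_T$ of \Cref{eq:discrete} before invoking maximality, with the case $q\ge h_T$ handled separately. One must also keep the strict inequality $q+1>\bar\alpha/m_j$, which comes from the floor, so that the contradiction with $\bar\alpha\ge\alpha_k$ is strict.
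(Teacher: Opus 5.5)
Your proof is correct and follows the paper's argument: the surrogate sandwich and the radius-$0$ term (where $\Nup=n$ and $\log(n/k)\ge\log C_\star=2$) for item (1), contradiction with the profile at the scale $\bar\alpha/m_j$ (with $\min\{k,\cdot\}>m_j$ because $m_j<k$) for item (2), and summing the minimum-net sizes for item (3). The differences are cosmetic. You use the integer-radius formula \Cref{eq:discrete}, which forces the separate case $q\ge h_T$, whereas the paper evaluates \Cref{eq:profile} directly at $r=\bar\alpha/m_j$. You also bound $\sum_{i\le j}e^{m_i}$ by a ratio-$1/e$ geometric series, whereas the paper uses $j\le e^{2^{j-1}}$.
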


\begin{proof}
\emph{Part (1).} The surrogate dominates the profile, $\overline\delta_k\ge\delta_k$ by Lemma~2.3, so $\bar\alpha=k\overline\delta_k\ge k\delta_k=\alpha_k(T)$. For the floor, tree distances are integers, so at radii $0<r<1$ every vertex is its own only ancestor within distance $r$; every ancestor $r$-net then contains all of $\sfV$, and $\Nup(r)=n$. Hence
\[
\alpha_k(T)\ \ge\ \sup_{0<r<1}\ r\,\min\Bigl\{k,\
\Bigl[\log\frac nk\Bigr]_+\Bigr\}
\ =\ \min\Bigl\{k,\ \Bigl[\log\frac nk\Bigr]_+\Bigr\}
\ \ge\ \min\{2,\ \log C_\star\}\ =\ 2,
\]
using $k\ge2$ and $n/k\ge C_\star=e^2$.

\emph{Part (2).} Fix $0\le j\le J$ and suppose $\Nup(\lfloor\bar\alpha/m_j\rfloor)>ke^{m_j}$. The radius $r:=\bar\alpha/m_j$ is positive by part (\ref{sit:floor}), and the covering constraint depends on $r$ only through $\lfloor r\rfloor$, so $\Nup(r)=\Nup(\lfloor r\rfloor)>ke^{m_j}$, that is, $[\log(\Nup(r)/k)]_+>m_j$. Since also $m_j\le m_J<k$, the minimum of $k$ and this positive part exceeds $m_j$, and the term of (1.4) at $r$ satisfies
\[
r\,\min\Bigl\{k,\ \Bigl[\log\frac{\Nup(r)}k\Bigr]_+\Bigr\}
\ >\ \frac{\bar\alpha}{m_j}\cdot m_j\ =\ \bar\alpha\ \ge\ \alpha_k(T),
\]
contradicting that $\alpha_k(T)$ is the supremum of these terms.

\emph{Part (3).} The sets $S_i$ are minimum nets, so $|S_i|=\Nup(\lfloor\bar\alpha/m_i\rfloor)\le ke^{m_i}$ by part (\ref{sit:netsize}), and $|R_j|\le k\sum_{i\le j}e^{m_i}$. For $j\ge1$ the sum has $j$ terms before the last, each at most $e^{m_{j-1}}$, and $j\le e^{2^{j-1}}$ gives $je^{m_{j-1}}\le e^{m_{j-1}+2^{j-1}}=e^{m_j}$, so $\sum_{i\le j}e^{m_i}\le2e^{m_j}$; for $j=0$ this is immediate.
\end{proof}

The cells inherit their structure from the nesting of the nets alone. Parts (2)--(4) below are the rooted-connectedness, refinement, and radius properties quoted in the main body; part (3) also identifies the parent of each cell, the hierarchy used from here on.

\begin{lemma}[Cell structure]\label{slem:cells}
Fix $0\le j\le J$ and let $a_j(v)$ be the deepest ancestor of $v$ in $R_j$. Then:
\begin{enumerate}
\item\label{sit:partition} the cells $B_{a,j}=\{v:a_j(v)=a\}$,
$a\in R_j$, partition $\sfV$, and each contains its root $a$;
\item\label{sit:connected} if $v\in B_{a,j}$ and $a\preceq z\preceq v$,
then $z\in B_{a,j}$;
\item\label{sit:refine} if $j<J$ and $a_{j+1}(u)=b$, then
$a_j(u)=a_j(b)$; consequently every level-$(j{+}1)$ cell is contained
in a single level-$j$ cell, whose root is an ancestor of its root;
\item\label{sit:radius} $d_T(a,v)\le\lfloor\bar\alpha/m_j\rfloor$ for
every $v\in B_{a,j}$.
\end{enumerate}
\end{lemma}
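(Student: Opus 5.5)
The plan is to derive all four parts from two facts about the nets: the root lies in every $R_j$, and the nets are nested, $R_j\subseteq R_{j+1}$. The order will be well-definedness, then part~(\ref{sit:partition}), then (\ref{sit:connected}), then (\ref{sit:refine}), and finally (\ref{sit:radius}), the one step that uses the covering property.

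\emph{Well-definedness and part (\ref{sit:partition}).} The root lies in every ancestor net, since it is its own only ancestor. Hence each $v$ has at least one ancestor in $R_j$, and these ancestors form a chain on the segment from $o$ to $v$. So $a_j(v)$ exists and is unique, and the fibers $B_{a,j}$ partition $\sfV$. For $a\in R_j$, the deepest ancestor of $a$ in $R_j$ is $a$ itself, so $a\in B_{a,j}$.

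\emph{Part (\ref{sit:connected}).} Take $a\preceq z\preceq v$ with $a=a_j(v)$. Every ancestor of $z$ is an ancestor of $v$. Also $a$ is an ancestor of $z$ lying in $R_j$, and no vertex of $R_j$ on the segment $(a,z]$ can exist, because it would be a deeper ancestor of $v$ in $R_j$. Therefore $a_j(z)=a$.

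\emph{Part (\ref{sit:refine}).} Let $b=a_{j+1}(u)$. Since $R_j\subseteq R_{j+1}$, the vertex $a_j(u)$ lies in $R_{j+1}$ and is an ancestor of $u$, so $a_j(u)\preceq b$. Thus $a_j(u)$ is an ancestor of $b$ in $R_j$. Any deeper ancestor of $b$ in $R_j$ would also be an ancestor of $u$ deeper than $a_j(u)$, which is impossible. Hence $a_j(b)=a_j(u)$, and the whole level-$(j+1)$ cell of $b$ lies in $B_{a_j(b),j}$. Its root $b$ has ancestor $a_j(b)$, as claimed.

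\emph{Part (\ref{sit:radius}).} This is the only step needing the net property, and I expect it to be the main point to get right. The net $S_j\subseteq R_j$ is an ancestor $\lfloor\bar\alpha/m_j\rfloor$-net, so $v$ has an ancestor $s\in S_j$ with $d_T(s,v)\le\lfloor\bar\alpha/m_j\rfloor$. Because $a=a_j(v)$ is the deepest ancestor of $v$ in $R_j\supseteq S_j$, we have $s\preceq a\preceq v$, and so $d_T(a,v)\le d_T(s,v)$. The subtlety is to use $S_j$ itself rather than the coarser nets $S_i$ with $i<j$, whose radii are larger; taking the deepest ancestor over the union $R_j$ only shortens distances.
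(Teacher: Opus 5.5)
Your proof is correct and follows the paper's argument essentially step for step. Like the paper, you use the root's membership in every net and the chain structure of ancestors for parts (1) and (2), and the nesting $R_j\subseteq R_{j+1}$ for part (3). For part (4), you combine the covering property of the level-$j$ net contained in $R_j$ with the fact that the deepest $R_j$-ancestor lies on the segment from the covering center down to $v$.
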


\begin{proof}
All four parts are read off the chain structure of ancestors and the nesting $R_j\subseteq R_{j+1}$.

\emph{Part (1).} The root lies in every ancestor net, being its own only ancestor, so every vertex has an ancestor in $R_j$; the ancestors of $v$ form a chain, so among them the elements of $R_j$ have a unique deepest member, and $a_j(v)$ is well defined. The fibers of $v\mapsto a_j(v)$ partition $\sfV$, and $a_j(a)=a$ for $a\in R_j$.

\emph{Part (2).} Every ancestor of $z$ is an ancestor of $v$, so every $R_j$-ancestor of $z$ has depth at most $\depth(a_j(v))=\depth(a)$; and $a$ itself is an $R_j$-ancestor of $z$, since $a\preceq z$. Hence $a_j(z)=a$.

\emph{Part (3).} Let $s$ be an $R_j$-ancestor of $u$. Then $s\in R_{j+1}$, so $\depth(s)\le\depth(b)$; and $s$ and $b$ are comparable, both being ancestors of $u$, so $s\preceq b$ and $s$ is an $R_j$-ancestor of $b$. Conversely, every $R_j$-ancestor of $b$ is an ancestor of $u$, since $b\preceq u$. The $R_j$-ancestors of $u$ and of $b$ therefore coincide, and $a_j(u)=a_j(b)$. In particular $B_{b,j+1}\subseteq B_{a_j(b),j}$, and $a_j(b)\preceq b$.

\emph{Part (4).} The net property supplies $s\in R_j$ with $s\preceq v$ and $d_T(s,v)\le\lfloor\bar\alpha/m_j\rfloor$. The deepest $R_j$-ancestor $a=a_j(v)$ has $\depth(a)\ge\depth(s)$, so $a$ lies on the segment from $s$ to $v$ and $d_T(a,v)\le d_T(s,v)\le\lfloor\bar\alpha/m_j\rfloor$.
\end{proof}

The remaining facts concern the stopped refinement that the proof of Proposition~3.1 runs for a fixed signal. Fix $f=\sum_u\lambda_up_u$ with $\lambda_u\ge0$ and $\sum_u\lambda_u=1$, write $\lambda(B):=\sum_{u\in B}\lambda_u$, write $m(B):=m_j$ for the \emph{level weight} of a level-$j$ cell $B$, and call $B$ heavy when $\lambda(B)>m(B)/k$ and light otherwise. The refinement begins at level $0$, refines every heavy cell into its level-$(j{+}1)$ subcells, stops at every light cell, and stops everything at level $J$. The cells at which it stops are the \emph{terminal} cells. They partition $\sfV$ because the cells of one level do and each refinement step replaces a cell by a partition of it (\Cref{slem:cells}(\ref{sit:partition}),(\ref{sit:refine})), and $\mathcal L$ denotes their collection. For $L\in\mathcal L$ we write $a_L$ for its root and $\lambda_L:=\lambda(L)$. As in the main proof, $U$ consists of $R_0$ together with the roots of all heavy cells; a heavy cell is \emph{maximal} when no heavy cell lies strictly below it in the hierarchy of \Cref{slem:cells}(\ref{sit:refine}).

\begin{lemma}[The heavy structure]\label{slem:heavy}
In this setting:
\begin{enumerate}
\item\label{sit:disjoint} distinct maximal heavy cells have disjoint
underlying vertex sets;
\item\label{sit:edges} the minimal rooted subtree $S$ spanning $U$
has at most $5\bar\alpha k$ edges;
\item\label{sit:charges}
$\sum_{v\in U\setminus R_0}\widetilde\omega(v)<2k$;
\item\label{sit:maxweight}
$\sum_{C\ \text{maximal heavy}}m(C)<k$, which is the display (3.3) of
the main paper.
\end{enumerate}
\end{lemma}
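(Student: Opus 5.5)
The plan is to use only the laminar structure of the cell hierarchy from \Cref{slem:cells} together with the defining inequality $\lambda(B)>m(B)/k$ of heaviness. Parts (\ref{sit:disjoint}) and (\ref{sit:maxweight}) come first, then the charge bound (\ref{sit:charges}), and the edge count (\ref{sit:edges}) last, since it is the only step that needs a quantitative chaining argument.

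\emph{Parts (\ref{sit:disjoint}) and (\ref{sit:maxweight}).} By \Cref{slem:cells}(\ref{sit:partition}),(\ref{sit:refine}), any two cells of the hierarchy, at any levels, are either nested or disjoint. Heaviness is inherited upward. If a level-$(j{+}1)$ cell $B$ is heavy, its parent cell $P\supseteq B$ satisfies $\lambda(P)\ge\lambda(B)>m_{j+1}/k>m_j/k$. Moreover, any cell reached by the refinement has only heavy ancestors. Two distinct maximal heavy cells that intersect would therefore be nested, and the smaller would lie strictly below the larger, contradicting maximality. This gives (\ref{sit:disjoint}). For (\ref{sit:maxweight}), each maximal heavy cell $C$ has $\lambda(C)>m(C)/k$. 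These cells are disjoint and $\sum_u\lambda_u=1$, so $\sum_C m(C)<k\sum_C\lambda(C)\le k$.

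\emph{Part (\ref{sit:charges}).} Every heavy cell lies on the hierarchy chain from level $0$ down to some maximal heavy cell $C$, say at level $j_C$. Descending along heavy children must stop at a heavy cell with no heavy cell below it. A heavy cell at level $0$ has its root in $R_0$, so it contributes no charge. A heavy cell at level $j\ge1$ has its root $b$ in $R_j$, so $\widetilde\omega(b)\le\omega(b)\le m_j$. Bounding the sum over distinct roots by the sum over heavy cells, and grouping the cells by a maximal cell below them, gives
\[
\sum_{v\in U\setminus R_0}\widetilde\omega(v)\ \le\ \sum_{C\ \text{maximal heavy}}\ \sum_{j=1}^{j_C}2^j\ <\ 2\sum_C m(C)\ <\ 2k,
\]
where the last inequality is part (\ref{sit:maxweight}).

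\emph{Part (\ref{sit:edges}), the main obstacle.} The subtree $S$ is the union of the root paths of the elements of $U$. I will bound $|S|$ by the sum of the lengths of one connecting segment per element of $U$, each segment running up to an element of $U$ that is a strict ancestor.

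For $b\in R_0\setminus\{o\}$, the parent $\pa(b)$ is covered by some center $s\in S_0$ with $s\preceq\pa(b)$ and $d_T(s,\pa(b))\le\lfloor\bar\alpha\rfloor$. Then $s$ is a strict ancestor of $b$ in $R_0$ with $d_T(s,b)\le\bar\alpha+1\le\tfrac32\bar\alpha$, using $\bar\alpha\ge2$ from \Cref{slem:netsizes}(\ref{sit:floor}). By \Cref{slem:netsizes}(\ref{sit:netsize}), $|R_0|=|S_0|\le ke$, so these segments contribute at most $\tfrac32 e\,\bar\alpha k<4.1\,\bar\alpha k$ edges.

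For the root $b$ of a heavy cell at level $j\ge1$, the parent cell is heavy, since this cell was reached by the refinement. Its root $a=a_{j-1}(b)\in U$ satisfies $d_T(a,b)\le\bar\alpha/m_{j-1}=2\bar\alpha/m_j$ by \Cref{slem:cells}(\ref{sit:refine}),(\ref{sit:radius}). The heavy cells of level $j$ are disjoint, each of mass exceeding $m_j/k$, so there are fewer than $k/m_j$ of them. Their segments therefore total at most
\[
\sum_{j\ge1}\frac{k}{m_j}\cdot\frac{2\bar\alpha}{m_j}\ =\ 2\bar\alpha k\sum_{j\ge1}4^{-j}\ =\ \tfrac23\,\bar\alpha k .
\]
Adding the two contributions gives at most $(4.1+0.67)\bar\alpha k<5\bar\alpha k$ edges.

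The delicate points are to check that every vertex of $U$ is reached by one of these two chaining steps, and to verify the constant $\tfrac32 e+\tfrac23<5$. In particular, the bound on the segment from $R_0$ must use the parent trick, because the net property alone says nothing about a center's own ancestors.
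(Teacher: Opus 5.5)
Your proposal is correct and follows the paper's proof essentially step for step. For parts (1) and (4) you use laminarity plus upward inheritance of heaviness. For part (3) you charge each heavy root to a maximal heavy cell along its ancestor chain. For part (2) you use the same two families of vertical segments: the parent trick gives $R_0$-segments of length at most $\tfrac32\bar\alpha$, and the parent-cell segments have length at most $2\bar\alpha/m_j$, with fewer than $k/m_j$ heavy cells at level $j$.

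The only point to tighten is your claim that each segment ends at a \emph{strict} ancestor. If a heavy root $b$ at level $j$ already lies in $R_{j-1}$, then $a_{j-1}(b)=b$ and the segment is trivial. There are two fixes: for $b\notin R_0$, choose the heavy cell rooted at $b$ of minimal level; or, as the paper does, iterate the parent-cell steps down to level $0$ and then follow the $R_0$ chain. Either way the edge count is unchanged.
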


\begin{proof}
Part (\ref{sit:disjoint}) is a chain argument in the hierarchy, and part (\ref{sit:maxweight}) follows from it; part (\ref{sit:edges}) charges the skeleton to vertical segments along the nets; part (\ref{sit:charges}) charges activation weights to maximal heavy cells.

\emph{Parts (1) and (4).} Suppose two distinct heavy cells $(B,i)$ and $(B',j)$ with $i\le j$ have intersecting underlying sets. Cells at one level partition $\sfV$, so $i<j$. Iterating \Cref{slem:cells}(\ref{sit:refine}), $B'$ is contained in a unique cell of each level below $j$; its level-$i$ cell meets $B$, hence equals $B$, and the chain of cells containing $B'$ at levels $i,\dots,j$ ascends from $(B',j)$ to $(B,i)$ in the hierarchy. Each cell of this chain, at level $\ell$ say, has mass at least $\lambda(B')>m_j/k\ge m_\ell/k$ and is therefore heavy. In particular the chain member at level $i{+}1$ is a heavy cell strictly below $(B,i)$, so $(B,i)$ is not maximal. Distinct maximal heavy cells therefore cannot intersect. Each carries mass exceeding $m(C)/k$, and their supports being disjoint, their masses total at most $\lambda(\sfV)=1$, so
\[
\sum_{C\ \text{maximal heavy}}m(C)\ <\ k ,
\]
which is part (\ref{sit:maxweight}); the empty family satisfies it trivially.

\emph{Part (2).} The subtree $S$ lies in the union of two families of vertical segments, whose total length we bound.

First, connect $R_0$ to the root. For $a\in R_0\setminus\{o\}$, the net property of $R_0$ at the parent of $a$ supplies $s\in R_0$ with $s\preceq\pa(a)$ and $d_T(s,\pa(a))\le\lfloor\bar\alpha\rfloor$; the deepest strict $R_0$-ancestor $\bar a$ of $a$ then lies on the segment from $s$ to $a$, so
\[
d_T(\bar a,a)\ \le\ d_T(s,a)\ \le\ \bar\alpha+1\ \le\
\tfrac32\,\bar\alpha,
\]
by \Cref{slem:netsizes}(\ref{sit:floor}). Iterating $a\mapsto\bar a$ from any element of $R_0$ reaches the root, which belongs to every ancestor net, so the segments $[\bar a,a]$, $a\in R_0\setminus\{o\}$, connect $R_0$ to $o$; by \Cref{slem:netsizes}(\ref{sit:netsize}) their total length is at most $\tfrac32\bar\alpha\,|R_0|\le\tfrac32e\,\bar\alpha k$.

Second, connect the heavy roots to $R_0$. The root $b$ of a heavy cell $B$ at level $j\ge1$ lies in its parent cell, whose root $a_{j-1}(b)$ is an ancestor of $b$ within distance $\lfloor\bar\alpha/m_{j-1}\rfloor\le2\bar\alpha/m_j$ (\Cref{slem:cells}(\ref{sit:refine})--(\ref{sit:radius})); and the parent cell is heavy as well, its mass being at least $\lambda(B)>m_j/k>m_{j-1}/k$. Iterating these segments joins every heavy root to the root of a level-$0$ cell, which lies in $R_0$. At level $j$ the heavy cells are disjoint and each has mass exceeding $m_j/k$, so there are fewer than $k/m_j$ of them, and the segments they contribute have total length less than
\[
\sum_{j\ge1}\frac k{m_j}\cdot\frac{2\bar\alpha}{m_j}
\ =\ 2\bar\alpha k\sum_{j\ge1}4^{-j}
\ =\ \tfrac23\,\bar\alpha k .
\]
Every member of $U$ is thus connected to the root inside the union of the two families, so $S$ is contained in that union and $|E(S)|\le\tfrac32e\,\bar\alpha k+\tfrac23\,\bar\alpha k\le 5\bar\alpha k$.

\emph{Part (3).} The set $U\setminus R_0$ consists of roots of heavy cells at levels $j\ge1$. Charge each $v\in U\setminus R_0$ to a heavy cell rooted at $v$ of minimal level $j(v)\ge1$; roots of level-$j$ cells lie in $R_j$, so $\widetilde\omega(v)=\omega(v)\le m_{j(v)}$, and the charging is injective into the set $\mathcal H$ of heavy cells. Hence
\[
\sum_{v\in U\setminus R_0}\widetilde\omega(v)
\ \le\ \sum_{(B,j)\in\mathcal H}m_j .
\]
Assign every heavy cell to one maximal heavy cell weakly below it in the hierarchy, obtained by descending to heavy children while one exists. The cells assigned to a fixed maximal cell $(C,\ell)$ all lie on its ancestor chain, so their level weights total at most $m_0+m_1+\dots+m_\ell<2m_\ell$, and
\[
\sum_{(B,j)\in\mathcal H}m_j
\ \le\ 2\sum_{C\ \text{maximal heavy}}m(C)
\ <\ 2k ,
\]
the last step by part (\ref{sit:maxweight}), which also covers the case of an empty family.
\end{proof}

The construction of the main proof samples the light exits at random. The next lemma supplies the collapse bound quoted there, the exit estimates behind the fixed realization, and the budgets that realization carries. Its objects are the following.

Collapsing every terminal cell to its root gives $g:=\sum_{L\in\mathcal L}\lambda_Lp_{a_L}$. A heavy terminal cell is one the refinement did not split, so it sits at level $J$ and no heavy cell lies below it; being maximal it contributes $m_J\ge k/2$ to the sum in \Cref{slem:heavy}(\ref{sit:maxweight}), and there is therefore at most one. Each terminal cell $L$ is attached to $U$ at $h(L):=a_L$ when $L$ is a level-$0$ cell or the heavy terminal one, and otherwise at the root of the parent cell of $L$, which is heavy because $L$ was reached by the refinement; either way $h(L)\in U$, and $h(L)\preceq a_L$ by \Cref{slem:cells}(\ref{sit:refine}). Put $q_L:=p_{a_L}-p_{h(L)}$. In the first two cases $q_L=0$; otherwise, $L$ sits at a level $j\ge1$, and its parent cell has level weight $m_j/2=m(L)/2$ and contains $a_L$, so \Cref{slem:cells}(\ref{sit:radius}) and Lemma~2.2(1) give
\begin{equation}\label{eq:exitlength}
\|q_L\|_2^2\ =\ d_T\bigl(h(L),a_L\bigr)\ \le\
\Bigl\lfloor\frac{2\bar\alpha}{m(L)}\Bigr\rfloor\ \le\
\frac{2\bar\alpha}{m(L)} .
\end{equation}
With $b_h:=\sum_{L:\,h(L)=h}\lambda_L$ this splits the collapsed signal as $g=g_0+\sum_L\lambda_Lq_L$, where $g_0:=\sum_{h\in U}b_hp_h$ is the \emph{skeleton} and the nonzero $q_L$ are the \emph{light exits}.

The skeleton is rounded through one scalar. List $U$ as $h_1,\dots,h_s$ in a fixed depth-first preorder of $T$, write $b_i:=b_{h_i}$ for the masses in that order, set $B_i:=\sum_{\ell\le i}b_\ell$ with $B_0:=0$, and put $Q_i:=\lfloor kB_i\rfloor-\lfloor kB_{i-1}\rfloor\in\mathbb Z_{\ge0}$. Let $Q$ be the integer vector with $Q_{h_i}:=Q_i$ and $Q_v:=0$ for $v\notin U$, and set $\bar g_0:=\frac1k\sum_{h\in U}Q_hp_h$. Over an interval of the list the $Q_i$ telescope, so their sum differs from $k$ times the corresponding mass by a difference of two floor errors, hence by less than $1$. The terminal cells partition $\sfV$, so $B_s=\sum_L\lambda_L=1$ and the whole list gives $\sum_iQ_i=\lfloor kB_s\rfloor=k$; as the $B_i$ increase, $Q$ is a nonnegative integer leak vector of total mass $k$ supported on $U$. Every subtree $T_v$ is a contiguous block of a depth-first preorder, so $U\cap T_v$ is such an interval, and coordinates are subtree sums by Lemma~2.1; therefore $g_0$ and $\bar g_0$ differ by less than $1/k$ at every vertex simultaneously.

Finally, write $\mathcal L_{\mathrm{ex}}:=\{L\in\mathcal L:q_L\ne0\}$ for the light exits, every member of which is a light cell and therefore has $\lambda_L\le m(L)/k$. They are quantized by the independent variables $Z_L=\frac{m(L)}k\xi_L$, $L\in\mathcal L_{\mathrm{ex}}$, with $\xi_L\sim\mathrm{Bernoulli}(k\lambda_L/m(L))$, which is a legitimate parameter by that bound. With the selection weight $W:=\sum_{L\in\mathcal L_{\mathrm{ex}}}m(L)\ind\{Z_L\ne0\}$, the integer vector is
\[
z\ =\ Q+\sum_{L\in\mathcal L_{\mathrm{ex}}:\,Z_L\ne0}
m(L)\bigl(e_{a_L}-e_{h(L)}\bigr),
\]
and the \emph{flow states} of an integer vector $z$ are the subtree sums $x_v:=\sum_{u\succeq v}z_u$, that is, the coordinates of $\sum_uz_up_u$.

\begin{lemma}[Collapse and exit estimates]\label{slem:collapse}
In this setting:
\begin{enumerate}
\item\label{sit:collapse} $\|f-g\|_2^2\le3\bar\alpha/k$, the display
(3.4);
\item\label{sit:exitvar} writing $\Sigma$ for the sum
$\sum_{L\in\mathcal L_{\mathrm{ex}}}(Z_L-\lambda_L)q_L$, one has
$\E\|\Sigma\|_2^2\le2\bar\alpha/k$ and
$\E W\le k$, so with probability at least $\tfrac14$ both $W\le2k$
and $\|\Sigma\|_2^2\le8\bar\alpha/k$ hold;
\item\label{sit:zbudget} on such a realization $z$ is supported on
$A_k$ and satisfies $\|z\|_1\le5k$, and its selected exit roots carry
total activation charge at most $W$.
\end{enumerate}
\end{lemma}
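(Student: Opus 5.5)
The plan is to take the three parts in order. Each reduces to disjointness of supports together with the cell radius bounds of \Cref{slem:cells}(\ref{sit:radius}) and (S2.1).

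\emph{Part (1).} Write $f-g=\sum_{L}e_L$ with $e_L:=\sum_{u\in L}\lambda_u(p_u-p_{a_L})$. By \Cref{slem:cells}(\ref{sit:connected}) and Lemma~2.2(2), each $p_u-p_{a_L}=\ind_{(a_L,u]}$ is supported in $L\setminus\{a_L\}$. The terminal cells are disjoint, so $\|f-g\|_2^2=\sum_L\|e_L\|_2^2$. Within a cell all contributions are nonnegative, and every coordinate of $e_L$ is at most $\lambda_L$. Convexity (Jensen) gives
\[
\|e_L\|_2^2\ \le\ \lambda_L\sum_{u\in L}\lambda_u\,d_T(a_L,u)\ \le\ \lambda_L^2\,\frac{\bar\alpha}{m(L)}.
\]
We then split by cell type.
\begin{itemize}
\item A light cell has $\lambda_L\le m(L)/k$, so it contributes at most $\lambda_L\bar\alpha/k$. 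Summing over light cells gives at most $\bar\alpha/k$.
\item The unique heavy terminal cell, if any, sits at level $J$ with $m_J\ge k/2$. It contributes at most $\lambda_L^2\cdot2\bar\alpha/k\le2\bar\alpha/k$.
\end{itemize}
The total is at most $3\bar\alpha/k$.

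\emph{Part (2).} The $Z_L$ are independent and $\E Z_L=\lambda_L$, so the cross terms vanish in expectation and
\[
\E\|\Sigma\|_2^2\ =\ \sum_L\operatorname{Var}(Z_L)\|q_L\|_2^2.
\]
Here $\operatorname{Var}(Z_L)\le\frac{m(L)}{k}\lambda_L$, and (S2.1) gives $\|q_L\|_2^2\le2\bar\alpha/m(L)$. Since $\sum_L\lambda_L\le1$, this yields $\E\|\Sigma\|_2^2\le2\bar\alpha/k$. Next, $\E W=\sum_L m(L)\cdot k\lambda_L/m(L)\le k$. Markov's inequality gives $\mathbb P(W>2k)\le\tfrac12$ and $\mathbb P(\|\Sigma\|_2^2>8\bar\alpha/k)\le\tfrac14$. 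A union bound leaves probability at least $\tfrac14$ for both events.

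\emph{Part (3).} First, $Q$ is supported on $U\subseteq A_k$: the roots of heavy cells at level $j\le J$ lie in $R_j\subseteq R_J$. Each $a_L$ and $h(L)$ is a cell root at level at most $J$, hence also lies in $A_k$. Next, $\|Q\|_1=k$, and each selected transfer adds at most $2m(L)$ to the $\ell^1$ norm, so $\|z\|_1\le k+2W\le5k$. Finally, a selected exit root $a_L$ is the root of a level-$j$ cell with $m_j=m(L)$, so $a_L\in R_j$ and $\widetilde\omega(a_L)\le\omega(a_L)\le m(L)$. Summing over the selected exits, even with multiplicities, bounds their total charge by $W$.

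The main obstacle is the heavy terminal cell in part (1), where the light-cell mass bound is unavailable. The remedy is to use $m_J\ge k/2$ in the radius bound, which is exactly what makes its contribution of order $\bar\alpha/k$.
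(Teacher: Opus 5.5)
Your proposal is correct and follows the paper's proof essentially step for step: disjoint supports and the cell radius bound for part (1), independence and two Markov bounds for part (2), and the same $\ell^1$ and charge accounting for part (3). The differences are cosmetic. You bound $\|e_L\|_2^2\le\lambda_L^2\bar\alpha/m(L)$ by Cauchy--Schwarz (Jensen) where the paper uses the triangle inequality, and you bound the total charge of the selected exit roots by summing with multiplicities where the paper notes the roots are distinct; nonnegativity of the charges makes your version valid without that fact.
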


\begin{proof}
\emph{Part (1).} The blocks $e_L=\sum_{u\in L}\lambda_u(p_u-p_{a_L})$ have pairwise disjoint supports, as recorded in the main proof, so $\|f-g\|_2^2=\sum_{L\in\mathcal L}\|e_L\|_2^2$. Every $u\in L$ has $d_T(a_L,u)\le\bar\alpha/m(L)$ by \Cref{slem:cells}(\ref{sit:radius}), so the triangle inequality and Lemma~2.2(1) give $\|e_L\|_2\le\sum_{u\in L}\lambda_u\sqrt{d_T(a_L,u)}\le \lambda_L\sqrt{\bar\alpha/m(L)}$. A light terminal cell has $\lambda_L\le m(L)/k$, whence $\|e_L\|_2^2\le\lambda_L^2\bar\alpha/m(L)\le\lambda_L\bar\alpha/k$, and summing over the light cells with $\sum_L\lambda_L\le1$ bounds their total contribution by $\bar\alpha/k$. At most one terminal cell is heavy; it sits at level $J$, where $m_J\ge k/2$, and contributes $\|e_L\|_2^2\le\lambda_L^2\bar\alpha/m_J\le2\bar\alpha/k$. Adding the two bounds proves (1).

\emph{Part (2).} Each $Z_L$ has mean $\lambda_L$ and
\[
\operatorname{Var}(Z_L)\ \le\ \Bigl(\frac{m(L)}k\Bigr)^2
\frac{k\lambda_L}{m(L)}\ =\ \frac{\lambda_Lm(L)}k ,
\]
while $\|q_L\|_2^2\le2\bar\alpha/m(L)$ by \Cref{eq:exitlength}. The $Z_L$ are independent, so the cross terms vanish in expectation. With every sum below running over $L\in\mathcal L_{\mathrm{ex}}$,
\[
\E\Bigl\|\sum_L(Z_L-\lambda_L)q_L\Bigr\|_2^2
\ =\ \sum_L\operatorname{Var}(Z_L)\,\|q_L\|_2^2
\ \le\ \frac{2\bar\alpha}k\sum_L\lambda_L\ \le\ \frac{2\bar\alpha}k .
\]
Also $\E W=\sum_Lm(L)\,k\lambda_L/m(L)=k\sum_L\lambda_L\le k$. By Markov's inequality $\mathbb P(W>2k)\le\tfrac12$, and the squared norm exceeds $8\bar\alpha/k$ with probability at most $\tfrac14$, so both bounds hold with probability at least $\tfrac14$.

\emph{Part (3).} The skeleton $Q$ is supported on $U\subseteq A_k$ and each selected transfer on $\{a_L,h(L)\}$; the root of a level-$j$ cell lies in $R_j\subseteq A_k$, so $\operatorname{supp}(z)\subseteq A_k$. Since $Q$ is nonnegative with total mass $k$ and each selected transfer adds $2m(L)$ to the $\ell_1$ norm, $\|z\|_1\le k+2W\le5k$. The selected exit roots are distinct, being roots of distinct terminal cells, and $\widetilde\omega(a_L)\le\omega(a_L)\le m(L)$ because $a_L\in R_j$ with $m_j=m(L)$; their charges therefore total at most $\sum_{L:\,Z_L\ne0}m(L)=W$.
\end{proof}

Two explicit constants follow. The subtree $S$ has at most $5\bar\alpha k+1\le6\bar\alpha k$ vertices, by \Cref{slem:heavy}(\ref{sit:edges}) and $\bar\alpha\ge2$; since $g_0$ and $\bar g_0$ differ by less than $1/k$ at every vertex and vanish outside $S$, $\|g_0-\bar g_0\|_2^2<6\bar\alpha k/k^2=6\bar\alpha/k$: the display (3.5) holds with $C=6$. Together with the collapse and exit bounds of \Cref{slem:collapse}, the error assembly of the main proof reads
\[
\Bigl\|f-\frac1k\sum_vz_vp_v\Bigr\|_2^2
\ \le\ 3\Bigl(\frac{3\bar\alpha}k+\frac{6\bar\alpha}k
+\frac{8\bar\alpha}k\Bigr)
\ =\ \frac{51\,\bar\alpha}k
\ =\ 51\,\overline\delta_k(T)\ \le\ 102\,\delta_k(T),
\]
the last step by Lemma~2.3, so part (1) of Proposition~3.1 holds with $C_0=102$.

It remains to prove the counting bound.

\begin{lemma}[Counting bound]\label{slem:count}
The integer vectors $z$ supported on $A_k$ with $\sum_vz_v=k$, $\Gamma(z)\le9k$, and all flow states in $[0,3k]$ number at most $e^{45k}$. In particular part (2) of Proposition~3.1 holds with $C_1=45$.
\end{lemma}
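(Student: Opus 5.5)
The count splits into a choice of support and a choice of coefficients on that support; I would drop the flow-state constraint and the constraint $\sum_v z_v=k$, since relaxing them only enlarges the set. Write $\operatorname{supp}(z)=F\cup G$ with $F:=\operatorname{supp}(z)\cap R_0$, the free part, and $G:=\operatorname{supp}(z)\setminus R_0$, the charged part. Both pieces are controlled by $\Gamma(z)\le9k$. First, $\|z\|_1\le9k$, so $|\operatorname{supp}(z)|\le9k$. Second, $\sum_{v\in G}\widetilde\omega(v)\le9k$.

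\emph{Counting supports.} The free part is a subset of $R_0$ of size at most $9k$, where $|R_0|\le2ke$ by \Cref{slem:netsizes}(\ref{sit:union}). Hence it has at most $2^{|R_0|}\le2^{2ek}=e^{2e\log2\,k}\le e^{3.8k}$ choices.

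For the charged part, group $G$ by first-appearance level. A vertex $v$ with $\omega(v)=m_j$, $j\ge1$, lies in $R_j\setminus R_{j-1}$, a set of size at most $2ke^{m_j}$. Let $g_j$ be the number of chosen vertices at level $j$. Then $\sum_j g_jm_j\le9k$, and level $j$ contributes at most $\binom{2ke^{m_j}}{g_j}\le(2eke^{m_j}/g_j)^{g_j}$ choices.

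I would bound the total by a generating function. The number of level-$j$ subsets is at most $\sum_g\binom{N_j}{g}x^{gm_j}\,x^{-9k}$ summed over levels, with $N_j=2ke^{m_j}$, and then choose $x$ to optimize. A cleaner route is the following. The product over levels of $\sum_g\binom{N_j}{g}y^{g}$, with weights $y_j=e^{-2m_j}$, bounds $\sum_G e^{-2\sum_{v\in G}\widetilde\omega(v)}$ by $\prod_j(1+e^{-2m_j})^{N_j}\le\exp\bigl(\sum_j2ke^{m_j}e^{-2m_j}\bigr)\le\exp(2k\sum_je^{-2^j})\le e^{k}$. This is the same generating-function estimate the paper uses for the normalizing sum of the prior. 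Every admissible $G$ has weight at least $e^{-18k}$, so there are at most $e^{19k}$ charged supports.

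\emph{Coefficients.} On a fixed support of size $s\le9k$ with $\|z\|_1\le9k$, the number of integer vectors is at most $2^s\binom{9k+s}{s}\le2^{9k}\binom{18k}{9k}\le2^{9k}2^{18k}=e^{27\log2\,k}\le e^{18.8k}$.

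Multiplying the three counts gives at most $e^{(3.8+19+18.8)k}\le e^{45k}$.

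The main obstacle is the charged-support count. The constants must close at $45$, so the tilt $y_j$ must beat the net sizes $e^{m_j}$; a weight $e^{-2m_j}$ suffices. The free-part bound uses only $|R_0|\le2ek$, which is why level $0$ is left uncharged. If these constants come out slightly too large, I would tighten the coefficient count using $\|z\|_1\le9k-\sum\widetilde\omega$, or tilt with $e^{-3m_j}$.
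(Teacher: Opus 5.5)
Your proof is correct and follows the paper's argument. It uses the same split into free support, charged support and coefficients, the same tilt $e^{-2m_j}$ against level sizes $2ke^{m_j}$ to get $e^{19k}$ charged supports, and a stars-and-bars coefficient count. The differences are minor: you use the weaker $|R_0|\le 2ek$, the sharper support bound $|\operatorname{supp} z|\le\|z\|_1\le 9k$ in place of $ek+9k$, and signs times magnitudes in place of the $(z^+,z^-)$ injection; your constants close at about $41.6k\le 45k$.
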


\begin{proof}
We bound the larger class defined by the support and code-length constraints alone. Each member $z$ determines its support, split into the free part $\operatorname{supp}(z)\cap R_0$ and the charged part $\operatorname{supp}(z)\setminus R_0$, and its coefficient vector; we count the three in turn and multiply.

\emph{Free parts.} Any subset of $R_0$ can occur, and $|R_0|\le ke^{m_0}=ek$ by \Cref{slem:netsizes}(\ref{sit:netsize}), so the free parts number at most $2^{ek}\le e^{ek}$.

\emph{Charged parts.} Group the charged vertices by first appearance: $G_j:=\{v\in A_k\setminus R_0:\ \omega(v)=m_j\}$ satisfies $G_j\subseteq R_j$, hence $|G_j|\le2ke^{m_j}$ by \Cref{slem:netsizes}(\ref{sit:union}), for $1\le j\le J$. The groups partition $A_k\setminus R_0$, so a charged part is the disjoint union of its intersections with them, and one with $q_j$ elements of $G_j$ carries activation charge $\sum_jm_jq_j\le\Gamma(z)\le9k$. Inserting the factor $e^{2(9k-\sum_jm_jq_j)}\ge1$ and then dropping the charge constraint,
\begin{align*}
\#\{\text{charged parts}\}
\ \le\ e^{18k}\prod_{j=1}^{J}\ \sum_{q\ge0}
\binom{|G_j|}q\,e^{-2m_jq}
\ &=\ e^{18k}\prod_{j=1}^{J}\bigl(1+e^{-2m_j}\bigr)^{|G_j|}
\\
&\le\ e^{18k}\exp\Bigl(2k\sum_{j\ge1}e^{-m_j}\Bigr)
\ \le\ e^{19k},
\end{align*}
using $\log(1+x)\le x$, $|G_j|e^{-2m_j}\le2ke^{-m_j}$, and $\sum_{j\ge1}e^{-2^j}<\tfrac16$.

\emph{Coefficients.} Every charged support vertex carries charge at least $1$, so a support has at most $ek+9k\le12k$ vertices. On a fixed support of size $d$, the map $z\mapsto(z^+,z^-)$ is injective into the nonnegative integer vectors on $2d$ coordinates with total at most $9k$, since $\|z\|_1\le\Gamma(z)\le9k$; these number $\binom{9k+2d}{2d}\le2^{9k+2d}\le2^{33k}\le e^{23k}$.

Multiplying the three counts bounds the class by $e^{(e+19+23)k}\le e^{45k}$.
\end{proof}

The facts quoted in the main proof are now all in place, and Proposition~3.1 is proved.

%%%%%%%%%%%%%%%%%%%%%%%%%%%%%%%%%%%%%%%%%%%%%%%%%%%%%%%%%%%%%%%%%%%%%%%%%%%%%%
\section{The Packing Lower Bound}\label{ssec:packing}
%%%%%%%%%%%%%%%%%%%%%%%%%%%%%%%%%%%%%%%%%%%%%%%%%%%%%%%%%%%%%%%%%%%%%%%%%%%%%%

This section completes the lower-bound half of Section~3.2. Lemmas~3.2 and~3.3, whose proofs were sketched there, are proved in full first, with the constants of the latter fixed. Two self-contained lemmas follow: the additivity of Bayes risks across coordinate-disjoint blocks, and the edge-disjoint grouping of a separated vertex set. The section then completes the proofs of Propositions~3.4 and~3.5, and finally the deferred cases of Theorem~1 and Proposition~1.

\begin{proof}[Proof of Lemma 3.2]
Write $\mathcal P:=\{p_u:u\in\sfV\}$ and $\varepsilon:=\sqrt r/3$, and fix a maximal $\varepsilon$-separated subset of $\mathcal P$; by maximality, every point of $\mathcal P$ lies within Euclidean distance $\varepsilon$ of the subset. By Lemma~2.2(1), the vertices indexing the subset have pairwise tree distances at least $\varepsilon^2=r/9$, so it suffices to bound their number below by $\Nup(r)$.

Consider the ball of radius $\varepsilon$ around one point of the subset, let $C$ be the set of vertices whose indicators it contains, and let $a$ be the deepest common ancestor of $C$. Fix $u\in C$ with $u\ne a$ and let $c$ be the child of $a$ on the path toward $u$. Not all of $C$ lies in the subtree $T_c$, since $c$ would then be a common ancestor of $C$ deeper than $a$; fix $v\in C\setminus T_c$. The meet $u\wedge v$ is at least as deep as $a$, the latter being a common ancestor of $u$ and $v$; and it is not deeper, since an ancestor of $u$ strictly below $a$ lies in $T_c$, and $u\wedge v\in T_c$ would force $v\in T_c$. Hence $u\wedge v=a$, so the path from $u$ to $v$ passes through $a$, and by Lemma~2.2(1),
\[
d_T(a,u)\ \le\ d_T(u,v)\ =\ \|p_u-p_v\|_2^2\ \le\ (2\varepsilon)^2
\ =\ \tfrac49\,r\ <\ r ,
\]
the indicators $p_u$ and $p_v$ lying in one ball of radius $\varepsilon$. The bound holds trivially at $u=a$, so $a$ ancestor-covers every vertex of $C$ within distance $r$. Every vertex of $T$ lies in the captured set of some ball, so collecting one such ancestor per ball yields an ancestor $r$-net, of size at most the number of balls; the number of balls is the size of the separated subset, and every ancestor $r$-net has at least $\Nup(r)$ elements, which proves the bound.
\end{proof}

Two facts about Gaussian shifts are used without further comment \citep{tsybakov2009}. The Kullback--Leibler divergence between $N(\theta,\sigma^2I_N)$ and $N(\theta',\sigma^2I_N)$ is $\|\theta-\theta'\|_2^2/(2\sigma^2)$. Fano's inequality in its pairwise form states that if a label is uniform on $M\ge2$ hypotheses whose data laws satisfy $\max_{i,j}\operatorname{KL}(P_i,P_j)\le\kappa$, then every decoder based on the data errs with probability at least $1-(\kappa+\log2)/\log M$.

\begin{proof}[Proof of Lemma 3.3]
Fix $\eta:=\tfrac1{10}$, the constant of the main-text sketch, and shrink each block into its information budget:
\[
\tau_j^2\ :=\ \min\Bigl\{1,\ \frac{\eta\,\sigma^2h_j}{D_j^2}\Bigr\},
\qquad
\widetilde\mu_{\mathbf a}\ :=\ \mu_0+\sum_j\tau_jg_j(a_j),
\]
so that $\tau_j\in[0,1]$ and, by hypothesis (1), the shrunken family lies in $K$. Place the uniform product prior on $\mathbf a$. Since the family lies in $K$, the minimax risk over $K$ dominates the Bayes risk of this prior, and it suffices to bound the latter for an arbitrary estimator $\widehat\mu=\widehat\mu(Y)$.

\emph{Separation.} Let $\widehat{\mathbf a}$ index a member of the shrunken family nearest to $\widehat\mu$, ties broken by a fixed order, so that $\|\widehat\mu-\widetilde\mu_{\mathbf a}\|_2\ge \tfrac12\|\widetilde\mu_{\widehat{\mathbf a}} -\widetilde\mu_{\mathbf a}\|_2$ by the triangle inequality, and write $\Delta_j:=g_j(\widehat a_j)-g_j(a_j)$, blocks with $\Delta_j=0$ dropped. With $\rho_{ij}:=|\langle\Delta_i,\Delta_j\rangle| /(\|\Delta_i\|_2\|\Delta_j\|_2)$, hypothesis (3) gives $\sum_{j\ne i}\rho_{ij}\le\gamma$ for every $i$, and
\[
\Bigl\|\sum_j\tau_j\Delta_j\Bigr\|_2^2
\ \ge\ \sum_j\tau_j^2\|\Delta_j\|_2^2
-\sum_{i\ne j}\rho_{ij}\,\tau_i\|\Delta_i\|_2\,\tau_j\|\Delta_j\|_2
\ \ge\ (1-\gamma)\sum_j\tau_j^2\|\Delta_j\|_2^2 ,
\]
the last step by $xy\le\tfrac12(x^2+y^2)$ and the row sums. Since $\|\Delta_j\|_2\ge d_j$ whenever $\widehat a_j\ne a_j$, pathwise
\[
\|\widehat\mu-\widetilde\mu_{\mathbf a}\|_2^2
\ \ge\ \frac{1-\gamma}4\sum_j\tau_j^2d_j^2\,
\ind\{\widehat a_j\ne a_j\}.
\]

\emph{Per-block error.} Fix $j$ and condition on the other labels $\mathbf a_{-j}$. Under the product prior, $a_j$ remains uniform on $\mathcal A_j$, and subtracting the known vector $\mu_0+\sum_{i\ne j}\tau_ig_i(a_i)$ from $Y$ reduces the conditional problem to testing the finite Gaussian family $\{\tau_jg_j(a):a\in\mathcal A_j\}$, whose pairwise Kullback--Leibler divergences are at most $\tau_j^2D_j^2/(2\sigma^2)\le\eta h_j/2$. If $|\mathcal A_j|\ge3$, Fano's inequality gives every decoder error probability at least $1-\eta/2-\log2/\log|\mathcal A_j| \ge1-\tfrac1{20}-\log2/\log3>\tfrac14$. If $|\mathcal A_j|=2$, then $D_j=d_j$ and the two means are at distance $\tau_jD_j\le\sigma\sqrt{\eta\log2}$; the likelihood-ratio test is optimal and projects the data onto the difference of the means; its two conditional errors are equal, and the Bayes error is
\[
\Phi_{\mathcal N}\Bigl(-\frac{\tau_jD_j}{2\sigma}\Bigr)
\ \ge\ \frac12-\frac{\sqrt{\eta\log2}}{2\sqrt{2\pi}}\ >\ \frac14,
\]
where $\Phi_{\mathcal N}$ is the standard normal distribution function, whose density is at most $1/\sqrt{2\pi}$. In either case, every decoder of $a_j$ that is measurable in $Y$ is, conditionally on $\mathbf a_{-j}$, a decoder of the reduced problem, so $\mathbb P(\widehat a_j\ne a_j)\ge\tfrac14$.

\emph{Assembly.} Taking expectations in the separation bound,
\[
\E\|\widehat\mu-\widetilde\mu_{\mathbf a}\|_2^2
\ \ge\ \frac{1-\gamma}4\sum_j\tau_j^2d_j^2\,
\mathbb P(\widehat a_j\ne a_j)
\ \ge\ \frac{1-\gamma}{16}\sum_j\tau_j^2d_j^2 ,
\]
and by the definition of $\tau_j$ and hypothesis (2),
\[
\tau_j^2d_j^2
\ =\ \min\Bigl\{d_j^2,\ \eta\,\sigma^2h_j\,\frac{d_j^2}{D_j^2}\Bigr\}
\ \ge\ \min\Bigl\{d_j^2,\ \frac\eta A\,\sigma^2h_j\Bigr\}
\ \ge\ \frac\eta A\,\min\bigl\{d_j^2,\ \sigma^2h_j\bigr\},
\]
so the Bayes risk of the prior is at least $c_A(1-\gamma)\sum_j\min\{d_j^2,\sigma^2h_j\}$ with $c_A:=\eta/(16A)=1/(160A)$.

\emph{The restricted loss.} When every contrast is supported in the coordinate set $S$, choose $\widehat{\mathbf a}$ nearest to $\widehat\mu$ in the norm restricted to $S$. The differences $(\widetilde\mu_{\widehat{\mathbf a}}-\widetilde\mu_{\mathbf a})|_S =\sum_j\tau_j\Delta_j$ are unchanged, since the contrasts live in $S$, so the separation bound holds for $\|(\widehat\mu-\widetilde\mu_{\mathbf a})|_S\|_2^2$; and the per-block bound applies to any decoder measurable in the full data $Y$, in particular to this one. The assembly is identical.
\end{proof}

The assembly across groups in Proposition~3.4 rests on the fact that coordinate-disjoint blocks contribute additively to the Bayes risk.

\begin{lemma}[Bayes risks add across disjoint blocks]\label{slem:blocks}
Let
\[
\mu_{\boldsymbol\theta}\ =\ \mu_0+\sum_{i=1}^{m}h_i(\theta_i),
\qquad
\boldsymbol\theta=(\theta_1,\dots,\theta_m),
\]
where the $\theta_i$ are independent under a product prior and, for each $i$, every contrast $h_i(\theta_i)-h_i(\theta_i')$ is supported in a coordinate set $S_i$, the sets $S_1,\dots,S_m$ pairwise disjoint. In the model $Y=\mu_{\boldsymbol\theta}+\sigma Z$, $Z\sim N(0,I_N)$, the Bayes risk of estimating $\mu_{\boldsymbol\theta}$ in squared loss is at least the sum over $i$ of the Bayes risks of the isolated problems: observe $Y_{S_i}$, which up to a fixed additive vector is $h_i(\theta_i)|_{S_i}+\sigma Z_{S_i}$, and estimate $\mu_{\boldsymbol\theta}|_{S_i}$ under the $i$-th marginal prior.
\end{lemma}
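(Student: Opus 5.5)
The loss splits coordinatewise, so the plan is to write the squared error as a sum over the blocks $S_i$ (plus the leftover coordinates, which only add a nonnegative term) and show that each block term is at least the Bayes risk of the $i$-th isolated problem. Fix an estimator $\widehat\mu(Y)$. Discarding the coordinates outside $\bigcup_iS_i$ gives
\[
\E\|\widehat\mu(Y)-\mu_{\boldsymbol\theta}\|_2^2\ \ge\ \sum_{i=1}^m\E\bigl\|\widehat\mu(Y)|_{S_i}-\mu_{\boldsymbol\theta}|_{S_i}\bigr\|_2^2 ,
\]
the expectation running jointly over the product prior and the noise. It then suffices to show, for each fixed $i$, that the $i$-th summand dominates the Bayes risk of the isolated problem.

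Fix $i$. Pick a reference value $\theta_j^0$ for each $j$. Since every contrast $h_j(\theta_j)-h_j(\theta_j^0)$ is supported in $S_j$, the restriction to $S_i$ is
\[
\mu_{\boldsymbol\theta}|_{S_i}=\Bigl(\mu_0+\sum_jh_j(\theta_j^0)\Bigr)\Big|_{S_i}+\bigl(h_i(\theta_i)-h_i(\theta_i^0)\bigr)\big|_{S_i},
\]
because the blocks $j\neq i$ vanish on $S_i$ by disjointness. So $\mu_{\boldsymbol\theta}|_{S_i}$ depends on $\boldsymbol\theta$ only through $\theta_i$, and $Y_{S_i}$ is a fixed vector plus $h_i(\theta_i)|_{S_i}+\sigma Z_{S_i}$; this is the isolated problem as stated. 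The complementary data $Y_{S_i^c}$ is a function of $\boldsymbol\theta_{-i}$ and $Z_{S_i^c}$ only, with a possible dependence on $\theta_i$ through coordinates outside $S_i$. That dependence is the point to check: the contrast $h_i(\theta_i)-h_i(\theta_i^0)$ is supported in $S_i$, so on $S_i^c$ the vector $\mu_{\boldsymbol\theta}$ does not depend on $\theta_i$. Under the product prior, $W:=Y_{S_i^c}$ is therefore independent of $(\theta_i,Y_{S_i})$, because $Z_{S_i^c}$ is independent of $Z_{S_i}$ and $\boldsymbol\theta_{-i}$ is independent of $\theta_i$.

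Consequently, $\widehat\mu(Y)|_{S_i}=\widehat\mu(Y_{S_i},W)|_{S_i}$ is a randomized estimator in the isolated problem, with external randomization $W$ independent of the data and parameter of that problem. Conditioning on $W=w$, each map $y\mapsto\widehat\mu(y,w)|_{S_i}$ is a deterministic estimator of the isolated problem, so its Bayes risk is at least the isolated Bayes risk. Integrating over $w$ by Fubini, which is valid since the loss is nonnegative and the maps are measurable, gives the same lower bound for the $i$-th summand. Summing over $i$ and taking the infimum over $\widehat\mu$ completes the argument.

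The main obstacle is the independence claim, namely that $W$ carries no information about $\theta_i$; this needs the contrasts of every block, including block $i$, to be confined to their own $S_j$. Everything else is bookkeeping.
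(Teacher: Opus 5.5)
Your proof is correct and follows the paper's argument: split the loss over the disjoint $S_i$, observe that $\mu_{\boldsymbol\theta}|_{S_i}$ depends on $\theta_i$ alone, and use that $Y_{S_i^c}$ is a function of $(\boldsymbol\theta_{-i},Z_{S_i^c})$ and is therefore independent of $(\theta_i,Y_{S_i})$. The only difference is the last step: you treat $Y_{S_i^c}$ as external randomization and integrate over it by Fubini, while the paper argues that the posterior mean of $\mu_{\boldsymbol\theta}|_{S_i}$ is a function of $Y_{S_i}$ alone; your version never uses the posterior mean and so needs no second-moment assumption.
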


\begin{proof}
The squared loss dominates the sum of its restrictions to the disjoint sets: $\|\widehat\mu-\mu_{\boldsymbol\theta}\|_2^2\ge \sum_i\|(\widehat\mu-\mu_{\boldsymbol\theta})|_{S_i}\|_2^2$. Fix $i$. For $\ell\ne i$ the restriction $h_\ell(\theta_\ell)|_{S_i}$ does not depend on $\theta_\ell$: two values of $\theta_\ell$ differ on $S_i$ by a contrast of block $\ell$ restricted to $S_i$, which vanishes because those contrasts are supported in $S_\ell$, disjoint from $S_i$. Hence $\mu_{\boldsymbol\theta}|_{S_i}=c_i+h_i(\theta_i)|_{S_i}$ for a fixed vector $c_i$, and $Y_{S_i}$ is distributed as in the isolated problem. Moreover $Y_{S_i^c}$ is a function of $(\boldsymbol\theta_{-i},Z_{S_i^c})$, which is independent of $(\theta_i,Z_{S_i})$ under the product prior, so $Y_{S_i^c}$ is independent of the pair $(\theta_i,Y_{S_i})$ and the conditional law of $\mu_{\boldsymbol\theta}|_{S_i}$ given $Y$ equals its conditional law given $Y_{S_i}$. The Bayes-optimal estimate of $\mu_{\boldsymbol\theta}|_{S_i}$, its posterior mean, is therefore a function of $Y_{S_i}$ alone, and $\E\|(\widehat\mu-\mu_{\boldsymbol\theta})|_{S_i}\|_2^2$ is at least the isolated Bayes risk for every estimator $\widehat\mu$. Summing over $i$ completes the proof.
\end{proof}

The next lemma produces the edge-disjoint groups of the main-text sketch, with explicit comparison constants.

\begin{lemma}[Edge-disjoint grouping]\label{slem:group}
Let $U\subseteq\sfV$ consist of $M\ge2$ vertices and let $q$ be an integer with $1\le q\le M/2$. There are connected subtrees $\mathcal C_1,\dots,\mathcal C_{q'}$ of $T$ with pairwise disjoint edge sets, and pairwise disjoint sets $U_i\subseteq U\cap \mathcal C_i$, such that
\[
\frac q4\ \le\ q'\ \le\ 6q,
\qquad
\frac M{6q}\ \le\ |U_i|\ \le\ \frac Mq
\quad\text{and}\quad
|U_i|\ \ge\ 2
\qquad\text{for every }i .
\]
\end{lemma}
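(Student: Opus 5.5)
The plan is a bottom-up bin-packing pass on the minimal subtree $S$ spanning $U$. Set the threshold $t:=\lceil M/(2q)\rceil$, which is at least $1$ since $q\le M/2$ and satisfies $t\le M/q$. Process the vertices of $S$ in postorder and maintain at each vertex $v$ a \emph{pending} set $P_v\subseteq U\cap T_v$ of marked vertices not yet sealed. Every pending vertex is joined to $v$ inside $T_v$ along edges not yet used by a sealed group.

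At $v$, first collect $v$ itself if $v\in U$. Then absorb the children's pending sets one child at a time. Each child's pending set has size below $t$, since otherwise it would already have been sealed. The moment the accumulated pending mass at $v$ reaches $t$, seal a group. Its marked set $U_i$ is the accumulated collection. Its subtree $\mathcal C_i$ is the union of the paths from those vertices to $v$, and these paths run through the absorbed children's branches plus the child edges into $v$. After sealing, continue absorbing the remaining children with a fresh pending set.

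Because each absorption adds fewer than $t$ vertices, a sealed group has size in $[t,2t)$, hence between $M/(2q)$ and $M/q+1$. A slightly tighter threshold, $t=\lceil M/(3q)\rceil$ with sealing at $\ge t$, will make the upper bound exactly $|U_i|\le 2t-1\le M/q$. The lower bounds $|U_i|\ge M/(6q)$ and $|U_i|\ge2$ will use $M/q\ge2$; if $t=1$, I will instead seal at size $2$ so that $|U_i|\ge2$ still holds.

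The key structural claim is edge-disjointness. A sealed group at $v$ uses edges only inside the branches of the children absorbed since the last seal at $v$, together with those children's edges into $v$. Edges below a child were used only by vertices that were still pending there, since sealed vertices were removed. So two groups sealed at the same $v$ use disjoint sets of child branches. Groups sealed at different vertices either sit in disjoint subtrees, or one lies below the other, and then the lower group's edges were retired when it was sealed and are never reused. The groups may share the vertex $v$ but no edge, which is exactly what the statement requires.

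At the root, a leftover pending set of size below $t$ will be merged into the last sealed group containing the root, or into any group whose subtree can be extended to the root without overlapping edges. If merging would break the upper size bound, I will simply discard the leftover, since it has fewer than $t$ vertices.

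Counting will then give the bounds on $q'$. At most $t$ vertices are discarded, so the sealed groups cover at least $M-t\ge M/2$ vertices, and each group has size at most $M/q$. This gives $q'\ge q/2\ge q/4$. Each group also has size at least about $M/(3q)$, which gives $q'\le 3q\le 6q$.

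The main obstacle is the discard step at the root combined with the exact constant bookkeeping, particularly in the small-$t$ regime where $M/q$ is close to $2$. There I will use the loose constants $q/4$, $6q$ and $M/(6q)$ to absorb the rounding losses.
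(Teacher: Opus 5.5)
Your proposal is correct and follows essentially the paper's proof: a postorder bin-packing pass over the spanning subtree that seals a group as soon as its mark count reaches a threshold, passes the unsealed remainder upward along unused edges, discards the final residual at the root, and then counts. The main difference is the threshold. The paper uses $L=\max\{2,\lfloor M/(4q)\rfloor\}$, while your final choice $\max\{2,\lceil M/(3q)\rceil\}$ gives the slightly tighter $q/2\le q'\le 3q$ and $M/(3q)\le|U_i|\le M/q$. Your optional root-merging step is unnecessary, since discarding the residual already suffices.
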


\begin{proof}
Call the members of $U$ \emph{marks}. Work in the minimal subtree of $T$ spanning $U$, rooted by the orientation inherited from $T$, and set $L:=\max\{2,\lfloor M/(4q)\rfloor\}$.

\emph{The procedure.} Process the vertices in postorder, distinguishing each mark's \emph{token}, which will be assigned to at most one group, from the vertices and edges that only provide connectivity. Each vertex $v$ receives a collection of \emph{items}, each a connected subgraph containing $v$ together with the unassigned tokens it carries: for every child $c$ of $v$, the residual passed up from $c$, extended by the edge $(v,c)$; and, if $v$ is a mark, the one-point item $\{v\}$ carrying the token of $v$. Insert the items into an open bin one at a time; whenever the bin first carries at least $L$ tokens, seal it: its subgraphs unite into a component, connected through $v$, and its tokens, now permanently assigned, form the component's mark set. After the last item, pass the union of the open bin and the bare vertex $\{v\}$, carrying the bin's tokens, upward as the residual of $v$. At the root, discard the final residual.

\emph{The invariant.} Sealed components and residuals are connected, being unions of items that share the vertex at which they formed. A residual carries fewer than $L$ tokens, its bin having never reached $L$; consequently every item carries fewer than $L$ tokens, the one-point items because $L\ge2$, and a component seals with at least $L$ but fewer than $2L$ tokens. Each token is assigned at most once, so the mark sets $U_i$ are pairwise disjoint; and a token's vertex belongs to every subgraph that carries it, so $U_i\subseteq U\cap \mathcal C_i$. Finally, each edge of the spanning subtree enters the process in exactly one item and thereafter stays inside whichever bin, component, or residual that item joined; the components are therefore pairwise edge-disjoint.

\emph{Counting.} Let $q'$ be the number of sealed components; by the invariant, $|U_i|\in[L,2L)$ for each. Every token is either assigned at some sealing or sits in the final, discarded residual, which carries fewer than $L$; hence
\[
q'L\ \le\ M\ <\ 2Lq'+L .
\]
If $M\ge12q$, then $L=\lfloor M/(4q)\rfloor\ge3$ and $M/(6q)\le L\le M/(4q)$, since $\lfloor x\rfloor\ge x-1\ge\tfrac23x$ for $x\ge3$; hence $q'\le M/L\le6q$ and $q'>M/(2L)-\tfrac12\ge2q-\tfrac12$, so $q'\ge2q\ge q$, while $|U_i|\in[L,2L)\subseteq[M/(6q),\,M/(2q)]$. If $M<12q$, then $L=2$: every item then carries at most one token, so components seal with exactly two, and $|U_i|=2$ lies in $[M/(6q),M/q]$ because $2q\le M<12q$. For $M\ge4$ the count gives $q'\le M/2\le6q$ and $q'>(M-2)/4\ge M/8\ge q/4$; for $M\in\{2,3\}$, necessarily $q=1$, and the procedure seals exactly one component: fewer than $4$ tokens admit at most one, and at least one seals because all unassigned tokens funnel into the bins at the root.
\end{proof}

\begin{proof}[Completion of the proof of Proposition 3.4]
\Cref{slem:group}, applied with the parameter $q$ of the proposition, supplies the groups of the main-text sketch, with $q/4\le q'\le6q$ and $M/(6q)\le M_i\le M/q$; every group receives the budget $a=V/q'$. Fix a group, with marked set $U_i$ of size $M_i\ge2$, and adopt the objects of the sketch: the radii $r_\ell=16^\ell r$, the nested maximal separated subsets $N_\ell$, the descent chain with its node weights and child counts $m_\ell$, the retained levels $I$, the alphabets $\mathcal A_\ell$, and the masses $a_\ell$ with their normalizer $S$. We verify the claims quoted there and assemble.

\emph{The hierarchy and its entropy.} Maximality of $N_\ell$ inside $N_{\ell-1}$ means that every point of $N_{\ell-1}$ lies within $r_\ell$ of a point of $N_\ell$, so parents can be assigned, each point of $N_\ell$ to itself; and once $r_\ell$ exceeds the diameter of $U_i$ an $r_\ell$-separated subset is a single point: the hierarchy has a top. The weight of a node is at most its number of children times the largest weight of a child, so descending along children of maximal weight from the top, whose weight is $M_i$, to a leaf, whose weight is one, gives $M_i\le\prod_\ell m_\ell$, that is, $\sum_\ell\log m_\ell\ge\log M_i$. Levels with $m_\ell=1$ contribute zero to this sum, and among the four residue classes modulo four of the remaining levels one carries at least a quarter of it; the retained class $I$ therefore has $H=\sum_{\ell\in I}h_\ell\ge\tfrac14\log M_i$.

\emph{Separations and aspect.} Distinct children $u\ne v$ of the chain node at level $\ell$ lie in $N_{\ell-1}$, so $d_T(u,v)\ge r_{\ell-1}$; both lie within $r_\ell$ of their parent, so $d_T(u,v)\le2r_\ell=32\,r_{\ell-1}$. By Lemma~2.2(1), the contrasts $\Delta=a_\ell(p_u-p_v)$ then satisfy $a_\ell^2r_{\ell-1}\le\|\Delta\|_2^2\le32\,a_\ell^2r_{\ell-1}$, so $d_\ell^2\ge a_\ell^2r_{\ell-1}$ and $D_\ell^2\le32\,a_\ell^2r_{\ell-1}\le32\,d_\ell^2$: hypothesis (2) of Lemma~3.3 holds with $A=32$.

\emph{Overlap.} For $\ell<\ell'$ in $I$, a contrast $\Delta_\ell$ has entries in $[-a_\ell,a_\ell]$ and support of size $d_T(u,v)\le32\,r_{\ell-1}$, by Lemma~2.2(2), so $|\langle\Delta_\ell,\Delta_{\ell'}\rangle| \le32\,r_{\ell-1}a_\ell a_{\ell'}$, and with the norm lower bounds just proved the normalized entry is at most $32\sqrt{r_{\ell-1}/r_{\ell'-1}}=32\cdot4^{-(\ell'-\ell)}$. Retained levels differ by at least four, so every row of the normalized Gram matrix sums to at most
\[
2\sum_{t\ge1}32\cdot4^{-4t}\ =\ \frac{64}{255}\ =:\ \gamma\ <\ 1 :
\]
hypothesis (3) of Lemma~3.3 holds with this $\gamma$.

\emph{Legality and scalability.} With the reference points $u_\ell^0\in\mathcal A_\ell$ and $\mu_0:=\sum_{\ell\in I}a_\ell p_{u_\ell^0}$, a blockwise-scaled member of the group family is
\[
\mu_0+\sum_{\ell\in I}\tau_\ell\,a_\ell\bigl(p_{u_\ell}-p_{u_\ell^0}\bigr)
\ =\ \sum_{\ell\in I}a_\ell\bigl[(1-\tau_\ell)\,p_{u_\ell^0}
+\tau_\ell\,p_{u_\ell}\bigr],
\]
a leak combination with nonnegative coefficients of total mass $\sum_{\ell\in I}a_\ell=a$; hypothesis (1) of Lemma~3.3 holds with $K$ the convex set of all such mass-$a$ combinations.

\emph{The allocation.} Since $a_\ell^2r_{\ell-1}=a^2h_\ell/S^2$,
\[
\sum_{\ell\in I}\min\bigl\{a_\ell^2r_{\ell-1},\ \sigma^2h_\ell\bigr\}
\ =\ \sum_{\ell\in I}h_\ell\,\min\Bigl\{\frac{a^2}{S^2},\
\sigma^2\Bigr\}
\ =\ H\,\min\Bigl\{\frac{a^2}{S^2},\ \sigma^2\Bigr\}.
\]
By the Cauchy--Schwarz inequality, $S^2\le H\sum_{\ell\in I}r_{\ell-1}^{-1}$, and the radii grow geometrically, so $\sum_{\ell\in I}r_{\ell-1}^{-1}\le r^{-1}\sum_{i\ge0}16^{-i} =16/(15r)\le2/r$; hence $a^2/S^2\ge a^2r/(2H)$ and
\[
H\,\min\Bigl\{\frac{a^2}{S^2},\ \sigma^2\Bigr\}
\ \ge\ \min\Bigl\{\frac{a^2r}2,\ \sigma^2H\Bigr\}
\ \ge\ \frac14\,\min\bigl\{a^2r,\ \sigma^2\log M_i\bigr\},
\]
using $H\ge\tfrac14\log M_i$.

\emph{Assembly.} The contrasts of the group join members of $U_i\subseteq \mathcal C_i$, and the connecting paths run inside the connected subtree $\mathcal C_i$, so all group contrasts are supported in the set $S_i$ of lower endpoints of the edges of $\mathcal C_i$ (Lemma~2.2(2)). Lemma~3.3, in its restricted-loss form with $A=32$ and $\gamma=64/255$, therefore bounds the Bayes risk of the shrunken group family, for every estimator measurable in the full data, below by
\[
c_A(1-\gamma)\sum_{\ell\in I}\min\bigl\{d_\ell^2,\sigma^2h_\ell\bigr\}
\ \ge\ c'\,\min\bigl\{a^2r,\ \sigma^2\log M_i\bigr\},
\qquad
c':=\frac{c_A(1-\gamma)}4 ,
\]
by the aspect and allocation bounds above. Sums of one shrunken member per group are legal points of $\mathcal F_V(T)$, as verified in the main body; the sets $S_i$ are pairwise disjoint, the $E(\mathcal C_i)$ being disjoint and distinct edges having distinct lower endpoints; and the prior is a product across groups. \Cref{slem:blocks} therefore bounds the joint Bayes risk below by the sum of the isolated group risks, and each isolated problem is, after subtracting its fixed vector, the group problem observed through $Y_{S_i}$ alone, so its Bayes risk also dominates the displayed bound. Hence
\[
R^*_T(V,\sigma)\ \ge\ c'\sum_{i=1}^{q'}
\min\bigl\{a^2r,\ \sigma^2\log M_i\bigr\}.
\]
Finally, $a=V/q'\ge V/(6q)$, and $\log M_i\ge\tfrac17\log(eM/q)$: if $M/q\ge36$, then with $x:=\log(M/q)$, $\log M_i\ge\log(M/(6q))=x-\log6$ and $5(x-\log6)-(1+x)=4x-5\log6-1\ge4\log36-5\log6-1=3\log6-1>0$, so the ratio to $\log(eM/q)=1+x$ is at least $\tfrac15$; if $M/q<36$, then $\log M_i\ge\log2$ and $\log(eM/q)<1+\log36$, and $7\log2>4.8>1+\log36$. Therefore
\[
R^*_T(V,\sigma)\ \ge\ \frac{q'c'}{36}\,
\min\Bigl\{\frac{V^2r}{q^2},\ \sigma^2\log\frac{eM}q\Bigr\}
\ \ge\ \frac{c'}{144}\,
\min\Bigl\{\frac{V^2r}q,\ \sigma^2q\log\frac{eM}q\Bigr\},
\]
which is the claim.
\end{proof}

The reduction of Proposition~3.5 rests on a scalar matching of the free parameter $q$ to the profile; the constant $8$ below is the one priced into $A_0=18\cdot8=144$.

\begin{lemma}[Scalar matching]\label{slem:match}
Let $k\ge1$ and $M>k$ be integers, let $r>0$, and set $L:=\log(M/k)>0$. If
\[
V^2r\,\min\Bigl\{1,\ \frac Lk\Bigr\}\ \ge\ 8\,\sigma^2k ,
\]
then some integer $1\le q\le M/2$, comparable to $\max\{1,\ k/(1+L)\}$ within universal factors, satisfies
\[
\min\Bigl\{\frac{V^2r}q,\ \sigma^2q\log\frac{eM}q\Bigr\}
\ \ge\ \frac{\sigma^2k}7 .
\]
\end{lemma}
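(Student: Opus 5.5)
We split into two cases according to whether $L\ge k$ or $L<k$, that is, according to which term of $\min\{1,L/k\}$ is active. In both cases we choose $q$ explicitly, check $1\le q\le M/2$, and then verify the two branches of the minimum separately. The mass branch uses the hypothesis. The entropy branch uses the elementary bound $\log(eM/q)\ge 1+\log(M/k)-\log(q/k)$, together with monotonicity of $q\mapsto q\log(eM/q)$ on $[1,M]$; this derivative is $\log(M/q)\ge0$.

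\emph{Case $L\ge k$.} Here the hypothesis reads $V^2r\ge8\sigma^2k$, and we take $q:=1$, which is comparable to $\max\{1,k/(1+L)\}$ since $k/(1+L)<1$. Since $M\ge2$ holds because $M>k\ge1$, we have $q\le M/2$. The mass branch is $V^2r\ge8\sigma^2k$. The entropy branch is $\sigma^2\log(eM)\ge\sigma^2(1+L)\ge\sigma^2k$, because $M\ge M/k$. Both exceed $\sigma^2k/7$.

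\emph{Case $L<k$.} Here the hypothesis reads $V^2rL\ge8\sigma^2k^2$. We take $q:=\lceil k/(1+L)\rceil$, so that $k/(1+L)\le q\le 2k/(1+L)$ when $k/(1+L)\ge1$, and $q=1$ otherwise. In either case $q\le k<M$. To get $q\le M/2$, note that $q\le\max\{1,2k/(1+L)\}$, and $2k/(1+L)\le M/2$ because $M>k$ and $L>0$ give $M\ge k+1$. The borderline $k=1$, $M=2$ gives $q=1=M/2$; in general $2k/(1+L)\le 2k/(1+\log(1+1/k))$, and we check $\le (k+1)/2$ only where needed, or else cap $q$ at $\lfloor M/2\rfloor$, which preserves the bounds below up to constants.

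Now the two branches. The mass branch satisfies
\[
V^2r/q\ \ge\ \frac{8\sigma^2k^2}{Lq}\ \ge\ \frac{8\sigma^2k^2(1+L)}{2kL}\ \ge\ 4\sigma^2k ,
\]
using $q\le 2k/(1+L)$ when $q\ge2$. When $q=1$ it is directly $V^2r\ge 8\sigma^2k^2/L\ge8\sigma^2k$, since $L<k$. For the entropy branch, when $q\ge k/(1+L)$ we have
\[
\log(eM/q)\ \ge\ 1+L-\log(q/k)\ \ge\ 1+L,
\]
because $q\le k$. Hence $q\log(eM/q)\ge k$ when $q\ge k/(1+L)$. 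When $q=1>k/(1+L)$, monotonicity gives $\log(eM)\ge 1+L>k$.

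The main obstacle is only the boundary bookkeeping: keeping $q\le M/2$ when $M$ is close to $k$, and handling the ceiling when $k/(1+L)$ is near $1$. The slack between the constants $8$ and $1/7$ is what absorbs the factor-two losses from rounding and capping. We expect one small subcase, $M\le 2q$, which we handle by replacing $q$ with $\lfloor M/2\rfloor\ge q/2$. This replacement costs at most a factor $2$ in the entropy branch, and the entropy branch stays at least $\sigma^2 k/7$, since $\log(eM/q)\ge1$ still holds.
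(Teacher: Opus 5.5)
Your proof is correct, and in the case $L<k$ it takes a different route from the paper. The paper rounds down, $q=\max\{1,\lfloor k/B\rfloor\}$ with $B=1+L$. This forces a second split on whether $M\ge 8k/B$:
\begin{itemize}
\item when it holds, $q\le 2k/B\le M/4$, and the entropy branch needs the separate estimate $\log(eM/q)\ge 0.3B$;
\item when it fails, the paper switches to $q=\lfloor M/2\rfloor$ and re-derives the mass branch from the case hypothesis $k/M>B/8$.
\end{itemize}
You round up instead, taking $q_0:=\lceil k/(1+L)\rceil$. The identity $\log(eM/q_0)=1+L+\log(k/q_0)\ge 1+L$, which uses only $q_0\le k$, gives $q_0\log(eM/q_0)\ge k$ in one line. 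Capping at $\lfloor M/2\rfloor$ then needs no new mass estimate, because $V^2r/q$ only grows as $q$ shrinks. The cap costs at most a factor $2$ in the entropy branch. Your route avoids the auxiliary threshold $8k/B$ and gives the constant $\tfrac12$ in place of $0.15$ in this case.

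Two repairs are needed in the write-up.

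First, the sentence deriving $2k/(1+L)\le M/2$ from $M\ge k+1$ is false. Take $k=10$, $M=11$: then $q_0=10>M/2$. So the cap is genuinely needed, not a fallback. Define $q:=\min\{\lceil k/(1+L)\rceil,\lfloor M/2\rfloor\}$ from the outset and delete that sentence.

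Second, your closing justification, that $\log(eM/q)\ge1$ still holds, gives only $q\log(eM/q)\ge q$. That is not enough without further argument. The factor-$2$ step should instead combine two facts:
\begin{itemize}
\item $x\mapsto x\log(eM/x)$ is nondecreasing on $(0,M]$;
\item $\lfloor M/2\rfloor\ge (M-1)/2\ge q_0/2$, using $q_0\le k\le M-1$.
\end{itemize}
Together these give $q\log(eM/q)\ge \tfrac{q_0}{2}\log(eM/q_0)\ge k/2$.

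With these repairs, the entropy branch is at least $\sigma^2k/2$ and the mass branch at least $4\sigma^2k$. The bounds $\tfrac12q_0\le q\le q_0$ keep $q$ comparable to $\max\{1,k/(1+L)\}$, as the statement requires.
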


\begin{proof}
Write $B:=1+L$ and note $M\ge2$. There are three cases.

\emph{Case $L\ge k$.} Take $q:=1$, which equals $\max\{1,k/B\}$ since $k/B<1$. The hypothesis reads $V^2r\ge8\sigma^2k$, and $\sigma^2\log(eM)=\sigma^2(1+\log M)\ge\sigma^2(1+L)\ge\sigma^2k$, so both branches are at least $\sigma^2k$.

\emph{Case $L<k$ and $M\ge8k/B$.} Take $q:=\max\{1,\lfloor k/B\rfloor\}$. Then $k/(2B)\le q\le2k/B$: if $k\ge B$ this is $\lfloor x\rfloor\in[x/2,x]$ at $x=k/B\ge1$; if $k<B$, then $q=1$, and $k/(2B)<\tfrac12<1\le2k/B$ because $B=1+L<1+k\le2k$. Also $q\le2k/B\le M/4<M/2$ by the case hypothesis. The hypothesis of the lemma reads $V^2rL/k\ge8\sigma^2k$, so
\[
\frac{V^2r}q\ \ge\ \frac B{2k}\,V^2r\ \ge\ \frac L{2k}\,V^2r
\ \ge\ 4\,\sigma^2k .
\]
For the entropy branch, $M=ke^L$ and $q\le2k/B$ give $eM/q\ge\tfrac e2\,Be^L$, so
\[
\log\frac{eM}q\ \ge\ 1+L+\log B-\log2\ \ge\ 0.3\,B ,
\]
since $0.7(1+L)+\log(1+L)\ge0.7>\log2$; hence $\sigma^2q\log(eM/q)\ge\sigma^2\,\tfrac k{2B}\cdot0.3B =0.15\,\sigma^2k$.

\emph{Case $L<k$ and $M<8k/B$.} Take $q:=\lfloor M/2\rfloor$, so $M/3\le q\le M/2$; and $q$ is comparable to $\max\{1,k/B\}$, since $k<M<8k/B$ forces $B<8$, whence $q\ge(M-1)/2\ge k/2$ and $q<4k/B\le4k$. The entropy branch has $\log(eM/q)\ge\log(2e)>1$, so $\sigma^2q\log(eM/q)\ge\sigma^2M/3>\sigma^2k/3$; and the mass branch, using $k/M>B/8$ and $B>L$,
\[
\frac{V^2r}q\ \ge\ \frac{2V^2r}M
\ =\ 2\,\frac{V^2rL}k\cdot\frac k{LM}
\ \ge\ 16\,\sigma^2k\cdot\frac B{8L}\ \ge\ 2\,\sigma^2k .
\]
In every case the minimum is at least $0.15\,\sigma^2k\ge\sigma^2k/7$.
\end{proof}

\begin{proof}[Completion of the proof of Proposition 3.5]
The main text reduces the proposition to the following situation: an integer $k\ge1$, and $M\ge\Nup(r)>k$ vertices with pairwise tree distances at least $r':=r/9$, where
\[
V^2r'\,\min\Bigl\{1,\ \frac1k\Bigl[\log\frac Mk\Bigr]_+\Bigr\}
\ \ge\ \frac1{18}\,V^2\delta_k(T)\ \ge\ \frac{A_0}{18}\,\sigma^2k
\ =\ 8\,\sigma^2k :
\]
the term at $(r',M)$ is at least one ninth of the term at $(r,\Nup(r))$, since $r'=r/9$ and enlarging the count only increases the positive part. With $L:=\log(M/k)>0$ this is the hypothesis of \Cref{slem:match}. Its conclusion returns an integer $1\le q\le M/2$, comparable to $\max\{1,k/(1+\log(M/k))\}$ as asserted in the main body, with $\min\{V^2r'/q,\ \sigma^2q\log(eM/q)\}\ge\sigma^2k/7$. Proposition~3.4, applied to the $M$ separated vertices with this $q$, gives $R^*_T(V,\sigma)\ge(c/7)\,\sigma^2k$ with $c$ the constant of that proposition, which proves Proposition~3.5.
\end{proof}

Section~3.3 assembles the two halves. We next record its two-point bound, the deferred comparisons from three of the four cases, and the fixed-point conclusion. First, the generic bound (3.6) rests on the oracle inequality for least squares over a finite set of candidates. The inequality is classical, and belongs to the Gaussian model selection theory of \citet{birgemassart2001} and \citet{massart2007}; we give the short proof so that its constants are explicit.

\begin{lemma}[Finite-class least squares]\label{slem:finiteclass}
Let $Y=\mu+\sigma Z$ with $Z\sim N(0,I_n)$ and $\mu\in\bbR^n$, let $\mathcal N\subset\bbR^n$ be finite and nonempty, and let $\widehat f$ minimize $\|Y-f\|_2^2$ over $f\in\mathcal N$, ties broken by a fixed rule. Then
\[
\E_\mu\bigl\|\widehat f-\mu\bigr\|_2^2\ \le\
4\min_{f\in\mathcal N}\|f-\mu\|_2^2
\ +\ 12\,\sigma^2\bigl(1+\log|\mathcal N|\bigr).
\]
\end{lemma}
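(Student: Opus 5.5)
The plan is the standard basic-inequality argument for least squares over a finite class, followed by a union bound over the candidates. Fix an oracle candidate $f_0\in\mathcal N$ attaining $\min_f\|f-\mu\|_2^2$. By optimality, $\|Y-\widehat f\|_2^2\le\|Y-f_0\|_2^2$. Substituting $Y=\mu+\sigma Z$ and expanding gives the basic inequality
\[
\|\widehat f-\mu\|_2^2\ \le\ \|f_0-\mu\|_2^2+2\sigma\langle Z,\widehat f-f_0\rangle .
\]
The cross term involves a data-dependent direction, so I will control it uniformly over $\mathcal N$ through normalized Gaussian pairings.

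For each $f\in\mathcal N$ with $f\ne f_0$, put $u_f:=(f-f_0)/\|f-f_0\|_2$. Then $\langle Z,u_f\rangle\sim N(0,1)$. Let
\[
\xi:=\max_{f\ne f_0}\,[\langle Z,u_f\rangle]_+ ,
\]
with $\xi:=0$ when $\mathcal N$ is a singleton. The cross term is then at most $2\sigma\xi\|\widehat f-f_0\|_2\le2\sigma\xi(\|\widehat f-\mu\|_2+\|f_0-\mu\|_2)$. Two applications of $2xy\le\tfrac12x^2+2y^2$ absorb half of $\|\widehat f-\mu\|_2^2$ into the left side. What remains is a pathwise bound of the form
\[
\|\widehat f-\mu\|_2^2\ \le\ 3\|f_0-\mu\|_2^2+8\sigma^2\xi^2 .
\]
If the bookkeeping is done carefully, the constant in front of $\|f_0-\mu\|^2$ is at most $4$ and the constant in front of $\sigma^2\xi^2$ is at most about $8$.

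It remains to bound $\E\xi^2$. I will use the standard maximal inequality for squares of $N=|\mathcal N|-1$ standard Gaussians, which holds without any independence assumption. Apply Jensen's inequality to $\exp(\xi^2/4)$, using $\E e^{g^2/4}=\sqrt2$ for $g\sim N(0,1)$. This gives $\E\xi^2\le4\log(\sqrt2\,N)\le4\log|\mathcal N|+2$. Alternatively, integrate the tail $\mathbb P(\xi^2>t)\le Ne^{-t/2}$ above $t_0=2\log N$ to get $\E\xi^2\le2\log|\mathcal N|+2$. Both forms give $8\sigma^2\E\xi^2\le12\sigma^2(1+\log|\mathcal N|)$ provided the sharper tail-integration version is used, since $16\log|\mathcal N|+16$ would overshoot $12$.

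The main obstacle is nothing conceptual; it is matching the stated constants $4$ and $12$. I will therefore tune the Young-inequality weights so that the coefficient on $\xi^2$ is at most $6$: for example, absorb with weight $\tfrac12$ once and bound $\|\widehat f-f_0\|$ by the triangle inequality. Then $6\sigma^2(2\log|\mathcal N|+2)=12\sigma^2(1+\log|\mathcal N|)$, with the factor $4$ on the approximation term. The singleton case is trivial, since then $\widehat f=f_0$.
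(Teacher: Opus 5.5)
Your proposal is correct and follows the paper's proof essentially step for step: the same basic inequality against the oracle candidate, the same normalized pairings $G=[\max_{f\ne f_0}\langle Z,u_f\rangle]_+$, Young's inequality split as $2\sigma Gx\le x^2/2+2\sigma^2G^2$ and $2\sigma Ga\le a^2+\sigma^2G^2$ to reach $x^2\le4a^2+6\sigma^2G^2$, and tail integration split at $2\log N$ giving $\E G^2\le2\log N+2$. One intermediate remark of yours is off: even with the sharper tail bound, a coefficient $8$ gives $8(2\log|\mathcal N|+2)$, which exceeds $12(1+\log|\mathcal N|)$. The weights you finally settle on, however, are exactly the paper's, and they give the stated constants $4$ and $12$.
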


\begin{proof}
Let $f^\star$ attain the minimum and $a:=\|f^\star-\mu\|_2$. If $|\mathcal N|=1$ then $\widehat f=f^\star$ and the claim is immediate, so assume $N:=|\mathcal N|\ge2$. Optimality of $\widehat f$ and $Y-f=(\mu-f)+\sigma Z$ give
\[
\|\mu-\widehat f\|_2^2+2\sigma\langle Z,\mu-\widehat f\rangle
\ \le\ a^2+2\sigma\langle Z,\mu-f^\star\rangle ,
\]
that is, $\|\mu-\widehat f\|_2^2\le a^2+2\sigma\langle Z,\widehat f-f^\star\rangle$. Writing $u_f:=(f-f^\star)/\|f-f^\star\|_2$ for $f\ne f^\star$ and $G:=\bigl[\max_{f\ne f^\star}\langle Z,u_f\rangle\bigr]_+$, we get $\langle Z,\widehat f-f^\star\rangle\le\|\widehat f-f^\star\|_2\,G$, whence, with $x:=\|\widehat f-\mu\|_2$ and the triangle inequality $\|\widehat f-f^\star\|_2\le x+a$,
\[
x^2\ \le\ a^2+2\sigma Gx+2\sigma Ga\ \le\
a^2+\Bigl(\tfrac{x^2}2+2\sigma^2G^2\Bigr)
+\bigl(a^2+\sigma^2G^2\bigr),
\]
so $x^2\le4a^2+6\sigma^2G^2$. Each $\langle Z,u_f\rangle$ is standard normal, so $\mathbb P(G>s)\le\min\{1,Ne^{-s^2/2}\}$ and, splitting the integral $\E G^2=\int_0^\infty2s\,\mathbb P(G>s)\,ds$ at $s_0:=\sqrt{2\log N}$,
\[
\E G^2\ \le\ s_0^2+N\int_{s_0}^\infty 2s\,e^{-s^2/2}\,ds
\ =\ 2\log N+2 .
\]
Combining the two displays proves the lemma.
\end{proof}

\begin{proof}[Completion of Theorem 1 and Proposition 1]
For the two-point bound, pick $u,w$ at tree distance $H_T$. The segment from $Vp_u$ to $Vp_w$ lies in $\mathcal F_V(T)$ and has length $V\sqrt{H_T}$ by Lemma~2.2(1), so it contains two points at distance
\[
\varrho\ :=\ \min\bigl\{V\sqrt{H_T},\ \sigma\bigr\}.
\]
Testing one against the other is a one-dimensional Gaussian shift of size $\varrho/\sigma\le1$, so under the uniform prior on the pair every test errs with probability at least a universal constant. For any estimator, decoding the nearer of the two points errs only when the squared loss is at least $\varrho^2/4$; hence the maximum of the two risks is at least $c\varrho^2$, which is the display (3.8).

\emph{Case $n<C_\star$.} By (3.8) and $\sigma^2\ge\sigma^2n/C_\star$,
\[
R^*_T\ \ge\ c\min\{V^2H_T,\sigma^2\}\ \ge\
\frac c{C_\star}\min\{V^2H_T,\ \sigma^2n\}\ \ge\
\frac c{C_\star}\bigl(\rho_T\wedge V^2H_T\wedge\sigma^2n\bigr),
\]
which is (I) and the lower half of (II); the upper half follows from (3.6), since $\rho_T\wedge V^2H_T\wedge\sigma^2n\le C_\star\min\{V^2H_T,\sigma^2\}$.

\emph{Case $k_0=1$ with $n\ge C_\star$.} The main text bounds $\rho_T\le(A_0/\log2)\,\sigma^2$ there, so
\[
\rho_T\wedge V^2H_T\wedge\sigma^2n\ \le\ \rho_T\ \le\
C\min\{\sigma^2,\ V^2H_T\},
\]
using $\rho_T\le V^2h_T\le V^2H_T$ for the second branch; with (3.8) this gives (I). For (II), $\min\{V^2H_T,\sigma^2k_0\}=\min\{V^2H_T,\sigma^2\}$, whose lower half is (3.8) and whose upper half is $R^*_T\le C\rho_T\wedge CV^2H_T\le C'\min\{\sigma^2,V^2H_T\}$ by (3.6).

\emph{Case $k_0=K+1$ with $n\ge C_\star$.} Here $K=\lfloor n/C_\star\rfloor\ge n/(2C_\star)$, so $\sigma^2k_0=\sigma^2(K+1)$ lies between $\sigma^2n/(2C_\star)$ and $2\sigma^2n/C_\star$, while $V^2H_T\ge V^2\delta_K>A_0\sigma^2K\ge \sigma^2k_0/2$. Hence $\min\{V^2H_T,\sigma^2k_0\}\asymp\sigma^2n$, bounded below by the profile-to-risk lower bound proved in the main body and above by (3.6) through the branch $\sigma^2n$.

\emph{The fixed point.} In the interior case $2\le k_0\le K$, the bound (3.6) gives $\rho_T\ge\rho_T\wedge V^2H_T\wedge\sigma^2n\ge R^*_T/C$, so (3.9) and (3.10) pin the fixed point itself: $c\,\sigma^2k_0\le\rho_T\le C\,\sigma^2k_0$.
\end{proof}

%%%%%%%%%%%%%%%%%%%%%%%%%%%%%%%%%%%%%%%%%%%%%%%%%%%%%%%%%%%%%%%%%%%%%%%%%%%%%%
\section{Benchmark and Broom Profiles}\label{ssec:benchmarks}
%%%%%%%%%%%%%%%%%%%%%%%%%%%%%%%%%%%%%%%%%%%%%%%%%%%%%%%%%%%%%%%%%%%%%%%%%%%%%%

This section proves Corollaries~1, 2, and~3, together with the broom profile bounds (5.8) and the remaining computations deferred from the broom lower bound of Section~5: the objective cap, the probability of the forcing event, the broom's covering counts, and the crossing that pins its minimax rate. By Theorem~1 the minimax risk is $\min\{V^2H_T,\sigma^2k_0\}$ up to universal constants, so each corollary reduces to pinning the covering counts and the profile with explicit constants, localizing $k_0$, and identifying the active branch in every parameter regime. Throughout, $K:=\lfloor n/C_\star\rfloor$ with $C_\star=e^2$; the dimension clause of (1.5) holds at $K+1$, so $1\le k_0\le K+1$ on every tree. In the corollary proofs, $W:=V/\sigma$.

On the path, ancestor covering is interval covering, and the exact counts it yields reduce the profile to one scalar optimization.

\begin{lemma}[Path profile]\label{slem:path}
Let $P_L$ be the path with $L\ge1$ edges, rooted at one endpoint, so that $n=L+1$ and $h_{P_L}=H_{P_L}=L$. For integers $0\le q<L$,
\[
\Nup(q)\ =\ \Bigl\lceil\frac{L+1}{q+1}\Bigr\rceil .
\]
Moreover
\[
\alpha_k(P_L)\ \le\ \frac{2(L+1)}{ek}
\quad\text{for every }k\ge1,
\qquad
\alpha_k(P_L)\ \ge\ \frac{L+1}{2ek}
\quad\text{for }1\le k\le\frac{L+1}{e^2}.
\]
\end{lemma}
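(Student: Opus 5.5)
The plan is to treat ancestor covering on the path as interval covering, and then bound the discrete profile \Cref{eq:discrete} of Lemma~2.3(2) term by term with the substitution $s:=q+1$.

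\emph{Covering counts.} Label the vertices by their depths $0,1,\dots,L$. A vertex $a$ ancestor-covers $u$ within distance $q$ exactly when $\depth(u)\in[\depth(a),\depth(a)+q]$. An ancestor $q$-net is therefore a family of integer windows of length $q+1$ that together cover the $L+1$ depths.
\begin{enumerate}
\item For the lower bound, each window contains at most $q+1$ vertices, so every net has at least $\lceil (L+1)/(q+1)\rceil$ elements.
\item For the upper bound, the centers at depths $0,\,q+1,\,2(q+1),\dots$ form a net of exactly that size. This family contains the root, as every net must.
\end{enumerate}
Together these give $\Nup(q)=\lceil(L+1)/(q+1)\rceil$.

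\emph{Upper bound on the profile.} Since $n=L+1\ge2$, \Cref{eq:discrete} applies. It gives $\alpha_k=\max_{1\le s\le L} s\min\{k,[\log(N_s/k)]_+\}$ with $N_s:=\lceil y\rceil$ and $y:=(L+1)/s$. For $s\le L$ we have $y>1$, hence $N_s\le 2y$. Each term is then at most $s[\log(2(L+1)/(sk))]_+$.

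Put $t:=sk/(2(L+1))$. The term equals $\frac{2(L+1)}{k}\,t[\log(1/t)]_+$, and $t\log(1/t)\le 1/e$ on $(0,1]$; the term vanishes for $t>1$. This gives $\alpha_k\le 2(L+1)/(ek)$ for every $k\ge1$.

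\emph{Lower bound on the profile.} Assume $1\le k\le (L+1)/e^2$ and take $s:=\lfloor (L+1)/(ek)\rfloor$. Then $(L+1)/(ek)\ge e>1$, so $s\ge (L+1)/(2ek)$, since $\lfloor x\rfloor\ge x/2$ for $x\ge1$. Also $1\le s\le (L+1)/e<L+1$, so $q=s-1$ is an admissible radius in $[0,L)$.

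At this radius $N_s/k\ge (L+1)/(sk)\ge e$, so $[\log(N_s/k)]_+\ge1$. Since $k\ge1$, the truncated factor is at least $1$. The term at $s$ is therefore at least $s\ge (L+1)/(2ek)$, and so is $\alpha_k$.

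\emph{Main obstacle.} The only delicate points are the integer-rounding steps. These are the bound $\lceil y\rceil\le 2y$, which needs $y\ge1$ and hence the restriction $q<L$, and the floor bound in the lower estimate, which is exactly where the hypothesis $k\le (L+1)/e^2$ is spent. Everything else is the elementary maximization of $t\log(1/t)$.
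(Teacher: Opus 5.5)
Your proof is correct and follows the paper's argument essentially step for step: interval covering gives the exact counts, the bound $\lceil y\rceil<2y$ followed by maximizing $y\log(A/y)$ (your $t\log(1/t)$) gives the upper estimate, and the radius $q=\lfloor (L+1)/(ek)\rfloor-1$ gives the lower one. The only difference is cosmetic: you obtain $y=(L+1)/(q+1)>1$ directly from $q<L$, whereas the paper derives it from the term at $q$ being nonzero.
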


\begin{proof}
Index the vertices by depth $0,\dots,L$. The ancestors of $u$ are $0,\dots,u$, so a center $a$ covers exactly the vertices $a,\dots,\min\{a+q,L\}$: ancestor $q$-nets are coverings of $L+1$ points by blocks of $q+1$ consecutive integers anchored at their left endpoints. The centers $j(q+1)$, $0\le j\le\lfloor L/(q+1)\rfloor$, form such a covering, and there are $\lfloor L/(q+1)\rfloor+1=\lceil(L+1)/(q+1)\rceil$ of them; no net is smaller, because each center covers at most $q+1$ of the $L+1$ vertices and net sizes are integers.

\emph{Upper bound.} Consider a radius $q$ whose term in the discrete formula (2.3) is nonzero, so that $\Nup(q)>k\ge1$; then $\lceil(L+1)/(q+1)\rceil\ge2$ forces $(L+1)/(q+1)>1$, whence $\Nup(q)<(L+1)/(q+1)+1<2(L+1)/(q+1)$. With $y:=q+1$ and $A:=2(L+1)/k$, the term is therefore at most
\[
y\,\log\frac Ay\ \le\ \sup_{y>0}\ y\log\frac Ay\ =\ \frac Ae\ =\
\frac{2(L+1)}{ek},
\]
the supremum being attained at $y=A/e$.

\emph{Lower bound.} Set $x:=(L+1)/(ek)$ and $y:=\lfloor x\rfloor$. The hypothesis $k\le(L+1)/e^2$ gives $x\ge e>2$, so $y\ge x-1\ge x/2$; and $y\le x\le(L+1)/e\le L$, so $q:=y-1$ is an admissible radius. Then $\Nup(q)=\lceil(L+1)/y\rceil\ge(L+1)/y\ge ek$, so $[\log(\Nup(q)/k)]_+\ge1$ and the term at $q$ is at least $y\min\{k,1\}=y\ge(L+1)/(2ek)$.
\end{proof}

\begin{proof}[Proof of Corollary 1]
By Theorem~1 and $H_{P_L}=L$, $R^*_{P_L}(V,\sigma)\asymp\min\{V^2L,\sigma^2k_0\}$; write
\[
F\ :=\ \min\bigl\{V^2L,\ \ V^{2/3}\sigma^{4/3}L^{1/3},\ \
\sigma^2(L+1)\bigr\}
\]
for the right side of Corollary~1. Since its middle branch is the geometric interpolation $(V^2L)^{1/3}(\sigma^2)^{2/3}$, every branch of $F$, and hence $F$ itself, dominates $\min\{V^2L,\sigma^2\}$. Here $K=\lfloor(L+1)/e^2\rfloor$, and there are three cases.

\emph{Case $k_0=1$.} Here $\min\{V^2L,\sigma^2k_0\}=\min\{V^2L,\sigma^2\}$, which $F$ dominates; and since $F\le V^2L$, the matching upper bound needs an argument only when $\sigma^2<V^2L$. By (1.5), $k_0=1$ means $L+1<e^2$ or $V^2\delta_1\le A_0\sigma^2$. In the first case $F\le\sigma^2(L+1)<e^2\sigma^2$. Otherwise $L+1\ge e^2$, so \Cref{slem:path} applies at $k=1$ and gives $\delta_1=\alpha_1\ge(L+1)/(2e)$, whence $F\le V^2L\le2e\,V^2\delta_1\le2eA_0\sigma^2$.

\emph{Case $2\le k_0\le K$.} By minimality no clause of (1.5) holds below $k_0$, while at $k_0$ the crossing clause holds, the dimension clause being silent there. \Cref{slem:path} is available on all of $[1,K]$, since $K\le(L+1)/e^2$, and with
\[
\kappa\ :=\ \Bigl(\frac{V^2(L+1)}{eA_0\sigma^2}\Bigr)^{1/3}
\]
the crossing inequalities at $k_0$ and $k_0-1$ read
\[
\frac{V^2(L+1)}{2e\,k_0^2}\ \le\ V^2\delta_{k_0}\ \le\ A_0\sigma^2k_0,
\qquad
A_0\sigma^2(k_0-1)\ <\ V^2\delta_{k_0-1}\ \le\
\frac{2V^2(L+1)}{e\,(k_0-1)^2},
\]
that is, $k_0^3\ge\kappa^3/2$ and $(k_0-1)^3<2\kappa^3$; since $k_0\ge2$ gives $k_0\le2(k_0-1)$,
\[
2^{-1/3}\,\kappa\ \le\ k_0\ \le\ 2^{4/3}\,\kappa .
\]
Hence $\sigma^2k_0\asymp\sigma^2\kappa\asymp V^{2/3}\sigma^{4/3}L^{1/3}$, the middle branch of $F$, using $L\le L+1\le2L$. That branch is the least of the three up to constants: failure of the crossing clause at $k=1$ gives $A_0\sigma^2<V^2\delta_1\le4V^2L/e$, so $\sigma^2\le CV^2L$ and the middle branch is at most $(V^2L)^{1/3}(CV^2L)^{2/3}=C^{2/3}V^2L$; and it is $\asymp\sigma^2k_0\le\sigma^2K\le\sigma^2(L+1)$. Therefore $F\asymp\sigma^2k_0$, and the same middle-branch bound gives $\sigma^2k_0\le CV^2L$, so $\min\{V^2L,\sigma^2k_0\}\asymp\sigma^2k_0$ as well.

\emph{Case $k_0=K+1\ge2$.} Then $K\ge1$, so $L+1\ge e^2$, and no clause fired on $[1,K]$; at $k=K$,
\[
A_0\sigma^2K\ <\ V^2\delta_K\ \le\ \frac{2V^2(L+1)}{eK^2},
\quad\text{so}\quad
V^2(L+1)\ >\ \frac{eA_0}2\,\sigma^2K^3\ \ge\
\frac{A_0}{16e^5}\,\sigma^2(L+1)^3,
\]
using $K\ge(L+1)/(2e^2)$, which holds because $\lfloor x\rfloor\ge x/2$ for $x\ge1$. Hence $V^2\ge c\,\sigma^2(L+1)^2$ with $c:=A_0/(16e^5)$, and both remaining branches of $F$ dominate the third: $V^2L\ge\tfrac12V^2(L+1)\ge\tfrac c2\,\sigma^2(L+1)^3\ge \tfrac c2\,\sigma^2(L+1)$, and $(V^2L)^{1/3}\sigma^{4/3}\ge(\tfrac c2)^{1/3}\sigma^2(L+1)$. Therefore $F\asymp\sigma^2(L+1)$. On the other side, $(L+1)/e^2<K+1\le2(L+1)/e^2$ gives $\sigma^2k_0\asymp\sigma^2(L+1)$, and $\min\{V^2L,\sigma^2k_0\}\asymp\sigma^2(L+1)$ as well, by $V^2L\ge\tfrac c2\,\sigma^2(L+1)$. The three cases are exhaustive.
\end{proof}

On the star, $h_{S_m}=1$, so only the count $\Nup(0)=m+1$ remains; the profile is exact by Remark~2.1, splitting into a capped and a logarithmic phase, and the case analysis tracks which phase meets the crossing.

\begin{proof}[Proof of Corollary 2]
By Theorem~1 and $H_{S_m}=2$, $R^*_{S_m}(V,\sigma)\asymp\min\{V^2,\sigma^2k_0\}$ after absorbing the factor two; here $n=m+1$. By Remark~2.1,
\[
\alpha_k\ =\ \min\{k,\ \ell(k)\},
\qquad
\ell(x)\ :=\ \Bigl[\log\frac{m+1}x\Bigr]_+\quad(x>0),
\]
and since $k$ grows while $\ell(k)$ does not, the set $\{k\ge1:k\le\ell(k)\}$ is an initial segment of the integers; it contains $1$, because $\ell(1)=\log(m+1)>1$, and is finite, because $\ell$ vanishes beyond $m+1$. With $\bar k$ its largest element, the profile has two phases:
\[
V^2\delta_k\ =\ \sigma^2W^2\ \ (k\le\bar k),
\qquad
V^2\delta_k\ =\ \sigma^2W^2\,\frac{\ell(k)}k\ \ (k>\bar k).
\]
Write
\[
\Lambda\ :=\ 1+\bigl[\log(m/W)\bigr]_+,
\qquad
F\ :=\ \min\bigl\{V^2,\ \sigma^2W\sqrt\Lambda,\ \sigma^2m\bigr\},
\]
so that $F$ is the right side of Corollary~2.

\emph{Bounded stars.} For any universal $m_0$ the corollary holds for $m\le m_0$ with constants depending only on $m_0$: from $1\le k_0\le K+1\le m_0$, $\min\{V^2,\sigma^2k_0\}\asymp\min\{V^2,\sigma^2\}$, while $F\le\min\{V^2,\sigma^2m\}\le m_0\min\{V^2,\sigma^2\}$ and every branch of $F$ dominates $\min\{V^2,\sigma^2\}$, the middle one because $\sigma^2W\sqrt\Lambda\ge\sigma V\ge\min\{V^2,\sigma^2\}$. Now fix a universal $m_0$ with the following four properties for every $m\ge m_0$ (one may take $m_0=\lceil e^{10}\rceil$): (a) $\bar k\ge\tfrac12\log(m+1)$; (b) $\bar k+1\le K$; (c) $\log(m/w)\ge\tfrac12(1+\log m)$ whenever $1\le w\le\sqrt{A_0\log(m+1)}$; (d) $m\ge4e^4(1+\log m)$. Property (a) holds because $k':=\lceil\tfrac12\log(m+1)\rceil$ satisfies $k'+\log k'\le\log(m+1)$ once $m$ is large, so that $k'\le\ell(k')$; (b) and (d) compare linear with logarithmic growth; (c) is worst at $w=\sqrt{A_0\log(m+1)}$, where it reads $\tfrac12\log m\ge\tfrac12+\tfrac12\log(A_0\log(m+1))$. We also use, unconditionally, $\bar k\le\ell(\bar k)\le\log(m+1)$. Assume $m\ge m_0$; there are four cases in $W$.

\emph{Case $W^2\le A_0$.} Since $\bar k\ge1$, the capped phase gives $V^2\delta_1=\sigma^2W^2\le A_0\sigma^2$: the crossing clause fires at once and $k_0=1$. Then $\min\{V^2,\sigma^2k_0\}\in[V^2/A_0,\,V^2]$. And $F\asymp V^2$: it is at most $V^2$, while $\sigma^2W\sqrt\Lambda\ge\sigma^2W\ge V^2/\sqrt{A_0}$ and $\sigma^2m\ge2\sigma^2\ge2V^2/A_0$.

\emph{Case $A_0<W^2\le A_0\bar k$.} Every integer $k<W^2/A_0$ has $k<\bar k$, so the capped phase gives $V^2\delta_k=\sigma^2W^2>A_0\sigma^2k$, and $k<\bar k\le K$ keeps the dimension clause silent: no clause fires below $W^2/A_0$. At $k_1:=\lceil W^2/A_0\rceil$, which is at most $\bar k\le K$ because $W^2/A_0\le\bar k$ and $\bar k$ is an integer, the bound $\delta_{k_1}\le1$ gives $V^2\delta_{k_1}\le\sigma^2W^2\le A_0\sigma^2k_1$. Hence
\[
k_0\ =\ \Bigl\lceil\frac{W^2}{A_0}\Bigr\rceil\ \in\
\Bigl[\frac{W^2}{A_0},\ \frac{2W^2}{A_0}\Bigr],
\qquad
\sigma^2k_0\ \asymp\ V^2 ,
\]
so $\min\{V^2,\sigma^2k_0\}\asymp V^2$. For $F\asymp V^2$: the case hypothesis and $\bar k\le\log(m+1)$ give $W\le\sqrt{A_0\log(m+1)}$, so property (c) yields $\Lambda\ge\tfrac12(1+\log m)$ and
\[
\sigma^2W\sqrt\Lambda\ \ge\ V^2\,\frac{\sqrt\Lambda}W\ \ge\
V^2\,\frac{\sqrt{(1+\log m)/2}}{\sqrt{A_0(1+\log m)}}\ =\
\frac{V^2}{\sqrt{2A_0}},
\]
the denominator by $\log(m+1)\le1+\log m$. Also $\sigma^2m\ge\sigma^2W^2=V^2$, since property (d) gives $m\ge A_0\log(m+1)\ge W^2$.

\emph{Case $W^2>A_0\bar k$ and $W<m$.} Now $W>\sqrt{A_0}$ and $\log(m/W)>0$, so $\Lambda=1+\log(m/W)$. Set
\[
\kappa\ :=\ W\sqrt\Lambda,
\qquad
c_1\ :=\ \frac1{2\sqrt{A_0}} .
\]
\emph{No crossing below $c_1\kappa$:} fix an integer $k\le\min\{c_1\kappa,K\}$, so that the dimension clause is silent at $k$. If $k\le\bar k$, the capped phase and the case hypothesis give $V^2\delta_k=\sigma^2W^2>A_0\sigma^2\bar k\ge A_0\sigma^2k$. If $\bar k<k\le c_1\kappa$, monotonicity of $\ell$ gives
\[
\ell(k)\ \ge\ \ell(c_1\kappa)\ \ge\ \log\frac m{c_1W\sqrt\Lambda}
\ =\ (\Lambda-1)+\log(2\sqrt{A_0})-\tfrac12\log\Lambda
\ \ge\ \frac\Lambda4+1,
\]
the last step because $\log(2\sqrt{A_0})=\log24>3$ and $\tfrac34\Lambda+1\ge\tfrac12\log\Lambda$ for every $\Lambda\ge1$; hence, using $A_0c_1^2=\tfrac14$,
\[
V^2\delta_k\ =\ \sigma^2\,\frac{W^2\ell(k)}k\ >\
\sigma^2\,\frac{W^2\Lambda}{4k}\ =\
A_0\sigma^2\,\frac{(c_1\kappa)^2}k\ \ge\ A_0\sigma^2k .
\]
So no clause fires at $k$, and $k_0>\min\{c_1\kappa,K\}$. \emph{Crossing by $\lceil\kappa\rceil$:} if $\lceil\kappa\rceil\le K$, then $k_1:=\lceil\kappa\rceil$ has
\begin{gather*}
\ell(k_1)\ \le\ \Bigl[\log\frac{2m}\kappa\Bigr]_+\ \le\
\log2+(\Lambda-1)-\tfrac12\log\Lambda\ <\ \Lambda,
\\
V^2\delta_{k_1}\ \le\ \sigma^2\,\frac{W^2\Lambda}{k_1}\ \le\
\sigma^2\kappa\ \le\ A_0\sigma^2k_1 ,
\end{gather*}
so the crossing clause fires by $k_1$ and $k_0\le\lceil\kappa\rceil$.

If $\kappa\le K$: then $c_1\kappa<k_0\le\lceil\kappa\rceil\le2\kappa$, using $\kappa\ge W>1$, so $\sigma^2k_0\asymp\sigma^2\kappa =\sigma^2W\sqrt\Lambda$, the middle branch of $F$. Moreover, by property (a) and the case hypothesis,
\[
\Lambda\ \le\ 1+\log m\ \le\ 1+2\bar k\ <\ 1+\frac{2W^2}{A_0}\ \le\
\frac{3W^2}{A_0},
\qquad\text{so}\qquad
\sigma^2\kappa\ \le\ \sqrt{\tfrac3{A_0}}\,V^2\ \le\ \tfrac16\,V^2 :
\]
hence $\min\{V^2,\sigma^2k_0\}\asymp\sigma^2\kappa$, and $F\asymp\sigma^2\kappa$ as well: the first branch is at least $6\sigma^2\kappa$, and the third is at least the second because $\kappa\le K\le m$. If $\kappa>K$: then $k_0>\min\{c_1\kappa,K\}\ge c_1K$ and $k_0\le K+1\le2K$, so $\sigma^2k_0\asymp\sigma^2K\asymp\sigma^2m$, using $K\ge(m+1)/(2e^2)$. Also $V^2>\sigma^2m$: squaring $W\sqrt\Lambda=\kappa>K\ge m/(2e^2)$ gives
\[
V^2\ =\ \sigma^2W^2\ >\ \frac{\sigma^2m^2}{4e^4\Lambda}\ \ge\
\sigma^2m ,
\]
by property (d) and $\Lambda\le1+\log m$. Hence $\min\{V^2,\sigma^2k_0\}\asymp\sigma^2m$, and $F\asymp\sigma^2m$: its first branch exceeds $\sigma^2m$, and its second is $\sigma^2\kappa>\sigma^2K\ge\sigma^2m/(2e^2)$.

\emph{Case $W\ge m$.} Then $\Lambda=1$ and $F=\sigma^2\min\{W^2,W,m\}=\sigma^2m$. For every integer $k\le K$,
\[
\ell(k)\ \ge\ \ell(K)\ =\ \log\frac{m+1}K\ \ge\ \log C_\star\ =\ 2,
\]
so $\min\{k,\ell(k)\}\ge1$ and $V^2\delta_k\ge\sigma^2W^2/k>A_0\sigma^2k$ whenever $k<W/\sqrt{A_0}$: no clause fires below $\min\{W/\sqrt{A_0},\,K+1\}\ge m/12$, the first entry because $W\ge m$ and $\sqrt{A_0}=12$, the second because $K+1>(m+1)/e^2$ and $e^2<12$. Hence $m/12\le k_0\le K+1\le m$ and $\sigma^2k_0\asymp\sigma^2m$. Since $V^2=\sigma^2W^2\ge\sigma^2m^2\ge\sigma^2m$, also $\min\{V^2,\sigma^2k_0\}\asymp\sigma^2m\asymp F$.

The four cases cover $m\ge m_0$: if $W\ge m$ the fourth applies, and otherwise $W^2\le A_0$, $A_0<W^2\le A_0\bar k$, and $W^2>A_0\bar k$ partition the range.
\end{proof}

On the complete binary tree the covering counts decay exponentially in the radius. Radius therefore trades against entropy linearly, and the profile is governed by $\log(n/k)$: quadratic in that logarithm when the budget exceeds it, and truncated at the budget otherwise.

\begin{lemma}[Binary profile]\label{slem:binary}
Let $B_h$ be the complete binary tree of height $h\ge1$, with $n=2^{h+1}-1$ vertices, $h_{B_h}=h$, and $H_{B_h}=2h$. For integers $0\le q<h$,
\[
2^{h-q}\ \le\ \Nup(q)\ \le\ 3\cdot2^{h-q},
\]
and for every integer $1\le k\le n/C_\star$, with $\beta:=\log(n/k)$, which is at least $2$ in this range,
\[
c_B\,\beta\min\{k,\beta\}\ \le\ \alpha_k(B_h)\ \le\
C_B\,\beta\min\{k,\beta\},
\qquad
c_B:=\frac1{8\log2},
\quad
C_B:=3 .
\]
\end{lemma}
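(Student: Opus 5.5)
The proof splits into the covering counts and then the profile bounds.

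\emph{Counts.} For the lower bound $\Nup(q)\ge2^{h-q}$: a center $a$ ancestor-covers at most the leaves of $T_a$ within distance $q$ of $a$. A leaf lies at depth $h$, so a center covering it has depth at least $h-q$, and therefore covers at most $2^{q}$ leaves. Since there are $2^h$ leaves, we get $\Nup(q)\ge2^h/2^q$. For the upper bound, take as centers all vertices at depths $0,q+1,2(q+1),\dots$ (every depth divisible by $q+1$). Every vertex then has an ancestor within distance $q$. The count is $\sum_{j}2^{j(q+1)}$ over $j(q+1)\le h$. This sum is at most $2^{j^*(q+1)}\cdot(1+2^{-(q+1)}+\dots)\le2\cdot2^{j^*(q+1)}$, where $j^*(q+1)\le h$. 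This gives only $2\cdot2^h$, which is too crude, so I would instead anchor the grid at the bottom. Take centers at depths $h-q,\ h-q-(q+1),\dots$ down to the last such depth $\ge0$, and add the root. The deepest layer has $2^{h-q}$ vertices and the layers above decay geometrically, giving at most $2\cdot2^{h-q}+1\le3\cdot2^{h-q}$. Each vertex at depth $d$ has a center ancestor at the largest grid depth $\le d$, within distance $q$; the root covers depths below the first grid level, which are within $q$ of it.

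\emph{Profile.} Use the discrete formula \Cref{eq:discrete}: $\alpha_k=\max_{0\le q<h}(q+1)\min\{k,[\log(\Nup(q)/k)]_+\}$. Set $y=q+1$. Then $\log(\Nup(q)/k)$ lies between $(h+1-y)\log2-\log k$ and that quantity plus $\log 3$. Since $n\approx2^{h+1}$, this is roughly $\beta-y\log2$, up to an additive $O(1)$.

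For the upper bound, each term is at most $y\min\{k,[\beta+\log3-y\log2]_+\}$ (using $2^{h+1}\le n+1\le 2n$, absorbed). The term vanishes unless $y\le(\beta+\log 3)/\log2\le C\beta$, using $\beta\ge2$. Hence each term is at most $C\beta\min\{k,\beta+1\}\le C'\beta\min\{k,\beta\}$. Getting the stated constant $3$ requires tracking: $y\log(\cdot)$ is maximized near $y\approx\beta/(2\log 2)$, giving $\beta^2/(4\log2)\cdot$const in the case $\beta\le k$, and $yk\le k\beta/\log 2\cdot$const otherwise.

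For the lower bound, pick $y\approx\beta/(2\log2)$, clipped to $[1,h]$. This is admissible since $\beta\le\log n\le(h+1)\log2$. At this radius $\log(\Nup(q)/k)\ge(h+1-y)\log2-\log k-\log 2\gtrsim\beta/2-\log2\ge\beta/4$ (using $\beta\ge2$ and the lower count), so the term is at least $\frac{\beta}{2\log2}\min\{k,\beta/4\}\ge\frac{1}{8\log2}\beta\min\{k,\beta\}$. The small-$\beta$ floor rounding, which ensures $y\ge1$, needs a short separate check.

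The main obstacle is purely the constant bookkeeping: the additive $\log 2$, $\log3$ slacks against $\beta\ge2$, and the integer rounding of $y$. The structural content is that the counts decay exactly geometrically in the radius.
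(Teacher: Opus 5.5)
Your route is the paper's: leaf counting for the lower count, the bottom-anchored grid of depths $h-q-j(q+1)$ plus the root for the upper count, then the discrete formula with $y=q+1$ and the choice $y\approx\beta/(2\log2)$. The two count bounds are correct as you state them. The gap is in the constant bookkeeping of the profile. It matters here because the lemma asserts the specific values $c_B=1/(8\log2)$ and $C_B=3$.

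\emph{Lower bound.} The extra $-\log2$ you subtract is unnecessary and it breaks the chain. Since $2^{h+1}=n+1>n$, the lower count gives directly $\log(\Nup(q)/k)\ge(h+1-y)\log2-\log k\ge\beta-y\log2\ge\beta/2$ whenever $y\le\beta/(2\log2)$. Your version instead needs $\beta/2-\log2\ge\beta/4$, which is false for $2\le\beta<4\log2$. Even where it holds, your constant $\frac1{8\log2}$ comes from treating $y$ as exactly $\beta/(2\log2)$, which leaves no room for rounding. The radius must be an integer, and $y=\lfloor\beta/(2\log2)\rfloor$ can be barely more than half of $\beta/(2\log2)$, for instance when that ratio is just below $2$. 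With your $\beta/4$ entropy factor the chain then guarantees only $\frac1{16\log2}$, and the ``separate check'' you defer cannot close this. The correct accounting runs as follows:
\begin{itemize}
\item $y=\lfloor\beta/(2\log2)\rfloor$ satisfies $\beta/(4\log2)\le y\le(h+1)/2\le h$, using $\beta/(2\log2)\ge1/\log2>1$ and $\beta<\log(n+1)=(h+1)\log2$;
\item the entropy factor satisfies $\min\{k,\beta/2\}\ge\tfrac12\min\{k,\beta\}$;
\item the product is exactly $c_B\beta\min\{k,\beta\}$.
\end{itemize}
It is the $\beta/2$ entropy bound, not $\beta/4$, that pays for the rounding.

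\emph{Upper bound.} The slack $\beta+\log3-y\log2$ is not a valid bound as written, because it drops the positive term $\log(1+1/n)$. Using $\log(n+1)\le\log n+\tfrac13$ for $n\ge3$ gives $\log(\Nup(q)/k)\le\beta+\tfrac32-y\log2\le\tfrac74\beta-y\log2$. Your crude step, that the term vanishes unless $y\le C\beta$, then yields only about $\tfrac{(7/4)^2}{\log2}\approx4.4$, not $3$. So the concave optimization you sketch has to be carried out. With $u=y\log2$ and $B=\tfrac74\beta$, one has $\sup_{u>0}u\min\{k,(B-u)_+\}\le B\min\{k,B/4\}$. Since $B/4\le\beta$, this gives $\alpha_k\le\tfrac{7}{4\log2}\,\beta\min\{k,\beta\}<3\,\beta\min\{k,\beta\}$.
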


\begin{proof}
\emph{Covering counts.} An ancestor of a depth-$h$ leaf within distance $q$ has depth at least $h-q$, and a vertex at depth $d\ge h-q$ has $2^{h-d}\le2^q$ leaf descendants; covering all $2^h$ leaves therefore needs at least $2^{h-q}$ centers. Conversely, let $R$ consist of the root together with every vertex whose depth is of the form $D_j:=h-q-j(q+1)\ge0$, $j\ge0$. A selected vertex covers exactly the depths $D_j,\dots,D_j+q$ of its subtree; consecutive selected depths differ by $q+1$, so the covered bands tile $\{D_{j^*},\dots,h\}$, where $D_{j^*}$ is the smallest selected depth, and $D_{j^*}\le q$, since otherwise $D_{j^*}-(q+1)\ge0$ would be selected as well; the root covers the depths below $D_{j^*}$. Hence $R$ is an ancestor $q$-net, of size
\[
1+\sum_{j:\,D_j\ge0}2^{D_j}\ \le\
1+2^{h-q}\sum_{j\ge0}2^{-j(q+1)}\ \le\ 1+2\cdot2^{h-q}\ \le\
3\cdot2^{h-q} .
\]

\emph{Profile, upper bound.} Since $2^{h+1}=n+1$ and $\log(n+1)\le\log n+\tfrac13$ for $n\ge3$, every integer $0\le q<h$ has, with $y:=q+1$,
\begin{align*}
\log\frac{\Nup(q)}k\ &\le\ (h-q)\log2+\log3-\log k
\\
&=\ \log(n+1)+\log3-\log k-y\log2
\ \le\ \beta+\tfrac32-y\log2 ,
\end{align*}
and $\beta+\tfrac32\le\tfrac74\beta$ because $\beta\ge2$. Substituting $u:=y\log2$ in the discrete formula (2.3) and writing $B:=\tfrac74\beta$,
\begin{align*}
\alpha_k(B_h)\ \le\ \frac1{\log2}\,\sup_{u>0}\
u\,\min\bigl\{k,\ (B-u)_+\bigr\}
\ &\le\ \frac1{\log2}\,B\,\min\Bigl\{\frac B4,\ k\Bigr\}
\\
&\le\ \frac7{4\log2}\,\beta\min\{k,\beta\}\ \le\
C_B\,\beta\min\{k,\beta\}:
\end{align*}
if $k\ge B/4$ the supremum is at most $\sup_u u(B-u)_+=B^2/4$, while if $k<B/4$ the product is at most $uk\le(B-k)k$ for $u\le B-k$, and at most $u(B-u)_+\le(B-k)k$ beyond, since $u(B-u)_+$ is nonincreasing past $B/2<B-k$.

\emph{Profile, lower bound.} Set $y:=\lfloor\beta/(2\log2)\rfloor$. From $\beta\ge2$: $\beta/(2\log2)\ge1/\log2>1$, so $y\ge1$ and, halving, $y\ge\beta/(4\log2)$; and $y\le\beta/(2\log2)\le\log n/(2\log2)\le(h+1)/2\le h$, so $q:=y-1$ is an admissible radius. At this radius,
\[
\log\frac{\Nup(q)}k\ \ge\ (h-q)\log2-\log k
\ =\ \log(n+1)-\log k-y\log2\ \ge\ \beta-\frac\beta2\ =\ \frac\beta2,
\]
so, using $\min\{k,\beta/2\}\ge\tfrac12\min\{k,\beta\}$, the term at $q$ is at least
\[
y\,\min\Bigl\{k,\ \frac\beta2\Bigr\}\ \ge\
\frac\beta{4\log2}\cdot\frac{\min\{k,\beta\}}2\ =\
c_B\,\beta\min\{k,\beta\} . \qedhere
\]
\end{proof}

\begin{proof}[Proof of Corollary 3]
By Theorem~1 and $H_{B_h}=2h$, $R^*_{B_h}(V,\sigma)\asymp\min\{V^2h,\sigma^2k_0\}$ after absorbing the factor two. Write $\beta(k):=\log(n/k)$, as in \Cref{slem:binary}, and
\[
\Lambda\ :=\ 1+\bigl[\log(n/W)\bigr]_+,
\qquad
F\ :=\ \sigma^2\min\bigl\{W^2h,\ W\Lambda,\ n\bigr\},
\]
the right side of Corollary~3.

\emph{Bounded trees.} For any universal $n_0$ the corollary holds for $n\le n_0$ with constants depending only on $n_0$: from $1\le k_0\le K+1\le n_0$, $\min\{V^2h,\sigma^2k_0\}\asymp\min\{V^2h,\sigma^2\}$, while $F\le\min\{V^2h,\sigma^2n\}\le n_0\min\{V^2h,\sigma^2\}$ and every branch of $F$ dominates $\min\{V^2h,\sigma^2\}$; for the middle branch, when $W\le1$ this is $\sigma^2W\Lambda\ge\log2\cdot\sigma^2W^2h$, using $W\ge W^2$ and $\Lambda\ge1+\log n\ge h\log2$, and when $W>1$ it is $\sigma^2W\Lambda\ge\sigma^2$. Now fix a universal $n_0$ (one may take $n_0=2^7$) such that for every $n\ge n_0$: $h\le K$, $n\le2e^2K$, and $\log h\le\tfrac{\log2}2\,h$. Assume $n\ge n_0$.

\emph{The diameter regime $W\le1$.} We claim $k_0\ge c\,W^2h$ with $c:=c_B\log2/(4A_0)$. This is trivial when $cW^2h<1$; otherwise fix an integer $k\le cW^2h\le ch$. Then
\[
\beta(k)\ \ge\ \log\frac{2^h}h\ =\ h\log2-\log h\ \ge\
\frac{\log2}2\,h\ \ge\ k ,
\]
the last step because $c\le\log2/2$; so \Cref{slem:binary} gives
\[
V^2\delta_k\ \ge\ c_B\,\sigma^2W^2\beta(k)\ \ge\
\frac{c_B\log2}2\,\sigma^2W^2h\ >\ A_0\sigma^2\,cW^2h\ \ge\
A_0\sigma^2k ,
\]
and the dimension clause is silent at $k\le h\le K$: no clause fires at any $k\le cW^2h$, proving the claim. In either case $\sigma^2k_0\ge c\,V^2h$, so
\[
\min\{V^2h,\ \sigma^2k_0\}\ \asymp\ V^2h .
\]
And $F\asymp V^2h$: its first branch is $V^2h$; its middle branch dominates $\log2\cdot V^2h$ as above; and $\sigma^2n\ge\sigma^2h\ge V^2h$. This proves the corollary for $W\le1$.

\emph{The entropy and dimension regimes $W>1$.} Put $\kappa:=W\Lambda\ge1$. First, if $\lceil\kappa\rceil\le K$, the crossing clause fires by $k_1:=\lceil\kappa\rceil$: there $W\le k_1\le K\le n$, so $\beta(k_1)\le\log(n/W)\le\Lambda$, and \Cref{slem:binary} gives
\[
V^2\delta_{k_1}\ \le\ C_B\,\sigma^2\,\frac{W^2\beta(k_1)^2}{k_1}
\ \le\ C_B\,\sigma^2\,\frac{W^2\Lambda^2}\kappa\ =\
C_B\,\sigma^2\kappa\ \le\ A_0\sigma^2k_1 ;
\]
hence $k_0\le\lceil\kappa\rceil$ whenever $\lceil\kappa\rceil\le K$. Second, with $c:=c_B/(8A_0)$, no clause fires at any integer $k\le\min\{c\kappa,K\}$. The dimension clause is silent since $k\le K$. For the crossing clause, note first that $\beta(k)\ge\Lambda/4$: if $W\ge n$ this reads $\beta(k)\ge2>\tfrac14=\Lambda/4$, and if $W<n$, then $k\le cW\Lambda$ with $c\le1$ gives
\[
\beta(k)\ \ge\ \log\frac n{W\Lambda}\ =\ (\Lambda-1)-\log\Lambda
\ \ge\ \frac\Lambda4\quad(\Lambda\ge3),
\qquad
\beta(k)\ \ge\ 2\ \ge\ \frac34\ >\ \frac\Lambda4\quad(\Lambda<3),
\]
the first branch because $\tfrac34\Lambda\ge1+\log\Lambda$ for $\Lambda\ge3$. Now split on $\min\{k,\beta(k)\}$. If $k\le\beta(k)$, then, using $k\le cW\Lambda\le cW^2\Lambda$ and $A_0c=c_B/8$,
\[
V^2\delta_k\ \ge\ c_B\,\sigma^2W^2\beta(k)\ \ge\
\frac{c_B}4\,\sigma^2W^2\Lambda\ >\ A_0\sigma^2\,cW^2\Lambda\ \ge\
A_0\sigma^2k .
\]
If $k>\beta(k)$, then
\[
V^2\delta_k\ \ge\ c_B\,\sigma^2\,\frac{W^2\beta(k)^2}k\ \ge\
\frac{c_B}{16}\,\sigma^2\,\frac{\kappa^2}k\ >\ A_0\sigma^2k ,
\]
the last step because $k\le c\kappa$ and $c=c_B/(8A_0)<\sqrt{c_B}/(4\sqrt{A_0})$, so that $k^2\le c^2\kappa^2<\tfrac{c_B}{16A_0}\kappa^2$. Hence $k_0>\min\{c\kappa,K\}$.

Combining the two tests: if $\kappa\le K$, then $c\kappa<k_0\le\lceil\kappa\rceil\le2\kappa$, so $\sigma^2k_0\asymp\sigma^2\kappa=\sigma^2\min\{W\Lambda,n\}$, using $\kappa\le K\le n$; if $\kappa>K$, then $k_0>\min\{c\kappa,K\}\ge cK$ and $k_0\le K+1\le2K$, so $\sigma^2k_0\asymp\sigma^2K\asymp\sigma^2n\asymp \sigma^2\min\{W\Lambda,n\}$, using $n\le2e^2K$. In both regimes
\[
\sigma^2k_0\ \asymp\ \sigma^2\min\{W\Lambda,\ n\},
\quad\text{hence}\quad
\min\{V^2h,\ \sigma^2k_0\}\ \asymp\
\sigma^2\min\{W^2h,\ W\Lambda,\ n\}\ =\ F ,
\]
which proves the corollary for $W>1$.
\end{proof}

It remains to prove the profile bounds (5.8) for the broom $T_L$ of Section~5, the handle $o=v_0,\dots,v_L$ with $m$ leaf children at $v_L$, so that $h_{T_L}=L+1$. Its ancestor-covering numbers were computed in (5.7) there: $\Nup(0)=n$, and $\Nup(q)=\lceil(L+2)/(q+1)\rceil$ for integers $1\le q\le L$.

\begin{lemma}[Broom profile]\label{slem:broom}
For the broom $T_L$ with $L\ge1$ and every integer $k\ge1$,
\[
\delta_k(T_L)\ \le\ \max\Bigl\{1,\ \frac{2(L+2)}{ek^2}\Bigr\},
\]
and whenever $k\le(L+2)/(2e)$,
\[
\delta_k(T_L)\ \ge\ \frac{L+2}{2ek^2}\,.
\]
\end{lemma}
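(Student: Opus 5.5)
Both bounds follow the recipe of \Cref{slem:path}. By \Cref{lem:toolkit}(\ref{it:discrete}), with $h_{T_L}=L+1$,
\[
\alpha_k(T_L)=\max_{0\le q\le L}(q+1)\,c_q,\qquad c_q:=\min\Bigl\{k,\Bigl[\log\frac{\Nup(q)}{k}\Bigr]_+\Bigr\},
\]
and the covering counts are $\Nup(0)=n$ and $\Nup(q)=\lceil(L+2)/(q+1)\rceil$ for $1\le q\le L$, from (5.7). The plan is to split off the radius $q=0$, which is the only place where $m$ enters, and to treat the radii $q\ge1$ as an interval-covering computation on $L+2$ points. This is the path computation of \Cref{slem:path} with $L+1$ replaced by $L+2$.

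\emph{Upper bound.} The term at $q=0$ is $c_0\le k$, so it contributes at most $1$ to $\delta_k=\alpha_k/k$; the truncation caps the star's contribution here, however large $m$ is. Now take $q\ge1$ with a nonzero term. Then $\Nup(q)>k\ge1$, so $\Nup(q)\ge2$, which forces $(L+2)/(q+1)>1$ and hence $\Nup(q)<2(L+2)/(q+1)$. Set $y:=q+1$ and $A:=2(L+2)/k$. Bounding $c_q$ by its logarithmic entry gives a term at most $y\log(A/y)\le\sup_{y>0}y\log(A/y)=A/e$. Dividing by $k$ gives $2(L+2)/(ek^2)$. The maximum of the two pieces is therefore at most $\max\{1,2(L+2)/(ek^2)\}$.

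\emph{Lower bound.} Assume $k\le(L+2)/(2e)$. Set $x:=(L+2)/(ek)\ge2$, $y:=\lfloor x\rfloor$ and $q:=y-1$. Then $y\ge x/2$ and $y\ge2$, so $q\ge1$. Also $y\le(L+2)/e\le L+1$, so $q\le L$ and $q$ is admissible. At this radius $\Nup(q)\ge(L+2)/y\ge ek$, hence $c_q\ge\min\{k,1\}=1$. The term at $q$ is then at least $y\ge(L+2)/(2ek)$, and dividing by $k$ gives the claim.

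The only delicate point is the admissibility check $q\ge1$ in the lower bound. It is the reason for the factor $2$ in the hypothesis $k\le(L+2)/(2e)$, which guarantees $x\ge2$ and so avoids landing on the radius $q=0$, where the count is $n$ rather than the interval formula. Everything else is routine. I would also double-check (5.7) at $q=L$: it gives $\lceil (L+2)/(L+1)\rceil=2$, since the root cannot reach leaves at distance $L+1$.
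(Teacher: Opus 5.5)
Your proposal is correct and follows the paper's proof essentially step for step. The radius $q=0$ contributes at most $1$ to $\delta_k$. The radii $q\ge1$ are bounded through $\Nup(q)<2(L+2)/(q+1)$ and $\sup_{y>0}y\log(A/y)=A/e$. The lower bound evaluates the term at $q=\lfloor (L+2)/(ek)\rfloor-1$, where $x\ge2$ secures $q\ge1$. The only cosmetic difference is that you get $(L+2)/(q+1)>1$ from $\Nup(q)\ge2$, whereas the paper gets $(L+2)/(q+1)>k$ from the ceiling; this changes nothing.
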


\begin{proof}
By the discrete formula (2.3), with $h_{T_L}=L+1$,
\[
\alpha_k(T_L)\ =\ \max_{0\le q\le L}\ (q+1)\,
\min\Bigl\{k,\ \Bigl[\log\frac{\Nup(q)}k\Bigr]_+\Bigr\}.
\]

\emph{Upper bound.} The term at $q=0$ is at most $k$ and contributes at most $1$ to $\delta_k=\alpha_k/k$. Consider $1\le q\le L$ with a nonvanishing term and put $y:=q+1$. Then $\lceil(L+2)/y\rceil>k$, hence $(L+2)/y>k$, since a real number at most the integer $k$ has ceiling at most $k$; consequently
\[
\Nup(q)\ <\ \frac{L+2}y+1\ <\ \frac{2(L+2)}y\,,
\]
and the term is at most
\[
y\,\log\frac{2(L+2)}{ky}\ \le\ \sup_{u>0}\ u\log\frac Au\ =\
\frac Ae\,,
\qquad A:=\frac{2(L+2)}k\,,
\]
the supremum attained at $u=A/e$. Hence $\alpha_k\le\max\{k,\ 2(L+2)/(ek)\}$, and dividing by $k$ proves the upper bound.

\emph{Lower bound.} Set $x:=(L+2)/(ek)$, at least $2$ in the stated range, and $y:=\lfloor x\rfloor$, so that $y\ge2$ and $y>x-1\ge x/2$. The radius $q:=y-1$ lies in $[1,L]$: $q\ge1$ since $y\ge2$, and $q\le L$ since $y\le x\le(L+2)/e<L+1$ for every $L\ge1$. Then
\[
\Nup(q)\ =\ \Bigl\lceil\frac{L+2}y\Bigr\rceil\ \ge\ \frac{L+2}y\
\ge\ \frac{L+2}x\ =\ ek,
\]
so $[\log(\Nup(q)/k)]_+\ge1$, and the term at $q$ is at least $y\cdot\min\{k,1\}=y\ge x/2$. Hence $\alpha_k\ge(L+2)/(2ek)$, and dividing by $k$ proves the lower bound.
\end{proof}

Four computations from the lower-bound proof of Theorem~5 remain, in the notation fixed there: $V=L^{1/4}$, $\sigma=1$, $m\ge e^{20L^{5/2}}$, and the statistics $W=\sum_{j=1}^L Z_{v_j}$, $Q=\sum_{j=1}^L[Z_{v_j}]_+^2$, $M=\max_{1\le i\le m}Z_{w_i}$.

\emph{The covering counts (5.7).} For the lower bound, the root-to-leaf path $v_0,\dots,v_L,w_1$ has $L+2$ vertices; an ancestor center on it covers at most $j+1$ consecutive ones, and a center at any other leaf covers none of them, so $\Nup(j)\ge\lceil(L+2)/(j+1)\rceil$. For the upper bound, the single center $v_{L+1-j}$ covers $v_{L+1-j},\dots,v_L$ together with every leaf, each at distance exactly $j$, and $\lceil(L-j+1)/(j+1)\rceil$ further centers along the prefix $v_0,\dots,v_{L-j}$ complete an ancestor $j$-net; the total is $1+\lceil(L-j+1)/(j+1)\rceil=\lceil(L+2)/(j+1)\rceil$.

\emph{The cap (5.3).} For $h_j\ge0$ the handle term obeys $2Z_{v_j}h_j-h_j^2\le[Z_{v_j}]_+^2$: the left side is nonpositive when $Z_{v_j}\le0$, and equals $Z_{v_j}^2-(h_j-Z_{v_j})^2$ otherwise. The leaf part is at most $2M\sum_i\ell_i\le2Mh_L\le MV$ by (5.1), the last step because the case under consideration has $h_L\le V/2$.

\emph{The forcing event $E=\{W\ge-2\sqrt L\}\cap\{Q\le2L\} \cap\{M\ge4L^{5/4}\}$ has probability at least $\tfrac38$.} On $E$,
\[
(L+1)V-2W+\frac QV\ \le\
L^{5/4}+L^{1/4}+4L^{1/2}+2L^{3/4}\ <\ 4L^{5/4}\ \le\ M,
\]
the middle inequality being equivalent to $L^{-1}+4L^{-3/4}+2L^{-1/2}<3$, whose left side is decreasing in $L$ and below $3$ at $L=4$; so the forcing inequality (5.4) holds on $E$. Since $W\sim N(0,L)$, Markov's inequality applied to $W^2$ gives $\mathbb P(W<-2\sqrt L)\le\tfrac14$; since $\E Q=L/2$, by $\E[Z_{v_j}]_+^2=\tfrac12$, the same inequality gives $\mathbb P(Q>2L)\le\tfrac14$; hence $\mathbb P\{W\ge-2\sqrt L,\ Q\le2L\}\ge\tfrac12$. For the leaves, set $t:=4L^{5/4}$ and integrate the standard normal density, decreasing on $[t,t+t^{-1}]$, over that interval:
\[
\mathbb P\bigl(Z_{w_1}\ge t\bigr)\ \ge\
\frac{1}{t\sqrt{2\pi}}\,e^{-(t+t^{-1})^2/2}\ \ge\
c_G\,L^{-5/4}\,e^{-8L^{5/2}},
\qquad
c_G:=\frac{e^{-33/32}}{4\sqrt{2\pi}}\ ,
\]
the exponent being $\tfrac12t^2+1+\tfrac12t^{-2}\le 8L^{5/2}+\tfrac{33}{32}$ by $t^2=16L^{5/2}\ge16$. Multiplying by $m\ge e^{20L^{5/2}}$,
\[
m\,\mathbb P\bigl(Z_{w_1}\ge t\bigr)\ \ge\
c_G\,L^{-5/4}\,e^{12L^{5/2}}\ \ge\ \log4 ,
\]
the middle expression being increasing in $L$ and, at $L=1$, equal to $c_Ge^{12}\ge e^{12-33/32-3}\ge e^{7}$, using $4\sqrt{2\pi}\le e^{3}$. The leaves are independent of one another, so $\mathbb P(M<t)=(1-\mathbb P(Z_{w_1}\ge t))^{m}\le e^{-m\mathbb P(Z_{w_1}\ge t)}\le\tfrac14$, and independent of the handle, whence $\mathbb P(E)\ge\tfrac12\cdot\tfrac34=\tfrac38$.

\emph{The crossing index satisfies $k_0\asymp\sqrt L$.} Set $\kappa:=(V^2(L+2)/(2eA_0))^{1/3}$, which is $\asymp\sqrt L$ since $V^2=\sqrt L$. Every integer $k<\kappa$ lies in the lower-bound range of \Cref{slem:broom}: $(2e)^2\sqrt L\le A_0(L+2)^2$ gives $\kappa\le(L+2)/(2e)$, and there $V^2\delta_k\ge V^2(L+2)/(2ek^2)>A_0k$, the strict inequality being $k^3<\kappa^3$ rearranged, while $n\ge e^{20L^{5/2}}$ keeps the dimension clause of (1.5) silent; hence $k_0\ge\kappa$. At $k_+:=\lceil4^{1/3}\kappa\rceil$ the upper bound of \Cref{slem:broom} gives $V^2\delta_{k_+}\le\max\{V^2,2V^2(L+2)/(ek_+^2)\}\le A_0k_+$, the first branch since $A_0k_+\ge A_0^{2/3}(2/e)^{1/3}\sqrt L\ge V^2$ and the second since $k_+^3\ge4\kappa^3$; so the crossing fires by $k_+$ and $\kappa\le k_0\le\lceil4^{1/3}\kappa\rceil$, whence $k_0\asymp\sqrt L$. Finally $\kappa\le(L+2)/(2e)$ gives $k_0\le\lceil4^{1/3}\kappa\rceil\le L+3\le\sqrt L\,(L+1)=V^2H_{T_L}$, so the minimum in Theorem~1 is the information branch and $R^*_{T_L}(L^{1/4},1)\asymp k_0\asymp\sqrt L$.

%%%%%%%%%%%%%%%%%%%%%%%%%%%%%%%%%%%%%%%%%%%%%%%%%%%%%%%%%%%%%%%%%%%%%%%%%%%%%%
\section{Exact Evaluation of the Aggregate}\label{ssec:evaluation}
%%%%%%%%%%%%%%%%%%%%%%%%%%%%%%%%%%%%%%%%%%%%%%%%%%%%%%%%%%%%%%%%%%%%%%%%%%%%%%

This section proves the statements deferred from Sections~4.1 and~4.2: the bound on the normalizing sum of the prior, the completion of the proof of Lemma~4.1, the risk of the three elementary branches of (4.3), the correctness and cost of the exact evaluation of the aggregate (4.2), and the batched evaluation behind the instance sentence of Section~4.2, including the softly linear evaluation on stars.

Throughout, $K:=\lfloor n/C_\star\rfloor$; except in the treatment of the elementary branches, $k$ is an integer with $2\le k\le K$. The active support $A_k$, the charges $\widetilde\omega$, the code length $\Gamma$, and the level weights $m_j=2^j$ for $0\le j\le J$ are those of Section~3.1; $\mathcal X_k$ is the class (4.1) of integer states of Section~4.1; and, as in Section~4.2 there, $\varphi_v(x)=e^{-(Y_v-bx)^2/(4\sigma^2)}$ with the shorthand $b:=V/k$.

\begin{lemma}[Normalizing sum]\label{slem:normalizer}
For every integer $2\le k\le K$,
\[
\sum_{x\in\mathcal X_k}e^{-2\Gamma(x)}\ \le\ e^{k}.
\]
\end{lemma}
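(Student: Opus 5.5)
The plan is to discard every constraint of $\mathcal X_k$ except the support condition and then factor the resulting sum over vertices. The map $x\mapsto z$, $z_v=x_v-\sum_{c\in\ch(v)}x_c$, is injective by Lemma~2.1: the states are recovered from the leaks as subtree sums $x_v=\sum_{u\succeq v}z_u$. Each $x\in\mathcal X_k$ yields an integer vector $z$ supported on $A_k$ with $\Gamma(x)=\Gamma(z)$. Hence
\[
\sum_{x\in\mathcal X_k}e^{-2\Gamma(x)}\ \le\ \sum_{z\in\mathbb Z^{A_k}}e^{-2\Gamma(z)},
\]
where I drop the root-sum constraint, the state bounds $[0,3k]$, and everything else.

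The right side factorizes because $\Gamma$ is additive over $A_k$. Each vertex $v$ contributes the factor
\[
\sum_{t\in\mathbb Z}e^{-2|t|-2\widetilde\omega(v)\ind\{t\ne0\}}\ =\ 1+\theta\,e^{-2\widetilde\omega(v)},
\qquad
\theta:=\sum_{t\ne0}e^{-2|t|}=\frac{2}{e^2-1}<0.314 .
\]
Applying $\log(1+y)\le y$ to each factor gives
\[
\log\sum_{z}e^{-2\Gamma(z)}\ \le\ \theta\sum_{v\in A_k}e^{-2\widetilde\omega(v)} .
\]

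Next I split $A_k$ as in the counting bound, \Cref{slem:count}.

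\emph{Free vertices.} The vertices of $R_0$ carry charge zero. There are at most $|R_0|\le ke^{m_0}=ek$ of them by \Cref{slem:netsizes}(\ref{sit:netsize}), so they contribute at most $\theta ek$.

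\emph{Charged vertices.} The charged vertices fall into the first-appearance groups $G_j\subseteq R_j$, $1\le j\le J$, each with charge $m_j$ and size $|G_j|\le 2ke^{m_j}$ by \Cref{slem:netsizes}(\ref{sit:union}). They contribute at most
\[
\theta\sum_{j\ge1}2ke^{m_j}e^{-2m_j}\ =\ 2\theta k\sum_{j\ge1}e^{-2^j}\ <\ \frac{\theta k}{3},
\]
using $\sum_{j\ge1}e^{-2^j}<\tfrac16$.

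\emph{Conclusion.} The total exponent is at most $\theta(e+\tfrac13)k<0.314\cdot3.06\,k<0.97\,k\le k$, which is the claim.

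The only delicate point is that this constant is tight-ish. The free part $R_0$ contributes the bulk, about $0.85k$, so the exponent $2$ in the prior must be used in full through $\theta$, and I cannot afford a cruder bound such as counting signs separately with a factor of $2$. If the numerics had failed, the fallback would have been to retain the constraint $\sum_vz_v=k$ or the state bound $x_v\ge0$ to cut the free factor, but the product bound above already suffices.
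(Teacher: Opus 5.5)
Your proof is correct and follows the paper's own argument: injectivity of $x\mapsto z$, dropping every constraint except the support on $A_k$, factoring the sum into $\prod_{v\in A_k}\bigl(1+\theta e^{-2\widetilde\omega(v)}\bigr)$, and bounding the free level $R_0$ and the first-appearance groups $G_j$ with the net-size bounds of \Cref{slem:netsizes}. The only difference is numerical: the paper bounds the free factor by $\log(1+\theta)\le 3/10$ instead of $\theta$ and reaches $0.93\,k$, where you reach about $0.96\,k$; either suffices.
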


\begin{proof}
The map $x\mapsto z$ is injective on state vectors, since the subtree sums $x_v=\sum_{u\succeq v}z_u$ recover $x$, and $\Gamma(x)$ depends only on $z$. Dropping the constraints $x_o=k$ and $0\le x_v\le3k$ therefore bounds the sum by a product over the support:
\[
\sum_{x\in\mathcal X_k}e^{-2\Gamma(x)}
\ \le\ \sum_{z\in\mathbb Z^{A_k}}
\prod_{v\in A_k}e^{-2(|z_v|+\widetilde\omega(v)\ind\{z_v\ne0\})}
\ =\ \prod_{v\in A_k}
\Bigl(1+\tfrac2{e^2-1}\,e^{-2\widetilde\omega(v)}\Bigr),
\]
the factor at $v$ being $1+e^{-2\widetilde\omega(v)}\cdot 2\sum_{t\ge1}e^{-2t}$, with $2\sum_{t\ge1}e^{-2t}=\tfrac2{e^2-1}$. The free level $R_0$, where $\widetilde\omega=0$, has $|R_0|\le ke^{m_0}=ek$ vertices by \Cref{slem:netsizes}(\ref{sit:netsize}), and $\log(1+\tfrac2{e^2-1})\le\tfrac3{10}$, so it contributes at most $\tfrac3{10}ek<0.82\,k$ to the logarithm. The charged vertices split into the groups $G_j:=\{v\in A_k\setminus R_0:\ \omega(v)=m_j\} \subseteq R_j$, $1\le j\le J$, with $|G_j|\le2ke^{m_j}$ by \Cref{slem:netsizes}(\ref{sit:union}); since $\log(1+y)\le y$, their contribution is at most
\[
\sum_{j\ge1}2ke^{m_j}\cdot\frac2{e^2-1}\,e^{-2m_j}
\ =\ \frac{4k}{e^2-1}\sum_{j\ge1}e^{-2^j}
\ \le\ \frac{4k}{e^2-1}\cdot0.16\ <\ 0.11\,k,
\]
using $\sum_{j\ge1}e^{-2^j}<e^{-2}+e^{-4}+2e^{-8}<0.16$. The logarithm of the product is below $0.93\,k$.
\end{proof}

In the setting of Lemma~4.1, write
\[
\mathcal Z(y)\ :=\ \sum_{g\in F}\pi_g\,e^{-\|y-g\|_2^2/(4\sigma^2)}
\]
for the normalizing sum of the weights, positive and smooth on $\bbR^N$.

\begin{proof}[Completion of the proof of Lemma 4.1]
\emph{The Jacobian.} The logarithm of a weight is $\log\pi_f-\|y-f\|_2^2/(4\sigma^2)-\log\mathcal Z(y)$; differentiating it in $y_i$ gives
\[
\frac{\partial\log\widehat\pi_f}{\partial y_i}
\ =\ -\frac{y_i-f_i}{2\sigma^2}
+\sum_{g\in F}\widehat\pi_g(y)\,\frac{y_i-g_i}{2\sigma^2}
\ =\ \frac{f_i-\widehat f_i(y)}{2\sigma^2}\ ,
\]
that is, $\partial\widehat\pi_f/\partial y_i=\widehat\pi_f\,(f_i-\widehat f_i)/(2\sigma^2)$. Multiplying by $f_j$ and summing over $f$,
\[
\frac{\partial\widehat f_j}{\partial y_i}
\ =\ \frac1{2\sigma^2}\Bigl(\sum_{f\in F}\widehat\pi_f\,f_if_j
-\widehat f_i\,\widehat f_j\Bigr)
\ =\ \frac1{2\sigma^2}\operatorname{Cov}_{\widehat\pi(y)}(f_i,f_j),
\]
the Jacobian identity of the main-text sketch; taking $i=j$ and summing,
\begin{equation}\label{eq:traceid}
2\sigma^2\operatorname{div}\widehat f_\pi(y)
\ =\ \operatorname{tr}\operatorname{Cov}_{\widehat\pi(y)}(f)
\ =\ \sum_{f\in F}\widehat\pi_f(y)\,
\bigl\|f-\widehat f_\pi(y)\bigr\|_2^2\ \ge\ 0 .
\end{equation}

\emph{The domain of Stein's identity.} The aggregate takes values in the convex hull of the finite set $F$, so it is bounded; its partial derivatives are posterior covariances of coordinates of $F$, bounded uniformly in $y$; and both are smooth, since $\mathcal Z>0$ everywhere. For such a map, Stein's identity \citep{stein1981} gives
\[
\E_\mu\bigl\|\widehat f_\pi(Y)-\mu\bigr\|_2^2\ =\
\E\bigl\|\widehat f_\pi(Y)-Y\bigr\|_2^2
+2\sigma^2\,\E\operatorname{div}\widehat f_\pi(Y)\ -\ N\sigma^2 .
\]

\emph{The cancellation.} At fixed $y$, expanding $\|y-f\|_2^2$ around $\widehat f_\pi(y)$ and averaging under $\widehat\pi(y)$ kills the cross term, since $\sum_f\widehat\pi_f\,(f-\widehat f_\pi)=0$:
\[
\sum_{f\in F}\widehat\pi_f(y)\,\|y-f\|_2^2
\ =\ \bigl\|y-\widehat f_\pi(y)\bigr\|_2^2
+\sum_{f\in F}\widehat\pi_f(y)\,\bigl\|f-\widehat f_\pi(y)\bigr\|_2^2 .
\]
The last sum is $2\sigma^2\operatorname{div}\widehat f_\pi(y)$ by \Cref{eq:traceid}, so substituting the expansion into Stein's identity cancels the divergence terms and leaves the risk identity of the main-text sketch.

\emph{The variational bound.} The relative entropy of probability vectors on $F$ is $\operatorname{KL}(\rho\,\|\,\pi):=\sum_f\rho_f\log(\rho_f/\pi_f)$, with $0\log0:=0$. For any such $\rho$, inserting the definition of $\widehat\pi(y)$ gives the identity
\[
\sum_{f\in F}\rho_f\,\|y-f\|_2^2
+4\sigma^2\operatorname{KL}(\rho\,\|\,\pi)
\ =\ 4\sigma^2\operatorname{KL}\bigl(\rho\,\|\,\widehat\pi(y)\bigr)
\ -\ 4\sigma^2\log\mathcal Z(y),
\]
so the left side is minimized exactly at $\rho=\widehat\pi(y)$. Comparing the minimizer with the point mass at $f_0$, and dropping the nonnegative divergence $\operatorname{KL}(\widehat\pi(y)\,\|\,\pi)$ from the minimum,
\[
\sum_{f\in F}\widehat\pi_f(y)\,\|y-f\|_2^2
\ \le\ \|y-f_0\|_2^2+4\sigma^2\log\frac1{\pi_{f_0}}\ ,
\]
the pathwise bound of the main body, whose expectation completes the proof there.
\end{proof}

The elementary branches of (4.3) are next. Their analysis rests on two facts: the root-corrected observation matches $\mu(o)=V$ exactly, and $Vp_o$ is within $V\sqrt{h_T}$ of every body point.

\begin{lemma}[Elementary branches]\label{slem:branches}
Write $k:=\kalg(T,V,\sigma)$. In each of the three elementary branches of (4.3),
\[
\sup_{\mu\in\mathcal F_V(T)}
\E_\mu\|\widehat\mu-\mu\|_2^2\ \le\
C\min\{V^2H_T,\ \sigma^2k\}.
\]
\end{lemma}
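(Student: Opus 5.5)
We treat the three elementary branches of (4.3) one at a time, using only two risk facts. First, since $\mu(o)=V$ on the body, the estimator $Vp_o$ has loss $\|\mu-Vp_o\|_2^2\le V^2h_T\le V^2H_T$ deterministically, by convexity and Lemma~2.2(1). Second, the root-corrected observation $\widetilde Y$ is exact at the root and equals $Y$ elsewhere, so its risk is exactly $\sigma^2(n-1)\le\sigma^2n$.

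\emph{Branch $V^2H_T\le\sigma^2k$.} The estimator is $Vp_o$, with risk at most $V^2H_T=\min\{V^2H_T,\sigma^2k\}$. Nothing further is needed.

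\emph{Branch $V^2H_T>\sigma^2k$ and $k>K$.} Here $k=\kalg=K+1$, because $\kalg\le K+1$ always. The estimator is $\widetilde Y$, with risk at most $\sigma^2n$. We have $K+1>n/C_\star$, so $\sigma^2n<C_\star\sigma^2k$. The branch condition makes the minimum equal to $\sigma^2k$, so the bound holds with $C=C_\star$.

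\emph{Branch $V^2H_T>\sigma^2k$ and $k=1\le K$.} The estimator is $Vp_o$, with risk at most $V^2h_T$. We need $V^2h_T\le C\sigma^2$. Since $1\le K$, the dimension clause of (2.2) is silent at $k=1$, so the surrogate clause fired: $V^2\overline\delta_1\le2A_0\sigma^2$. We combine this with the lower bound $\alpha_1=\delta_1\ge h_T\log2$, obtained in Section~3.3 from $\Nup(r)\ge2$ for $r<h_T$; this needs $n\ge2$, which holds since $K\ge1$ forces $n\ge C_\star$. By Lemma~2.3(3), $\overline\delta_1\ge\delta_1$. Chaining these gives
\[
V^2h_T\le V^2\delta_1/\log2\le V^2\overline\delta_1/\log2\le(2A_0/\log2)\,\sigma^2=(2A_0/\log2)\,\sigma^2k,
\]
and the branch condition again makes the minimum equal to $\sigma^2k$.

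The only step that needs care is the last branch, which must transfer the crossing at $k=1$ from the surrogate back to the height. It goes through the chain $h_T\log2\le\delta_1\le\overline\delta_1$ and the silence of the dimension clause. The remaining branches are immediate bookkeeping. The constants come out as $C=\max\{1,C_\star,2A_0/\log2\}$.
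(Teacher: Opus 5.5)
Your proof is correct and follows the paper's argument essentially line by line. It uses the convexity bound $\|\mu-Vp_o\|_2^2\le V^2h_T$ for the two constant-estimator branches and the exact risk $(n-1)\sigma^2<C_\star\sigma^2k$ of $\widetilde Y$ when $k=K+1$. In the first-budget branch it chains $h_T\log2\le\delta_1\le\overline\delta_1$ with the fired surrogate clause $V^2\overline\delta_1\le2A_0\sigma^2$, exactly as the paper does, and yields the same constant $C=\max\{1,C_\star,2A_0/\log2\}$.
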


\begin{proof}
For every $\mu=\sum_u\lambda_u\,Vp_u\in\mathcal F_V(T)$, convexity of the norm and Lemma~2.2 give $\|\mu-Vp_o\|_2\le\max_u\|Vp_u-Vp_o\|_2=V\sqrt{h_T}$.

\emph{Diameter branch ($V^2H_T\le\sigma^2k$).} The estimator is the constant $Vp_o$, so its risk is at most $V^2h_T\le V^2H_T$, which is the minimum under the branch condition.

\emph{Dimension branch ($V^2H_T>\sigma^2k$ and $k>K$).} Since $k\le K+1$ always, $k=K+1$, and $K=\lfloor n/C_\star\rfloor$ gives $n<C_\star(K+1)=C_\star k$. The root coordinate of $\widetilde Y$ is exact, so
\[
\E_\mu\|\widetilde Y-\mu\|_2^2\ =\ (n-1)\,\sigma^2\ <\
C_\star\,\sigma^2k,
\]
and the minimum is $\sigma^2k$ under the branch condition.

\emph{First-budget branch ($V^2H_T>\sigma^2k$ and $k=1\le K$).} Here $n\ge C_\star$, so $n\ge2$, and, as in Section~3.3, every ancestor $r$-net with $r<h_T$ has at least two elements: the root lies in every net and covers no deepest vertex. Hence $\Nup(r)\ge2$ for all $r<h_T$, and letting $r\uparrow h_T$ in (1.4) gives $\alpha_1\ge h_T\log2$. The dimension clause of (2.2) is silent at $k=1$, since $1\le K$, so the surrogate crossing clause fired there, $V^2\overline\delta_1\le2A_0\sigma^2$, and with Lemma~2.3,
\[
\sup_{\mu}\|\mu-Vp_o\|_2^2\ \le\ V^2h_T\ \le\
\frac{V^2\alpha_1}{\log2}\ =\ \frac{V^2\delta_1}{\log2}\ \le\
\frac{V^2\overline\delta_1}{\log2}\ \le\
\frac{2A_0}{\log2}\,\sigma^2 ;
\]
the minimum is $\sigma^2$ under the branch condition.
\end{proof}

The rest of the section evaluates the interior branch. Fix $2\le k\le K$ and collect the prior into local kernels: for $z\in\mathbb Z$ put
\begin{equation}\label{eq:kernel}
\kappa_v(z)\ :=\
\begin{cases}
1, & z=0,\\
e^{-2\widetilde\omega(v)}\,e^{-2|z|}, & z\ne0\ \text{and}\
v\in A_k,\\
0, & z\ne0\ \text{and}\ v\notin A_k .
\end{cases}
\end{equation}
For a state vector $x\in\{0,\dots,3k\}^{\sfV}$ with leaks $z_v=x_v-\sum_{c\in\ch(v)}x_c$, the product $\prod_v\kappa_v(z_v)$ equals $e^{-2\Gamma(x)}$ when the leaks vanish off $A_k$ and is zero otherwise. The denominator of (4.2) is therefore the total \emph{weight} $\prod_v\kappa_v(z_v)\,\varphi_v(x_v)$ of all state vectors with $x_o=k$, and it is positive: the vector with $x_o=k$ and all other states zero has its one leak at the root, where $\widetilde\omega(o)=0$ because $o\in R_0$, so its weight is $e^{-2k}\varphi_o(k)\prod_{v\ne o}\varphi_v(0)>0$. In the same terms, the coefficients of the message (4.5) are
\begin{equation}\label{eq:semantics}
P_v[x]\ =\ \sum_{\substack{x'\in\{0,\dots,3k\}^{T_v}\\ x'_v=x}}\
\prod_{w\in T_v}\kappa_w(z'_w)\,\varphi_w(x'_w),
\qquad
z'_w:=x'_w-\textstyle\sum_{c\in\ch(w)}x'_c ,
\end{equation}
the leaks computed within $T_v$.

Messages are polynomials of degree at most $3k$, but the product of many children's messages is not, and its degree must not be allowed to grow with their number. The device is a pair representation: a polynomial $F(\zeta)=\sum_{i\ge0}F[i]\zeta^i$ with nonnegative coefficients is carried as its truncation $F_0(\zeta):=\sum_{i\le3k}F[i]\zeta^i$ together with its \emph{tail}
\[
\tau_F\ :=\ \sum_{i>3k}F[i]\,e^{-2(i-3k)} ,
\]
the evaluation of the high part at $e^{-2}$, shifted to the cap. Messages have zero tail; tails arise along partial products of messages and stay scalars.

\begin{lemma}[Product of pairs]\label{slem:pairs}
Let $F,G$ be polynomials with nonnegative coefficients, of arbitrary degrees, given by their pairs. Then $(FG)_0$ is the truncation of $F_0G_0$, and
\begin{equation}\label{eq:tailprod}
\tau_{FG}\ =\ \sum_{q=3k+1}^{6k}[\zeta^q](F_0G_0)\,e^{-2(q-3k)}
\ +\ F_0(e^{-2})\,\tau_G\ +\ \tau_F\,G_0(e^{-2})
\ +\ e^{-6k}\,\tau_F\,\tau_G .
\end{equation}
The pair of $FG$ is therefore computable from the pairs of $F$ and $G$ by one multiplication of two polynomials of degree at most $3k$ and $O(k)$ further operations.
\end{lemma}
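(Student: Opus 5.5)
The plan is to split each factor into its low and high parts, multiply out, and sort every coefficient of the product by degree. Write $F=F_0+F_1$, where $F_1:=\sum_{i>3k}F[i]\zeta^i$ is the high part, so that $\tau_F=e^{6k}F_1(e^{-2})$; likewise $G=G_0+G_1$ with $\tau_G=e^{6k}G_1(e^{-2})$. Then
\[
FG\ =\ F_0G_0+F_0G_1+F_1G_0+F_1G_1 .
\]
The last three products have only monomials of degree above $3k$, because $F_0,G_0$ have nonnegative exponents and $F_1,G_1$ start at degree $3k+1$. The coefficients of $FG$ in degrees $0,\dots,3k$ therefore come from $F_0G_0$ alone, which proves $(FG)_0=\operatorname{trunc}_{3k}(F_0G_0)$.

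For the tail, I use that $H\mapsto\tau_H=e^{6k}\sum_{i>3k}H[i]e^{-2i}$ is linear. Moreover, on any polynomial $H$ supported in degrees above $3k$ it equals $e^{6k}H(e^{-2})$. I apply this to each of the four pieces. For $F_0G_0$, only degrees $3k+1,\dots,6k$ occur, which gives the first sum in \eqref{eq:tailprod}. For $F_0G_1$, the tail is $e^{6k}F_0(e^{-2})G_1(e^{-2})=F_0(e^{-2})\,\tau_G$, and symmetrically $F_1G_0$ gives $\tau_F\,G_0(e^{-2})$. For $F_1G_1$, the tail is $e^{6k}F_1(e^{-2})G_1(e^{-2})=e^{6k}e^{-6k}\tau_Fe^{-6k}\tau_G=e^{-6k}\tau_F\tau_G$. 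Summing these four terms gives \eqref{eq:tailprod}. Nonnegativity of the coefficients is not needed for the identity itself; it only ensures that all the quantities are finite and meaningful as weights.

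For the cost, one FFT multiplication of $F_0G_0$ (degree at most $6k$) costs $O(k\log(k+1))$ in the model of \Cref{sec:polytope}. The first sum in \eqref{eq:tailprod} takes $O(k)$ further operations. Evaluating $F_0(e^{-2})$ and $G_0(e^{-2})$ by Horner's rule is $O(k)$. The remaining products and sums of scalars are $O(1)$.

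There is no real obstacle. The one point needing care is the exponent bookkeeping in the $F_1G_1$ term, where the normalizing factor $e^{6k}$ appears once in $\tau_{FG}$ but twice in $\tau_F\tau_G$, which produces the $e^{-6k}$ prefactor.
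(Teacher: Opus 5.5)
Your proof is correct and is essentially the paper's argument. Splitting $F=F_0+F_1$ and $G=G_0+G_1$ and multiplying out is the same as the paper's partition of the index pairs $(i,j)$ according to whether $i\le3k$ and whether $j\le3k$, and your use of the linear functional $H\mapsto\tau_H$, which equals $e^{6k}H(e^{-2})$ on high-supported $H$, is a compact form of the paper's factorization $e^{-2(i+j-3k)}=e^{-2(i-3k)}e^{-2(j-3k)}e^{-6k}$; the cost accounting matches as well.
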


\begin{proof}
For $q\le3k$, $[\zeta^q](FG)=\sum_{i+j=q}F[i]G[j]$ involves only indices $i,j\le3k$, which gives the truncation claim. In $\tau_{FG}=\sum_{i+j>3k}F[i]G[j]e^{-2(i+j-3k)}$, partition the index pairs according to whether $i\le3k$ and whether $j\le3k$. The low--low pairs give the first term of \Cref{eq:tailprod}. The low--high pairs factor as $\sum_{i\le3k}F[i]e^{-2i}\cdot\sum_{j>3k}G[j]e^{-2(j-3k)} =F_0(e^{-2})\tau_G$, the high--low pairs symmetrically, and the high--high pairs as $e^{-6k}\tau_F\tau_G$, since $e^{-2(i+j-3k)}=e^{-2(i-3k)}e^{-2(j-3k)}e^{-6k}$. For the cost: the single multiplication $F_0G_0$ yields all coefficients through degree $6k$, and every remaining ingredient of \Cref{eq:tailprod}, including the evaluations $F_0(e^{-2})$ and $G_0(e^{-2})$, is $O(k)$ arithmetic on them.
\end{proof}

The recursion of Section~4.2 now takes its precise form. At each vertex the children's messages are merged pairwise in the pair representation, and one local pass applies the observation weight and the prior kernel.

\begin{lemma}[Message recursion]\label{slem:recursion}
For $v\in\sfV$ let $Q_v(\zeta):=\prod_{c\in\ch(v)}P_c(\zeta)$, an empty product being $1$, with coefficients $Q_v[s]$ and tail $\tau_v$, and set
\[
\psi^-_x\ :=\ \sum_{0\le s<x}e^{-2(x-s)}\,Q_v[s],
\qquad
\psi^+_x\ :=\ \sum_{s>x}e^{-2(s-x)}\,Q_v[s]
\qquad(0\le x\le3k).
\]
Then
\begin{equation}\label{eq:localpass}
P_v[x]\ =\
\begin{cases}
\varphi_v(x)\,Q_v[x], & v\notin A_k,\\[2pt]
\varphi_v(x)\,\bigl(Q_v[x]
+e^{-2\widetilde\omega(v)}(\psi^-_x+\psi^+_x)\bigr), & v\in A_k,
\end{cases}
\end{equation}
and the two sums obey the linear recurrences
\begin{equation}\label{eq:psirec}
\psi^-_0=0,\quad
\psi^-_x=e^{-2}\bigl(Q_v[x-1]+\psi^-_{x-1}\bigr);
\qquad
\psi^+_{3k}=\tau_v,\quad
\psi^+_x=e^{-2}\bigl(Q_v[x+1]+\psi^+_{x+1}\bigr).
\end{equation}
In particular the pair of $P_v$ is computed from the pairs of the
children's messages by $(|\ch(v)|-1)_+$ products of pairs and $O(k)$
further operations, and $P_o[k]$ is the denominator of (4.2).
\end{lemma}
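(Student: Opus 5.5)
The plan is an induction over subtrees in postorder, using the semantics \Cref{eq:semantics} of the message coefficients.

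\emph{Factorization.} Fix $v$ and a state $x$ at $v$. An assignment $x'$ on $T_v$ with $x'_v=x$ splits into independent assignments on the children's subtrees $T_c$, one for each $c\in\ch(v)$, with states $x'_c\in\{0,\dots,3k\}$. The leaks $z'_w$ for $w\ne v$ are computed inside the $T_c$, and the only coupling term is $\kappa_v(x-s)\varphi_v(x)$, where $s:=\sum_c x'_c$. By the inductive hypothesis at the children, summing over the child assignments with fixed child states gives $\prod_c P_c[x'_c]$. Summing then over all child states with total $s$ gives the coefficient $Q_v[s]$ of $\prod_cP_c(\zeta)$, which is exact because every $P_c$ has degree at most $3k$ and $s$ ranges over all integers up to $3k\,|\ch(v)|$. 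Hence
\[
P_v[x]\ =\ \varphi_v(x)\sum_{s\ge0}\kappa_v(x-s)\,Q_v[s].
\]
At a leaf the empty product is $1$, so $Q_v[s]=\ind\{s=0\}$ and the same formula holds.

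\emph{Local pass.} We insert \Cref{eq:kernel}. If $v\notin A_k$, only $s=x$ survives, which gives the first case of \Cref{eq:localpass}. If $v\in A_k$, the term $s=x$ contributes $Q_v[x]$. The terms $s<x$ and $s>x$ carry the factor $e^{-2\widetilde\omega(v)}e^{-2|x-s|}$, and these are exactly $\psi^-_x$ and $\psi^+_x$. The recurrences \Cref{eq:psirec} follow by peeling off the nearest term, because $e^{-2(x-s)}=e^{-2}e^{-2(x-1-s)}$. The one point needing care is the initialization $\psi^+_{3k}=\sum_{s>3k}e^{-2(s-3k)}Q_v[s]$, which is by definition the tail $\tau_v$ of $Q_v$. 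This is why the high coefficients of the child product never need to be stored.

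\emph{Cost and the root.} The pair of $Q_v$ comes from the children's pairs, each with zero tail. We merge them by $(|\ch(v)|-1)_+$ applications of \Cref{slem:pairs}; its truncation and tail are exactly those of the full product, by induction on the number of factors. Given $Q_v[0..3k]$ and $\tau_v$, the two linear recurrences and \Cref{eq:localpass} cost $O(k)$, and the result has degree at most $3k$ and zero tail. Finally, by \Cref{eq:semantics} at $v=o$, $P_o[k]$ is the total weight of the state vectors with $x_o=k$. As noted before \Cref{eq:semantics}, this equals the denominator of (4.2).

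The main obstacle is justifying the use of the full (untruncated) $Q_v$ in the factorization while storing only its pair. This is settled by checking that $\psi^+$ depends on the high part only through $\tau_v$, and that $\psi^-$ and $Q_v[x]$ for $x\le3k$ use only the low coefficients.
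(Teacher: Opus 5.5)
Your proposal is correct and follows the paper's own proof essentially step by step. The shared steps are these: the factorization of assignments on $T_v$ over the child subtrees, which gives $P_v[x]=\varphi_v(x)\sum_{s\ge0}\kappa_v(x-s)\,Q_v[s]$; the split of the kernel at $s=x$ into $Q_v[x]$, $\psi^-_x$ and $\psi^+_x$; the peeling recurrences seeded by $\psi^+_{3k}=\tau_v$, so that the high child sums enter only through the tail; the cost count via \Cref{slem:pairs}; and the identification of $P_o[k]$ with the denominator of (4.2) through \Cref{eq:semantics}.
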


\begin{proof}
Assignments on the subtrees of distinct children are independent,
and weights multiply, so $Q_v[s]$ is the total weight of the
assignments on the child subtrees whose states at the children sum
to $s$; here $s$ ranges over all of $\{0,1,\dots\}$, and the pair of
$Q_v$ is assembled by \Cref{slem:pairs}. A state $x$ at $v$
completes such a family to an assignment on $T_v$ with local weight
$\varphi_v(x)\kappa_v(x-s)$, so by \Cref{eq:semantics}
\[
P_v[x]\ =\ \varphi_v(x)\sum_{s\ge0}\kappa_v(x-s)\,Q_v[s].
\]
If $v\notin A_k$, the kernel forces $s=x\le3k$, which is \Cref{eq:localpass}; child sums beyond $3k$ contribute nothing. If $v\in A_k$, the kernel splits the sum at $s=x$ into $Q_v[x]+e^{-2\widetilde\omega(v)}(\psi^-_x+\psi^+_x)$ with
\[
\psi^+_x\ =\ \sum_{x<s\le3k}e^{-2(s-x)}Q_v[s]
\ +\ e^{-2(3k-x)}\,\tau_v ,
\]
the high child sums entering exactly through the tail. The recurrences \Cref{eq:psirec} are immediate from the definitions; the seed $\psi^+_{3k}=\tau_v$ is the identity just displayed at $x=3k$. Both passes cost $O(k)$, as does applying \Cref{eq:localpass}, and the leaf base case ($Q_v\equiv1$) is contained in the general one. At the root, \Cref{eq:semantics} with $x=k$ sums the weights of all state vectors with $x_o=k$, which is the denominator of (4.2).
\end{proof}

The pruning and the chain compression described in Section~4.2 modify this recursion; neither changes the posterior.

\begin{lemma}[Pruning and compression]\label{slem:compress}
\leavevmode
\begin{enumerate}
\item\label{sit:prune} If $T_w\cap A_k=\varnothing$, then every
state vector of nonzero weight vanishes on $T_w$, and the subtree
contributes the factor $\prod_{u\in T_w}\varphi_u(0)$ to every
weight; dropping it, and every other factor common to all
assignments, changes no posterior quantity. The estimator satisfies
$\widehat\mu_k(u)=0$ for $u\in T_w$.
\item\label{sit:chain} Let $\mathcal C$ be a maximal chain of
vertices not in $A_k$, each having exactly one child whose subtree
meets $A_k$ and each, except the lowest, followed in $\mathcal C$ by
that child; write $c_{\mathcal C}$ for that child of the lowest
vertex of $\mathcal C$. Every state vector of nonzero weight is constant on
$\mathcal C\cup\{c_{\mathcal C}\}$, and the chain contributes, after
the
state-independent factor
$\exp(-\sum_{w\in\mathcal C}Y_w^2/(4\sigma^2))$ is dropped, the
diagonal kernel
\begin{equation}\label{eq:chainkernel}
x\ \longmapsto\
\exp\Bigl(\frac{b\,x}{2\sigma^2}\sum_{w\in\mathcal C}Y_w
\ -\ \frac{|\mathcal C|\,b^2x^2}{4\sigma^2}\Bigr),
\end{equation}
generated in $O(k)$ operations from $|\mathcal C|$ and
$\sum_{w\in\mathcal C}Y_w$. The estimator satisfies
$\widehat\mu_k(w)=\widehat\mu_k(c_{\mathcal C})$ for
$w\in\mathcal C$.
\end{enumerate}
\end{lemma}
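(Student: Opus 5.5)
Both parts reduce to two facts: the prior kernel \Cref{eq:kernel} forbids nonzero leaks off $A_k$, and the summand of (4.2) factorizes over vertices as $\prod_v\kappa_v(z_v)\varphi_v(x_v)$. The plan is to read off which state vectors carry positive weight, and then to factor the weight on the relevant part of the tree.

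\emph{Part (1).} Fix $w$ with $T_w\cap A_k=\varnothing$ and a state vector of nonzero weight. Every $u\in T_w$ then has $z_u=0$, so $x_u=\sum_{c\in\ch(u)}x_c$. Inducting from the leaves of $T_w$ upward, a leaf has $x_u=z_u=0$, and an internal vertex sums zeros, so $x\equiv0$ on $T_w$. All such vectors therefore share the factor $\prod_{u\in T_w}\varphi_u(0)\kappa_u(0)=\prod_{u\in T_w}\varphi_u(0)$. The leak at $\pa(w)$ is computed from the child states, but $x_w=0$ removes $w$ from that sum, so it is unchanged whether $T_w$ is present or deleted. Any factor common to all assignments cancels between the numerator and the denominator of (4.2), and so it cancels from every posterior marginal \Cref{eq:marginal}. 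Since every positive-weight vector has $x_u=0$, we get $\widehat\mu_k(u)=\frac Vk\cdot0=0$.

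\emph{Part (2).} Let $w\in\mathcal C$, and let $c$ be its unique child whose subtree meets $A_k$. By part (1), every other child $c'$ has $x_{c'}=0$. Since $w\notin A_k$, we have $z_w=0$, hence $x_w=\sum_{c'}x_{c'}=x_c$. Iterating down the chain gives constancy on $\mathcal C\cup\{c_{\mathcal C}\}$ at the common value $x:=x_{c_{\mathcal C}}$. The chain vertices contribute $\kappa=1$ and
\[
\prod_{w\in\mathcal C}\varphi_w(x)=\exp\Bigl(-\sum_w\frac{Y_w^2}{4\sigma^2}\Bigr)\exp\Bigl(\frac{bx}{2\sigma^2}\sum_wY_w-\frac{|\mathcal C|b^2x^2}{4\sigma^2}\Bigr),
\]
which is obtained by expanding $(Y_w-bx)^2$ and summing. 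The first factor is state-independent and is dropped, as in part (1). The second is \Cref{eq:chainkernel}. It depends only on $|\mathcal C|$ and $\sum_wY_w$, so it is tabulated for $x=0,\dots,3k$ in $O(k)$ operations, for example by the recurrence that multiplies successive ratios of $e^{\text{linear}\cdot x-\text{quadratic}\cdot x^2}$. Since $x_w=x_{c_{\mathcal C}}$ holds pathwise on the support of the posterior, the marginals agree, and hence $\widehat\mu_k(w)=\widehat\mu_k(c_{\mathcal C})$.

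\emph{Main obstacle.} The only delicate point is bookkeeping. Pruned siblings of chain vertices must be handled by part (1) before part (2) is applied, which is why part (1) comes first. The chain's top vertex's parent then receives the compressed message, that is, $P_{c_{\mathcal C}}$ multiplied coefficientwise by the diagonal kernel, in place of the original child message. I would check this by evaluating \Cref{eq:semantics} at the top of the chain and confirming that it equals this product up to the dropped constant.
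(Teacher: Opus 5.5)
Your proof is correct and follows the paper's argument. The kernel forces zero leaks off $A_k$, and a leaf-up induction then gives $x\equiv0$ on pruned subtrees. Part (1), applied to the off-chain siblings, propagates a single state down the chain, expanding $\sum_{w\in\mathcal C}(Y_w-bx)^2$ gives the kernel, and pathwise equality of the states yields the coordinate identities. You also note that $x_w=0$ leaves the leak at $\pa(w)$ unchanged, so deleting $T_w$ is a legitimate structural operation and not only a dropped constant; the paper leaves this small bookkeeping point implicit.
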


\begin{proof}
For (\ref{sit:prune}): on $T_w$ all kernels force $z\equiv0$, so leaf-up induction gives $x\equiv0$ on $T_w$ for every assignment of nonzero weight; the surviving factor is $\prod_{u\in T_w}\kappa_u(0)\varphi_u(0)$ as claimed. A factor common to every assignment multiplies numerator and denominator of each posterior expectation alike. Since $x_u=0$ with posterior probability one, $\widehat\mu_k(u)=b\,\E_Y[x_u]=0$.

For (\ref{sit:chain}): a vertex $w\in\mathcal C$ has $z_w=0$ and all of its child subtrees off the chain disjoint from $A_k$, hence carrying state zero, so $x_w$ equals the state of its unique child toward $A_k$; iterating down the chain, all these states equal $x_{c_{\mathcal C}}=:x$. The chain's likelihood factor is $\prod_{w\in\mathcal C}\varphi_w(x)$, and expanding the exponent,
\[
-\sum_{w\in\mathcal C}\frac{(Y_w-bx)^2}{4\sigma^2}
\ =\ -\sum_{w\in\mathcal C}\frac{Y_w^2}{4\sigma^2}
\ +\ \frac{bx}{2\sigma^2}\sum_{w\in\mathcal C}Y_w
\ -\ \frac{|\mathcal C|\,b^2x^2}{4\sigma^2},
\]
which is \Cref{eq:chainkernel} after the constant is dropped. Pathwise equality of the states gives $\E_Y[x_w]=\E_Y[x_{c_{\mathcal C}}]$, hence the coordinate claim.
\end{proof}

The forward computation of $P_o[k]$ is a straight-line program in the $n(3k+1)$ inputs $\varphi_v(x)$ whose gates are polynomial multiplications with their tail formulas \Cref{eq:tailprod}, the linear recurrences \Cref{eq:psirec}, and the coordinatewise products \Cref{eq:localpass}. The reverse sweep differentiates it.

\begin{lemma}[Reverse sweep]\label{slem:reverse}
All $n(3k+1)$ partial derivatives $\partial P_o[k]/ \partial\varphi_v(x)$ are computable in $O(1)$ times the cost of the forward computation, and
\begin{equation}\label{eq:margid}
\widehat\mu_k(v)\ =\ b\sum_{x=0}^{3k}x\cdot
\frac{\varphi_v(x)}{P_o[k]}\,
\frac{\partial P_o[k]}{\partial\varphi_v(x)},
\end{equation}
which is the display (4.6).
\end{lemma}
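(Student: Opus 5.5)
The statement has two parts, and I would prove them separately. The first is the cost of reverse-mode differentiation of the straight-line program that computes $P_o[k]$. The second is the marginal identity \Cref{eq:margid}, which rests on multilinearity in the inputs.

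For the identity, I start from the semantics \Cref{eq:semantics} at the root with $x=k$. By that display, $P_o[k]$ is a sum over state vectors with $x_o=k$ of weights $\prod_w\kappa_w(z_w)\varphi_w(x_w)$. Each such weight contains exactly one factor drawn from $\{\varphi_v(x):0\le x\le3k\}$ for each vertex $v$, namely $\varphi_v(x_v)$. Hence $P_o[k]$, viewed as a polynomial in the formal inputs $\varphi_v(\cdot)$, is linear in the block of inputs at $v$. It follows that $\varphi_v(x)\,\partial P_o[k]/\partial\varphi_v(x)$ is the total weight of the state vectors with $x_v=x$.

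Dividing by $P_o[k]$, which is positive as shown after \Cref{eq:kernel}, gives the posterior probability of $\{x_v=x\}$ under the weights of (4.2). Multiplying by $bx$ and summing over $x$ gives $b\,\E[x_v]$, and this equals $\widehat\mu_k(v)$ because $\nu_x=bx$ coordinatewise.

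One point needs care. The program actually computed applies the pair representation, the tails, pruning and chain compression. I must check that it computes the same polynomial function of the inputs as the semantic sum. \Cref{slem:pairs,slem:recursion} give this as an exact algebraic identity, so it holds as an identity of polynomials and the formal derivatives agree.

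For pruned vertices and compressed chains, the inputs at those vertices no longer appear as separate inputs. There I use the coordinate claims of \Cref{slem:compress} directly: $\widehat\mu_k(u)=0$ on pruned subtrees, and $\widehat\mu_k(w)=\widehat\mu_k(c_{\mathcal C})$ on chains. So \Cref{eq:margid} is needed only at the retained vertices, or else I reinterpret it through the chain kernel.

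The cost bound is the Baur–Strassen theorem, the cheap-gradient principle, applied gate by gate. Each gate type must have an adjoint computable at a constant multiple of its own cost.

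Coordinatewise products and the linear recurrences \Cref{eq:psirec} have adjoints that run the recurrence in reverse, at $O(k)$ per vertex. The tail formula \Cref{eq:tailprod} is a fixed arithmetic expression in $O(k)$ quantities, so its adjoint also costs $O(k)$.

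The main obstacle is the polynomial multiplication gate $F_0G_0$ done by FFT. Its adjoint with respect to $F_0$ is the correlation $\sum_q\bar H[q]G_0[q-i]$ for $i\le3k$, where $\bar H$ is the adjoint vector of the product. This correlation is again one multiplication of degree-$O(k)$ polynomials, $\bar H$ times the reversal of $G_0$, so it costs $O(k\log(k+1))$; the adjoint with respect to $G_0$ is symmetric.

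Finally, I would observe that the forward pass stores all intermediate pairs. The reverse sweep therefore visits the merges in reverse order and accumulates adjoints at total cost $O(1)$ times the forward cost. This yields every $\partial P_o[k]/\partial\varphi_v(x)$ at once.
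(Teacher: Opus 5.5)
Your proposal is correct and follows the paper's proof in both parts. The marginal identity comes from each monomial of $P_o[k]$ in the semantic sum carrying exactly one input $\varphi_v(x_v)$ per vertex, and the cost comes from a single backward pass in which each gate is replaced by its transpose, the adjoint of a product of pairs being one correlation, i.e.\ one polynomial multiplication after reversing coefficients. Your explicit checks are points the paper leaves implicit, and they are fine: that the pair-and-tail program computes the same polynomial in the formal inputs as the semantic sum, and that pruned or compressed vertices are covered by \Cref{slem:compress}.
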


\begin{proof}
By \Cref{eq:semantics} at the root, $P_o[k]$ is a sum of monomials, one per state vector $x'$ with $x'_o=k$, each containing the input $\varphi_v(x'_v)$ exactly once and otherwise only kernel constants and inputs at other vertices. Hence $\varphi_v(x)\,\partial P_o[k]/\partial\varphi_v(x)$ is the total weight of the state vectors with $x'_v=x$, so the ratio in \Cref{eq:margid} is the posterior probability of $\{x_v=x\}$ and the sum is $b\,\E_Y[x_v]=\widehat\mu_k(v)$.

For the cost, propagate the derivatives of $P_o[k]$ with respect to every intermediate quantity backward through the program, from the output to the inputs. By the chain rule, the derivative with respect to a quantity is assembled from the derivatives with respect to the quantities that consume it, so one backward pass over the acyclic program suffices; each gate is handled by the transpose of its linearization, at the cost of the gate itself. For a product of pairs, the derivative with respect to a coefficient of one factor is a correlation with the other,
\begin{equation}\label{eq:correlation}
\frac{\partial P_o[k]}{\partial F_0[i]}
\ =\ \sum_{j=0}^{3k}
\frac{\partial P_o[k]}{\partial\,[\zeta^{i+j}](F_0G_0)}\ G_0[j]
\qquad(0\le i\le3k),
\end{equation}
one multiplication of polynomials of degree at most $6k$ after reversing the coefficient order, plus the $O(k)$ coordinatewise contributions of \Cref{eq:tailprod} through its low coefficients, its evaluations, and its tails. The transposed recurrences \Cref{eq:psirec} run in the opposite direction in $O(k)$, and the coordinatewise products \Cref{eq:localpass} are differentiated in $O(k)$. Every gate's transpose therefore costs the order of the gate, and the sweep visits each gate once.
\end{proof}

\emph{The operation count of Theorem~2.} Computing $\kalg$ costs $O(n\log(n+1))$ operations by Lemma~2.3; the nets $S_j$ at the $J+1=O(\log(k+1))$ dyadic radii cost $O(n)$ each by \Cref{slem:greedy}, and the charges $\widetilde\omega$ one further pass. In the interior branch, the forward computation performs, over the whole tree, fewer than $n$ products of pairs, each $O(k\log(k+1))$ by \Cref{slem:pairs} and the multiplication model of Section~2.1, and $O(k)$ local work at each vertex (\Cref{slem:recursion}); the reverse sweep matches this cost (\Cref{slem:reverse}); and assembling \Cref{eq:margid} at every vertex costs $O(nk)$. Pruning and compression (\Cref{slem:compress}) only remove work. The elementary branches cost $O(n)$ beyond the preprocessing. Every tie break (child orders, the greedy of \Cref{slem:greedy}, the merge order) is fixed, so the whole map $(T,V,\sigma,Y)\mapsto\widehat\mu$ is deterministic, and the total is
\[
O\bigl(n\log(n+1)+nk\log(k+1)\bigr)\ \le\ O\bigl(n^2\log(n+1)\bigr)
\]
operations, since $k\le K+1\le n$: the count claimed in Theorem~2, whose proof is now complete.

The remaining two lemmas support the instance sentence of Section~4.2. Sibling messages that are dilations of a common polynomial multiply in batch: the mechanism is that logarithms turn the product into power sums of the dilation parameters. Two standard consequences of fast multiplication are used \citep{vonzurgathen2013}: Newton iteration computes truncated inverses and logarithms of power series with constant term $1$, and exponentials of those with constant term $0$, at the cost of $O(1)$ multiplications of the same degree; and a degree-$D$ polynomial is evaluated at $d$ points in $O((D+d)\log^2(D+d))$ operations by a product tree.

\begin{lemma}[Batched dilations]\label{slem:batch}
Let $F$ be a polynomial of degree at most $3k$ with $F[0]=1$, and let $\xi_1,\dots,\xi_d>0$. In $O((k+d)\log^2(k+d))$ operations one can compute
\begin{enumerate}
\item\label{sit:batchlow} the truncation of
$\prod_{i=1}^dF(\xi_i\zeta)$ modulo $\zeta^{3k+1}$;
\item\label{sit:batcheval} the values $F(e^{-2}\xi_i)$ and
$F'(e^{-2}\xi_i)$ for all $i\le d$;
\item\label{sit:batchder} for any given $y_0,\dots,y_{3k}$, the
values
$\ \xi_i\,\dfrac{\partial}{\partial\xi_i}\displaystyle
\sum_{s=0}^{3k}y_s\,[\zeta^s]\prod_{j=1}^dF(\xi_j\zeta)\ $
for all $i\le d$.
\end{enumerate}
\end{lemma}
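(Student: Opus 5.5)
The plan is to pass to logarithms, so that the product of dilations becomes a weighted power sum that only a handful of fast polynomial operations can produce. Since $F[0]=1$, Newton iteration gives the truncated logarithm $\log F(\zeta)=\sum_{s\ge1}\ell_s\zeta^s \bmod \zeta^{3k+1}$ at the cost of $O(1)$ multiplications of degree $3k$. Then
\[
\log\prod_{i=1}^dF(\xi_i\zeta)\ \equiv\ \sum_{s=1}^{3k}\ell_s\,p_s\,\zeta^s \pmod{\zeta^{3k+1}},
\qquad p_s:=\sum_{i=1}^d\xi_i^s .
\]
So part (\ref{sit:batchlow}) reduces to computing the power sums $p_1,\dots,p_{3k}$ and taking one truncated exponential.

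\emph{Power sums.} I would build the product tree for $\prod_i(1-\xi_i\zeta)$ in $O(d\log^2 d)$ operations. Its truncated logarithmic derivative is $-\sum_{s\ge1}p_s\zeta^{s-1}$, obtainable by one series inversion modulo $\zeta^{3k}$, so all the $p_s$ come out in $O((k+d)\log^2(k+d))$ operations. Part (\ref{sit:batchlow}) follows: multiply coefficientwise by $\ell_s$, then exponentiate by Newton iteration.

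Part (\ref{sit:batcheval}) is direct multipoint evaluation. Form $F'$ in $O(k)$, then evaluate $F$ and $F'$ at the $d$ points $e^{-2}\xi_i$ with the product tree, in $O((k+d)\log^2(k+d))$ operations.

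Part (\ref{sit:batchder}) is the one I expect to be the main obstacle, since $d$ separate derivatives must be read off in batch. The plan is to differentiate through the logarithm. Write $P:=\prod_jF(\xi_j\zeta)$ and $G:=\log F$. Then
\[
\xi_i\,\partial_{\xi_i}P\ =\ P(\zeta)\cdot(\zeta G')(\xi_i\zeta),
\]
because $\xi_i\partial_{\xi_i}G(\xi_i\zeta)=\xi_i\zeta\,G'(\xi_i\zeta)$. Let $D(\zeta):=\zeta G'(\zeta)=\sum_s s\,\ell_s\zeta^s$. The target is $\sum_s y_s[\zeta^s]\bigl(P\cdot D(\xi_i\zeta)\bigr)$, which by transposition equals
\[
\sum_{t=1}^{3k}t\,\ell_t\,\xi_i^t\,u_t,
\qquad u_t:=\sum_{s\ge t}y_s\,P[s-t].
\]
The vector $(u_t)$ is one correlation of $y$ with the truncation of $P$, computed by a single reversed multiplication in $O(k\log k)$. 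What remains is to evaluate the polynomial $\sum_t t\,\ell_t\,u_t\,\zeta^t$ at the $d$ points $\xi_i$, which is again a multipoint evaluation in $O((k+d)\log^2(k+d))$.

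The care points are all truncation issues. Coefficients of degree $\le 3k$ of the product depend only on the degree-$\le 3k$ parts of the logarithm, which justifies working modulo $\zeta^{3k+1}$ throughout. The constant term of $D$ vanishes. Positivity of $\xi_i$ is used only to keep the setting real.
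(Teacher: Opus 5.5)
Your proposal is correct and follows essentially the same route as the paper's proof. Both pass to $\log F$, obtain the power sums $\sum_i\xi_i^m$ from the logarithmic derivative of $\prod_i(1-\xi_i\zeta)$, and exponentiate for part~(1), and both use multipoint evaluation for part~(2). For part~(3), your direct identity $\xi_i\partial_{\xi_i}P=P\cdot(\zeta G')(\xi_i\zeta)$ replaces the paper's chain rule through the power sums, but it leads to the same correlation $u_t=\sum_{s\ge t}y_sP[s-t]$ followed by evaluation of $\sum_t t\,\ell_t\,u_t\,\zeta^t$ at the points $\xi_i$.
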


\begin{proof}
Write $g(\zeta):=\log F(\zeta)=\sum_{m=1}^{3k}g_m\zeta^m$ modulo $\zeta^{3k+1}$, computable by Newton iteration since $F[0]=1$, and let $t_m:=\sum_{i\le d}\xi_i^m$ be the power sums. Then, modulo $\zeta^{3k+1}$,
\begin{equation}\label{eq:powersum}
\prod_{i=1}^dF(\xi_i\zeta)\ =\
\exp\Bigl(\sum_{m=1}^{3k}g_m\,t_m\,\zeta^m\Bigr),
\end{equation}
because $\log F(\xi_i\zeta)=\sum_mg_m\xi_i^m\zeta^m$ and the logarithms add. The power sums come from the auxiliary polynomial $N(\zeta):=\prod_{i\le d}(1-\xi_i\zeta)$, built by a product tree, through the logarithmic-derivative identity $-\zeta N'(\zeta)/N(\zeta)=\sum_{m\ge1}t_m\zeta^m$, one derivative, one truncated inversion, and one multiplication; the exponential in \Cref{eq:powersum} is one more Newton iteration. This proves (\ref{sit:batchlow}), and (\ref{sit:batcheval}) is multipoint evaluation of $F$ and $F'$ at the points $e^{-2}\xi_i$. For (\ref{sit:batchder}), write $P:=\exp(H)$ with $H:=\sum_mg_mt_m\zeta^m$; then $dP=P\,dH$, so the displayed quantity, as a function of the $t_m$, has
\[
\frac{\partial}{\partial t_m}\sum_{s\le3k}y_s\,[\zeta^s]P
\ =\ g_m\sum_{s=m}^{3k}y_s\,[\zeta^{s-m}]P ,
\]
one correlation as in \Cref{eq:correlation}; and since $\xi_i\,\partial t_m/\partial\xi_i=m\,\xi_i^m$, the value carried by the dilation parameter $\xi_i$ is the evaluation at $\xi_i$ of the polynomial $\sum_{m=1}^{3k}m\,g_m\bigl(\sum_{s\ge m}y_s[\zeta^{s-m}]P\bigr) \,\zeta^m$, a second multipoint evaluation. Every step is a product tree, a truncated inversion, a logarithm or exponential, a correlation, or a multipoint evaluation at degree $O(k+d)$, each within the stated bound.
\end{proof}

On the star, either the active support collapses to the root or every leaf message is a dilation of one template; in both cases the evaluation is softly linear.

\begin{lemma}[Stars]\label{slem:star}
On the star $S_m$ with $m\ge2$ leaves, the estimator (4.3) is computed in $O(n\log^2n)$ operations, $n=m+1$.
\end{lemma}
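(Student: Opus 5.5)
The plan is to follow the branch structure of (4.3) and show that every branch, together with all preprocessing, costs $O(n\log^2n)$ on $S_m$. We have $h_{S_m}=1$, so the grid in (2.1) is just $q=0$, and $\overline\delta_k=2\min\{k,[\log((m+1)/k)]_+\}/k$ is available in closed form. Hence $\kalg$ costs $O(n\log n)$ by binary search, as in Lemma~2.3. The three elementary branches cost $O(n)$. So we may take $2\le k\le K$, where we must evaluate $\widehat\mu_k$.

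First compute the nets. The radii are $\lfloor\bar\alpha/m_j\rfloor$. A radius $\ge1$ gives the net $\{o\}$, and radius $0$ gives all of $\sfV$. There are two cases.
\begin{itemize}
\item If every radius is $\ge1$, then $A_k=\{o\}$. Every leaf subtree misses $A_k$ and is pruned by Lemma~S.5.6(\ref{sit:prune}); only the root state $x_o=k$ survives. Thus $\widehat\mu_k=Vp_o$ in $O(n)$ operations.
\item Otherwise some level has radius $0$. Then $A_k=\sfV$, and the charge $\widetilde\omega$ is one common value $\omega_\star$ on all leaves, the first level at which the radius drops to $0$. Since $\bar\alpha\ge2$, this is not level $0$ and $\omega_\star>0$. (It is zero if already $R_0=\sfV$, which is harmless.)
\end{itemize}

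In the second case each leaf $i$ has message $P_i[x]=\varphi_i(x)\kappa(x)$, where $\kappa(0)=1$ and $\kappa(x)=e^{-2\omega_\star-2x}$ for $x\ge1$. Expand
\[
\varphi_i(x)=e^{-Y_i^2/(4\sigma^2)}\,\xi_i^{x}\,e^{-b^2x^2/(4\sigma^2)},\qquad \xi_i:=e^{bY_i/(2\sigma^2)},
\]
and drop the constant $e^{-Y_i^2/(4\sigma^2)}$. Then $P_i(\zeta)=F(\xi_i\zeta)$ for the single template $F[x]:=\kappa(x)e^{-b^2x^2/(4\sigma^2)}$, which has $F[0]=1$: every leaf message is a dilation of one template. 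Lemma~S.5.7(\ref{sit:batchlow}) gives the truncated root product $Q_o$ modulo $\zeta^{3k+1}$ in $O((k+m)\log^2(k+m))=O(n\log^2n)$.

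The main obstacle is the tail $\tau_o$, which the root pass (S.5.7)–(S.5.8) needs, since high child sums are admissible at the active root. I will compute it directly from its definition as the high part of $\prod_iF(\xi_i\zeta)$ evaluated at $e^{-2}$ and shifted to the cap:
\[
\tau_o=e^{6k}\Bigl(\prod_iF(e^{-2}\xi_i)-\sum_{s\le3k}Q_o[s]e^{-2s}\Bigr).
\]
The values $F(e^{-2}\xi_i)$ come from Lemma~S.5.7(\ref{sit:batcheval}), and the product costs $O(m)$. The root local pass is then $O(k)$, giving $P_o[k]$.

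For the numerator I differentiate this forward program in reverse. By Lemma~S.5.8 the posterior mean at leaf $i$ is $b$ times the posterior expectation of $x_i$. In the dilated form, $x_i$ is the $\zeta$-degree contributed by factor $i$, so this expectation equals $\xi_i\,\partial\log P_o[k]/\partial\xi_i$, with $P_o[k]$ viewed as a function of the $\xi_j$ through $Q_o$ and $\tau_o$. By the root pass, $P_o[k]$ is a linear functional $\sum_sy_sQ_o[s]+y_\tau\tau_o$, with coefficients $y_s,y_\tau$ read off (S.5.7)–(S.5.8) in $O(k)$.
\begin{itemize}
\item The $Q_o$ part is exactly Lemma~S.5.7(\ref{sit:batchder}), for all $i$ at once.
\item The $\tau_o$ part contributes $y_\tau e^{6k}$ times $\xi_i\partial_{\xi_i}$ of the full product at $e^{-2}$, which is $\prod_jF(e^{-2}\xi_j)\cdot e^{-2}\xi_iF'(e^{-2}\xi_i)/F(e^{-2}\xi_i)$, using the values $F'(e^{-2}\xi_i)$ from Lemma~S.5.7(\ref{sit:batcheval}). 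It also contributes minus the truncated part, which is again Lemma~S.5.7(\ref{sit:batchder}) with modified weights $y_s-y_\tau e^{6k}e^{-2s}$.
\end{itemize}
The root coordinate is identically $V$. Summing the costs gives $O(n\log n)$ preprocessing plus $O(n\log^2n)$ for the interior evaluation.

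The delicate points are two. First, the factor $e^{6k}$ multiplies a difference, so exactness relies on the real-RAM model of Remark~4.1. Second, the template must have $F[0]=1$ for the Newton logarithm to apply, and it does, since $\kappa(0)=1$ and the Gaussian factor is $1$ at $x=0$.
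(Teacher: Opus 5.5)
Your proposal is correct and follows essentially the same route as the paper's proof. Both arguments use the dichotomy $A_k=\{o\}$ versus $A_k=\sfV$ with a common leaf charge, the single dilated template $F$ handled by \Cref{slem:batch}, the tail recovered exactly as $e^{6k}\bigl(\prod_iF(e^{-2}\xi_i)-\sum_{s\le3k}Q_o[s]e^{-2s}\bigr)$, and Euler differentiation of the linear root functional, split into one batched correlation with weights shifted by $-y_\tau e^{6k-2s}$ plus the $F'/F$ term.

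Two minor points. Your parenthetical about $R_0=\sfV$ is vacuous, because $\bar\alpha\ge2$ already forces $S_0=\{o\}$. The division by $F(e^{-2}\xi_i)$ deserves a word of justification: it is legitimate because $F$ has constant term $1$ and nonnegative coefficients.
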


\begin{proof}
The preprocessing and the elementary branches cost $O(n\log(n+1))$, so let the interior branch be selected, with budget $2\le k\le K$ and working scale $\bar\alpha=k\overline\delta_k\ge2$ (\Cref{slem:netsizes}(\ref{sit:floor})). On the star, the minimum ancestor $r$-net with $r\ge1$ is $\{o\}$, a leaf being an ancestor only of itself, while at $r=0$ every vertex needs itself; so with $j_0:=\min\{j:\lfloor\bar\alpha/m_j\rfloor=0\}$, understood as $j_0=J+1$ if no radius vanishes, the nets are $S_j=\{o\}$ for $j<j_0$ and $S_j=\sfV$ for $j\ge j_0$, and $j_0\ge1$ because $\lfloor\bar\alpha\rfloor\ge2$.

If $j_0=J+1$, then $A_k=\{o\}$: the only state vector with nonzero weight has $x_o=k$ and all leaf states zero, the posterior is a point mass, and $\widehat\mu_k=\tfrac Vk\,x=Vp_o$ is written in $O(n)$.

If $j_0\le J$, then $A_k=\sfV$ and every leaf carries the same charge $\widetilde\omega=m_{j_0}$. Factor the local weight of leaf $v$ at state $x$ as $\varphi_v(x)=e^{-Y_v^2/(4\sigma^2)}\,e^{-b^2x^2/(4\sigma^2)}\, \xi_v^x$ with $\xi_v:=e^{bY_v/(2\sigma^2)}$; the first factor is common to every assignment and is dropped. By \Cref{slem:recursion}, the message of leaf $v$ is then $F(\xi_v\zeta)$ for the common template
\[
F(\zeta)\ :=\ 1\ +\ e^{-2m_{j_0}}\sum_{x=1}^{3k}
e^{-2x}\,e^{-b^2x^2/(4\sigma^2)}\,\zeta^x ,
\]
and the child product of the root is $Q_o(\zeta)=\prod_{v}F(\xi_v\zeta)$. \Cref{slem:batch}(\ref{sit:batchlow}) gives its truncation, \Cref{slem:batch}(\ref{sit:batcheval}) gives $Q_o(e^{-2})=\prod_vF(e^{-2}\xi_v)$, and the tail follows as
\[
\tau_o\ =\ e^{6k}\Bigl(\,Q_o(e^{-2})
-\sum_{s=0}^{3k}Q_o[s]\,e^{-2s}\Bigr),
\]
exactly, from the definition of the tail. The root pass of \Cref{slem:recursion} then yields $P_o[k]$ in $O(k)$ operations.

For the marginals, $P_o[k]$ is a linear function of $(Q_o[0],\dots,Q_o[3k],\tau_o)$ by \Cref{eq:localpass} and \Cref{eq:psirec}, with coefficients computable in $O(k)$; and every assignment's weight carries the factor $\xi_v^{x_v}$, so Euler differentiation gives
\begin{equation}\label{eq:euler}
\E_Y[x_v]\ =\ \frac{\xi_v}{P_o[k]}\,
\frac{\partial P_o[k]}{\partial\xi_v}.
\end{equation}
The derivative splits along the linear form: the low coefficients contribute one instance of \Cref{slem:batch}(\ref{sit:batchder}), with weights that combine the direct coefficients of the linear form and the term $-e^{6k-2s}$ that $Q_o[s]$ inherits from $\tau_o$, while $Q_o(e^{-2})$ contributes
\[
\xi_v\,\frac{\partial Q_o(e^{-2})}{\partial\xi_v}
\ =\ Q_o(e^{-2})\cdot
\frac{e^{-2}\xi_v\,F'(e^{-2}\xi_v)}{F(e^{-2}\xi_v)}\ ,
\]
an $O(1)$ combination of the values from \Cref{slem:batch}(\ref{sit:batcheval}), whose denominators are positive because $F$ has positive constant term and nonnegative coefficients. Then $\widehat\mu_k(v)=b\,\E_Y[x_v]$ at every leaf and $\widehat\mu_k(o)=V$. All steps beyond the preprocessing cost $O((k+m)\log^2(k+m))$, and $k\le K<n$ makes this $O(n\log^2n)$.
\end{proof}

Together, \Cref{slem:batch,slem:star} verify the instance sentence of Section~4.2: a group of sibling messages agreeing up to a dilation of the variable multiplies in batch, and on stars the entire evaluation is softly linear in $n$.

%%%%%%%%%%%%%%%%%%%%%%%%%%%%%%%%%%%%%%%%%%%%%%%%%%%%%%%%%%%%%%%%%%%%%%%%%%%%%%
\section{Adaptation}\label{ssec:adaptation}
%%%%%%%%%%%%%%%%%%%%%%%%%%%%%%%%%%%%%%%%%%%%%%%%%%%%%%%%%%%%%%%%%%%%%%%%%%%%%%

This section proves the statements deferred from Section~4.3: the selection oracle (4.8), in the weighted and random-penalty form stated there; the scale calculus for the minimax risk; the subspace approximation (4.7); the enumeration of admissible supports; the assembly of the sufficiency half of Theorem~3, including its running time; the difference statistics of Theorem~4 with their contamination bound, the concentration of the noise estimate, and the moment bookkeeping; and the noise-estimation ceiling stated at the end of Section~4.3. Throughout, $K:=\lfloor n/C_\star\rfloor$ and dyadic integers are powers of two; the selection constant of (4.8) is instantiated as $C_2:=128$. The standard Gaussian tail bound $\mathbb P\bigl(N(0,\tau^2)\ge t\bigr)\le e^{-t^2/(2\tau^2)}$, valid for $t\ge0$, is used without further comment.

The selection engine comes first. The lemma below is the oracle promised with (4.8): the noise may have unequal, even zero, coordinate variances, which is how conditioning on the root observation will enter, and the penalties may be random as long as they are bracketed on an event, which is how the estimated noise scale will enter.

\begin{lemma}[Weighted affine selection]\label{slem:selection}
Let $Y=\theta+Z\in\bbR^N$, where $\theta\in\bbR^N$ and $Z$ is a centered Gaussian vector with independent coordinates whose variances are at most $\sigma^2$, some possibly zero. Let $\mathcal M$ be a countable collection of affine models $m=c_m+W_m\subseteq\bbR^N$, each $W_m$ a linear subspace of finite dimension $D_m$, with weights $\Delta_m\ge0$ such that $\sum_{m\in\mathcal M}e^{-\Delta_m}\le1$. Let $\widehat\theta$ minimize $\|Y-\nu\|_2^2+\operatorname{pen}(m)$ over $m\in\mathcal M$ and $\nu\in m$, and over a full model whose fit is $\nu=Y$ itself. The penalties are nonnegative, possibly random, and such that the minimum defining $\widehat\theta$ is attained, as it is in particular for finite $\mathcal M$. Suppose that on an event $E$, for a constant $\bar\kappa\ge32$,
\begin{gather*}
32\,\sigma^2(D_m+\Delta_m)\ \le\ \operatorname{pen}(m)\ \le\
\bar\kappa\,\sigma^2(D_m+\Delta_m)
\qquad\text{for every }m\in\mathcal M,
\\
32\,\sigma^2N\ \le\ \operatorname{pen}(\mathrm{full})\ \le\
\bar\kappa\,\sigma^2N .
\end{gather*}
Then
\[
\E\bigl[\|\widehat\theta-\theta\|_2^2\,\ind_E\bigr]\ \le\
C_{\bar\kappa}\Bigl(\min\Bigl\{\ \inf_{m\in\mathcal M}
\bigl(\operatorname{dist}^2(\theta,m)+\sigma^2(D_m+\Delta_m+1)\bigr),
\ \ \sigma^2N\ \Bigr\}\ +\ \sigma^2\Bigr),
\]
where $C_{\bar\kappa}$ depends only on $\bar\kappa$.
\end{lemma}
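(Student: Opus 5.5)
The proof is a Birg\'e--Massart argument run pathwise on $E$. Fix a comparison model $m^\star$ (either some $m\in\mathcal M$ with projection $\nu^\star:=\Pi_{m^\star}\theta$, or the full model with $\nu^\star:=\theta$). Let $\widehat m$ denote the selected model, with fit $\widehat\theta=\Pi_{\widehat m}Y$ or $\widehat\theta=Y$. The minimality of the criterion gives
\[
\|Y-\widehat\theta\|_2^2+\operatorname{pen}(\widehat m)\le\|Y-\nu^\star\|_2^2+\operatorname{pen}(m^\star),
\]
and since $Y=\theta+Z$ this rearranges to
\[
\|\widehat\theta-\theta\|_2^2\le\|\nu^\star-\theta\|_2^2+2\langle Z,\widehat\theta-\nu^\star\rangle+\operatorname{pen}(m^\star)-\operatorname{pen}(\widehat m).
\]
Write $\widehat\theta-\nu^\star=(\widehat\theta-\theta)+(\theta-\nu^\star)$. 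The term $2\langle Z,\theta-\nu^\star\rangle$ is centred Gaussian with variance at most $4\sigma^2\|\theta-\nu^\star\|^2$, so it has mean zero; it is not multiplied by $\ind_E$. To handle this, I bound $\E[\langle Z,\theta-\nu^\star\rangle\ind_E]$ by $\E|\cdot|\le\sigma\|\theta-\nu^\star\|$, which is at most $\|\theta-\nu^\star\|^2+\sigma^2$. The remaining term $2\langle Z,\widehat\theta-\theta\rangle$ is controlled uniformly over models. For each $m$, the quantity $\sup\{\langle Z,u\rangle:u\in(m-\theta)\cup\{0\},\ \|u\|\le1\}$ lies in the span of $W_m$ and $c_m-\theta$, a space of dimension at most $D_m+1$. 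Its square is therefore dominated by $\|\Pi_{\mathrm{span}(W_m,\,c_m-\Pi_m\theta)}Z\|^2$, a sum of at most $D_m+1$ independent-coordinate chi-square type terms with variances at most $\sigma^2$; a projection of independent heteroscedastic coordinates is still a Gaussian vector with covariance at most $\sigma^2 I$.

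Apply Laurent--Massart (or the Gaussian concentration of the Lipschitz map $Z\mapsto\|\Pi Z\|$) to get, for each $m$ and $x>0$,
\[
\|\Pi_m Z\|^2\le\sigma^2\bigl(\sqrt{D_m+1}+\sqrt{2(\Delta_m+x)}\bigr)^2\le 2\sigma^2(D_m+1)+4\sigma^2(\Delta_m+x)
\]
outside probability $e^{-\Delta_m-x}$. The Kraft condition then yields a single random $\xi\ge0$ with $\mathbb P(\xi>x)\le e^{-x}$, hence $\E\xi\le1$, such that for all $m$ simultaneously $\|\Pi_mZ\|^2\le 4\sigma^2(D_m+\Delta_m+1)+4\sigma^2\xi$. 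For the full model, $2\langle Z,Y-\theta\rangle=2\|Z\|^2\le2\sigma^2N+$ (a mean-zero fluctuation), which is absorbed the same way because $\E\|Z\|^2\le\sigma^2N$.

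Then use $2ab\le\frac12a^2+2b^2$:
\[
2\langle Z,\widehat\theta-\theta\rangle\le\tfrac12\|\widehat\theta-\theta\|^2+8\sigma^2(D_{\widehat m}+\Delta_{\widehat m}+1+\xi).
\]
On $E$ the lower penalty bracket $32\sigma^2(D+\Delta)$ dominates $8\sigma^2(D+\Delta)$, leaving only the constant $8\sigma^2(1+\xi)$. The upper bracket bounds $\operatorname{pen}(m^\star)$ by $\bar\kappa\sigma^2(D_{m^\star}+\Delta_{m^\star})$ or $\bar\kappa\sigma^2N$. Multiply by $\ind_E$, take expectations and absorb the half. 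Minimising over $m^\star$, including the full model, gives the stated min plus $C\sigma^2$ with $C_{\bar\kappa}\asymp\bar\kappa$.

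The main obstacle is that the penalties are random and possibly dependent on $Z$. Because of this, the union bound must be set up so that it never uses independence of $\operatorname{pen}$ from $Z$. The $\xi$-construction above is deterministic in the model family, so that is fine. The only care needed is the non-$\ind_E$ cross term, which the Cauchy--Schwarz step handles. Attainment of the minimum, assumed in the statement, makes $\widehat m$ well defined; a measurable selection with a fixed tie rule suffices.
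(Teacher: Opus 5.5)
Your proof is a correct Birg\'e--Massart argument, organized differently from the paper's, but two steps need repair as written.

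The paper pairs the noise with $\widehat\theta-f_0$, where $f_0$ is the projection of $\theta$ onto the comparison model $m_0$. Its deviation variable $Z^*$ therefore lives on the subspaces $W_m+W_{m_0}+\operatorname{span}(c_m-f_0)$ and depends on $m_0$, but no free cross term is left over. Selection of the full model is handled by a separate argument: a split on $\{\|Z\|_2^2\le2\sigma^2N\}$, with a fourth-moment Cauchy--Schwarz bound on the complement. The $\sigma^2N$ branch comes from comparing the selected residual with $\operatorname{pen}(\mathrm{full})$.

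You pair with $\widehat\theta-\theta$ instead, so a single variable $\xi$ serves every comparator at once. The price is the unselected term $2\langle Z,\theta-\nu^\star\rangle$. It survives multiplication by $\ind_E$, and you correctly bound it through its first absolute moment, at most $2\sigma\|\theta-\nu^\star\|_2$. Treating $\bbR^N$ as one more subspace then gives both branches of the minimum from one inequality, with no case split on the selected model. This is a genuine simplification, and it makes the four-system variant used in the proof of \Cref{thm:full} more direct.

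\textbf{First repair: the subspace.} It must be $S_m:=W_m+\operatorname{span}(c_m-\theta)$, as in your first description. Since $\Pi_m\theta\in c_m+W_m$, the vector $c_m-\Pi_m\theta$ lies in $W_m$, so $\operatorname{span}(W_m,c_m-\Pi_m\theta)=W_m$, which does not contain $\widehat\theta-\theta$. What you actually use is $\langle Z,\widehat\theta-\theta\rangle\le\|\widehat\theta-\theta\|_2\,\|\Pi_{S_{\widehat m}}Z\|_2$.

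\textbf{Second repair: the full-model step.} This is the substantive one. It is not justified by $\E\|Z\|_2^2\le\sigma^2N$. The fluctuation $\|Z\|_2^2-\E\|Z\|_2^2$ appears multiplied by $\ind_E\ind\{\widehat m=\mathrm{full}\}$, so it does not average out. Its first absolute moment can be of order $\sigma^2\sqrt N$, far above the $C\sigma^2$ allowed when the first branch of the minimum is $O(\sigma^2)$.

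What rescues the step is the penalty slack, and exploiting it needs a tail bound, not a mean bound. After the $\tfrac12$-absorption you need $\E\bigl[(2\|Z\|_2^2-\operatorname{pen}(\mathrm{full}))_+\ind_E\bigr]\le C\sigma^2$. The simplest fix is to include $\bbR^N$ in your union bound with weight $0$:
\begin{itemize}
\item the Kraft sum becomes at most $2$, so $\mathbb P(\xi>x)\le2e^{-x}$ and $\E\xi\le1+\log2$;
\item the same concentration bound gives $\|Z\|_2^2\le4\sigma^2(N+1+\xi)$;
\item hence $2\|Z\|_2^2\le\operatorname{pen}(\mathrm{full})+8\sigma^2(1+\xi)$ on $E$.
\end{itemize}
With these two changes your argument is complete.
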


\begin{proof}
The selected model is compared with one fixed model through the basic inequality of penalized least squares; a single Kraft-weighted deviation variable controls the Gaussian fluctuations over the whole collection, and the full model is handled by its own comparison.

\emph{A projection bound.} Let $U\subseteq\bbR^N$ be a fixed subspace of dimension $d$ and $P_U$ the orthogonal projection onto it. For an orthonormal basis $u_1,\dots,u_d$ of $U$, the vector $(\langle Z,u_i\rangle)_{i\le d}$ is centered Gaussian with covariance $\Sigma\preceq\sigma^2I_d$: its quadratic form at $a\in\bbR^d$ is $\operatorname{Var}\langle Z,\sum_ia_iu_i\rangle \le\sigma^2\|\sum_ia_iu_i\|_2^2=\sigma^2\|a\|_2^2$. For a centered Gaussian $g$ of variance $\tau^2\le\sigma^2$, direct integration gives $\E e^{g^2/(4\sigma^2)}=(1-\tau^2/(2\sigma^2))^{-1/2}\le\sqrt2$, the case $\tau^2=0$ included; diagonalizing $\Sigma$ and multiplying over the independent coordinates,
\[
\E\,e^{\|P_UZ\|_2^2/(4\sigma^2)}\ \le\ 2^{d/2};
\]
hence, by Markov's inequality and $2^{1/2}\le e$, for every $t\ge0$,
\[
\mathbb P\bigl(\|P_UZ\|_2^2\ge4\sigma^2(d+t)\bigr)\ \le\
2^{d/2}\,e^{-d-t}\ \le\ e^{-t}.
\]

\emph{The deviation variable.} Fix a comparison model $m_0\in\mathcal M$ and let $f_0\in m_0$ attain $a:=\operatorname{dist}(\theta,m_0)$, the metric projection of $\theta$ onto the closed affine set $m_0$. For $m\in\mathcal M$ put
\[
U_m\ :=\ W_m+W_{m_0}+\operatorname{span}(c_m-f_0),
\qquad
d_m\ :=\ \dim U_m\ \le\ D_m+D_{m_0}+1,
\]
so that every $\nu=c_m+w\in m$ has $\nu-f_0=(c_m-f_0)+w\in U_m$, and define
\[
Z^*\ :=\ \sup_{m\in\mathcal M}\
\Bigl[\frac{\|P_{U_m}Z\|_2^2}{4\sigma^2}-(d_m+\Delta_m)\Bigr]_+ .
\]
By the projection bound and a union bound, $\mathbb P(Z^*>t)\le\sum_{m}e^{-\Delta_m-t}\le e^{-t}$, so $\E Z^*=\int_0^\infty\mathbb P(Z^*>t)\,dt\le1$; and by its definition, $\|P_{U_m}Z\|_2^2\le4\sigma^2(d_m+\Delta_m+Z^*)$ for every $m$ simultaneously.

\emph{A model is selected.} Suppose the minimum is attained at some $\widehat m\in\mathcal M$ with fit $\widehat\theta\in\widehat m$, and write $d:=\widehat\theta-\theta$. Expanding the basic inequality against $(m_0,f_0)$, $\|Y-\widehat\theta\|_2^2+\operatorname{pen}(\widehat m)\le \|Y-f_0\|_2^2+\operatorname{pen}(m_0)$, at $Y=\theta+Z$ and combining the two cross terms into $2\langle Z,\widehat\theta-f_0\rangle$ gives
\[
\|d\|_2^2\ \le\ a^2+2\langle Z,\widehat\theta-f_0\rangle
-\operatorname{pen}(\widehat m)+\operatorname{pen}(m_0).
\]
Since $\widehat\theta-f_0\in U_{\widehat m}$, the inequality $2xy\le\tfrac14x^2+4y^2$ and the deviation variable give
\begin{align*}
2\langle Z,\widehat\theta-f_0\rangle
\ =\ 2\bigl\langle P_{U_{\widehat m}}Z,\
\widehat\theta-f_0\bigr\rangle
\ &\le\ \tfrac14\|\widehat\theta-f_0\|_2^2
+4\,\|P_{U_{\widehat m}}Z\|_2^2
\\
&\le\ \tfrac14\bigl(\|d\|_2+a\bigr)^2
+16\sigma^2\bigl(d_{\widehat m}+\Delta_{\widehat m}+Z^*\bigr).
\end{align*}
By $d_{\widehat m}\le D_{\widehat m}+D_{m_0}+1$, the last term splits into $16\sigma^2(D_{\widehat m}+\Delta_{\widehat m}) +16\sigma^2(D_{m_0}+1)+16\sigma^2Z^*$, and on $E$ the penalty dominates its first piece: $\operatorname{pen}(\widehat m)\ge32\sigma^2 (D_{\widehat m}+\Delta_{\widehat m})$. Together with $\tfrac14(\|d\|_2+a)^2\le\tfrac12\|d\|_2^2+\tfrac12a^2$, the expansion rearranges to
\begin{equation}\label{eq:modelcase}
\|d\|_2^2\ \le\ 3a^2+2\operatorname{pen}(m_0)
+32\sigma^2(D_{m_0}+1)+32\sigma^2Z^*
\qquad\text{on }E\cap\{\text{a model is selected}\}.
\end{equation}

\emph{The full model is selected.} Then $\widehat\theta=Y$ and $\|d\|_2^2=\|Z\|_2^2$, while the full model's own residual vanishes, so the basic inequality against $(m_0,f_0)$ reads $\operatorname{pen}(\mathrm{full})\le\|Y-f_0\|_2^2 +\operatorname{pen}(m_0)\le2a^2+2\|Z\|_2^2+\operatorname{pen}(m_0)$; on $E$ this gives $32\sigma^2N\le2a^2+2\|Z\|_2^2+\operatorname{pen}(m_0)$. On the event $\{\|Z\|_2^2\le2\sigma^2N\}$, where $2\|Z\|_2^2\le4\sigma^2N$, the preceding two inequalities force $28\sigma^2N\le2a^2+\operatorname{pen}(m_0)$, hence $\|Z\|_2^2\le2\sigma^2N\le\tfrac1{14}\bigl(2a^2 +\operatorname{pen}(m_0)\bigr)$. On the complement, the projection bound at $U=\bbR^N$ gives $\mathbb P(\|Z\|_2^2>2\sigma^2N)\le2^{N/2}e^{-N/2}\le e^{-N/8}$; moreover $\E\|Z\|_2^4=\sum_i\E Z_i^4+\sum_{i\ne j}\E Z_i^2\,\E Z_j^2 \le3\sigma^4N+\sigma^4N(N-1)\le3\sigma^4N^2$, the coordinates being independent with fourth moments at most $3\sigma^4$, so the Cauchy--Schwarz inequality gives
\[
\E\bigl[\|Z\|_2^2\,\ind\{\|Z\|_2^2>2\sigma^2N\}\bigr]
\ \le\ \sqrt3\,\sigma^2N\,e^{-N/16}\ \le\ C\sigma^2 ,
\]
the map $N\mapsto Ne^{-N/16}$ being bounded.

\emph{Assembly.} Combining \Cref{eq:modelcase} with the two estimates of the full-model case and using $\E Z^*\le1$ and the bracketing of $\operatorname{pen}(m_0)$ on $E$,
\[
\E\bigl[\|\widehat\theta-\theta\|_2^2\,\ind_E\bigr]
\ \le\ C\bigl(a^2+\bar\kappa\,\sigma^2(D_{m_0}+\Delta_{m_0}+1)\bigr)
+C\sigma^2 ,
\]
and taking the infimum over $m_0$ gives the first branch of the minimum. For the second branch, compare with the full model instead: if the full model is selected, then $\E[\|d\|_2^2\,\ind_E]\le\E\|Z\|_2^2\le\sigma^2N$; if some $\widehat m$ is selected, the basic inequality against the full model gives $\|Y-\widehat\theta\|_2^2\le\operatorname{pen}(\mathrm{full}) \le\bar\kappa\sigma^2N$ on $E$, so $\|d\|_2^2\le2\|Y-\widehat\theta\|_2^2+2\|Z\|_2^2$ has expectation at most $(2\bar\kappa+2)\sigma^2N$ there. Taking the minimum of the two comparisons completes the proof.
\end{proof}

Three features are used below. The penalties may be random, only their bracketing on $E$ entering the argument. The coordinate variances may differ and may vanish. And a point $\nu$ is the affine model $\{\nu\}$ with $D_m=0$, whose oracle value reads $\|\theta-\nu\|_2^2+\sigma^2(\Delta_{\{\nu\}}+1)$; we write $w(\nu)$ for the weight of a point model. The display (4.8) is the deterministic special case: penalties $C_2\sigma^2(\dim m+\Delta_m)$ and $C_2\sigma^2N$ satisfy the bracketing with $E$ the whole probability space and $\bar\kappa=C_2$.

The coverage argument will replace the unknown amplitude by a grid value and the unknown noise level by a rounded estimate. The next lemma prices both substitutions.

\begin{lemma}[Scale calculus]\label{slem:scale}
For every finite rooted tree, $R^*_T(V,\sigma)$ is nondecreasing in $V$ and in $\sigma$, and for every $c\ge1$,
\[
R^*_T(cV,\sigma)\ \le\ c^2\,R^*_T(V,\sigma),
\qquad
R^*_T(V,c\sigma)\ \le\ c^2\,R^*_T(V,\sigma).
\]
\end{lemma}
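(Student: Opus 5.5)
The plan is to derive all four claims directly from the definition (1.3) of $R^*_T$ by reduction arguments, without appealing to the rate formula.

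\emph{Monotonicity in $\sigma$.} For $\sigma'\ge\sigma$, data at noise level $\sigma'$ can be simulated from data at level $\sigma$ by adding independent noise: $Y'=Y+\sqrt{\sigma'^2-\sigma^2}\,Z'$. Any estimator at level $\sigma'$, composed with this randomization, is therefore a randomized estimator at level $\sigma$ with the same risk function. Averaging over $Z'$ and using convexity of the squared loss (Jensen), we obtain a deterministic estimator at level $\sigma$ whose risk is no larger. Hence $R^*_T(V,\sigma)\le R^*_T(V,\sigma')$.

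\emph{Scaling identity and the $\sigma$-inequality.} Homogeneity gives the identity
\[
R^*_T(cV,c\sigma)=c^2R^*_T(V,\sigma),
\]
because $\mathcal F_{cV}(T)=c\,\mathcal F_V(T)$ and rescaling data and estimators by $c$ maps the two problems onto each other. For the $\sigma$-inequality, one might try to combine this identity with monotonicity in $V$, but that requires monotonicity in $V$ first. A direct route is cleaner. The body $\mathcal F_V(T)$ is fixed, and the noise rises from $\sigma$ to $c\sigma$. Write $\widehat\mu'(y):=c\,\widehat\mu(y/c)$, with $\widehat\mu$ applied at the problem $(V/c,\sigma)$. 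This suggests comparing $R^*_T(V,c\sigma)=c^2R^*_T(V/c,\sigma)$ with $R^*_T(V,\sigma)$. Since $V/c\le V$, the bound $R^*_T(V,c\sigma)\le c^2R^*_T(V,\sigma)$ follows once monotonicity in $V$ is known.

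\emph{Monotonicity in $V$ (the main obstacle).} The slices $\mathcal F_V$ are not nested, so monotonicity in $V$ is the crux. I will embed $\mathcal F_{V}$ into $\mathcal F_{V'}$ for $V\le V'$ via $\mu\mapsto\mu+(V'-V)p_o$, which changes only the root coordinate, as in the capped comparison of Appendix~A. The risk on nonroot coordinates is the same in both problems, and the root coordinate is known exactly in each slice. An estimator for the $V'$ problem is thus converted into one for the $V$ problem with the same nonroot loss and zero root loss: shift $Y_o$, apply the estimator, and reset the root to $V$. Resetting the root never increases the loss. This yields $R^*_T(V,\sigma)\le R^*_T(V',\sigma)$.

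\emph{The $V$-inequality.} With monotonicity in $\sigma$ established, the identity gives
\[
R^*_T(cV,\sigma)\le R^*_T(cV,c\sigma)=c^2R^*_T(V,\sigma).
\]
The only care needed is the minimax infimum over arbitrary measurable maps; $\varepsilon$-optimal estimators handle it.
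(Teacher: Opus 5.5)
Your proposal is correct and matches the paper's proof: the translation $\mu\mapsto\mu+(V'-V)p_o$ gives monotonicity in $V$, adding independent noise and averaging by Jensen gives monotonicity in $\sigma$, and the exact identity $R^*_T(aV,a\sigma)=a^2R^*_T(V,\sigma)$ combined with the two monotonicities gives the quadratic bounds. The differences are cosmetic. You reset the root to $V$ where the paper subtracts the shift, and you write $R^*_T(cV,\sigma)\le R^*_T(cV,c\sigma)$ where the paper writes $c^2R^*_T(V,\sigma/c)\le c^2R^*_T(V,\sigma)$. Your ``direct route'' paragraph for the $\sigma$-inequality is in fact the same combination with monotonicity in $V$, which is harmless because that monotonicity is proved independently, so there is no circularity.
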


\begin{proof}
\emph{Monotonicity in $V$.} Let $V\le V'$. The translation $\iota(\mu):=\mu+(V'-V)p_o$ adds $V'-V$ to the leak at the root and changes no other leak, so it maps $\mathcal F_V(T)$ into $\mathcal F_{V'}(T)$ by Lemma~2.1. Given any estimator $\widehat\mu'$, define $\widehat\mu(Y):=\widehat\mu'\bigl(Y+(V'-V)p_o\bigr)-(V'-V)p_o$. Under $\mu\in\mathcal F_V(T)$, the shifted data $Y+(V'-V)p_o$ are distributed as an observation of $\iota(\mu)$, and $\|\widehat\mu-\mu\|_2 =\|\widehat\mu'(Y+(V'-V)p_o)-\iota(\mu)\|_2$, so the risk of $\widehat\mu$ at $\mu$ equals the risk of $\widehat\mu'$ at $\iota(\mu)\in\mathcal F_{V'}(T)$. Taking the supremum over $\mu$ and then the infimum over $\widehat\mu'$ gives $R^*_T(V,\sigma)\le R^*_T(V',\sigma)$.

\emph{Monotonicity in $\sigma$.} Let $\sigma\le\sigma'$ and let $W\sim N(0,(\sigma'^2-\sigma^2)I_n)$ be independent of the data. For any estimator $\widehat\mu'$, the randomized procedure $\widehat\mu'(Y+W)$ has, at every $\mu$, the risk of $\widehat\mu'$ under noise level $\sigma'$; replacing it by its average $\widehat\mu(Y):=\E_W\widehat\mu'(Y+W)$ only lowers the squared-error risk, by Jensen's inequality conditionally on $Y$, and produces a deterministic estimator. Hence $R^*_T(V,\sigma)\le R^*_T(V,\sigma')$.

\emph{The quadratic bounds.} Scaling the observation, the parameter, and the estimator by $a>0$ turns the experiment at $(V,\sigma)$ into the experiment at $(aV,a\sigma)$, since $\mathcal F_{aV}(T)=a\,\mathcal F_V(T)$; the squared error scales by $a^2$, so
\[
R^*_T(aV,a\sigma)\ =\ a^2\,R^*_T(V,\sigma)
\qquad(a>0).
\]
For $c\ge1$, combining this exact identity with the monotonicity just proved gives
\[
\begin{aligned}
R^*_T(cV,\sigma)\ &=\ c^2R^*_T(V,\sigma/c)\ \le\ c^2R^*_T(V,\sigma),\\
R^*_T(V,c\sigma)\ &=\ c^2R^*_T(V/c,\sigma)\ \le\ c^2R^*_T(V,\sigma),
\end{aligned}
\]
using $\sigma/c\le\sigma$ and $V/c\le V$.
\end{proof}

Two consequences are used repeatedly. First, for every amplitude $v>0$ and every working scale $s\in[\sigma/2,4\sigma]$, monotonicity and the quadratic bound at $c=4$ give
\begin{equation}\label{eq:workingscale}
R^*_T(v,s)\ \le\ R^*_T(v,4\sigma)\ \le\ 16\,
R^*_T(v,\sigma).
\end{equation}
Second, $\kalg(T,v,s)\le k_0(T,v,s)$ by Lemma~2.3(3), and the middle term below is at most $CR^*_T(v,s)$ by Theorem~1 at $(v,s)$, so \Cref{eq:workingscale} gives
\begin{equation}\label{eq:ratescale}
\min\bigl\{v^2H_T,\ s^2\kalg(T,v,s)\bigr\}\ \le\
\min\bigl\{v^2H_T,\ s^2k_0(T,v,s)\bigr\}\ \le\ C\,R^*_T(v,\sigma),
\end{equation}
valid for all $v>0$ and $s\in[\sigma/2,4\sigma]$.

The amplitude-free half of the candidate system rests on the subspace approximation (4.7), verified next with an explicit constant.

\begin{lemma}[Subspace approximation]\label{slem:subspace}
For every $\mu\in\mathcal F_V(T)$ and every integer $2\le k\le n/C_\star$ there is an admissible support $S$ with
\[
\operatorname{dist}^2\bigl(\mu,\ \mathcal V_{k,S}\bigr)\ \le\
22\,V^2\overline\delta_k(T).
\]
\end{lemma}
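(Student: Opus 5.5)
The plan is to rerun the construction behind Proposition~3.1 with $V=1$ (by homogeneity) and to stop before the preorder rounding, since a subspace needs no quantization of amplitudes. Write $f=\sum_u\lambda_up_u$ and run the stopped refinement, the collapse $g=g_0+\sum_L\lambda_Lq_L$, and the exit sampling exactly as in \Cref{ssec:cover}. The candidate approximant is
\[
\widetilde f\ :=\ g_0+\sum_{L\in\mathcal L_{\mathrm{ex}}}Z_L\,q_L ,
\]
with no rounding of $g_0$. The skeleton $g_0$ is a combination of $p_h$ for $h\in U=R_0\cup\{\text{heavy roots}\}$. A selected exit contributes $p_{a_L}-p_{h(L)}$, with $h(L)\in U$ and $a_L\in A_k$. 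So $\widetilde f\in\mathcal V_{k,S}$, where
\[
S\ :=\ \bigl((U\setminus R_0)\cup\{a_L:Z_L\ne0\}\bigr)\setminus R_0 .
\]

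Next I would check that this $S$ is admissible. By \Cref{slem:heavy}(\ref{sit:charges}), the heavy roots outside $R_0$ carry total activation charge below $2k$. By \Cref{slem:collapse}(\ref{sit:zbudget}), the selected exit roots carry total charge at most $W$. On the good realization of \Cref{slem:collapse}(\ref{sit:exitvar}) we have $W\le2k$. Hence $\sum_{v\in S}\widetilde\omega(v)<4k$, which is the admissibility budget of \Cref{sec:adaptation}.

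The error splits as $f-\widetilde f=(f-g)+\sum_L(\lambda_L-Z_L)q_L$. The first term is at most $3\bar\alpha/k$ by \Cref{slem:collapse}(\ref{sit:collapse}). On the same realization, the second term is at most $8\bar\alpha/k$. Then $(x+y)^2\le2x^2+2y^2$ gives
\[
\operatorname{dist}^2(f,\mathcal V_{k,S})\ \le\ \|f-\widetilde f\|_2^2\ \le\ 2\Bigl(\frac{3\bar\alpha}k+\frac{8\bar\alpha}k\Bigr)\ =\ \frac{22\bar\alpha}k\ =\ 22\,\overline\delta_k(T).
\]
Restoring $V$ gives the claim with constant exactly $22$, which explains that constant. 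The realization guaranteed by Markov's inequality (probability at least $\tfrac14$) satisfies the charge bound and the error bound simultaneously, so a single deterministic choice of $S$ exists.

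The main obstacle is bookkeeping rather than estimation. The admissibility definition charges only $S\subseteq A_k\setminus R_0$, so the free level $R_0$ must be spanned for free, and vertices that are both heavy roots and exit roots must not be double-counted. Double counting only lowers the charge, so it is harmless. I would also check that an exit root $a_L$ of a level-$j$ cell satisfies $\widetilde\omega(a_L)\le m(L)$, as already recorded in \Cref{slem:collapse}. Once these points are confirmed, no new estimate is needed beyond \Cref{slem:heavy,slem:collapse}.
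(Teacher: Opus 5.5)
Your proposal is correct and matches the paper's proof step for step: the same unrounded candidate $g_0+\sum_L Z_Lq_L$ on the good sampling realization, the same support $S=(U\cup\{a_L:Z_L\ne0\})\setminus R_0$, the same admissibility accounting (heavy roots below $2k$, exit roots at most $W\le 2k$), and the same constant $2(3+8)=22$. Your set expression $((U\setminus R_0)\cup\{a_L:Z_L\ne0\})\setminus R_0$ is just a redundant way of writing the paper's $S$.
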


\begin{proof}
Run the proof of Proposition~3.1 on the normalized signal $f:=\mu/V$, retaining its objects: the terminal cells $L$ with masses $\lambda_L$ and exits $q_L$; the collapse $g$ with $\|f-g\|_2^2\le3\bar\alpha/k$ by (3.4); the skeleton $g_0=\sum_{h\in U}b_h\,p_h$, supported on the set $U$ consisting of $R_0$ and the heavy roots, with $g=g_0+\sum_L\lambda_Lq_L$; and the exit sampling, fixed at a realization with $W\le2k$ and $\|\sum_L(Z_L-\lambda_L)q_L\|_2^2\le8\bar\alpha/k$. Keep the skeleton unrounded and set
\[
\nu\ :=\ g_0+\sum_LZ_Lq_L ,
\]
a real linear combination of $\{p_v:\ v\in U\cup\{a_L:Z_L\ne0\}\}$. The set $S:=\bigl(U\cup\{a_L:Z_L\ne0\}\bigr)\setminus R_0$ is admissible: the heavy roots carry total activation charge below $2k$ by \Cref{slem:heavy}(3); the selected exit roots are pairwise distinct with $\widetilde\omega(a_L)\le m(L)$, so their charges total at most $W\le2k$; and a union is charged at most the sum of its parts. Hence $\sum_{v\in S}\widetilde\omega(v)\le4k$. Since $\nu\in\mathcal V_{k,S}$ and $f-\nu=(f-g)+\sum_L(\lambda_L-Z_L)q_L$,
\[
\operatorname{dist}^2\bigl(f,\ \mathcal V_{k,S}\bigr)\ \le\
\|f-\nu\|_2^2\ \le\
2\,\frac{3\bar\alpha}k+2\,\frac{8\bar\alpha}k
\ =\ 22\,\frac{\bar\alpha}k\ =\ 22\,\overline\delta_k(T).
\]
Restoring the amplitude multiplies the squared distance by $V^2$.
\end{proof}

Selecting over the capped subspaces requires listing them; this is the enumeration claim of Section~4.3.

\begin{lemma}[Support enumeration]\label{slem:enum}
For every integer $2\le k\le n/C_\star$, the admissible supports at budget $k$ number at most $e^{C_1k}$, and they can be listed in $O(|A_k|\,e^{C_1k})$ operations.
\end{lemma}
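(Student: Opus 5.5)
The count reuses the charged-part estimate from the proof of \Cref{slem:count}. An admissible support $S\subseteq A_k\setminus R_0$ splits by first appearance into its intersections with the groups $G_j:=\{v\in A_k\setminus R_0:\ \omega(v)=m_j\}$, $1\le j\le J$. We have $|G_j|\le2ke^{m_j}$ by \Cref{slem:netsizes}(\ref{sit:union}), and admissibility says $\sum_jm_j|S\cap G_j|\le4k$. Inserting the factor $e^{2(4k-\sum_jm_jq_j)}\ge1$ and dropping the constraint bounds the number of such supports by
\[
e^{8k}\prod_{j=1}^J\bigl(1+e^{-2m_j}\bigr)^{|G_j|}\ \le\ e^{8k}\exp\Bigl(2k\sum_{j\ge1}e^{-m_j}\Bigr)\ \le\ e^{9k}\ \le\ e^{C_1k},
\]
exactly as in \Cref{slem:count}, with $C_1=45$.

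For the listing, I would first compute the groups $G_j$. The nets $S_j$ come from \Cref{slem:greedy} and the weights $\omega$ from one pass, so this costs $O(|A_k|)$ beyond preprocessing already paid. I would then enumerate supports by a depth-first search over the vertices of $A_k\setminus R_0$ in a fixed order, with a running residual budget $4k-\sum_{v\in S}\widetilde\omega(v)$. At each vertex the search branches on include/exclude, and the include branch is pruned whenever the vertex's charge exceeds the residual. Every leaf of the search tree is an admissible support, and every internal node lies on the path to at least one admissible leaf (the exclude-all completion). The search tree therefore has at most $|A_k|$ levels and at most $e^{C_1k}$ leaves, hence $O(|A_k|e^{C_1k})$ nodes, each processed in $O(1)$. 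Writing out each support explicitly costs at most its size, which is $O(k)\le O(|A_k|)$ per support, within the bound.

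The one point needing care is the claim that the search does no wasted work. This holds because the excluded branch is always feasible, so the tree has no dead subtrees and the node count is at most depth times leaves. I expect no real obstacle beyond confirming that the generating-function count of \Cref{slem:count} applies verbatim, with budget $4k$ in place of $9k$.
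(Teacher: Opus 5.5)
Your proposal is correct and follows essentially the same route as the paper: both count admissible supports via the charged-part generating-function estimate of the counting lemma, and both list them by a charge-pruned depth-first search. The differences are minor. You rerun the estimate at budget $4k$ to get $e^{9k}$, whereas the paper just notes that admissible supports are among the charged parts of budget $9k$ and cites $e^{19k}$. You also bound the cost of a binary include/exclude tree by depth times number of leaves, whereas the paper searches over subset extensions, where every node is itself an admissible support and costs $O(|A_k|)$ to extend.
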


\begin{proof}
An admissible support is a subset of $A_k\setminus R_0$ of total activation charge at most $4k\le9k$, hence one of the charged parts counted in the proof of \Cref{slem:count}, where the generating-function estimate bounds their number by $e^{19k}\le e^{C_1k}$. To enumerate, fix a linear order on $A_k\setminus R_0$ and run a depth-first search over subsets that appends only higher-indexed vertices and recurses only while the accumulated charge remains at most $4k$. Every node visited by the search is an admissible support, every admissible support is visited exactly once, along the insertion order of its elements, and testing all extensions of a visited support costs $O(|A_k|)$ operations.
\end{proof}

\emph{The candidate system.} The estimator of Theorem~3 is the penalized selection of \Cref{slem:selection} over the candidates described in Section~4.3, which we now fix precisely. The system is parameterized by a working scale $s$, equal to $\sigma$ in Theorem~3 and to the rounded noise estimate in Theorem~4. Fix a universal threshold $n_\star$ such that
\[
\Bigl\lfloor\frac{\log n}{2C_1}\Bigr\rfloor\ \ge\ 2
\qquad\text{and}\qquad
\Bigl\lfloor\frac{\log n}{2C_1}\Bigr\rfloor\ \le\
\Bigl\lfloor\frac n{C_\star}\Bigr\rfloor
\qquad\text{for every }n\ge n_\star .
\]
For $n<n_\star$ the estimator returns $Y$, whose risk $n\sigma^2\le n_\star\sigma^2$ is absorbed by the additive $\sigma^2$ after enlarging the universal constant; assume $n\ge n_\star$ from here on, and set $k^\star:=\lfloor\log n/(2C_1)\rfloor$, so that $2\le k^\star\le K$. Write
\[
v_i:=(Y_o)_++2^is\ \ (0\le i\le J^+),
\qquad
v^-_j:=2^{-j}s\ \ (1\le j\le J^-),
\]
with $J^+=J^-:=\lceil\log_2(n+1)\rceil+2$, for the root-anchored and the low-amplitude grids. Throughout, $\mathcal X_k$ is the class (4.1) of integer states of Section~4.1. The candidates are:
\begin{itemize}
\item the subspaces $\mathcal V_{k,S}$ for dyadic $2\le k\le k^\star$
and admissible $S$, with weights $\Delta_{k,S}:=(C_1+1)k$, together
with $\operatorname{span}(p_o)$ at weight $1$;
\item the point models $\nu_{i,k,x}:=(v_i/k)\,x$ for $x\in\mathcal
X_k$ and dyadic $2\le k\le K$, at weights
$w(\nu_{i,k,x}):=2\Gamma(x)+k+i+\log_2k+4$, and the root points
$\nu^0_i:=v_i\,p_o$ at weights $i+4$; their low-amplitude
counterparts $\nu^-_{j,k,x}:=(v^-_j/k)\,x$ at weights
$2\Gamma(x)+k+2\log(1+j)+\log_2k+4$, and $\nu^{0-}_j:=v^-_j\,p_o$ at
weights $2\log(1+j)+4$;
\item the full model.
\end{itemize}
The penalties are $\operatorname{pen}(m):=C_2s^2(D_m+\Delta_m)$ for the affine and point models, with $\Delta_m$ the weight just listed, and $\operatorname{pen}(\mathrm{full}):=C_2s^2n$. The supports $A_k$, the charges $\widetilde\omega$, and hence $\Gamma$ and $\mathcal X_k$ depend only on $(T,k)$ (Proposition~3.1); a grid index enters only through its amplitude and its weight, and all tie breaks are fixed, so the system and its penalties are deterministic given $(Y_o,s)$.

The weights are Kraft-summable with room to spare. For the subspaces, \Cref{slem:enum} gives
\[
\sum_{\substack{k\ \mathrm{dyadic}\\2\le k\le k^\star}}\
\sum_{S\ \mathrm{admissible}}e^{-(C_1+1)k}\ +\ e^{-1}
\ \le\ \sum_{\substack{k\ \mathrm{dyadic}\\ k\ge2}}e^{-k}+e^{-1}
\ <\ 0.16+0.37\ =\ 0.53 .
\]
For the grid family, \Cref{slem:normalizer} gives $\sum_{x\in\mathcal X_k}e^{-2\Gamma(x)-k}\le1$ for every dyadic $k=2^\ell$, so, once the price of the grid index is factored out, each amplitude contributes at most $1$ from its root candidate and $\sum_{\ell\ge1}e^{-\ell}$ from its flow candidates, and
\begin{multline}\label{eq:kraftsum}
\sum_{\text{grid models}}e^{-w}
\ \le\ \Bigl(\sum_{i\ge0}e^{-i-4}
+\sum_{j\ge1}\frac{e^{-4}}{(1+j)^2}\Bigr)
\Bigl(1+\sum_{\ell\ge1}e^{-\ell}\Bigr)
\\
=\ e^{-4}\Bigl(\frac e{e-1}+\frac{\pi^2}6-1\Bigr)\frac e{e-1}
\ <\ \frac18 .
\end{multline}
The total is below $1$, as \Cref{slem:selection} requires.

The root-anchored grid is finite, and the truth can exceed its top value only after a downward root-noise fluctuation of order $n$. The next lemma quantifies both halves of this statement; its event $\mathcal E_+$ is the one announced in Section~4.3.

\begin{lemma}[Anchor event]\label{slem:anchor}
Let $s\in[\sigma/2,4\sigma]$, extend the anchored grid to $v_i=(Y_o)_++2^is$ for all integers $i\ge0$, and set
\[
i^*\ :=\ \min\{i\ge0:\ v_i\ge V\},
\qquad
\mathcal E_+\ :=\ \{i^*\le J^+\},
\qquad
Z_o\ :=\ Z(o).
\]
Then:
\begin{enumerate}
\item\label{sit:pointwise} pointwise,
$2^{i^*}s\le2s+2\sigma|Z_o|$, and consequently
\[
i^*\le1+\log_2\bigl(1+2|Z_o|\bigr),
\qquad
v_{i^*}\le V+2s+3\sigma|Z_o|,
\qquad
0\ \le\ v_{i^*}-Y_o\ \le\ 2s+3\sigma|Z_o| ,
\]
and in expectation
$\E\bigl[\sigma^2(1+i^*)+(Y_o-v_{i^*})^2\bigr]\le C\sigma^2$;
\item\label{sit:omitted}
$\mathcal E_+^{\,c}\subseteq\{Z_o<-2(n+1)\}$;
\item\label{sit:offanchor} every penalized selection of this section,
the full model carrying penalty $C_2s^2n$ and all penalties being
nonnegative, satisfies pathwise
$\|\widehat\mu-\mu\|_2^2\le2C_2s^2n+2\sigma^2\|Z\|_2^2$, and therefore
$\E\bigl[\|\widehat\mu-\mu\|_2^2\,\ind_{\mathcal E_+^c}\bigr]
\le C\sigma^2$.
\end{enumerate}
\end{lemma}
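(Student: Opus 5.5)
Part (1) is a direct computation from the definition of the grid. We have $Y_o=V+\sigma Z_o$, so $(Y_o)_+\ge V-\sigma|Z_o|$, and every grid value satisfies $v_i\ge V-\sigma|Z_o|+2^is$.

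For the pointwise bound, first suppose $i^*=0$; then $2^{i^*}s=s\le2s$. Otherwise $i^*\ge1$, and minimality gives $v_{i^*-1}<V$. Hence $2^{i^*-1}s<V-(Y_o)_+\le\sigma|Z_o|$, and doubling yields $2^{i^*}s\le2\sigma|Z_o|$. In both cases $2^{i^*}s\le2s+2\sigma|Z_o|$.

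The three consequences follow from this.
\begin{itemize}
\item Dividing by $s\ge\sigma/2$ gives $2^{i^*}\le2+4|Z_o|$, hence $i^*\le1+\log_2(1+2|Z_o|)$.
\item For the bound on $v_{i^*}$, use $(Y_o)_+\le V+\sigma|Z_o|$. This gives $v_{i^*}\le V+\sigma|Z_o|+2s+2\sigma|Z_o|=V+2s+3\sigma|Z_o|$.
\item For $v_{i^*}-Y_o$, the lower bound comes from $(Y_o)_+\ge Y_o$. The upper bound comes from $(Y_o)_+-Y_o\le\sigma|Z_o|$ (the left side is $(Y_o)_-$, and $Y_o<0$ forces $\sigma|Z_o|>V$), together with $2^{i^*}s\le2s+2\sigma|Z_o|$.
\end{itemize}

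The expectation bound follows by squaring. We get $(Y_o-v_{i^*})^2\le2(2s)^2+18\sigma^2Z_o^2\le C\sigma^2(1+Z_o^2)$ using $s\le4\sigma$. We also have $\sigma^2(1+i^*)\le\sigma^2(2+2|Z_o|)$ by $\log_2(1+x)\le x$. Gaussian moments then finish.

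For part (2), suppose $i^*>J^+$. Then $v_{J^+}<V$. Since $2^{J^+}s\ge4(n+1)s\ge2(n+1)\sigma$, we get $(Y_o)_+<V-2(n+1)\sigma$. This forces $Y_o<V-2(n+1)\sigma$, that is, $Z_o<-2(n+1)$.

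For part (3), take the full model as the competitor in the basic inequality, as in the last step of \Cref{slem:selection}. Its residual is zero, so the selected fit satisfies $\|Y-\widehat\mu\|_2^2\le\|Y-\widehat\mu\|_2^2+\operatorname{pen}(\widehat m)\le C_2s^2n$, penalties being nonnegative. If the full model itself is selected, this holds trivially because $\widehat\mu=Y$. Then the triangle inequality gives $\|\widehat\mu-\mu\|_2^2\le2\|Y-\widehat\mu\|_2^2+2\sigma^2\|Z\|_2^2$, which is the pathwise bound.

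The only delicate step is the expectation on $\mathcal E_+^c$; the rest is bookkeeping. Using part (2) and $s\le4\sigma$, it suffices to bound
\[
\E\bigl[(32C_2\sigma^2n+2\sigma^2\|Z\|_2^2)\ind\{Z_o<-2(n+1)\}\bigr].
\]
The first term is at most $C\sigma^2ne^{-2(n+1)^2}\le C\sigma^2$. For the second, split $\|Z\|_2^2=Z_o^2+\sum_{v\ne o}Z_v^2$. Independence turns the non-root part into $(n-1)\,\mathbb P(Z_o<-2(n+1))$, which is again tiny. The root part is $\E[Z_o^2\ind\{Z_o<-2(n+1)\}]\le1$. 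Both are $O(1)$ uniformly in $n$, which completes part (3).
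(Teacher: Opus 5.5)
Your proof is correct and follows the paper's argument essentially line for line. Your split on $i^*=0$ versus $i^*\ge1$ is a tidier version of the paper's three cases, and bounding $\E[Z_o^2\ind\{Z_o<-2(n+1)\}]$ crudely by $1$ instead of integrating by parts is harmless.

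One slip: the inequality $\log_2(1+x)\le x$ you cite is false for $0<x<1$ (e.g.\ $x=1/2$). Your bound $1+i^*\le 2+2|Z_o|$ still holds, because when $2|Z_o|<1$ the integer $i^*$ is at most $1$. Alternatively, use $\log_2(1+x)\le x/\log 2$, or the paper's concavity step $\E\, i^*\le 1+\log_2(1+2\E|Z_o|)\le 3$.
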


\begin{proof}
\emph{Part (1).} Since $Y_o=V+\sigma Z_o$ and $(Y_o)_+\ge Y_o$,
\[
V-(Y_o)_+\ \le\ (V-Y_o)_+\ =\ (-\sigma Z_o)_+\ \le\ \sigma|Z_o| .
\]
If $\sigma|Z_o|\le s$, then $v_0=(Y_o)_++s\ge V-\sigma|Z_o|+s\ge V$, so $i^*=0$ and $2^{i^*}s=s$. If $\sigma|Z_o|>s$ and $i^*\ge1$, minimality gives $v_{i^*-1}<V$, that is, $2^{i^*-1}s<V-(Y_o)_+\le\sigma|Z_o|$, so $2^{i^*}s<2\sigma|Z_o|$; and if $\sigma|Z_o|>s$ and $i^*=0$, then $2^{i^*}s=s<2\sigma|Z_o|$. In every case $2^{i^*}s\le2s+2\sigma|Z_o|$, and $i^*$ is finite, the grid values increasing to infinity. Dividing by $s\ge\sigma/2$ gives $2^{i^*}\le2+4|Z_o|=2(1+2|Z_o|)$, which is the bound on $i^*$. Next, $(Y_o)_+\le V+\sigma|Z_o|$, since $Y_o\le V+\sigma|Z_o|$ and $0\le V$, so $v_{i^*}=(Y_o)_++2^{i^*}s\le V+\sigma|Z_o|+2s+2\sigma|Z_o|$. For the last pointwise claim, $v_{i^*}\ge Y_o$ because $(Y_o)_+\ge Y_o$, and
\[
v_{i^*}-Y_o\ =\ \bigl((Y_o)_+-Y_o\bigr)+2^{i^*}s
\ \le\ (-Y_o)_++2s+2\sigma|Z_o|
\ \le\ 2s+3\sigma|Z_o| ,
\]
since $-Y_o=-V-\sigma Z_o\le\sigma|Z_o|$. In expectation, $\E\,i^*\le1+\log_2(1+2\,\E|Z_o|)\le3$ by concavity of $t\mapsto\log_2(1+2t)$ and $\E|Z_o|\le1$, while $\E(Y_o-v_{i^*})^2\le2(2s)^2+18\sigma^2\E Z_o^2\le146\sigma^2$, using $s\le4\sigma$.

\emph{Part (2).} On $\mathcal E_+^{\,c}$, $i^*>J^+$, so $v_{J^+}<V$ and
\[
(-\sigma Z_o)_+\ \ge\ V-(Y_o)_+\ >\ 2^{J^+}s\ \ge\
4(n+1)\cdot\frac\sigma2\ =\ 2(n+1)\,\sigma ,
\]
using $2^{J^+}\ge2^{\log_2(n+1)+2}=4(n+1)$ and $s\ge\sigma/2$; the positive part is positive here, so it equals $-\sigma Z_o$, and $Z_o<-2(n+1)$.

\emph{Part (3).} The selected criterion value is at most the full model's, whose fit is $Y$ itself, so $\|Y-\widehat\mu\|_2^2\le \|Y-\widehat\mu\|_2^2+\operatorname{pen}(\widehat m)\le \operatorname{pen}(\mathrm{full})=C_2s^2n$, and $\|\widehat\mu-\mu\|_2^2\le2\|\widehat\mu-Y\|_2^2+2\|Y-\mu\|_2^2 \le2C_2s^2n+2\sigma^2\|Z\|_2^2$. Put $a:=2(n+1)$. By part (\ref{sit:omitted}) and the independence of $Z_o$ from the nonroot coordinates,
\begin{align*}
\E\bigl[\|Z\|_2^2\,\ind_{\mathcal E_+^c}\bigr]
\ &\le\ (n-1)\,\mathbb P(Z_o<-a)
+\E\bigl[Z_o^2\,\ind\{Z_o<-a\}\bigr]
\\
&\le\ (n-1)\,e^{-a^2/2}+a\,\phi_{\mathcal N}(a)
+\Phi_{\mathcal N}(-a)\ \le\ C ,
\end{align*}
where $\phi_{\mathcal N}$ is the standard normal density, the middle step by the Gaussian tail bound and one integration by parts. With $s^2\le16\sigma^2$ and $n\,e^{-a^2/2}\le C$, the pathwise bound integrates to $\E[\|\widehat\mu-\mu\|_2^2\ind_{\mathcal E_+^c}] \le C\sigma^2$.
\end{proof}

With the anchor in place, some grid candidate always sits close to the truth at a price the selection can afford. The lemma is stated conditionally on the root observation, matching the conditional analysis of the selection to come.

\begin{lemma}[Grid coverage]\label{slem:coverage}
Condition on $Y_o=y_0$, let $s\in[\sigma/2,4\sigma]$, suppose $\mathcal E_+$ holds, and write $v:=v_{i^*}$, $k':=\kalg(T,v,s)$, and $\widetilde\theta:=(y_0,\mu_{-o})$. Let $\mathfrak O$ be the minimum, over the point models of the candidate system and the full model, of the oracle values of \Cref{slem:selection} at mean $\widetilde\theta$:
\[
\mathfrak O\ :=\ \min\Bigl\{\ \inf_{\nu}\bigl(
\|\widetilde\theta-\nu\|_2^2+\sigma^2(w(\nu)+1)\bigr),\ \
\sigma^2n\ \Bigr\}.
\]
Then, for every realization,
\begin{equation}\label{eq:anchoredcover}
\mathfrak O\ \le\ C\Bigl(R^*_T(v,\sigma)+(y_0-v)^2
+\sigma^2(1+i^*)\Bigr).
\end{equation}
If moreover $V<s/2$, set $j^*:=\lfloor\log_2(s/V)\rfloor\ge1$, so that $v^-_{j^*}\in[V,2V]$; if $j^*\le J^-$, then also
\begin{equation}\label{eq:lowcover}
\mathfrak O\ \le\ C\Bigl(R^*_T\bigl(v^-_{j^*},\sigma\bigr)
+\bigl(y_0-v^-_{j^*}\bigr)^2
+\sigma^2\bigl(1+\log(1+j^*)\bigr)\Bigr).
\end{equation}
\end{lemma}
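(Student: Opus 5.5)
We will prove \Cref{eq:anchoredcover} by exhibiting, for each realization, one candidate whose oracle value is at most the right side. The case split follows the estimator branches of \Cref{eq:branches}, now run at amplitude $v$ and scale $s$ with $k'=\kalg(T,v,s)$, so that \Cref{eq:ratescale} gives $\min\{v^2H_T,s^2k'\}\le C R^*_T(v,\sigma)$. Since $\widetilde\theta$ has root coordinate $y_0$ and nonroot part $\mu_{-o}$, while every candidate $\nu$ built from amplitude $v$ has root coordinate exactly $v$ (integer states have $x_o=k$), we can write
\[
\|\widetilde\theta-\nu\|_2^2=(y_0-v)^2+\|\mu_{-o}-\nu_{-o}\|_2^2 .
\]
The plan is therefore to bound the nonroot term by comparison with a body point at amplitude $v$. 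Because $v\ge V$ on $\mathcal E_+$, the raised signal $\mu^v:=\mu+(v-V)p_o$ lies in $\mathcal F_v(T)$ and has the same nonroot coordinates as $\mu$. Everything then reduces to covering $\mu^v$ at budget $v$.

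\emph{Diameter case, $v^2H_T\le s^2k'$.} We take the root point $\nu^0_{i^*}=vp_o$ at weight $i^*+4$. Its nonroot distance is at most $v^2h_T\le v^2H_T$, which is $\le CR^*_T(v,\sigma)$.

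\emph{Dimension case, $k'>K$.} We take the full model, whose value $\sigma^2n\le C_\star\sigma^2k'\le 4C_\star s^2k'$. In this case $\min\{v^2H_T,s^2k'\}=s^2k'$, and the bound closes again by \Cref{eq:ratescale}.

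\emph{Case $k'=1\le K$.} The first-budget argument of \Cref{slem:branches} gives $v^2h_T\le (2A_0/\log 2)s^2$. So $vp_o$ works with nonroot error $O(\sigma^2)$.

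\emph{Interior case, $2\le k'\le K$.} We pass to the dyadic $k\in[k',2k']$. If $k>K$, we fall back on the full model, since $n\le 2C_\star K\le Ck'$. Otherwise \Cref{prop:cover} at budget $v$ and integer $k$ gives a state $x^\star\in\mathcal X_k$ with $\Gamma(x^\star)\le 9k$ and
\[
\|\mu^v-(v/k)x^\star\|_2^2\le C_0v^2\delta_k\le C_0v^2\delta_{k'}\le 2C_0A_0 s^2k'.
\]
Here we used the monotonicity in \Cref{lem:toolkit} and the surrogate clause fired at $k'$. The weight $2\Gamma+k+i^*+\log_2k+4$ is $O(k')+i^*$. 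Multiplying by $\sigma^2$ and adding $\sigma^2$, every case gives $C(R^*_T(v,\sigma)+(y_0-v)^2+\sigma^2(1+i^*))$, where $s^2\le16\sigma^2$ converts $s$-terms into $\sigma$-terms.

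\Cref{eq:lowcover} follows by the same argument with $v^-_{j^*}\in[V,2V]$ replacing $v$. The point $\nu^-$ and the root points $\nu^{0-}_{j^*}$ play the roles of $\nu$ and $\nu^0$, and the weight price $2\log(1+j^*)$ replaces $i^*$. The raising argument still applies because $v^-_{j^*}\ge V$.

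The step we expect to be delicate is the bookkeeping that every rate quantity is taken at $(v,s)$ rather than at $(V,\sigma)$ and is then transported by \Cref{eq:ratescale}. A related check is that the dyadic doubling of $k'$ stays within the range of \Cref{prop:cover} or else is absorbed by the full model. Both follow from the definitions, but they have to be checked in each branch.
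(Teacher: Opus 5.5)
Your proposal is correct and is essentially the paper's proof. The shared steps are the root split after raising $\mu$ to $\mu+(v-V)p_o\in\mathcal F_v(T)$, the integer state from \Cref{prop:cover} at the dyadic budget above $k'$, the root candidate in the diameter regime, the full model when the budget exceeds $K$, and the transport through \Cref{eq:ratescale}. The one difference is cosmetic: you handle $k'=1$ with the root point through the first-budget bound $v^2h_T\le(2A_0/\log 2)s^2$, whereas the paper folds $k'=1$ into its main case by taking the budget-$2$ integer state.
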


\begin{proof}
Every candidate of anchored index $i^*$ has root coordinate exactly $v$: the flow candidates because their root state is $k$, so $\nu_{i^*,k,x}(o)=(v/k)\,k=v$, and the root candidates trivially. Since $v\ge V$, the translate $\mu':=\mu+(v-V)p_o$ lies in $\mathcal F_v(T)$ by Lemma~2.1 and agrees with $\mu$ off the root, with $\mu'(o)=v$; hence, for every such candidate $\nu$,
\begin{equation}\label{eq:rootsplit}
\|\widetilde\theta-\nu\|_2^2\ =\ (y_0-v)^2+\|\mu'-\nu\|_2^2 .
\end{equation}
Let $k_{\max}$ be the largest dyadic integer at most $K$, defined when $K\ge2$, so that $2k_{\max}>K$. There are three exhaustive cases.

\emph{Case (a): $s^2k'\le v^2H_T$, $K\ge2$, and $k'\le k_{\max}$.} Choose the smallest dyadic $k\ge\max\{k',2\}$; then $k\le2k'$ and $k\le k_{\max}\le K$, since $k_{\max}$ is itself a dyadic integer at least $\max\{k',2\}$. Proposition~3.1(1), applied to $\mu'\in\mathcal F_v(T)$ at budget $k$ and taken with the explicit constant of its error assembly in \Cref{ssec:cover}, supplies $x^\star\in\mathcal X_k$ with $\Gamma(x^\star)\le9k$ and
\[
\|\mu'-\nu_{i^*,k,x^\star}\|_2^2\ \le\ 51\,v^2\overline\delta_k(T)
\ \le\ 51\,v^2\overline\delta_{k'}(T)\ \le\ 102\,A_0\,s^2k' ,
\]
the second step by the monotonicity of $\overline\delta$ (Lemma~2.3(3)) and $k\ge k'$, the third because the dimension clause of (2.2) is silent at $k'\le K$, so its surrogate crossing clause fired there: $v^2\overline\delta_{k'}\le2A_0s^2k'$. The weight obeys
\[
w(\nu_{i^*,k,x^\star})\ =\ 2\Gamma(x^\star)+k+i^*+\log_2k+4
\ \le\ 19k+i^*+\log_2k+4\ \le\ C(k'+i^*+1).
\]
By the case hypothesis and \Cref{eq:ratescale}, $s^2k'=\min\{v^2H_T,s^2k'\}\le CR^*_T(v,\sigma)$, and $\sigma^2k'\le4s^2k'$; with \Cref{eq:rootsplit}, the candidate's oracle value is therefore at most $(y_0-v)^2+CR^*_T(v,\sigma)+C\sigma^2(1+i^*)$, which is \Cref{eq:anchoredcover}.

\emph{Case (b): $v^2H_T<s^2k'$.} Take the root candidate $\nu^0_{i^*}$, whose nonroot coordinates vanish. By \Cref{eq:rootsplit} and the convexity bound $\|\mu-Vp_o\|_2\le V\sqrt{h_T}$ established in the proof of \Cref{slem:branches},
\[
\|\widetilde\theta-\nu^0_{i^*}\|_2^2\ =\
(y_0-v)^2+\|\mu-Vp_o\|_2^2\ \le\ (y_0-v)^2+V^2h_T ,
\]
and $V^2h_T\le v^2H_T=\min\{v^2H_T,s^2k'\}\le CR^*_T(v,\sigma)$ by \Cref{eq:ratescale}. The weight is $i^*+4$, so the oracle value is at most $(y_0-v)^2+CR^*_T(v,\sigma)+\sigma^2(i^*+5)$.

\emph{Case (c): $s^2k'\le v^2H_T$, and $K\le1$ or $k'>k_{\max}$.} If $K\ge2$ and $k'>k_{\max}$, then $K<2k_{\max}<2k'$ and $n<C_\star(K+1)\le2C_\star K<4C_\star k'$, so
\[
\sigma^2n\ \le\ 4s^2n\ <\ 16\,C_\star\,s^2k'\ \le\
C\,R^*_T(v,\sigma)
\]
by the case hypothesis and \Cref{eq:ratescale}: the full-model branch of $\mathfrak O$ qualifies. If $K\le1$, then $n<2C_\star$, and $\sigma^2n\le2C_\star\sigma^2$ is absorbed by the term $\sigma^2(1+i^*)$ of \Cref{eq:anchoredcover}.

\emph{The low-amplitude bound.} Let $V<s/2$ and $j^*\le J^-$. From $2^{j^*}\le s/V<2^{j^*+1}$, the amplitude $v^-_{j^*}=2^{-j^*}s$ lies in $[V,2V)$, the translate $\mu+(v^-_{j^*}-V)p_o$ lies in $\mathcal F_{v^-_{j^*}}(T)$ and agrees with $\mu$ off the root, and every candidate of low-amplitude index $j^*$ has root coordinate exactly $v^-_{j^*}$, so \Cref{eq:rootsplit} holds with $v^-_{j^*}$ in place of $v$. The three cases above run verbatim at the amplitude $v^-_{j^*}$ and its crossing index $\kalg(T,v^-_{j^*},s)$, the weight contributions $i^*+\log_2k+4$ and $i^*+4$ replaced by $2\log(1+j^*)+\log_2k+4$ and $2\log(1+j^*)+4$; they yield \Cref{eq:lowcover}.
\end{proof}

One computational ingredient remains before the assembly: the inner minimization of the selection criterion over an integer-state class. It is the min-plus analogue of the evaluation of \Cref{slem:recursion}, on the same one-dimensional state and with its own overflow scalar.

\begin{lemma}[Min-plus evaluation]\label{slem:minplus}
Fix an amplitude $v>0$, a dyadic budget $2\le k\le K$, and a constant $c\ge0$. The minimum
\[
m_{v,k}\ :=\ \min_{x\in\mathcal X_k}
\Bigl[\bigl\|Y-\tfrac vk\,x\bigr\|_2^2+c\,\Gamma(x)\Bigr]
\]
and one minimizer are computable exactly in $O(nk^2)$ operations.
\end{lemma}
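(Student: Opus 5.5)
The plan is to mirror the message recursion of \Cref{slem:recursion}, replacing the semiring $(+,\times)$ by $(\min,+)$. First I decompose the objective into vertex terms:
\[
\bigl\|Y-\tfrac vk x\bigr\|_2^2+c\,\Gamma(x)=\sum_{v'}\Bigl[(Y_{v'}-\tfrac vk x_{v'})^2+\lambda_{v'}(z_{v'})\Bigr].
\]
Here $\lambda_{v'}(z):=c(|z|+\widetilde\omega(v')\ind\{z\ne0\})$ when $v'\in A_k$. When $v'\notin A_k$, $\lambda_{v'}(0)=0$ and $\lambda_{v'}(z)=+\infty$ for $z\ne0$. The root constraint $x_o=k$ is imposed at the end.

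For each vertex I define the min-plus message $M_v[x]$ for $x\in\{0,\dots,3k\}$. It is the minimum, over admissible assignments on $T_v$ with state $x$ at $v$, of the sum of the local terms inside $T_v$. The answer is then $m_{v,k}=M_o[k]$. Children are combined by min-plus convolution:
\[
Q_v[s]=\min_{s_1+s_2=s}\bigl(Q'[s_1]+M_c[s_2]\bigr).
\]
I will do this naively in $O(k^2)$ per merge, which gives the claimed $O(nk^2)$ over fewer than $n$ merges with no need for fast multiplication.

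The overflow issue mirrors \Cref{slem:pairs}. A child sum $s>3k$ is legal only at an active vertex, and there it enters only through the leak cost $c(s-x)+c\widetilde\omega(v)$, which is linear in $s$. So I carry, besides the truncation $Q_0$, one tail scalar
\[
\tau:=\min_{s>3k}\bigl(Q[s]+c(s-3k)\bigr).
\]
This tail composes under merges because the leak cost is additive in $s$:
\[
\tau_{FG}=\min\Bigl\{\min_{3k<s\le6k}\bigl((F_0\ast G_0)[s]+c(s-3k)\bigr),\ \min_i\bigl(F_0[i]+ci\bigr)+\tau_G,\ \tau_F+\min_j\bigl(G_0[j]+cj\bigr),\ \tau_F+\tau_G+3kc\Bigr\}.
\]
Here $\ast$ is min-plus convolution. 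Each of the four pieces costs $O(k^2)$ or $O(k)$.

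The local pass at an active vertex computes
\[
M_v[x]=(Y_v-\tfrac vk x)^2+\min\Bigl\{Q_v[x],\ c\widetilde\omega(v)+\min_{s\ne x}\bigl(Q_v[s]+c|x-s|\bigr)\Bigr\}.
\]
The last minimum splits into prefix and suffix running minima, exactly like $\psi^\pm$ in \Cref{eq:psirec}. The suffix side is seeded by $\tau_v$ and run as $\psi^+_x=c+\min(Q_v[x+1],\psi^+_{x+1})$, and the prefix side symmetrically. So the local pass is $O(k)$. An inactive vertex just copies $Q_v[x]$ and adds its data term.

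To recover a minimizer, I store an argmin pointer at every merge and at every local pass, then backtrack from the root state $k$. A tail choice can be resolved at the cost of storing which summand attained each tail minimum. Because leak vectors of the class lie on $A_k$ and states are confined, a tail choice in an optimal solution can be decoded as a child-sum configuration. Correctness is an induction over subtrees exactly as in \Cref{slem:recursion}, with sums replaced by minima.

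I expect the main obstacle to be the tail bookkeeping in backtracking. A tail value is a minimum over infinitely many $s>3k$, yet the minimizer must correspond to states with every child state at most $3k$. I plan to argue that the tail recursion only ever attains its minimum at finite $s\le3k\cdot|\ch(v)|$. I would first try to show this by recording, at each merge, which of the four tail terms attained $\tau_{FG}$ together with its index, so that backtracking recursively splits the tail into child states, each within range. Exactness in the real-RAM model then follows, since only additions and comparisons are used.
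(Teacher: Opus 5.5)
Your proposal is correct and follows the paper's proof essentially step for step: a min-plus dynamic program over one state per vertex, the same overflow scalar $\min_{t>3k}\{A(t)+c(t-3k)\}$ with the identical four-block merge rule, the prefix/suffix distance transform at active vertices seeded by the tail, and backtracking through stored attaining indices and blocks. The obstacle you anticipate does not arise, because each tail minimum is taken over finitely many finite values, child sums being at most $3k\,|\ch(v)|$.

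The one difference is that the paper first prunes subtrees disjoint from $A_k$ and contracts inactive chains into explicit quadratics, keeping their additive constants. This only saves work and is not needed for the $O(nk^2)$ count you obtain on the full tree.
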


\begin{proof}
Write $b:=v/k$ and $D:=3k$. The class is nonempty: the state vector with $x_o=k$ and all other states zero belongs to $\mathcal X_k$, its one leak sitting at the root. Losses and charges are vertex-additive, and the admissible completions on the subtrees of distinct children are independent given the state at their parent, so the minimum is computed by a postorder dynamic program over one state per vertex.

As in \Cref{slem:compress}, every $x\in\mathcal X_k$ vanishes on a subtree disjoint from $A_k$, so such a subtree contributes one state-independent constant, the sum of $Y_w^2$ over its vertices, to every candidate; and a maximal chain of vertices off $A_k$ with a single child subtree meeting $A_k$ carries one common state. Call the tree obtained by pruning the zero subtrees and contracting these chains the \emph{reduced tree}. The cost of such a chain is the explicit quadratic $x\mapsto\sum_{u}(Y_u-bx)^2$ in the state $x$ at the reduced-tree vertex below it, generated in $O(k)$ operations from the chain's length, sum, and sum of squares. Unlike its sum-product counterpart, the min-plus recursion retains these additive constants: the values $m_{v,k}$ of different candidates are later compared, so nothing may be dropped. The reduction partitions the vertices: each vertex $u$ of the reduced tree owns itself, the suppressed chain entering it, and the zero subtrees pruned at $u$ or along that chain. Write $\widehat T_u$ for the union of the blocks owned by $u$ and by its descendants in the reduced tree, so that $\widehat T_o=\sfV$, and define at each such $u$ the table
\[
m_u(x)\ :=\ \min\Bigl\{\sum_{w\in\widehat T_u}(Y_w-bx'_w)^2
+c\,\Gamma\bigl(z'|_{\widehat T_u}\bigr)\Bigr\}
\qquad(0\le x\le D),
\]
the minimum over the state vectors $x'\in\{0,\dots,D\}^{\widehat T_u}$ with $x'_u=x$ whose leaks $z'$ vanish off $A_k$.

\emph{Overflow.} Child states are individually at most $D$, but their sum is not. A partial min-plus product of child tables, with conceptual value $A(t)$ at child sum $t\ge0$, is carried as its truncation $A(0),\dots,A(D)$ together with the overflow scalar
\[
\tau_A\ :=\ \min_{t>D}\ \bigl\{A(t)+c\,(t-D)\bigr\},
\]
the minimum of an empty set being $+\infty$. For the min-plus convolution of two partial products, $H(q)=\min_{i+j=q}\{A(i)+B(j)\}$, the values $H(0),\dots,H(D)$ cost $O(D^2)$ directly, and partitioning the pairs $(i,j)$ with $i+j>D$ according to whether each index exceeds $D$ gives the exact overflow
\begin{multline}\label{eq:minplustail}
\tau_H\ =\ \min\Bigl\{
\min_{\substack{0\le i,j\le D\\ i+j>D}}\{A(i)+B(j)+c(i+j-D)\},
\\
\tau_A+\min_{0\le j\le D}\{B(j)+cj\},\ \
\tau_B+\min_{0\le i\le D}\{A(i)+ci\},\ \
\tau_A+\tau_B+cD\Bigr\},
\end{multline}
since $A(i)+c(i-D)$ ranges over the overflowed side: for instance, $i>D$ and $j\le D$ give $A(i)+B(j)+c(i+j-D)=[A(i)+c(i-D)]+[B(j)+cj]$, minimized by the second block. Each merge therefore costs $O(D^2)=O(k^2)$, and there are fewer than $n$ merges in the whole tree.

\emph{The local step.} Let $(Q(0),\dots,Q(D),\tau_Q)$ be the assembled child-sum table at $u$, an empty product having $Q(0)=0$, $Q(t)=+\infty$ for $t\ge1$, and $\tau_Q=+\infty$, and let $\chi_u(x)$ be the cost of the rest of $u$'s own block: the quadratic cost of the suppressed chain entering $u$, all of whose vertices carry the state $x$, plus the constants of the block's pruned subtrees, zero if the block is the singleton $\{u\}$. If $u\notin A_k$, conservation forces the child sum to equal $x\le D$, the overflow is irrelevant, and $m_u(x)=(Y_u-bx)^2+\chi_u(x)+Q(x)$. If $u\in A_k$, splitting on $z_u=0$ against $z_u\ne0$ gives
\[
m_u(x)\ =\ (Y_u-bx)^2+\chi_u(x)
+\min\Bigl\{Q(x),\ \ c\,\widetilde\omega(u)
+\min_{t\ge0}\bigl[Q(t)+c\,|x-t|\bigr]\Bigr\};
\]
including $t=x$ in the charged branch only adds the option $Q(x)+c\widetilde\omega(u)$, dominated by the first branch, so the split is exact. The inner minimum is a one-dimensional distance transform, the min-plus mirror of the recurrences in \Cref{slem:recursion}: with
\begin{gather*}
F(0):=Q(0),\qquad F(x):=\min\{Q(x),\,F(x-1)+c\};
\\
B(D):=\min\{Q(D),\,\tau_Q\},\qquad B(x):=\min\{Q(x),\,B(x+1)+c\},
\end{gather*}
one has $F(x)=\min_{t\le x}\{Q(t)+c(x-t)\}$ and $B(x)=\min_{t\ge x}\{Q(t)+c(t-x)\}$, the seed at $D$ covering the overflowed sums because $\min_{t>D}\{Q(t)+c(t-x)\}=\tau_Q+c(D-x)$ for $x\le D$; hence $\min_{t\ge0}\{Q(t)+c|x-t|\}=\min\{F(x),B(x)\}$, at cost $O(k)$ for the two passes. At the root, $x_o=k$ is pinned and $\widetilde\omega(o)=0$, so $m_{v,k}$ is the local step evaluated at $x=k$. Storing, at every minimum, an attaining index, an attaining block of \Cref{eq:minplustail}, and an attaining overflow sum, and backtracking from the root, recovers a minimizer $x\in\mathcal X_k$. The total is $O(k^2)$ per merge and $O(k)$ per vertex, hence $O(nk^2)$ operations.
\end{proof}

\begin{proof}[Completion of the proof of Theorem~3, sufficiency]
The estimator is the penalized selection of \Cref{slem:selection} over the candidate system at working scale $s=\sigma$, with penalties $C_2\sigma^2(D_m+\Delta_m)$ and $C_2\sigma^2n$; for $n<n_\star$ it returns $Y$, a case already absorbed. Condition on $Y_o=y_0$: the candidate system is deterministic, the data decompose as $Y=\widetilde\theta+(0,\sigma Z_{-o})$ with $\widetilde\theta=(y_0,\mu_{-o})$, the noise being centered Gaussian with independent coordinates of variances at most $\sigma^2$, the root variance zero, and the penalties are deterministic, so \Cref{slem:selection} applies conditionally with $E$ the whole space, $\bar\kappa=C_2$, and $N=n$. Since $\widetilde\theta-\mu=(Y_o-V)p_o$, the pathwise root replacement
\begin{equation}\label{eq:uncondition}
\|\widehat\mu-\mu\|_2^2\ \le\
2\,\|\widehat\mu-\widetilde\theta\|_2^2+2\,(Y_o-V)^2 ,
\qquad
\E(Y_o-V)^2=\sigma^2 ,
\end{equation}
converts conditional bounds on $\|\widehat\mu-\widetilde\theta\|_2^2$ into risk bounds. Write $k_a:=\kalg(T,V,\sigma)$ and $r:=\min\{V^2H_T,\sigma^2k_a\}$; by Theorem~1 and the sandwich $k_a\le k_0\le2k_a$ of Lemma~2.3(3), $r\asymp R^*_T(V,\sigma)$. The anchor event $\mathcal E_+$ is determined by $Y_o$. Its complement contributes at most $C\sigma^2$ to the risk, by \Cref{slem:anchor}(3); on $\mathcal E_+$, taking conditional expectations in \Cref{eq:uncondition} and applying \Cref{slem:selection} conditionally bounds the contribution by $C\,\E[\mathfrak O'\,\ind_{\mathcal E_+}]+C\sigma^2$, where $\mathfrak O'$ is the conditional oracle value of \Cref{slem:selection}, the minimum over all candidates including the subspaces. It therefore suffices to prove
\begin{equation}\label{eq:oracletarget}
\E\bigl[\mathfrak O'\,\ind_{\mathcal E_+}\bigr]\ \le\ C\,(r+\sigma^2).
\end{equation}
There are three exhaustive regimes; in the first two the bound is pointwise.

\emph{Diameter regime: $V^2H_T\le\sigma^2k_a$.} The model $\operatorname{span}(p_o)$ contains $y_0p_o$, and $\widetilde\theta-y_0p_o$ vanishes at the root and equals $\mu$ off it, so
\[
\operatorname{dist}^2\bigl(\widetilde\theta,\operatorname{span}(p_o)\bigr)
\ \le\ \|\mu-Vp_o\|_2^2\ \le\ V^2h_T\ \le\ r ,
\]
by convexity as in \Cref{slem:branches}. Dimension and weight are one each, so $\mathfrak O'\le r+3\sigma^2$.

\emph{Entropy regime with small crossing: $\sigma^2k_a<V^2H_T$ and $k_a\le k^\star/2$.} Choose the smallest dyadic $k\ge\max\{k_a,2\}$, so that $k\le\max\{2k_a,2\}\le k^\star$ and $k\le4k_a$. Since $k_a\le k^\star/2\le K/2$, the dimension clause of (2.2) is silent at $k_a$, so its surrogate crossing clause fired there: $V^2\overline\delta_{k_a}\le2A_0\sigma^2k_a$. \Cref{slem:subspace} at budget $k$ and the monotonicity of $\overline\delta$ (Lemma~2.3(3)) give an admissible $S$ with
\[
\operatorname{dist}^2\bigl(\mu,\ \mathcal V_{k,S}\bigr)\ \le\
22\,V^2\overline\delta_k\ \le\ 22\,V^2\overline\delta_{k_a}\ \le\
44\,A_0\,\sigma^2k_a .
\]
The root indicator $p_o$ lies in every profile subspace, $o$ being in $R_0$, so translating an approximant of $\mu$ by $(y_0-V)p_o$ shows $\operatorname{dist}(\widetilde\theta,\mathcal V_{k,S}) \le\operatorname{dist}(\mu,\mathcal V_{k,S})$. With $\dim\mathcal V_{k,S}\le|R_0|+|S|\le(e+4)k$ and $\Delta_{k,S}=(C_1+1)k$, the oracle value of this subspace is at most $44A_0\sigma^2k_a+C\sigma^2k\le C\sigma^2k_a=Cr$.

\emph{Entropy regime with large crossing: $\sigma^2k_a<V^2H_T$ and $k_a>k^\star/2$.} Suppose first $V\ge\sigma/2$. On $\mathcal E_+$, \Cref{slem:anchor}(1) with $s=\sigma$ and $\sigma/V\le2$ gives, pointwise,
\[
\frac{v_{i^*}}V\ \le\ 1+\frac{2\sigma}V+\frac{3\sigma|Z_o|}V
\ \le\ 5+6|Z_o| ,
\]
so \Cref{slem:scale}, at the factor $c=v_{i^*}/V\ge1$, gives $R^*_T(v_{i^*},\sigma)\le(5+6|Z_o|)^2R^*_T(V,\sigma)$ pointwise, of expectation at most $CR^*_T(V,\sigma)$. Since $\mathfrak O'\le\mathfrak O$, the infimum running over more candidates, \Cref{eq:anchoredcover} and \Cref{slem:anchor}(1) together prove \Cref{eq:oracletarget} in this regime. Suppose now $V<\sigma/2$. The regime inequality and $H_T<n$ give $\sigma^2k_a<V^2H_T\le V^2n$, so $\sigma/V<\sqrt{n/k_a}\le\sqrt n$ and $j^*=\lfloor\log_2(\sigma/V)\rfloor$ satisfies $1\le j^*\le\tfrac12\log_2n\le J^-$. The low-amplitude bound \Cref{eq:lowcover} applies, and its three terms are handled in turn: $v^-_{j^*}\in[V,2V]$ and \Cref{slem:scale} give $R^*_T(v^-_{j^*},\sigma)\le4R^*_T(V,\sigma)$; the amplitude $v^-_{j^*}=2^{-j^*}\sigma$ is deterministic, so
\[
\E\bigl(Y_o-v^-_{j^*}\bigr)^2\ \le\
2\sigma^2\,\E Z_o^2+2\bigl(V-v^-_{j^*}\bigr)^2\ \le\
2\sigma^2+2V^2\ \le\ 3\sigma^2 ,
\]
using $|V-v^-_{j^*}|\le V<\sigma/2$; and, since $\log(1+t)\le t$ and $k_a>k^\star/2\ge\log n/(8C_1)$ for $n\ge n_\star$,
\[
\sigma^2\log(1+j^*)\ \le\ \sigma^2\,\frac{\log_2n}2\ \le\
C\,\sigma^2k_a\ =\ C\,r .
\]
This proves \Cref{eq:oracletarget} in every regime, hence the risk bound of Theorem~3.

\emph{Running time.} The dyadic budgets $k\le k^\star$ carry $\sum_{k}e^{C_1k}\le2e^{C_1k^\star}\le2\sqrt n$ admissible supports in total, the terms of the sum growing at least geometrically; by \Cref{slem:enum} they are listed in $O(n\sqrt n)$ operations. One $O(n\log n)$ preprocessing pass stores every vertex's $2^j$-th ancestor for $j\le\lceil\log_2n\rceil$ and every root-path sum of $Y$; lifting the deeper endpoint and then both endpoints finds the deepest common ancestor of any pair in $O(\log n)$ operations, and, by the ancestor count recorded with Lemma~2.2, $\langle p_u,p_w\rangle=\depth(u\wedge w)+1$, while $\langle Y,p_u\rangle$ is the stored root-path sum at $u$. Forming and solving the normal equations of one subspace, of dimension $O(\log n)$, therefore costs $O(\log^3n)$ operations after the preprocessing, and the subspace half costs $O(n^{3/2})$ in total, up to logarithmic factors. The grid half has $O(\log n)$ amplitudes, $O(\log n)$ budgets, and, for each pair, one run of \Cref{slem:minplus} with $c=2C_2\sigma^2$: the remaining weight $C_2\sigma^2(k+i+\log_2k+4)$ of a flow candidate is constant across $x\in\mathcal X_k$, so the penalized criterion is minimized by the returned minimizer. With $k\le n$, the grid half costs $O(n^3\log^2n)$ operations, a conservative bound, and the final comparison across all candidates is dominated by it. All tie breaks are fixed, so the estimator is a deterministic function of $(T,\sigma,Y)$, computable in polynomial time.
\end{proof}

It remains to remove the noise level; the difference statistics and their contamination bound come first. Write $\operatorname{hc}(u)$ for the size-heavy child of an internal vertex $u$, fixed in Section~4.3, pair the children of each branching vertex by a fixed rule, one child left unpaired when their number is odd, and take
\begin{equation}\label{eq:diffstats}
X_u\ :=\ Y_u-Y_{\operatorname{hc}(u)}\ \ (u\ \text{internal}),
\qquad
X_{c,c'}\ :=\ Y_c-Y_{c'}\ \ (\{c,c'\}\ \text{a pair of children}),
\end{equation}
at least $(n-1)/2$ statistics in all, each a Gaussian of variance $2\sigma^2$ around a signal difference. For every $\varepsilon>0$, the contamination bound announced in the main body reads
\begin{equation}\label{eq:contamcount}
\#\bigl\{u:\ \E X_u\ge\varepsilon\sigma\bigr\}
\ +\ \#\bigl\{\{c,c'\}:\ |\E X_{c,c'}|\ge\varepsilon\sigma\bigr\}
\ \le\ \frac{V\,w_T}{\varepsilon\sigma}\ .
\end{equation}

\begin{proof}[Proof of \Cref{eq:contamcount}]
Fix $\varepsilon>0$. Both counts rest on the identity, valid for every vertex set $B$ by the subtree sums of Lemma~2.1,
\begin{equation}\label{eq:massidentity}
\sum_{a\in B}\mu(a)\ =\ \sum_{a\in B}\ \sum_{u:\,u\succeq a}s_u
\ =\ \sum_{u\in\sfV}s_u\,\#\{a\in B:\ a\preceq u\}\ ;
\end{equation}
when every root path meets $B$ in at most $w$ vertices, the count on the right is at most $w$, and the leaks total $V$, so $\sum_{a\in B}\mu(a)\le Vw$.

\emph{Heavy differences.} Every internal vertex carries exactly one heavy edge, and every vertex is the size-heavy child of at most one vertex, its parent, so the heavy edges partition $\sfV$ into vertex-disjoint paths; the \emph{top} of such a path is the root or the lower endpoint of a light edge. The flow form of Lemma~2.1 makes every mean $\E X_u=\mu(u)-\mu(\operatorname{hc}(u))$ nonnegative, and along one heavy path the means telescope to $\mu(\mathrm{top})-\mu(\mathrm{bottom})\le\mu(\mathrm{top})$. A root path contains at most $w_{\mathrm{lt}}$ tops, one for the root and one per light edge it crosses, so \Cref{eq:massidentity}, applied to the set of tops, gives
\[
\varepsilon\sigma\,\#\bigl\{u:\ \E X_u\ge\varepsilon\sigma\bigr\}
\ \le\ \sum_{u\ \mathrm{internal}}\E X_u
\ \le\ \sum_{\mathrm{tops}\ a}\mu(a)\ \le\ V\,w_{\mathrm{lt}}\ .
\]

\emph{Sibling pairs.} The coordinates $\mu(c)$ of the children of a branching vertex $u$ are nonnegative, so a pair with $|\E X_{c,c'}|=|\mu(c)-\mu(c')|\ge\varepsilon\sigma$ contains a child whose coordinate is at least $\varepsilon\sigma$; such children number at most $\mu(u)/(\varepsilon\sigma)$, since $\sum_{c\in\ch(u)}\mu(c)\le\mu(u)$, and distinct pairs contain distinct children. Summing over the branching vertices, of which a root path contains at most $w_{\mathrm{br}}$, and applying \Cref{eq:massidentity} to their set,
\[
\#\bigl\{\{c,c'\}:\ |\E X_{c,c'}|\ge\varepsilon\sigma\bigr\}
\ \le\ \frac1{\varepsilon\sigma}\sum_{u\ \mathrm{branching}}\mu(u)
\ \le\ \frac{V\,w_{\mathrm{br}}}{\varepsilon\sigma}\ .
\]
Adding the two displays gives \Cref{eq:contamcount}, the width being $w_T=w_{\mathrm{lt}}+w_{\mathrm{br}}$.
\end{proof}

The statistic family \Cref{eq:diffstats} has at least $(n-1)/2$ members: an internal vertex with $g$ children contributes the difference along its heavy edge together with $\lfloor g/2\rfloor$ sibling pairs, $1+\lfloor g/2\rfloor\ge(g+1)/2$, and the child counts of the internal vertices sum to $n-1$. Each vertex $u$ enters at most three statistics: its own difference $X_u$ when internal, the difference $X_{\pa(u)}$ when it is the size-heavy child of its parent, and at most one pair. Scan the family in a fixed order and retain every statistic both of whose coordinates are unused by the statistics retained so far: a retained statistic blocks at most four others, two through each coordinate, so at least a fifth of the family is retained. This produces a deterministic subfamily, relabeled $X_1,\dots,X_N$, with $N\ge(n-1)/10$ and pairwise disjoint coordinate pairs; the $X_i$ are therefore independent, with $X_i-\E X_i\sim N(0,2\sigma^2)$.

\begin{lemma}[Noise estimate]\label{slem:noise}
Let $n\ge2$, write $\operatorname{med}\{|X_1|,\dots,|X_N|\}$ for the $\lceil N/2\rceil$-th smallest of the absolute values, and set
\[
\widehat\sigma\ :=\
\frac{\operatorname{med}\{|X_1|,\dots,|X_N|\}}
{\sqrt2\,\Phi_{\mathcal N}^{-1}(3/4)},
\]
the denominator being the median of $|N(0,2)|$, with $\Phi_{\mathcal N}$ the standard normal distribution function. If $V\,w_T\le\sigma n/1280$, then
\[
\mathbb P\bigl(\widehat\sigma\in[\sigma/2,\ 2\sigma]\bigr)\ \ge\
1-2e^{-c(n-1)}
\]
with a universal $c>0$.
\end{lemma}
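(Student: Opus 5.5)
The plan is to reduce the event $\{\widehat\sigma\in[\sigma/2,2\sigma]\}$ to two counting events on the independent family $X_1,\dots,X_N$. Write $m_0:=\sqrt2\,\Phi_{\mathcal N}^{-1}(3/4)\,\sigma$, the median of $|N(0,2\sigma^2)|$. Then $\widehat\sigma<\sigma/2$ requires at least $\lceil N/2\rceil$ of the $|X_i|$ to lie below $m_0/2$. Likewise, $\widehat\sigma>2\sigma$ requires at least $N-\lceil N/2\rceil+1\ge N/2$ of them to exceed $2m_0$. Each event is a binomial-type tail for a sum of independent indicators, so it suffices to show that the expected count stays below $N/2$ by a margin linear in $N$. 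Hoeffding's inequality then gives probability at most $e^{-cN}\le e^{-c(n-1)/10}$ for each event. A union bound over the two events yields the factor $2$.

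For the lower tail, set $\mu_i:=\E X_i$. For any mean, the probability $\mathbb P(|X_i|<m_0/2)$ is at most the same probability at mean zero, by Anderson's inequality (symmetric unimodal density, symmetric interval). That centered value is a fixed constant $p_-<1/2$, computable as $2\Phi_{\mathcal N}(\Phi_{\mathcal N}^{-1}(3/4)/2)-1\approx0.26$. So the expected count is at most $p_-N$, no contamination bound is needed, and Hoeffding applies with margin $(1/2-p_-)N$.

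The upper tail is where the contamination enters, and I expect it to be the main step. Split the indices into contaminated ones, with $|\mu_i|\ge\varepsilon\sigma$, and clean ones. Apply \Cref{eq:contamcount} with a fixed $\varepsilon$, say $\varepsilon=1$. The retained subfamily is a subset of the statistics in \Cref{eq:diffstats}, and heavy differences have nonnegative means, so the count of contaminated indices is at most $Vw_T/\sigma\le n/1280$. Using $N\ge(n-1)/10$ and $n\ge2$ (so $n\le 2(n-1)$), this is at most $N/64$ or so.

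Each clean index has $|X_i|\le|X_i-\mu_i|+\sigma$. So $\mathbb P(|X_i|>2m_0)\le\mathbb P(|N(0,2\sigma^2)|>2m_0-\sigma)=:p_+$. Here $2m_0\approx1.91\sigma$, so $2m_0-\sigma\approx0.91\sigma$ and $p_+\approx0.52$, which is too large. To fix this, I would take $\varepsilon$ smaller, for instance $\varepsilon=1/5$. The contaminated count then grows to $5Vw_T/\sigma\le n/256\le N/12$ (using $N\ge(n-1)/10\ge n/20$). The threshold becomes $2m_0-\sigma/5\approx1.71\sigma$, so $p_+=\mathbb P(|N(0,2)|>1.71)\approx0.23$.

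The expected number of exceedances is then at most $N/12+0.23N<N/2-cN$. Hoeffding over the clean indices, with the contaminated ones counted deterministically as exceedances, closes the bound. The main obstacle is exactly this constant bookkeeping: choosing $\varepsilon$ so the contamination fraction and the shifted Gaussian tail together stay strictly below one half. The hypothesis constant $1280$ is what leaves room for that choice.
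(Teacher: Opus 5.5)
Your proof is correct and follows the paper's argument: reduce the median to two counting events for the independent family, bound the number of statistics with mean at least $\varepsilon\sigma$ by \Cref{eq:contamcount}, control the remaining statistics by shifted Gaussian tails, and finish with exponential concentration for independent indicators. The paper takes $\varepsilon=1/8$ for both tails and thresholds the median at $\sigma/2$ and $9\sigma/5$. Your one genuine variation is the lower tail: Anderson's inequality shows that a mean shift can only decrease $\mathbb P(|X_i|<m_0/2)$, so that half needs no contamination control. This makes explicit that the regime condition $Vw_T\le\sigma n/1280$ only guards against overestimating $\sigma$.
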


\begin{proof}
Call $X_i$ \emph{clean} when $|\E X_i|\le\sigma/8$. The contamination count \Cref{eq:contamcount} at $\varepsilon=1/8$, applied to the full family and hence to the subfamily, bounds the number of statistics that are not clean by $8Vw_T/\sigma\le n/160\le N/8$, the last step because $N\ge(n-1)/10\ge n/20$ for $n\ge2$. A clean statistic satisfies, for every $t\ge0$,
\[
\mathbb P\bigl(|X_i|\le t\bigr)\ \le\
\mathbb P\Bigl(|N(0,2)|\le\frac t\sigma+\frac18\Bigr),
\qquad
\mathbb P\bigl(|X_i|>t\bigr)\ \le\
\mathbb P\Bigl(|N(0,2)|>\frac t\sigma-\frac18\Bigr),
\]
and two numerical evaluations of the standard normal distribution function give
\[
\mathbb P\bigl(|N(0,2)|\le0.625\bigr)\ <\ 0.35\ <\ \tfrac38,
\qquad
\mathbb P\bigl(|N(0,2)|>1.675\bigr)\ <\ 0.24\ <\ \tfrac38 .
\]
If $\operatorname{med}|X|<\sigma/2$, at least $\lceil N/2\rceil$ of the absolute values are below $\sigma/2$, of which at least $N/2-N/8=3N/8$ belong to clean statistics, and each clean statistic falls below $\sigma/2$ with probability less than $0.35$. If $\operatorname{med}|X|>9\sigma/5$, at least $N-\lceil N/2\rceil+1\ge N/2$ of the absolute values exceed $9\sigma/5$, at least $3N/8$ of them clean, each clean statistic exceeding $9\sigma/5$ with probability less than $0.24$. In either case, a count of at most $N$ independent indicators, each of success probability at most a fixed $p<3/8$, must reach $3N/8$. For $\lambda>0$, Markov's inequality applied to the exponential of $\lambda$ times this count bounds the probability by $[e^{-3\lambda/8}(1-p+pe^\lambda)]^N$, each missing indicator only lowering the moment generating function below the factor $1-p+pe^\lambda>1$; the bracket equals $1$ at $\lambda=0$ with derivative $p-\tfrac38<0$, so a fixed small $\lambda$ makes it $e^{-c}<1$, and each probability is at most $e^{-cN}\le e^{-c(n-1)/10}$; renaming $c$, the two events together have probability at most $2e^{-c(n-1)}$, the claimed exceptional probability. On the complement of the two events, $\operatorname{med}|X|\in[\sigma/2,\,9\sigma/5]$; since $\Phi_{\mathcal N}(0.6718)<\tfrac34<\Phi_{\mathcal N}(0.6788)$ by a numerical evaluation, the denominator $\sqrt2\,\Phi_{\mathcal N}^{-1}(3/4)$ lies in $(0.95,\,0.96)$, and
\[
\widehat\sigma\ \in\
\Bigl[\frac{\sigma/2}{0.96},\ \frac{9\sigma/5}{0.95}\Bigr]
\ \subseteq\ [\sigma/2,\ 2\sigma]. \qedhere
\]
\end{proof}

\begin{proof}[Completion of the proof of Theorem~4]
For $n<n_\star$ return $Y$, whose risk $n\sigma^2\le n_\star\sigma^2$ is absorbed as before; assume $n\ge n_\star$. The estimator computes $\widehat\sigma$; on the null event $\{\widehat\sigma=0\}$ it returns $Y$, and otherwise it rounds upward to $\widetilde\sigma:=2^{\lceil\log_2\widehat\sigma\rceil}$ and runs the selection of Theorem~3 at working scale $s=\widetilde\sigma$, with the weight of every subspace and point model increased by two and the penalties built from these shifted weights at scale $\widetilde\sigma$. The estimator depends only on $(T,Y)$, and its running time adds $O(n\log n)$ for the statistics and the median to the polynomial cost already accounted. We prove the risk bound with $c=1/1280$ in the regime condition.

Let $\widehat E:=\{\widehat\sigma\in[\sigma/2,2\sigma]\}$, so that $\mathbb P(\widehat E^c)\le2e^{-c(n-1)}$ by \Cref{slem:noise} under the regime condition. On $\widehat E$, the rounding gives $\widetilde\sigma\in[\sigma/2,4\sigma]$, and $\widetilde\sigma$ is a power of two in that interval, hence takes at most four values determined by $\sigma$.

\emph{On $\widehat E$.} Condition on $Y_o=y_0$. The realized candidate-and-penalty system is one of the four deterministic systems indexed by the possible values of $\widetilde\sigma$; the subspace candidates are common to all four, while the grids differ. The penalties satisfy the bracketing of \Cref{slem:selection} with respect to the true $\sigma$ on $\widehat E$: with shifted weights $\Delta_m+2$, since $\widetilde\sigma^2\in[\sigma^2/4,16\sigma^2]$ and $C_2/4=32$,
\[
32\,\sigma^2(D_m+\Delta_m+2)\ \le\
C_2\widetilde\sigma^2(D_m+\Delta_m+2)\ \le\
16C_2\,\sigma^2(D_m+\Delta_m+2),
\]
and likewise for the full model, so $E=\widehat E$ and $\bar\kappa=16C_2$. The proof of \Cref{slem:selection} is repeated with one change: the deviation variable is the supremum over the four systems, each paired with its own comparison model. The weight shift by two keeps the combined Kraft sum below one, since $4e^{-2}\bigl(0.53+\tfrac18\bigr)<1$ by \Cref{eq:kraftsum} and the subspace sum, so the expectation of the deviation variable is still at most one; the basic inequality is unchanged, the selected and the comparison model both lying in the realized subsystem. The conditional oracle bound of \Cref{slem:selection} therefore holds on $\widehat E$ for the realized system.

Coverage now runs as in the completion of Theorem~3, at the realized scale $s=\widetilde\sigma\in[\sigma/2,4\sigma]$, with $k_a(s):=\kalg(T,V,s)$ and $r_s:=\min\{V^2H_T,s^2k_a(s)\}$; by \Cref{eq:ratescale}, $r_s\le CR^*_T(V,\sigma)$ uniformly over the four values, and the weight shift adds $2\sigma^2$ to every oracle value, absorbed by the additive term. In the diameter regime, the pointwise bound through $\operatorname{span}(p_o)$ is unchanged. In the entropy regime with $k_a(s)\le k^\star/2$, the surrogate crossing at scale $s$ gives $V^2\overline\delta_{k_a(s)}\le2A_0s^2k_a(s)$, and the subspace bound reads $22V^2\overline\delta_k\le44A_0s^2k_a(s)\le Cr_s$. In the entropy regime with $k_a(s)>k^\star/2$: for $V\ge s/2$, \Cref{slem:anchor}(1) gives, pointwise,
\[
\frac{v_{i^*}}V\ \le\ 1+\frac{2s}V+\frac{3\sigma|Z_o|}V\ \le\
5+12\,|Z_o| ,
\]
using $s/V\le2$ and $\sigma\le2s$, so \Cref{eq:anchoredcover}, \Cref{slem:scale}, and \Cref{slem:anchor}(1) give the bound $C(R^*_T(V,\sigma)+\sigma^2)$ in expectation as before. For $V<s/2$, the index $j^*=\lfloor\log_2(s/V)\rfloor$ again lies in $\{1,\dots,J^-\}$, since $s^2k_a(s)<V^2H_T\le V^2n$; the amplitude $v^-_{j^*}=2^{-j^*}\widetilde\sigma$ now depends on the data through $\widetilde\sigma$, so no independence from $Z_o$ is available, and the root-coordinate term of \Cref{eq:lowcover} is bounded pathwise on $\widehat E$ instead:
\[
\bigl(Y_o-v^-_{j^*}\bigr)^2\ \le\ 2\sigma^2Z_o^2
+2\bigl(V-v^-_{j^*}\bigr)^2\ \le\ 2\sigma^2Z_o^2+2V^2\ \le\
2\sigma^2Z_o^2+8\sigma^2 ,
\]
using $v^-_{j^*}\in[V,2V]$ and $V<s/2\le2\sigma$, of expectation $O(\sigma^2)$; the weight term $\sigma^2\log(1+j^*)\le C\sigma^2k_a(s)=Cr_s$ as before. Finally, for every realized $s$ the omitted-anchor event is contained in $\{Z_o<-2(n+1)\}$ by \Cref{slem:anchor}(2), and \Cref{slem:anchor}(3) with $s^2\le16\sigma^2$ bounds its loss contribution by $C\sigma^2$. Splitting along the realized anchor event and applying the pathwise root replacement \Cref{eq:uncondition} on it, as in the completion of Theorem~3, gives $\E[\|\widehat\mu-\mu\|_2^2\,\ind_{\widehat E}]\le C(R^*_T(V,\sigma)+\sigma^2)$.

\emph{Off $\widehat E$.} On $\{\widehat\sigma=0\}$ the loss is $\sigma^2\|Z\|_2^2$. On $\{\widehat\sigma>0\}\cap\widehat E^c$, compare the selected criterion with $\operatorname{span}(p_o)$, of dimension one and shifted weight three:
\[
\|Y-\widehat\mu\|_2^2\ \le\
\bigl\|Y-P_{\operatorname{span}(p_o)}Y\bigr\|_2^2
+4C_2\widetilde\sigma^2\ \le\ \|Y\|_2^2+4C_2\widetilde\sigma^2 ,
\]
so $\|\widehat\mu-\mu\|_2\le\|\widehat\mu-Y\|_2+\|Y-\mu\|_2\le 2\|Y\|_2+\|\mu\|_2+2\sqrt{C_2}\,\widetilde\sigma$. Pathwise, $\widetilde\sigma\le2\widehat\sigma\le3\operatorname{med}|X|\le 3\max_i|X_i|\le6\|Y\|_\infty\le6\|Y\|_2$, each $X_i$ being a difference of two coordinates of $Y$, so all losses off $\widehat E$ obey $\|\widehat\mu-\mu\|_2\le C(\|Y\|_2+\|\mu\|_2)$. For the moments: $\mu(u)\le V$ at every vertex and $\sum_u\mu(u)=\sum_xs_x(\depth(x)+1)\le Vn$ by Lemma~2.1, so $\|\mu\|_2^2\le V^2n$; $\E\|Z\|_2^4\le3n^2$ gives $\E\|Y\|_2^4\le8(\|\mu\|_2^4+\sigma^4\E\|Z\|_2^4)\le 8(V^4n^2+3\sigma^4n^2)$; and the regime condition with $w_T\ge1$ gives $V\le\sigma n/1280$, hence $V^4n^2\le\sigma^4n^6$. The squared loss therefore has second moment at most $C\sigma^4n^6$ off $\widehat E$, and the Cauchy--Schwarz inequality gives
\[
\E\bigl[\|\widehat\mu-\mu\|_2^2\,\ind_{\widehat E^c}\bigr]\ \le\
\bigl(C\sigma^4n^6\bigr)^{1/2}\bigl(2e^{-c(n-1)}\bigr)^{1/2}\ \le\
C\sigma^2n^3e^{-c(n-1)/2}\ \le\ C\sigma^2 .
\]
Adding the two contributions proves Theorem~4.
\end{proof}

The width of the benchmark families is read off the definition in Section~4.3. A path has no light edge and no branching vertex, so $w_T=1$. On the complete binary tree of height $h$, every internal vertex has one light child edge, and a root-to-leaf path may cross a light edge and a branching vertex at each of its $h$ internal levels, so $w_T=(1+h)+h=2h+1$. On a star with $m\ge2$ leaves, and on the broom of Theorem~5, a root-to-leaf path crosses at most one light edge and one branching vertex, so $w_T\le3$.

The regime of Theorem~4 cannot be removed entirely by a better noise estimate. The following converse, promised at the end of Section~4.3, shows that once budgets of order $\sigma n\sqrt{\log n}$ are admitted, no estimator lands strictly within a factor $\sqrt2$ of the noise level, uniformly, on any tree.

\begin{prop}[Noise-estimation ceiling]\label{sprop:ceiling}
For every finite rooted tree and every $\sigma>0$, put
$\lambda_n:=\log\bigl(4(n+1)\bigr)$ and
$V_1:=6\,\sigma n\sqrt{\lambda_n}$. In the model $Y=\mu+sZ$,
$Z\sim N(0,I_n)$, where $\mu$ is a monotone flow of root value at
most $V_1$, that is, $\mu\in\bigcup_{0\le V'\le V_1}\mathcal
F_{V'}(T)$ with $\mathcal F_0(T):=\{0\}$, and $s\in\{\sigma,2\sigma\}$, every estimator
$\widehat s=\widehat s(Y)$ satisfies
\[
\sup_{(\mu,s)}\ \mathbb P_{\mu,s}\Bigl(\widehat s\notin
\bigl(s/\sqrt2,\ \sqrt2\,s\bigr)\Bigr)\ \ge\ \frac38 ,
\]
the supremum running over the admitted pairs. The same
bound holds for randomized estimators, by averaging over their
internal randomness.
\end{prop}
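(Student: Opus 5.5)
The plan is a two-point argument between the two noise levels. The target intervals $(\sigma/\sqrt2,\sqrt2\sigma)$ and $(\sqrt2\sigma,2\sqrt2\sigma)$ are disjoint. So it suffices to build a prior $\Pi_1$ on admitted pairs with $s=\sigma$ and a prior $\Pi_2$ with $s=2\sigma$ whose data marginals $P_1,P_2$ satisfy $\operatorname{TV}(P_1,P_2)\le\tfrac14$. Then for any $\widehat s$ the two failure events are complementary up to the gap, and the Bayes error under the uniform mixture is at least $\tfrac12(1-\operatorname{TV})\ge\tfrac38$. Since the supremum dominates the average over the two priors, this gives the bound. Randomized estimators are handled by conditioning on their internal randomness and averaging.

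The pair under $s=2\sigma$ is deterministic: $(\mu_0,2\sigma)$, whose data law is exactly $N(\mu_0,4\sigma^2I_n)$. The prior under $s=\sigma$ takes $\mu=\mu_0+\xi$ with $\xi_v$ i.i.d.\ $N(0,3\sigma^2)$. Without any constraint the marginal of $Y$ would be $N(\mu_0,\sigma^2I+3\sigma^2I)=N(\mu_0,4\sigma^2I)$, identical to $P_2$. The only obstruction is legality of $\mu_0+\xi$ as a monotone flow. I therefore condition $\xi$ on the event $G:=\{\max_v|\xi_v|\le B\}$, which moves the marginal by at most $\mathbb P(G^c)$ in total variation. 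The base flow $\mu_0$ is chosen, via Lemma~2.1, to absorb any $\xi$ on $G$: give $\mu_0$ leak $\ell_v:=B(1+|\ch(v)|)$ at every vertex. The leak of $\mu_0+\xi$ at $v$ is then
\[
\ell_v+\xi_v-\sum_{c\in\ch(v)}\xi_c\ \ge\ \ell_v-B(1+|\ch(v)|)\ =\ 0
\]
on $G$, so Lemma~2.1(3) makes $\mu_0+\xi$ a monotone flow of root value $V'=\sum_v\ell_v+\xi_o\ge0$. The root value satisfies $\sum_v\ell_v+\xi_o\le B(2n-1)+B=2Bn$, because the child counts sum to $n-1$. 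Also $\mu_0$ itself, of root value $B(2n-1)$, is admitted.

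It remains to choose $B$ so that $2Bn\le V_1=6\sigma n\sqrt{\lambda_n}$ and $\mathbb P(G^c)\le\tfrac14$. The first condition allows $B=3\sigma\sqrt{\lambda_n}$, i.e.\ $B^2/(3\sigma^2)=3\lambda_n$. The union bound with the Gaussian tail then gives
\[
\mathbb P(G^c)\ \le\ 2n\,e^{-B^2/(6\sigma^2)}\ =\ 2n\,e^{-3\lambda_n/2}\ =\ 2n\,\bigl(4(n+1)\bigr)^{-3/2},
\]
which is at most $\tfrac14$ for every $n\ge1$. The total variation estimate is the routine inequality $\operatorname{TV}\bigl(\mathcal L(Y\mid G),\mathcal L(Y)\bigr)\le\mathbb P(G^c)$, valid because conditioning a mixing variable on an event of probability $p$ changes the mixture by at most $1-p$.

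The main obstacle is the legality step. The naive choice of independent coordinate perturbations conflicts with the flow inequalities at high-degree vertices, because a vertex must dominate the sum of all its children's perturbations. This is why the leak slack is taken proportional to $1+|\ch(v)|$. The degree sum identity keeps the total budget linear in $n$, which is exactly what the $\sigma n\sqrt{\log n}$ scale of $V_1$ affords. If the constant $6$ turns out to be too tight in some edge case (small $n$), I would use the Mills-ratio form of the tail for the union bound.
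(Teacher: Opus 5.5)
Your proof is correct and follows the paper's argument: the same scale-mixture identity $N(\mu_0,4\sigma^2I_n)=\int N(\mu_0+w,\sigma^2I_n)\,d\gamma(w)$ with $\gamma=N(0,3\sigma^2I_n)$, a base flow whose leaks carry slack growing with $1+|\ch(v)|$, conditioning the mixing variable on a legality event of probability at least $\tfrac34$, and the same two-point total-variation step yielding $\tfrac38$. The only difference is minor. You condition on the box event $\{\max_v|\xi_v|\le B\}$ with slack $B(1+|\ch(v)|)$, so the root value is at most $B(2n-1)+B=V_1$ with no Cauchy--Schwarz step. The paper instead conditions directly on the Gaussian leak perturbations $w_v-\sum_{c\in\ch(v)}w_c\ge-s_v$, with slack of order $\sigma\sqrt{(1+|\ch(v)|)\lambda_n}$, and adds a separate cap on the root coordinate.
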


\begin{proof}
The mechanism is a scale mixture: doubling the noise of a Gaussian shift is a random perturbation of its mean, and a signal whose flow inequalities all carry enough slack absorbs the perturbation without leaving the cone.

\emph{Step 1: the mixture identity.} In one coordinate, with $a:=3\sigma^2$ and $b:=\sigma^2$, the exponent of the convolution integrand of $N(0,a)$ and $N(0,b)$ completes the square as
\[
\frac{w^2}{2a}+\frac{(y-w)^2}{2b}\ =\
\frac{a+b}{2ab}\Bigl(w-\frac{ay}{a+b}\Bigr)^2
+\frac{y^2}{2(a+b)}\ ,
\]
and integrating out $w$ leaves a Gaussian integral at variance $ab/(a+b)$; the normalizers combine into that of $N(0,a+b)$, so the convolution is $N(0,4\sigma^2)$. Coordinates being independent, with $\gamma:=N(0,3\sigma^2I_n)$ and any fixed $\mu_0\in\bbR^n$,
\begin{equation}\label{eq:mixture}
N\bigl(\mu_0,\,4\sigma^2I_n\bigr)\ =\
\int N\bigl(\mu_0+w,\ \sigma^2I_n\bigr)\,d\gamma(w):
\end{equation}
the law of $Y$ at signal $\mu_0$ and noise $2\sigma$ is a $\gamma$-mixture of laws at noise $\sigma$ and shifted signals.

\emph{Step 2: a signal with slack.} Define $\mu_0$ through its leaks,
\[
s_v\ :=\ \sigma\sqrt{6\,(1+|\ch(v)|)\,\lambda_n}
\qquad(v\in\sfV),
\qquad
V_0\ :=\ \sum_{v}s_v ,
\]
so that $\mu_0\in\mathcal F_{V_0}(T)$ by Lemma~2.1. The Cauchy--Schwarz inequality and $\sum_v(1+|\ch(v)|)=n+(n-1)\le2n$ give $V_0\le\sigma\sqrt{6\lambda_n}\,\sqrt n\,\sqrt{2n} =\sigma n\sqrt{12\lambda_n}\le V_1$, so $\mu_0$ is admitted. Define the legality event
\[
A\ :=\ \Bigl\{w\in\bbR^{\sfV}:\
w_v-\textstyle\sum_{c\in\ch(v)}w_c\ \ge\ -s_v\ \text{ for all }
v\in\sfV\Bigr\}\ \cap\ \bigl\{w_o\le\sigma\sqrt{6\lambda_n}\bigr\}.
\]
The leak map is linear, so on $A$ the leak of $\mu_0+w$ at $v$ is $s_v+(w_v-\sum_cw_c)\ge0$: the shifted signal is a monotone flow, of root value
\[
V_0+w_o\ \le\ \sigma n\sqrt{12\lambda_n}+\sigma\sqrt{6\lambda_n}
\ \le\ \bigl(\sqrt{12}+\sqrt6\bigr)\,\sigma n\sqrt{\lambda_n}
\ \le\ V_1 ,
\]
hence admitted. Under $\gamma$, the constraint at $v$ fails when a centered Gaussian of variance $3\sigma^2(1+|\ch(v)|)$ exceeds $s_v$, and the root cap fails when a centered Gaussian of variance $3\sigma^2$ exceeds $\sigma\sqrt{6\lambda_n}$; by the Gaussian tail bound each failure has probability at most $e^{-\lambda_n}$, so
\[
\gamma(A^c)\ \le\ (n+1)\,e^{-\lambda_n}\ =\ \frac14 .
\]

\emph{Step 3: two inseparable hypotheses.} Let $P:=N(\mu_0,4\sigma^2I_n)$, the law of $Y$ at the admitted pair $(\mu_0,2\sigma)$, and let
\[
Q_A\ :=\ \frac1{\gamma(A)}\int_AN\bigl(\mu_0+w,\
\sigma^2I_n\bigr)\,d\gamma(w),
\]
a mixture of laws at admitted pairs with noise level $\sigma$. Splitting \Cref{eq:mixture} over $A$ and $A^c$ gives $P=\gamma(A)Q_A+\gamma(A^c)Q_{A^c}$, so every event $B$ satisfies
\[
\bigl|P(B)-Q_A(B)\bigr|\ =\ \gamma(A^c)\,
\bigl|Q_{A^c}(B)-Q_A(B)\bigr|\ \le\ \frac14 .
\]
Given $\widehat s$, let $B:=\{\widehat s<\sqrt2\,\sigma\}$. Then $Q_A(B^c)+P(B)=1-[Q_A(B)-P(B)]\ge\tfrac34$, so at least one term is at least $\tfrac38$. If $P(B)\ge\tfrac38$: at $(\mu_0,2\sigma)$ the event $B$ reads $\{\widehat s<s/\sqrt2\}$, outside the window. If $Q_A(B^c)\ge\tfrac38$: a mixture average is at least $\tfrac38$, so some $w\in A$ has $\mathbb P_{\mu_0+w,\,\sigma}(B^c)\ge\tfrac38$, and at $(\mu_0+w,\sigma)$ the event $B^c$ reads $\{\widehat s\ge\sqrt2\,s\}$, outside the window. A randomized estimator is handled by averaging the display over its internal randomness.
\end{proof}

The statements deferred from Section~4.3 are now all in place.

%%%%%%%%%%%%%%%%%%%%%%%%%%%%%%%%%%%%%%%%%%%%%%%%%%%%%%%%%%%%%%%%%%%%%%%%%%%%%%
\section{The Universal Upper Bound for Least Squares}\label{ssec:lse}
%%%%%%%%%%%%%%%%%%%%%%%%%%%%%%%%%%%%%%%%%%%%%%%%%%%%%%%%%%%%%%%%%%%%%%%%%%%%%%

This section proves the upper bound of Theorem~5, following the three blocks sketched in Section~5: the localized projection principle (\Cref{slem:localized}), the truth-localized width bound (5.10) (\Cref{slem:truthwidth}), which rests on a noise-compatible choice of centers in the coded cover (\Cref{slem:compatible}), and the closure of the exponent, for which \Cref{slem:heightfloor} supplies the height branch. The capped order-statistics bound of \Cref{slem:capped} does the repeated work inside \Cref{slem:compatible}.

Throughout, $Z\sim N(0,I_n)$ is the noise, $\ell_n:=1+\log(en)$, and, for an integer $2\le k\le n/C_\star$, the construction of Proposition~3.1 is in force with the notation and of \Cref{ssec:cover}: the working scale $\bar\alpha=k\overline\delta_k(T)$, the nets $R_j$ with level weights $m_j=2^j$ and deepest-ancestor maps $a_j(\cdot)$, and, for a signal $f\in\mathcal F_1(T)$, the terminal cells $L$ with masses $\lambda_L$ and levels $m(L)$, the collapse $g$, the skeleton $g_0$ supported on $U$, its rounding $\bar g_0$, the exits $q_L$, and the sampling variables $Z_L$ with selection weight $W$, a notation fixed in the proof of Proposition~3.1; the unsubscripted $Z$ is always the noise. Root coordinates never contribute: every point of $\mathcal F_V(T)$ has root coordinate $V$, so every difference below vanishes there. The Gaussian tail bound $\mathbb P(N(0,\tau^2)\ge u)\le e^{-u^2/(2\tau^2)}$, valid for $u\ge0$, and Gaussian concentration for Lipschitz functions \citep{borell1975,tsirelson1976}, namely $\mathbb P\bigl(F(Z)\ge\E F(Z)+u\bigr)\le e^{-u^2/(2\Lambda^2)}$ for $\Lambda$-Lipschitz $F:\bbR^n\to\bbR$, are used without further comment.

The first lemma turns a bound on the localized Gaussian width into a risk bound for the projection. It is the fixed-point principle of Chatterjee's analysis of least squares \citep{chatterjee2014}, in the form quoted in Section~5; nothing in it refers to trees.

\begin{lemma}[Localized projection]\label{slem:localized}
Let $C\subseteq\bbR^N$ be compact and convex, let $\mu\in C$, let $Y=\mu+\sigma Z$ with $Z\sim N(0,I_N)$, and let $\widehat\mu:=\Pi_C(Y)$ be the Euclidean projection of $Y$ onto $C$. Suppose that $t\ge\sigma$ and that for every $s\ge t$
\begin{equation}\label{eq:widthcondition}
\sigma\,\E\,\sup\bigl\{\langle Z,x-\mu\rangle:\ x\in C,\
\|x-\mu\|_2\le s\bigr\}\ \le\ \frac{s^2}4\,.
\end{equation}
Then $\E\|\widehat\mu-\mu\|_2^2\le33\,t^2$.
\end{lemma}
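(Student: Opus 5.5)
The plan is the standard peeling-free localization argument: turn projection optimality into a lower bound on the localized supremum, then apply Gaussian concentration at each radius and integrate the tail. For $s>0$ put
\[
F_s(Z)\ :=\ \sup\bigl\{\langle Z,x-\mu\rangle:\ x\in C,\ \|x-\mu\|_2\le s\bigr\},
\qquad
w(s)\ :=\ \E F_s(Z).
\]
The supremum is taken over a compact set and is continuous in $Z$, so it is measurable. Since $x=\mu$ is admissible, $F_s\ge0$. As a supremum of linear functionals of $Z$ with coefficient vectors of norm at most $s$, $F_s$ is $s$-Lipschitz in $Z$.

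\emph{Step 1: optimality forces a large supremum.} Since $\mu\in C$, projection optimality gives $\|Y-\widehat\mu\|_2^2\le\|Y-\mu\|_2^2$. Substituting $Y=\mu+\sigma Z$ and expanding yields
\[
\|\widehat\mu-\mu\|_2^2\ \le\ 2\sigma\langle Z,\widehat\mu-\mu\rangle .
\]
Write $\rho:=\|\widehat\mu-\mu\|_2$ and fix $s\le\rho$. The point $x:=\mu+(s/\rho)(\widehat\mu-\mu)$ lies in $C$ by convexity and satisfies $\|x-\mu\|_2=s$. Moreover
\[
\langle Z,x-\mu\rangle\ =\ \frac s\rho\,\langle Z,\widehat\mu-\mu\rangle\ \ge\ \frac{s\rho}{2\sigma}\ \ge\ \frac{s^2}{2\sigma}.
\]
Hence $\{\rho\ge s\}\subseteq\{F_s(Z)\ge s^2/(2\sigma)\}$.

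\emph{Step 2: concentration.} Take $s\ge t$. Hypothesis \eqref{eq:widthcondition} gives $w(s)\le s^2/(4\sigma)$, so $s^2/(2\sigma)\ge w(s)+s^2/(4\sigma)$. Applying Gaussian concentration to the $s$-Lipschitz function $F_s$ with deviation $u=s^2/(4\sigma)$ gives
\[
\mathbb P(\rho\ge s)\ \le\ \exp\Bigl(-\frac{(s^2/4\sigma)^2}{2s^2}\Bigr)\ =\ e^{-s^2/(32\sigma^2)}\qquad(s\ge t).
\]

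\emph{Step 3: integrate.} Write $\E\rho^2=\int_0^\infty2s\,\mathbb P(\rho\ge s)\,ds$. On $[0,t]$ bound the probability by one, which contributes $t^2$. On $[t,\infty)$ use Step 2, which contributes
\[
\int_t^\infty 2s\,e^{-s^2/(32\sigma^2)}\,ds\ =\ 32\sigma^2e^{-t^2/(32\sigma^2)}\ \le\ 32\sigma^2\ \le\ 32t^2,
\]
the last step because $t\ge\sigma$. The total is at most $33t^2$.

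The only step that carries content is Step 1. The inequality at the full radius $\rho$ must be transported down to each smaller radius $s$, and the rescaled point $x$ does this using only convexity and the fact that $\rho\ge s$. Once that inclusion is in hand, Steps 2 and 3 are routine: the constant $33$ comes out exactly as $1+32$, with the $32$ being $2\cdot4^2$ from the concentration exponent.
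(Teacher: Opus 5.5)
Your proposal is correct and follows the paper's proof essentially line for line. Both derive the basic inequality $\|\widehat\mu-\mu\|_2^2\le2\sigma\langle Z,\widehat\mu-\mu\rangle$ from projection optimality, rescale $\widehat\mu-\mu$ to radius $s$ by convexity, apply Gaussian concentration to the $s$-Lipschitz supremum with deviation $s^2/(4\sigma)$, and integrate the tail by the layer-cake formula to obtain $t^2+32\sigma^2\le33t^2$.
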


\begin{proof}
Write $\widehat z:=\widehat\mu-\mu$ and, for $s>0$, let $X_s$ denote the supremum in \Cref{eq:widthcondition}, so that the hypothesis reads $\E X_s\le s^2/(4\sigma)$ for $s\ge t$. The projection is the nearest point of $C$ and $\mu\in C$, so $\|Y-\widehat\mu\|_2^2\le\|Y-\mu\|_2^2$; expanding both sides at $Y=\mu+\sigma Z$ and cancelling $\sigma^2\|Z\|_2^2$ gives
\begin{equation}\label{eq:projopt}
\|\widehat z\|_2^2\ \le\ 2\sigma\,\langle Z,\widehat z\rangle .
\end{equation}
Fix $s\ge t$ and suppose $\|\widehat z\|_2\ge s$. The point $\mu+s\widehat z/\|\widehat z\|_2$ lies in $C$ by convexity, on the segment from $\mu$ to $\widehat\mu$, and within distance $s$ of $\mu$, so
\[
X_s\ \ge\ \frac s{\|\widehat z\|_2}\,\langle Z,\widehat z\rangle\
\ge\ \frac{s\,\|\widehat z\|_2}{2\sigma}\ \ge\ \frac{s^2}{2\sigma}\,,
\]
the middle inequality by \Cref{eq:projopt}. As a function of $Z$ the supremum $X_s$ is $s$-Lipschitz, being a maximum of linear functionals with gradients of norm at most $s$, so concentration and $\E X_s\le s^2/(4\sigma)$ give
\[
\mathbb P\bigl(\|\widehat z\|_2\ge s\bigr)\ \le\
\mathbb P\Bigl(X_s\ge\E X_s+\frac{s^2}{4\sigma}\Bigr)\ \le\
\exp\Bigl(-\frac{(s^2/4\sigma)^2}{2s^2}\Bigr)\ =\
e^{-s^2/(32\sigma^2)}
\qquad(s\ge t).
\]
The layer-cake formula integrates the tail:
\[
\E\|\widehat z\|_2^2\ \le\ t^2+\int_t^\infty2s\,
e^{-s^2/(32\sigma^2)}\,ds\ =\ t^2+32\sigma^2e^{-t^2/(32\sigma^2)}\
\le\ t^2+32\sigma^2\ \le\ 33\,t^2,
\]
the last step by $t\ge\sigma$.
\end{proof}

For $C=\mathcal F_V(T)$ the expectation in \Cref{eq:widthcondition} is the localized width $w_\mu(s)$ of Section~5, and the lemma is the principle stated there. Bounding $w_\mu$ requires expected maxima of many correlated Gaussian terms whose weights are capped individually and in total; the next lemma is the tool. For $a\in(0,1]$ and $y\in\bbR^M$ define
\begin{equation}\label{eq:capfunctional}
\mathfrak M_a(y)\ :=\ \sup\Bigl\{\,\sum_{i=1}^Mw_iy_i:\ 0\le w_i\le
a\ \text{ for all }i,\ \ \sum_{i=1}^Mw_i\le1\Bigr\},
\end{equation}
the largest weighted sum under a cap on each weight and on the total.

\begin{lemma}[Capped order statistics]\label{slem:capped}
Let $G_1,\dots,G_M$ be jointly Gaussian, each centered with variance at most $\tau^2$, not necessarily independent, and let $a\in(0,1]$. Then
\begin{gather*}
\E\,\mathfrak M_a(G_1,\dots,G_M)\ \le\
C\tau\sqrt{1+[\log(Ma)]_+}\,,
\\
\E\,\mathfrak M_a\bigl(G_1^2,\dots,G_M^2\bigr)\ \le\
C\tau^2\bigl(1+[\log(Ma)]_+\bigr).
\end{gather*}
\end{lemma}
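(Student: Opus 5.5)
The plan is to reduce $\mathfrak M_a$ to a thresholded sum. Once that is done, the expectation is computed one coordinate at a time by linearity. Joint Gaussianity and the correlations among the $G_i$ then never enter; only the marginal tails do.

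\emph{Step 1: a deterministic bound on $\mathfrak M_a$.} Fix $y\in\bbR^M$ and $m:=\lceil1/a\rceil$. Any feasible weight vector in \Cref{eq:capfunctional} has $\sum_iw_iy_i\le\sum_iw_iy_i^+$. The sum $\sum_iw_iy_i^+$ is at most $a$ times the sum of the $m$ largest values $y_i^+$, because each weight is capped at $a$ and the total mass $1$ fills at most $m$ slots of height $a$. For any threshold $u\ge0$, the sum of the $m$ largest values $y_i^+$ is at most $mu+\sum_{i=1}^M(y_i-u)_+$. Since $am\le a(1/a+1)\le2$, this gives, pathwise,
\[
\mathfrak M_a(y)\ \le\ 2u+a\sum_{i=1}^M(y_i-u)_+\qquad(u\ge0).
\]
The same argument applies to the vector of squares $(y_1^2,\dots,y_M^2)$, with $(y_i^2-u)_+$ in place of $(y_i-u)_+$.

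\emph{Step 2: marginal tail integrals.} For a centered Gaussian $G$ of variance at most $\tau^2$, the tail bound gives
\[
\E(G-u)_+=\int_u^\infty\mathbb P(G>t)\,dt\le\tau\sqrt{2\pi}\,e^{-u^2/(2\tau^2)}/2\le C\tau e^{-u^2/(2\tau^2)},
\]
and similarly
\[
\E(G^2-u)_+=\int_u^\infty\mathbb P(|G|>\sqrt t)\,dt\le\int_u^\infty2e^{-t/(2\tau^2)}\,dt=4\tau^2e^{-u/(2\tau^2)}.
\]
Taking expectations in Step 1, which needs only linearity, yields
\[
\E\,\mathfrak M_a(G)\le2u+C\tau\,aM\,e^{-u^2/(2\tau^2)},
\qquad
\E\,\mathfrak M_a(G^2)\le2u+4\tau^2aM\,e^{-u/(2\tau^2)}.
\]

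\emph{Step 3: choice of threshold.} If $Ma\le1$, take $u=0$; both error terms are at most $C\tau$ and $4\tau^2$ respectively. If $Ma>1$, take $u=\tau\sqrt{2\log(Ma)}$ in the first bound and $u=2\tau^2\log(Ma)$ in the second. Each exponential then equals $(Ma)^{-1}$, so the error terms are $O(\tau)$ and $O(\tau^2)$, while $2u$ contributes $C\tau\sqrt{\log(Ma)}$ and $C\tau^2\log(Ma)$. In both cases this gives the two displayed bounds with $1+[\log(Ma)]_+$.

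I expect no serious obstacle. The only points needing care are the slot-filling comparison in Step 1, namely that the optimizer saturates the largest entries at height $a$ (a standard fractional-knapsack exchange), and the observation that correlations are harmless because only a sum of marginal expectations is taken.
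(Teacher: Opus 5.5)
Your proof is correct, and it takes a different and shorter route than the paper.

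The paper works through the decreasing rearrangement, in three steps:
\begin{enumerate}
\item It bounds $\mathfrak M_a$ by $a$ times the sum of the $q=\min\{\lceil1/a\rceil,M\}$ largest positive parts (or squares).
\item It bounds each order statistic separately, via Markov's inequality on the number of exceedances, obtaining $\E\,G^+_{(j)}\le C\tau\sqrt{\log(eM/j)}$.
\item It sums over $j\le q$ using Cauchy--Schwarz and $\log q!\ge q\log q-q$.
\end{enumerate}

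You replace all of this with a single threshold. You use the pathwise bound $\mathfrak M_a(y)\le 2u+a\sum_i(y_i-u)_+$, then linearity of expectation and one marginal tail integral. You choose $u$ so that the expected total exceedance, weighted by $a$, costs only $O(\tau)$ (or $O(\tau^2)$ for the squares).

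Both arguments use only marginal tails, so correlations are harmless in each. Yours avoids the per-rank tail bounds and the Stirling-type summation, and it gives small explicit constants. The paper's route gives finer per-rank information, but the downstream applications of the lemma never use it.

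You can also drop the slot-filling exchange and the factor $2$. Since $y_i\le u+(y_i-u)_+$, $0\le w_i\le a$, and $\sum_iw_i\le1$, every feasible $w$ satisfies $\sum_iw_iy_i\le u+a\sum_i(y_i-u)_+$ for each $u\ge0$. This is weak duality for the linear program defining $\mathfrak M_a$.

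The tail integral you quote, $\int_u^\infty e^{-t^2/(2\tau^2)}\,dt\le\tfrac12\tau\sqrt{2\pi}\,e^{-u^2/(2\tau^2)}$, follows in one line from $(u+s)^2\ge u^2+s^2$ for $u,s\ge0$.
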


\begin{proof}
Write $G_{(1)}^+\ge G_{(2)}^+\ge\cdots$ for the decreasing rearrangement of the positive parts $[G_i]_+$ and $Q_{(1)}\ge Q_{(2)}\ge\cdots$ for that of the squares $G_i^2$, and set $q:=\min\{\lceil1/a\rceil,M\}$.

\emph{Reduction to order statistics.} A feasible weight vector puts total weight at most $1$, each entry at most $a$, and gains nothing from coordinates where $G_i<0$; moving weight from a smaller to a larger positive coordinate only increases the sum, so the supremum is attained by filling the largest positive coordinates to capacity $a$, with at most one fractional entry. Since $aq\ge1$ when $q=\lceil1/a\rceil$, while all $M$ coordinates are already covered when $q=M$,
\begin{equation}\label{eq:capbyorder}
\mathfrak M_a(G)\ \le\ a\sum_{j\le q}G_{(j)}^+,
\qquad
\mathfrak M_a(G_1^2,\dots,G_M^2)\ \le\ a\sum_{j\le q}Q_{(j)} .
\end{equation}

\emph{Tails of the order statistics.} If $G_{(j)}^+\ge u$ for some $u>0$, at least $j$ of the $G_i$ are at least $u$; Markov's inequality applied to the count, whose mean is at most $Me^{-u^2/(2\tau^2)}$ by the Gaussian tail bound, gives
\[
\mathbb P\bigl(G_{(j)}^+\ge u\bigr)\ \le\
\min\Bigl\{1,\ \frac Mj\,e^{-u^2/(2\tau^2)}\Bigr\},
\qquad
\mathbb P\bigl(Q_{(j)}\ge u^2\bigr)\ \le\
\min\Bigl\{1,\ \frac{2M}j\,e^{-u^2/(2\tau^2)}\Bigr\},
\]
the second by the same argument applied to $|G_i|$. Splitting the first tail at $u_j$, where $u_j^2:=2\tau^2\log(eM/j)$, gives
\begin{multline*}
\E\,G_{(j)}^+\ \le\ u_j+\frac Mj\int_{u_j}^\infty
e^{-u^2/(2\tau^2)}\,du
\\
\le\ u_j+\frac Mj\cdot\frac{\tau^2}{u_j}\,
e^{-u_j^2/(2\tau^2)}\ =\ u_j+\frac{\tau^2}{e\,u_j}\ \le\
C\tau\sqrt{\log\frac{eM}j}\,,
\end{multline*}
where the integrand is at most $(u/u_j)\,e^{-u^2/(2\tau^2)}$. Splitting the second tail at $2\tau^2\log(2eM/j)$ gives
\[
\E\,Q_{(j)}\ \le\ 2\tau^2\log\frac{2eM}j+\frac{2\tau^2}e\ \le\
C\tau^2\log\frac{eM}j\,.
\]

\emph{Summation.} Summing the first bound over $j\le q$ and applying the Cauchy--Schwarz inequality to the average,
\[
a\sum_{j\le q}\E\,G_{(j)}^+\ \le\ Ca\tau\,q\,\Bigl(\frac1q
\sum_{j\le q}\log\frac{eM}j\Bigr)^{1/2}\ \le\
Ca\tau\,q\sqrt{2+\log(M/q)}\,,
\]
using $\sum_{j\le q}\log(eM/j)=q\log(eM)-\log q!\le q\bigl(2+\log(M/q)\bigr)$, by $\log q!\ge q\log q-q$. Now $aq\le a\lceil1/a\rceil\le1+a\le2$; and either $q=\lceil1/a\rceil$, in which case $M/q\le Ma$, or $q=M$, in which case $\log(M/q)=0$. In both cases the right side is at most $C\tau\sqrt{1+[\log(Ma)]_+}$, which with \Cref{eq:capbyorder} proves the first claim. The second is identical with $\tau^2\log(eM/j)$ in place of $\tau\sqrt{\log(eM/j)}$, without the Cauchy--Schwarz step.
\end{proof}

The heart of the matter is the next lemma: for each signal and noise realization, take the best accurate member of the coded cover; the resulting pairing with the noise, maximized over the body, is small in expectation. Its proof prices the three residuals of the construction separately, and only the last one, the sampled exits, needs the noise-dependent choice.

\begin{lemma}[Noise-compatible centers]\label{slem:compatible}
Fix an integer $2\le k\le n/C_\star$ and let $\mathcal D_k$ be the coded cover at amplitude $V=1$. For $f\in\mathcal F_1(T)$ and a noise realization $Z$ set
\begin{equation}\label{eq:compatibleinf}
\mathfrak r_Z(f)\ :=\ \inf\Bigl\{\langle Z,\ f-\nu\rangle:\
\nu\in\mathcal D_k,\ \|f-\nu\|_2^2\le51\,\overline\delta_k(T)\Bigr\}.
\end{equation}
The infimum runs over a nonempty set for every $f$ and every $Z$, and
\begin{equation}\label{eq:compatiblebound}
\E\,\sup_{f\in\mathcal F_1(T)}\mathfrak r_Z(f)\ \le\
C\Bigl[\sqrt{\bar\alpha}\,\log(2+\ell_n)+
\sqrt{\overline\delta_k(T)\,\ell_n}\,\Bigr].
\end{equation}
\end{lemma}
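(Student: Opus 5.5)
Nonemptiness comes straight from the construction of Proposition~3.1. For every $f$, the sampling event of \Cref{slem:collapse}(2) has probability at least $\tfrac14$. Any realization in that event gives a $z$ whose center $\nu=\frac1k\sum_vz_vp_v$ lies in $\mathcal D_k$, and by the error assembly of \Cref{ssec:cover} it satisfies $\|f-\nu\|_2^2\le51\overline\delta_k$. For the bound, fix $f$ and $Z$. I would split along the three residuals of that same assembly:
\[
f-\nu\ =\ (f-g)+(g_0-\bar g_0)+\sum_L(\lambda_L-Z_L)q_L ,
\]
with $\nu$ one of the sampled centers. The noise then enters through three pairings, and the infimum over $\nu$ is used only on the third. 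The plan is to bound the $Z$-supremum over $f$ of the first two pairings uniformly. For the third I would show that some realization in the sampling event keeps its pairing at most four conditional standard deviations, and then take the supremum of that deterministic bound over $f$.

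\emph{Collapse term.} $\langle Z,f-g\rangle=\sum_L\sum_{u\in L}\lambda_u\langle Z,p_u-p_{a_L}\rangle$. Each $\langle Z,p_u-p_{a_L}\rangle$ is a sum of $d_T(a_L,u)\le\bar\alpha/m(L)$ noise coordinates along a vertical segment. The weights $\lambda_u$ in light cells are capped by $m(L)/k$ per cell and by $1$ in total. So for each level $j$ I would normalize, $G_u:=\langle Z,p_u-p_{a_j(u)}\rangle/\sqrt{\bar\alpha/m_j}$, which has variance at most $1$. The level-$j$ contribution is then at most $\sqrt{\bar\alpha/m_j}\,\mathfrak M_{a}(G)$, where the cap is $a=m_j/k$ applied to cell masses. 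The mass of a cell can be shifted onto its worst vertex, so $\mathfrak M$ with $M\le n$ vertices applies. \Cref{slem:capped} gives $\sqrt{\bar\alpha/m_j}\sqrt{1+\log(nm_j/k)}$ per level. I expect this to be the main obstacle: these per-level bounds must sum over the $J\asymp\log k$ levels to $C\sqrt{\bar\alpha}\log(2+\ell_n)$ plus $\sqrt{\overline\delta_k\ell_n}$, rather than to $\sqrt{\bar\alpha\,\ell_n}$ times the number of levels.

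My first attempt at that summation is to use the lightness cap $\lambda_L\le m(L)/k$ more sharply. That gives a per-level factor $\sqrt{\bar\alpha/k}\sqrt{m_j}$ with total mass at most $k/m_j$-many cells. Writing it this way, the squared bound $\lambda_L^2\bar\alpha/m(L)\le\lambda_L\bar\alpha/k$ used in \Cref{slem:collapse}(1) makes the sum over $j$ geometric in the log factor. The $\sqrt{\log\log}$-type loss then shows up only as the $\log(2+\ell_n)$ term. The heavy terminal cell is a single cell at level $J$, and it contributes the $\sqrt{\overline\delta_k\ell_n}$ term.

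\emph{Rounding term.} $g_0-\bar g_0$ is supported on the spanning subtree $S$, which has at most $6\bar\alpha k$ vertices, and its entries are below $1/k$. The possible supports $U$ range over the charged parts counted in \Cref{slem:count}, of which there are at most $e^{Ck}$. For each such support the set of admissible differences lies in a box of side $1/k$. I would bound $\langle Z,g_0-\bar g_0\rangle\le\frac1k\sum_{v\in S}|Z_v|$. The expected maximum over $e^{Ck}$ subtrees of $\frac1k\|Z_S\|_1$ is at most $\frac1k(|S|+\sqrt{|S|\cdot Ck})\le C\bar\alpha/k\cdot\ldots$. This is too crude, so instead I would use Cauchy--Schwarz, $\|g_0-\bar g_0\|_2\|P_{S}Z\|_2$, together with a union over $e^{Ck}$ norm concentrations. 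That yields $\sqrt{6\bar\alpha/k}\cdot C\sqrt{\bar\alpha k+k}\le C\sqrt{\bar\alpha}\cdot\sqrt{\bar\alpha}$, which is still off. I would therefore refine by observing that the rounding error is a difference of floor errors along preorder intervals, i.e.\ at most $s\le|U|$ jumps, and bound it through $\mathfrak M$ applied to the root-path sums of $Z$.

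\emph{Exit term.} Conditionally on $(f,Z)$, $X:=\langle Z,\sum_L(\lambda_L-Z_L)q_L\rangle$ is centered. Its variance is at most $\sum_L\lambda_Lm(L)k^{-1}\langle Z,q_L\rangle^2$. On the sampling event, which has probability at least $\tfrac14$, I would argue that $\E[X\mid\text{event}]\le4\sqrt{\operatorname{Var}}$, so some realization achieves $X\le4\sqrt{\operatorname{Var}}$. The supremum over $f$ of $\sum_L\lambda_L\frac{m(L)}k\langle Z,q_L\rangle^2$ is then handled by the second bound of \Cref{slem:capped}, level by level, with normalized $\langle Z,q_L\rangle^2/(2\bar\alpha/m(L))$. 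This gives $\E\sup\operatorname{Var}\le C\frac{\bar\alpha}k\sum_j(1+\log(nm_j/k))$, and Jensen's inequality finishes this term inside the bound \Cref{eq:compatiblebound}.
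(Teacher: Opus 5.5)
Your overall structure matches the paper's: nonemptiness from the sampling event, the three-residual split, the infimum used only on the exit term through conditional centering, and \Cref{slem:capped} doing the work. The collapse term is not closed, however, and the remedy you propose cannot close it.

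\textbf{The collapse term.} You index level $j$ by vertices, with vectors $p_u-p_{a_j(u)}$. This feeds $M\approx n$ into \Cref{slem:capped}, so every level pays $\sqrt{\bar\alpha/m_j}\,\sqrt{1+\log(nm_j/k)}$. The coarsest level alone then costs $\sqrt{\bar\alpha(1+\log(n/k))}$, which exceeds the right side of \eqref{eq:compatiblebound} by an unbounded factor as $k\to\infty$ with $k\le\sqrt n$. Using $\lambda_L\le m(L)/k$ more sharply, or invoking $\lambda_L^2\bar\alpha/m(L)\le\lambda_L\bar\alpha/k$, does not help. The loss sits in the cardinality of the index set, since you are treating strongly correlated vertices as separate Gaussians, and a deterministic norm bound says nothing about a supremum over $f$.

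The missing idea is chaining along the nested nets. Let $\tau(u)$ be the level of the terminal cell of $u$, and telescope
$p_u-p_{a_{\tau(u)}(u)}=\sum_{j=\tau(u)}^{J-1}\bigl(p_{a_{j+1}(u)}-p_{a_j(u)}\bigr)+\bigl(p_u-p_{a_J(u)}\bigr)$.
\begin{itemize}
\item By \Cref{slem:cells}(\ref{sit:refine}), the level-$j$ increment depends on $u$ only through $b=a_{j+1}(u)\in R_{j+1}$. So there are at most $2ke^{2m_j}$ distinct vectors, each of variance at most $\bar\alpha/m_j$.
\item The vertices $u$ grouped at $b$ with $\tau(u)\le j$ all lie in one terminal cell. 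That cell has level at most $j<J$, so it is light, and the group mass is at most $m_j/k$.
\item The logarithmic factor of \Cref{slem:capped} is then at most $\min\{5m_j,\ell_n\}$, which cancels the $1/m_j$. Each level costs $C\sqrt{\bar\alpha}\min\{1,\sqrt{\ell_n/m_j}\}$, and the dyadic sum is $C\sqrt{\bar\alpha}\log(2+\ell_n)$.
\item Only the final increments $p_u-p_{a_J(u)}$ range over all $n$ vertices. They have variance at most $\bar\alpha/m_J\le2\overline\delta_k$ and cap $1$, and this is where the term $\sqrt{\overline\delta_k\ell_n}$ comes from.
\end{itemize}

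\textbf{The rounding term.} You never settle on a working bound here. The coordinate projection on $S$ costs the extra factor $\sqrt{\bar\alpha}$ you observed. Applying $\mathfrak M$ to root-path sums fails as well, because $\langle Z,p_h\rangle$ has variance $\depth(h)+1$, which $\bar\alpha$ does not control. The point you are missing is that $g_0-\bar g_0=\sum_{h\in U}(b_h-Q_h/k)\,p_h$ lies in $\operatorname{span}\{p_h:h\in U\}$, a subspace of dimension at most $|R_0|+k\le4k$. Moreover $U\setminus R_0$ is a charged part of charge below $2k$, so at most $e^{19k}$ such subspaces occur. With $\|g_0-\bar g_0\|_2^2\le6\bar\alpha/k$ and a union of norm concentrations, $\E\max_U\|P_{V_U}Z\|_2\le C\sqrt k$, which gives $C\sqrt{\bar\alpha}$.

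\textbf{The exit term.} Your reduction to $\E\sup_fS_f(Z)^2$ is correct. But counting up to $n$ exits per level gives $C(\bar\alpha/k)\,\ell_n\log k$. Its square root, $\sqrt{\overline\delta_k\,\ell_n\log k}$, is not dominated by the right side of \eqref{eq:compatiblebound}; take $k\asymp\sqrt{\ell_n}$. So Jensen's inequality does not finish the term. Count by cell roots instead. An exit at level $m$ is determined by $a_L\in R_j$, so there are at most $2ke^m$ of them. For $m\le\ell_n$ the log factor is then at most $4m$, and the level contributes $C\bar\alpha\,m/k$. The levels $m>\ell_n$ are handled together under cap $1$ with variance $2\bar\alpha/\ell_n$. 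Together these give $\E\sup_fS_f(Z)^2\le C\bar\alpha$.
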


\begin{proof}
Fix $f$ and run the proof of Proposition~3.1 on it, retaining its objects. Each realization of the sampling defines the candidate $\nu:=\bar g_0+\sum_LZ_Lq_L$, and the error decomposes as there:
\begin{equation}\label{eq:threeresiduals}
f-\nu\ =\ (f-g)\ +\ (g_0-\bar g_0)\ +\ \sum_L(\lambda_L-Z_L)\,q_L .
\end{equation}
On the sampling event of that proof, call it $A$, of probability at least $\tfrac14$, where $W\le2k$ and $\|\sum_L(Z_L-\lambda_L)q_L\|_2^2\le8\bar\alpha/k$, the candidate lies in $\mathcal D_k$, and the three residuals assemble, as in \Cref{ssec:cover}, to $\|f-\nu\|_2^2\le3(3+6+8)\bar\alpha/k=51\,\overline\delta_k(T)$: the feasible set of \Cref{eq:compatibleinf} is nonempty, for every $f$ and irrespective of $Z$. Pairing \Cref{eq:threeresiduals} with the noise and taking on $A$ the realization that minimizes the third term,
\begin{equation}\label{eq:pairingsplit}
\mathfrak r_Z(f)\ \le\ \langle Z,f-g\rangle+\langle Z,g_0-\bar
g_0\rangle+\min_{A}\,\Bigl\langle
Z,\sum_L(\lambda_L-Z_L)q_L\Bigr\rangle .
\end{equation}
The three terms are bounded uniformly over $f$ in turn; the first carries the main contribution, the second is a union bound over few subspaces, and the third is where choosing the center after the noise pays.

\emph{The collapse residual.} Let $\tau(u)$ be the level of the terminal cell containing $u$, so that $g=\sum_u\lambda_u\,p_{a_{\tau(u)}(u)}$ and the difference telescopes through the levels:
\begin{gather*}
f-g\ =\ \sum_u\lambda_u\bigl(p_u-p_{a_{\tau(u)}(u)}\bigr),
\\
p_u-p_{a_{\tau(u)}(u)}\ =\
\sum_{j=\tau(u)}^{J-1}\bigl(p_{a_{j+1}(u)}-p_{a_j(u)}\bigr)
+\bigl(p_u-p_{a_J(u)}\bigr).
\end{gather*}
Fix a level $j<J$ and group the contributions by the level-$(j{+}1)$ root: by \Cref{slem:cells}(\ref{sit:refine}), $a_j(u)=a_j(b)$ whenever $a_{j+1}(u)=b$, so the level-$j$ increment of every $u$ in the group of $b$ is the same vector $p_b-p_{a_j(b)}$, of squared norm at most $d_T(a_j(b),b)\le\bar\alpha/m_j$ by \Cref{slem:cells}(\ref{sit:radius}). The group mass
\[
\lambda_b\ :=\ \sum\bigl\{\lambda_u:\ a_{j+1}(u)=b,\
\tau(u)\le j\bigr\}
\]
is capped: each member of the group lies in the level-$(j{+}1)$ cell of $b$ and in its own terminal cell, which is coarser and hence contains that whole cell; two terminal cells with a common vertex coincide, the terminal cells being a partition, so the group shares one terminal cell, stopped at a level at most $j<J$ and hence light, whence $\lambda_b\le m_j/k$ by the monotonicity of the level weights. The group masses are nonnegative and total at most one, each $\lambda_u$ entering at most one group, so, with $M_j\le\min\{n,\,|R_{j+1}|\}\le\min\{n,\,2ke^{2m_j}\}$ possible groups (\Cref{slem:netsizes}(\ref{sit:union})),
\[
\E\,\sup_f\,\sum_b\lambda_b\,\langle Z,p_b-p_{a_j(b)}\rangle
\le
\E\,\mathfrak M_{m_j/k}\Bigl(\bigl(\langle Z,p_b-p_{a_j(b)}\rangle
\bigr)_b\Bigr)
\le
C\sqrt{\bar\alpha}\,\min\bigl\{1,\sqrt{\ell_n/m_j}\bigr\},
\]
by \Cref{slem:capped} with variance bound $\bar\alpha/m_j$ and cap $m_j/k$: its logarithmic factor $1+[\log(M_jm_j/k)]_+$ is at most $\min\{5m_j,\,\ell_n\}$, being at most $1+\log2+\log m_j+2m_j\le5m_j$ from $M_jm_j/k\le2m_je^{2m_j}$, and at most $1+\log n\le\ell_n$ from $M_jm_j/k\le n$. Summing the dyadic levels: those with $m_j\le\ell_n$ number at most $C\log(2+\ell_n)$ and contribute $C\sqrt{\bar\alpha}$ each, while over the rest $\sqrt{\ell_n/m_j}$ is geometrically decaying from below one, and sums to a constant. The final increments $p_u-p_{a_J(u)}$ number at most $n$ and have squared norms at most $\bar\alpha/m_J\le2\overline\delta_k(T)$, and their mass-weighted sum is a feasible value of $\mathfrak M_1$; so \Cref{slem:capped} with cap $1$ and variance bound $2\overline\delta_k(T)$ bounds its expected supremum by $C\sqrt{\overline\delta_k(T)}\,\sqrt{1+\log n}\le C\sqrt{\overline\delta_k(T)\,\ell_n}$. Altogether
\begin{equation}\label{eq:collapseresidual}
\E\,\sup_f\ \langle Z,\ f-g\rangle\ \le\
C\Bigl[\sqrt{\bar\alpha}\,\log(2+\ell_n)+
\sqrt{\overline\delta_k(T)\,\ell_n}\,\Bigr].
\end{equation}

\emph{The rounding residual.} The difference $g_0-\bar g_0$ has squared norm at most $6\bar\alpha/k$, by the constant recorded after \Cref{slem:heavy}, and lies in the span of $\{p_h:h\in U\}$. The set $U\setminus R_0$ consists of heavy roots, of total activation charge below $2k$ by \Cref{slem:heavy}(3); it is therefore one of the charged parts counted in the proof of \Cref{slem:count}, so at most $e^{19k}$ sets $U$, hence at most $e^{19k}$ subspaces $V_U:=\operatorname{span}\{p_h:h\in U\}$, occur as $f$ varies. Each has dimension $|U|\le|R_0|+k\le ek+k\le4k$, since a heavy root outside $R_0$ carries charge at least $m_1=2$. For any fixed $U$,
\[
\sup\bigl\{\langle Z,z\rangle:\ z\in V_U,\
\|z\|_2^2\le6\bar\alpha/k\bigr\}\ =\
\sqrt{6\bar\alpha/k}\,\bigl\|P_{V_U}Z\bigr\|_2 ,
\]
and $\|P_{V_U}Z\|_2$ is a $1$-Lipschitz function of $Z$ with mean at most $\sqrt{\dim V_U}\le2\sqrt k$. Concentration and a union bound give, for $u\ge0$, $\mathbb P(\max_U\|P_{V_U}Z\|_2\ge2\sqrt k+\sqrt{38k}+u)\le e^{19k}e^{-(\sqrt{38k}+u)^2/2}\le e^{-u^2/2}$, so $\E\max_U\|P_{V_U}Z\|_2\le2\sqrt k+\sqrt{38k}+2\le C\sqrt k$ and
\begin{equation}\label{eq:roundingresidual}
\E\,\sup_f\ \langle Z,\ g_0-\bar g_0\rangle\ \le\
\sqrt{6\bar\alpha/k}\cdot C\sqrt k\ =\ C\sqrt{\bar\alpha}\,.
\end{equation}

\emph{The exit pairing.} Condition on $Z$ and on $f$, so that only the sampling is random, and write $X:=\langle Z,\sum_L(\lambda_L-Z_L)q_L\rangle$. The sampling variables are independent with $\E Z_L=\lambda_L$ and $\operatorname{Var}(Z_L)\le\lambda_Lm(L)/k$, so $X$ is centered with
\[
\E\,X^2\ =\ \sum_L\operatorname{Var}(Z_L)\,\langle Z,q_L\rangle^2\
\le\ \sum_L\lambda_L\,\frac{m(L)}k\,\langle Z,q_L\rangle^2\ =:\
S_f(Z)^2 .
\]
Centeredness and the Cauchy--Schwarz inequality give
\[
\E\bigl[X\ \big|\ A\bigr]\ =\
-\frac{\E\bigl[X\,\ind_{A^c}\bigr]}{\mathbb P(A)}\ \le\
\frac{(\E X^2)^{1/2}\,\mathbb P(A^c)^{1/2}}{\mathbb P(A)}\ \le\
4\,S_f(Z),
\]
and some realization in $A$ is no larger than this conditional expectation. Hence the third term of \Cref{eq:pairingsplit} is at most $4S_f(Z)$, and it remains to prove
\begin{equation}\label{eq:exitsecond}
\E\,\sup_f\ S_f(Z)^2\ \le\ C\bar\alpha ,
\end{equation}
which gives $\E\sup_fS_f(Z)\le\sqrt{\E\sup_fS_f(Z)^2}\le C\sqrt{\bar\alpha}$ by Jensen's inequality. An exit $q_L$ is determined by the root $a_L$ of its cell: its other endpoint is the root of the parent cell (\Cref{slem:cells}(\ref{sit:refine})), so at level $m$ the possible exits number at most $\min\{n,2ke^m\}$, with $\|q_L\|_2^2\le2\bar\alpha/m$ by the parent cell's radius. Split $S_f(Z)^2$ by levels. At a level $m\le\ell_n$, the weights $\lambda_L\le m/k$ are again capped with total at most one, so \Cref{slem:capped}, applied to the squares with variance bound $2\bar\alpha/m$ and cap $m/k$, bounds the expected supremum of the level's sum by $C(2\bar\alpha/m)\bigl(1+[\log(2me^m)]_+\bigr)\le C\bar\alpha$, since $1+\log(2me^m)\le4m$; with the prefactor $m/k$ the level contributes $C\bar\alpha\,m/k$, and the dyadic sum over $m<k$ is at most $C\bar\alpha$. At the levels $m>\ell_n$, the prefactor obeys $m/k\le1$ and the weights total at most one, so the combined contribution is a feasible value of $\mathfrak M_1$ over the possible exits at these levels, at most $n(1+\log_2k)$ vectors of variance at most $2\bar\alpha/\ell_n$; \Cref{slem:capped} with cap $1$, applied to the squares, bounds its expectation by $C(\bar\alpha/\ell_n)\bigl(1+\log(n(1+\log_2k))\bigr)\le C\bar\alpha$, since $1+\log\bigl(n(1+\log_2k)\bigr)\le2\ell_n$. This proves \Cref{eq:exitsecond}.

Combining \Cref{eq:pairingsplit} with \Cref{eq:collapseresidual,eq:roundingresidual,eq:exitsecond},
\[
\E\,\sup_f\mathfrak r_Z(f)\ \le\
C\Bigl[\sqrt{\bar\alpha}\,\log(2+\ell_n)
+\sqrt{\overline\delta_k(T)\,\ell_n}\,\Bigr]
+C\sqrt{\bar\alpha}+4\,C\sqrt{\bar\alpha}\,,
\]
and $\sqrt{\bar\alpha}\le\sqrt{\bar\alpha}\log(2+\ell_n)$ absorbs the last two terms into the first.
\end{proof}

The width bound follows by recentering the local ball at a deterministic member of the cover.

\begin{lemma}[Truth-localized width]\label{slem:truthwidth}
For every integer $2\le k\le n/C_\star$, every $\mu\in\mathcal F_V(T)$, and every $t>0$,
\[
w_\mu(t)\ \le\
C\,\bigl(t+V\sqrt{\overline\delta_k(T)}\,\bigr)\sqrt k\ +\
C\,V\Bigl[\sqrt{k\overline\delta_k(T)}\,\log(2+\ell_n)
+\sqrt{\overline\delta_k(T)\,\ell_n}\,\Bigr],
\]
with $w_\mu$ the localized Gaussian width defined in Section~5; this is the display (5.10) there.
\end{lemma}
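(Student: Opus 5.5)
By homogeneity I reduce to $V=1$: the body, its localized width and the right side all scale linearly in $V$, with $w_\mu(t)$ for amplitude $V$ equal to $V\,w_{\mu/V}(t/V)$. So fix $\mu\in\mathcal F_1(T)$ and $t>0$. The plan is to recenter the local ball at a deterministic member of the coded cover, and to charge the remainder to \Cref{slem:compatible}.

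First I choose a fixed $\nu_\mu\in\mathcal D_k$ with $\|\mu-\nu_\mu\|_2^2\le51\,\overline\delta_k(T)$; it exists by the nonemptiness part of \Cref{slem:compatible}, taken at any fixed realization, and does not depend on $Z$. Every $\theta$ in the local ball satisfies $\|\theta-\mu\|_2\le t$. For each noise outcome I pick, for each such $\theta$, a near-minimizer $\nu_\theta$ of \Cref{eq:compatibleinf}; this is a noise-dependent center with $\|\theta-\nu_\theta\|_2^2\le51\overline\delta_k$. Then I decompose
\[
\langle Z,\theta-\mu\rangle=\langle Z,\nu_\theta-\nu_\mu\rangle+\langle Z,\theta-\nu_\theta\rangle+\langle Z,\nu_\mu-\mu\rangle .
\]
The last term has mean zero because $\nu_\mu$ is deterministic. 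The middle term is at most $\mathfrak r_Z(\theta)$ plus an arbitrarily small slack, so its expected supremum is bounded by \Cref{eq:compatiblebound}. Since $\sqrt{\bar\alpha}=\sqrt{k\overline\delta_k}$, that supplies exactly the second bracket of the claim.

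The first term is the main term. The difference $\nu_\theta-\nu_\mu$ ranges over $\{\nu-\nu_\mu:\nu\in\mathcal D_k,\ \|\nu-\nu_\mu\|_2\le\rho\}$, where $\rho:=t+2\sqrt{51\overline\delta_k}$ by the triangle inequality. This is a deterministic set of at most $e^{C_1k}$ vectors by \Cref{prop:cover}(\ref{it:count}), each of norm at most $\rho$. The standard Gaussian maximal inequality over a finite set gives expected maximum at most $\rho\sqrt{2C_1k}$, which is $C(t+\sqrt{\overline\delta_k})\sqrt k$. The fact that $\nu_\theta$ depends on $Z$ is harmless here, since we bound by the maximum over the whole deterministic finite set.

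The only delicate point is measurability and the exchange of supremum with the choice of centers. I handle it by bounding the supremum over $\theta$ pathwise, as the maximum over the finite set plus $\sup_\theta\mathfrak r_Z(\theta)$ plus the linear term, and only then taking expectations. Summing the three bounds and restoring $V$ gives the claim.
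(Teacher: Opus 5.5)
Your proposal is correct and follows the paper's proof essentially step for step: a deterministic center $\nu_\mu$, noise-dependent centers $\nu_\theta$ (nearly) attaining $\mathfrak r_Z(\theta)$, the same three-term decomposition, \Cref{slem:compatible} for the middle term, and a maximal inequality over the deterministic set of at most $e^{C_1k}$ cover points within $\rho$ of $\nu_\mu$ for the first term. The differences are only cosmetic: you normalize to $V=1$ up front where the paper rescales inside the argument, and you use the plain finite Gaussian maximal inequality where the paper invokes \Cref{slem:capped} with cap $1$, which yields the same bound.
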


\begin{proof}
Abbreviate $\overline\delta:=\overline\delta_k(T)$. Amplitudes rescale: for $\theta\in\mathcal F_V(T)$ the normalized signal $\theta/V$ lies in $\mathcal F_1(T)$, and multiplying a member of the unit-amplitude cover by $V$ gives a member of $\mathcal D_k$ at amplitude $V$. The feasible set of \Cref{eq:compatibleinf} does not depend on $Z$ and is never empty, so fix $\nu_\mu$, $V$ times one of its members at $\mu/V$: then $\|\mu-\nu_\mu\|_2\le V\sqrt{51\overline\delta}$ and $\nu_\mu$ is deterministic. For each $\theta$ with $\|\theta-\mu\|_2\le t$ and each realization of $Z$, let $\nu_\theta$ be $V$ times a minimizer of \Cref{eq:compatibleinf} at $\theta/V$, so that $\|\theta-\nu_\theta\|_2\le V\sqrt{51\overline\delta}$ and $\langle Z,\theta-\nu_\theta\rangle=V\,\mathfrak r_Z(\theta/V)$. Decompose
\[
\langle Z,\ \theta-\mu\rangle\ =\
\langle Z,\ \nu_\theta-\nu_\mu\rangle
+\langle Z,\ \theta-\nu_\theta\rangle
-\langle Z,\ \mu-\nu_\mu\rangle .
\]
The last term does not depend on $\theta$ and has expectation zero, $\nu_\mu$ being deterministic, so it drops from the expected supremum. The middle term is at most $V\sup_{f\in\mathcal F_1(T)}\mathfrak r_Z(f)$, whose expectation \Cref{slem:compatible} bounds by the second group of the claim. For the first term, the triangle inequality confines every center met to a deterministic ball:
\[
\|\nu_\theta-\nu_\mu\|_2\ \le\
\|\nu_\theta-\theta\|_2+\|\theta-\mu\|_2+\|\mu-\nu_\mu\|_2\ \le\
t+2V\sqrt{51\overline\delta}\ =:\ \rho ,
\]
so the first term is at most the maximum of $\langle Z,\nu-\nu_\mu\rangle$ over $\{\nu\in\mathcal D_k:\|\nu-\nu_\mu\|_2\le\rho\}$, a fixed set of at most $e^{C_1k}$ centered Gaussians of standard deviation at most $\rho$. The maximum is a feasible value of $\mathfrak M_1$, so \Cref{slem:capped} with cap $1$ bounds its expectation by $C\rho\sqrt{1+C_1k}\le C(t+V\sqrt{\overline\delta}\,)\sqrt k$, the first group of the claim.
\end{proof}

The second comparison branch of Section~5 rests on the profile remembering the height of the tree.

\begin{lemma}[Height floor]\label{slem:heightfloor}
For every finite rooted tree and every integer $1\le k\le n/C_\star$,
\[
\delta_k(T)\ \ge\ \frac{\log2}8\cdot\frac{H_T}{k^2}\,.
\]
\end{lemma}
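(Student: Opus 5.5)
Since $H_T\le2h_T$, it suffices to show $\delta_k(T)\ge\frac{\log2}{4}\,h_T/k^2$, that is, $\alpha_k(T)\ge\frac{\log2}{4}\,h_T/k$. The plan is to exhibit one radius in the discrete formula \Cref{eq:discrete} of \Cref{lem:toolkit}(\ref{it:discrete}) whose term is large, using only a single deepest root-to-leaf path. Fix a vertex $v$ at depth $h_T$ and let $\pi$ be the path $o=u_0,\dots,u_{h_T}=v$. An ancestor $q$-net restricted to the coverage of $\pi$ must contain, for each $u_i$, an ancestor of $u_i$ within distance $q$, and every such ancestor lies on $\pi$. Each center therefore covers at most $q+1$ consecutive vertices of $\pi$, exactly as in the path computation of \Cref{slem:path}. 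Hence
\[
\Nup(q)\ \ge\ \Bigl\lceil\frac{h_T+1}{q+1}\Bigr\rceil .
\]
This lower bound is the only structural input needed.

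Next I choose the radius. Set $y:=q+1$ so that $\log(\Nup(q)/k)\ge\log 2$, which holds if $(h_T+1)/y\ge2k$. Take $y:=\lfloor (h_T+1)/(2k)\rfloor$. When this is at least $1$, the term at $q=y-1$ is at least $y\min\{k,\log2\}=y\log2$, because $k\ge1>\log 2$. Since $y\ge(h_T+1)/(4k)$ once $(h_T+1)/(2k)\ge1$, this gives $\alpha_k\ge\frac{\log2}{4}(h_T+1)/k$ and hence the claim. One also needs $q=y-1<h_T$, which is immediate.

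The remaining step is the degenerate regime $h_T+1<2k$, where $y=0$. I expect this boundary case to be the main obstacle: it is where the constant $\frac{\log2}{8}$ has to be justified. Here I use radius $q=0$, where $\Nup(0)=n$, together with $k\le n/C_\star$, which gives $\log(n/k)\ge2$. The term is then at least $\min\{k,2\}\ge1$, so $\alpha_k\ge1$ and $\delta_k\ge 1/k$. It remains to check $1/k\ge\frac{\log2}{8}H_T/k^2$, i.e.\ $H_T\le 8k/\log2$; this follows from $H_T\le2h_T<4k$. The case $n=1$ is trivial, since $H_T=0$. Assembling the two regimes then proves the lemma with the stated constant, with room to spare.
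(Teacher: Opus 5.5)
Your proof is correct and follows the paper's argument. Both restrict ancestor covering to a deepest root-to-leaf path, which gives interval-covering lower bounds on $\Nup(q)$ at a radius of order $h_T/k$, and both handle short trees through the radius-$0$ term, which is at least $\min\{k,2\}\ge1$ because $k\le n/C_\star$. The only difference is the split: you use the threshold $h_T+1\ge2k$ with $y=\lfloor(h_T+1)/(2k)\rfloor$, while the paper uses $h_T\ge4k$ with $q=\lfloor h_T/(2k)\rfloor-1$, and this is immaterial.
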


\begin{proof}
Write $h:=h_T$, so that $H_T\le2h$, and fix a root-to-leaf path with $h$ edges; every ancestor of a vertex of the path lies on the path, so ancestor covering restricted to it is interval covering of its $h+1$ vertices, as on the path of \Cref{slem:path}.

If $h\ge4k$, set $q:=\lfloor h/(2k)\rfloor-1\ge1$. A center covers at most $q+1$ consecutive path vertices, so
\[
\Nup(q)\ \ge\ \frac{h+1}{q+1}\ =\ \frac{h+1}{\lfloor
h/(2k)\rfloor}\ \ge\ 2k ,
\]
whence $[\log(\Nup(q)/k)]_+\ge\log2$ and the term of the discrete formula (2.3) at $q$ is at least $(q+1)\log2$. Since $q+1=\lfloor h/(2k)\rfloor>h/(2k)-1\ge h/(4k)$,
\[
\delta_k\ \ge\ \frac{(q+1)\log2}k\ \ge\ \frac{h\log2}{4k^2}\ \ge\
\frac{H_T\log2}{8k^2}\,.
\]

If $h<4k$, the term of (2.3) at $q=0$ is $\min\{k,[\log(n/k)]_+\}\ge\min\{k,2\}\ge1$, since $n/k\ge C_\star=e^2$; hence $\delta_k\ge1/k>H_T\log2/(8k^2)$, using $H_T\le2h<8k$ and $\log2<1$.
\end{proof}

The blocks are in place, and it remains to assemble them along the case analysis of Section~5.

\begin{proof}[Completion of Theorem~5, upper bound]
The skeleton, the meeting of the two branches at $k_0=\ell_n^{1/5}$, and the resulting bound $C(1+\log(en))^{2/5}$ are in Section~5; deferred here are the fixed-point verification behind (5.11), the height branch (5.12), and the endpoint cases.

\emph{The interior case: the width branch \textup{(5.11)}.} Let $2\le k_0\le K:=\lfloor n/C_\star\rfloor$, so that the crossing clause fired at $k_0$: $V^2\delta_{k_0}\le A_0\sigma^2k_0$, and $R^*_T\asymp\sigma^2k_0$ by the interior case of Section~3.3. Lemma~2.3 gives $\overline\delta_{k_0}\le2\delta_{k_0}$, so
\[
V\sqrt{\overline\delta_{k_0}}\ \le\ \sqrt{2A_0}\,\sigma\sqrt{k_0}\,,
\qquad
V\sqrt{k_0\overline\delta_{k_0}}\ \le\ \sqrt{2A_0}\,\sigma\,k_0\,,
\]
and \Cref{slem:truthwidth} at $k=k_0$ yields, for every $s>0$,
\[
w_\mu(s)\ \le\ Cs\sqrt{k_0}\ +\ C\sigma B,
\qquad
B\ :=\ k_0\log(2+\ell_n)+\sqrt{k_0\,\ell_n}\,,
\]
with one universal constant $C$, uniformly over $\mu\in\mathcal F_V(T)$: the term $CV\sqrt{\overline\delta_{k_0}}\,\sqrt{k_0}\le C'\sigma k_0$ from the first group, and both terms of the second group after the two displayed substitutions, are absorbed into $C\sigma B$, since $k_0\le B/\log2$. Set $t^2:=C_{\mathrm{loc}}\sigma^2B$ with $C_{\mathrm{loc}}:=64C^2/\log2+8C+1$. Then $t\ge\sigma$, since $B\ge k_0\log2\ge2\log2>1$; and for every $s\ge t$,
\[
\sigma\,w_\mu(s)\ \le\ C\sigma s\sqrt{k_0}+C\sigma^2B\ \le\
\frac{s^2}8+\frac{s^2}8\ =\ \frac{s^2}4\,,
\]
the first term because $s\ge t\ge\sqrt{C_{\mathrm{loc}}k_0\log2}\,\sigma\ge 8C\sigma\sqrt{k_0}$, and the second because $C\sigma^2B=Ct^2/C_{\mathrm{loc}}\le t^2/8\le s^2/8$. \Cref{slem:localized} applies at every $\mu$ and gives
\[
R^{\mathrm{worst}}_{\mathrm{LSE}}(T,V,\sigma)\ \le\ 33\,t^2\ =\
33\,C_{\mathrm{loc}}\,\sigma^2\bigl[k_0\log(2+\ell_n)+\sqrt{k_0\,\ell_n}\,\bigr];
\]
dividing by $R^*_T\asymp\sigma^2k_0$ proves (5.11).

\emph{The interior case: the height branch \textup{(5.12)}.} Still for $2\le k_0\le K$, \Cref{slem:heightfloor} applies at $k_0$ and combines with the crossing:
\[
\frac{\log2}8\cdot\frac{V^2H_T}{k_0^2}\ \le\ V^2\delta_{k_0}\ \le\
A_0\sigma^2k_0,
\qquad\text{so}\qquad
V^2H_T\ \le\ \frac{8A_0}{\log2}\,\sigma^2k_0^3 .
\]
Both $\mu$ and its projection lie in the body, so $\|\widehat\mu_{\mathrm{LSE}}-\mu\|_2^2\le\diam^2\mathcal F_V(T)=V^2H_T$ pathwise, and dividing by $R^*_T\asymp\sigma^2k_0$ proves (5.12).

\emph{The endpoints.} The projection onto a convex set is a contraction, so $\|\widehat\mu_{\mathrm{LSE}}-\mu\|_2= \|\Pi_{\mathcal F_V(T)}(Y)-\Pi_{\mathcal F_V(T)}(\mu)\|_2\le \|Y-\mu\|_2$ and $R^{\mathrm{worst}}_{\mathrm{LSE}}\le\sigma^2n$ always; together with the diameter,
\begin{equation}\label{eq:lsecaps}
R^{\mathrm{worst}}_{\mathrm{LSE}}(T,V,\sigma)\ \le\
\min\bigl\{V^2H_T,\ \sigma^2n\bigr\}.
\end{equation}
If $k_0=K+1$ with $n\ge C_\star$, the dimension case of Section~3.3 gives $R^*_T\asymp\sigma^2n$, and \Cref{eq:lsecaps} bounds the ratio by a constant. If $k_0=1$, the two-point bound (3.8) gives $R^*_T\ge c\min\{V^2H_T,\sigma^2\}$ for $n\ge2$, and there are two subcases. When $n<C_\star$, the right side of \Cref{eq:lsecaps} is at most $C_\star\min\{V^2H_T,\sigma^2\}$: its second branch is $\sigma^2n\le C_\star\sigma^2$, and its first is unchanged. When the crossing clause fired at $k=1$, the second case of Section~3.3 gives $V^2H_T\le2V^2h_T\le(2A_0/\log2)\,\sigma^2$, so the right side of \Cref{eq:lsecaps} is again at most $C\min\{V^2H_T,\sigma^2\}$: if $V^2H_T\le\sigma^2$ it equals the first branch, and otherwise it is at most $C\sigma^2$. In both subcases $R^{\mathrm{worst}}_{\mathrm{LSE}}\le C\,R^*_T$. The three cases are exhaustive: $k_0=K+1$ with $n<C_\star$ means $K=0$ and $k_0=1$, which the last case covers. Together with the two interior branches and the case analysis of Section~5, the upper bound of Theorem~5 is proved; with the lower bound established there, so is the theorem.
\end{proof}

\clearpage
\bibliographystyle{imsart-nameyear}
\bibliography{refs}

\end{document}